\newcommand{\bl}{\begin{lemma}}
\newcommand{\el}{\end{lemma}}
\def\beaa{\begin{eqnarray*}}
\def\eeaa{\end{eqnarray*}}
\def\ba{\begin{array}}
\def\ea{\end{array}}
\def\be#1{\begin{equation} \label{#1}}
\def \eeq{\end{equation}}
\def\a{{\alpha}}
\def\b{{\beta}}
\def\be{{\beta}}
\def\ga{\gamma}
\def\Ga{\Gamma}
\def\de{\delta}
\def\ep{\epsilon}
\def\si{\sigma}
\def\Si{\Sigma}
\def\Om{\Omega}
\def\th{\theta}
\def\nab{\nabla}
\def\varep{\varepsilon}
\def\pr{{\partial}}
\def\rh{{\rho}}
\def\et{{\eta}}
\def\BB{{\mathcal B}}
\def\NN{{\mathcal N}}
\def\FF{{\mathcal F}}
\def\HH{{\mathcal H}}
\def\GG{{\mathcal G}}
\def\SS{{\mathcal S}}
\def\NN{{\mathcal N}}
\def\Lie{{\mathcal L}}
\def\Lie{{\mathcal L}}
\def\F{{\mathcal F}}
\def\RRR{{\mathbb R}}
\def\f12{{\frac 1 2}}
\def\dual{{\,\,^*}}
\DeclareMathOperator{\Div}{\mathrm{div}}
\DeclareMathOperator*{\Curl}{\mathrm{curl}}
\def\half{\frac{1}{2}}
\newcommand{\eh}{\hat \eta}
\newcommand{\rhon}{\left(\rho_N \right)}
\newcommand{\rhod}{\rho \mkern-7mu /\ \mkern-7mu}
\newcommand{\sigmaNd}{\mkern-6mu \dual\sigma_N \mkern-9mu /\ \mkern-7mu}
\newcommand{\sigmaNdd}{\mkern-5mu \dual\widehat{(\sigmad)} }
\newcommand{\sigmaN}{ \sigma_N\mkern-11mu/\ \mkern-7mu}
\newcommand{\sigmad}{\sigma\mkern-9mu/\ \mkern-7mu}
\newcommand{\RRRic}{\mathrm{Ric}}
\newcommand{\nr}{\nonumber}
\newcommand{\isinf}{\int\limits_1^\infty}
\newcommand{\isr}{\int\limits_1^r}
\newcommand{\isphere}{\int\limits_{S_r}}
\newcommand{\sumzero}{\sum\limits_{l\geq0} \sum\limits_{m=-l}^l }
\newcommand{\sumone}{\sum\limits_{l\geq1} \sum\limits_{m=-l}^l }
\newcommand{\sumtwo}{\sum\limits_{l\geq2} \sum\limits_{m=-l}^l }
\newcommand{\ol}{\overline} 
\newcommand{\Divd}{\Div \mkern-17mu /\ }
\newcommand{\Curld}{\Curl \mkern-17mu /\ }
\newcommand{\Nd}{\nabla \mkern-13mu /\ }
\newcommand{\Ndn}{{\nabla \mkern-13mu /\ \mkern-4mu}_N \mkern-0mu}
\newcommand{\Ld}{\triangle \mkern-12mu /\ }
\newcommand{\trd}{\mathrm{tr} \mkern-12mu /\ }
\newcommand{\iin}{\in \mkern-16mu /\ \mkern-5mu}
\newcommand{\LIE}{\mathcal{L} \mkern-11mu /\  \mkern-3mu }
\newcommand{\cDd}{\mathcal{D} \mkern-11mu /\  \mkern-4mu}
\newcommand{\HHl}{{\HH^w_{-1/2}}}
\newcommand{\HHm}{{\HH^{w-1}_{-3/2}}}
\newcommand{\HHn}{{\HH^{w-2}_{-5/2}}}
\newcommand{\HHlo}{{\ol{\HH}^w_{-1/2}}}
\newcommand{\HHmo}{{\ol{\HH}^{w-1}_{-3/2}}}
\newcommand{\HHno}{{\ol{\HH}^{w-2}_{-5/2}}}
\newcommand{\RRRwo}{ {\RRR^3 \setminus \ol{B_1}}}
\newcommand{\srp}{\int\limits_{S_{r'}}}
\def\sr{\int\limits_{S_r}}
\def\ni{\noindent}
\def\tr{\mathrm{tr}}
\def\th{\theta}
\def\f{\widetilde{f}}
\def\sr{\int\limits_{S_r}}
\newcommand{\gac}{\overset{\circ}{\ga}}
\newcommand{\Lied}{\mathcal{L} \mkern-9mu/\ \mkern-7mu}
\newtheorem{theorem}{Theorem}[section]
\newtheorem{lemma}[theorem]{Lemma}
\newtheorem{proposition}[theorem]{Proposition}
\newtheorem{corollary}[theorem]{Corollary}
\newtheorem{definition}[theorem]{Definition}
\newtheorem{remark}[theorem]{Remark}
\newtheorem{claim}[theorem]{Claim}
\numberwithin{equation}{section}
\begin{document}

\title[An extension procedure]{An extension procedure   \\ for the constraint equations}
\author{Stefan Czimek}
\address{Laboratoire Jacques-Louis Lions, Universit\'e Pierre et Marie Curie (Paris 6)}

\begin{abstract} Let $( \bar g, \bar k)$ be a solution to the maximal constraint equations of general relativity on the unit ball $B_1$ of $\RRR^3$. We prove that if $(\bar g,\bar k)$ is sufficiently close to the initial data for Minkowski space, then there exists an asymptotically flat solution $(g,k)$ on $\RRR^3$ that extends $(\bar g, \bar k)$.
Moreover, $(g,k)$ is bounded by $(\bar g, \bar k)$ and has the same regularity. \newline
Our proof uses a new method of solving the prescribed divergence equation for a tracefree symmetric $2$-tensor, and a geometric variant of the conformal method to solve the prescribed scalar curvature equation for a metric. Both methods are based on the implicit function theorem and an expansion of tensors based on spherical harmonics. They are combined to define an iterative scheme that is shown to converge to a global solution $(g,k)$ of the maximal constraint equations which extends $(\bar g,\bar k)$. 
\end{abstract}

\maketitle
\tableofcontents

\section{Introduction} \label{sec:Introduction}

\subsection{The Cauchy problem and the maximal constraint equations}

The Einstein vacuum equations on a Lorentzian $4$-manifold $(\mathcal{M},{\bf g})$ are given by
\begin{align*}
\RRRic( { \bf g} ) = 0,
\end{align*}
where $\RRRic$ denotes the Ricci tensor of ${ \bf g}$. Initial data for the Cauchy problem is given by a triple $(\Si,g,k)$, where $(\Si,g)$ is a complete Riemannian $3$-manifold and $k$ a symmetric $2$-tensor on $\Si$ satisfying the \emph{constraint equations} on $\Si$
\begin{align} \begin{aligned} 
R(g) &= \vert k \vert_g^2 - (\tr_g k )^2, \\
\Div_g k &= d(\tr_g k). \label{eq:CONST1}
\end{aligned} \end{align}
Here $R(g)$ denotes the scalar curvature of $g$, $d$ is the exterior derivative and
\begin{align*}
\vert k \vert_g^2:= g^{ij} g^{lm} k_{il} k_{jm}, \, \tr_g k: = g^{ij}k_{ij}, \, (\Div_g k )_l := g^{ij} \nab_i k_{jl},
\end{align*}
where $\nab$ denotes the covariant derivative on $(\Si,g)$ and we use, as in the rest of this paper, the Einstein summation convention.\\

Let $(\mathcal{M},{\bf g})$ be the solution of the Einstein vacuum equations corresponding to initial data $(\Si,g,k)$. Then $\Si \subset (\mathcal{M}, {\bf g})$ is a space-like Cauchy hypersurface with induced metric $g$ and second fundamental form $k$. See for example \cite{Wald} for details. \\

The trivial solution to the Einstein vacuum equations is Minkowski spacetime which follows in particular from the trivial initial data
\begin{align*}
(\Si,g,k)=(\RRR^3, e,0),
\end{align*}
where $e$ denotes the Euclidean metric.\\

In this work, we consider initial data that satisfies two further properties.
\begin{itemize}
\item The initial data is \emph{asymptotically flat}, which means
\begin{align*}
g \to e, \, k \to 0
\end{align*}
as $\vert x \vert \to \infty$ on $\Si$. For a more precise definition, see Definition \ref{def:asmypt}. Such initial data corresponds to the description of isolated gravitational systems, see for example \cite{Wald}.
\item We assume that $\Si$ is \emph{maximal}, that is,
\begin{align*}
\tr_g k = 0.
\end{align*}
This assumption is sufficiently general for our purposes, see for example \cite{BartnikExistence}.
\end{itemize}

By the second assumption, the constraint equations \eqref{eq:CONST1} reduce to the \emph{maximal constraint equations}
\begin{align} \begin{aligned}
R(g)&=\vert k \vert^2, \\
\Div_g k &=0, \\
\tr_g k &=0.
\end{aligned}
\label{eq:EinsteinMaximalConstraints}
\end{align}

\subsection{The extension problem and the main theorem}

The maximal constraint equations are an under-determined geometric non-linear elliptic system of partial differential equations. In this paper, we are interested in the following problem.\\

{\bf Extension problem.} \emph{Given initial data $(\bar g, \bar k)$ on the unit ball $B_1 \subset \RRR^3$, does there exist a regular asymptotically flat initial data set $(g,k)$ on $\RRR^3$ that isometrically contains $(\bar g, \bar k)$ and is bounded by it?} \\

This problem has received considerable attention in the literature. It appears for example
\begin{itemize}
\item when analysing the space of solutions to the maximal constraint equations, see for example \cite{Bartnik} \cite{SmithWeinstein2} \cite{ShiTam} \cite{Isenberg} \cite{SmithWeinstein1}  \cite{BartnikSharples} , 
\item when considering the rigidity of the equations, as in the celebrated gluing construction \cite{Corvino} \cite{CorvinoSchoen}, see also \cite{ChruscielDelay} \cite{Chrusciel2} \cite{Delay},
\item in the context of Bartnik's conjecture \cite{BartnikConj} \cite{Bartnik3mass}, see for example \cite{Huisken} \cite{Miao} \cite{ShiTam2} \cite{AndersonKhuri} \cite{Anderson}
\item in the proof of the bounded $L^2$ curvature theorem \cite{KRS}, where it is used to reduce the local existence for the Cauchy problem of general relativity to the small data case, see for example Section 2.3 in that paper.
\end{itemize}

Our main motivation to consider the extension problem is to prove a \emph{localised version} of the bounded $L^2$ curvature theorem of \cite{KRS}, see \cite{Cz2}.\\

In this paper, we resolve the extension problem for small data. The next theorem is a rough version of our main result, see Theorem \ref{MainTheorem12} for a precise formulation.
\begin{theorem}[Main theorem, rough version] \label{maintheoremintro0}
Let $(\bar g,\bar k)$ be a solution on the unit ball $B_1 \subset \RRR^3$ of the maximal constraint equations
\begin{align*}
R(\bar g)&=\vert \bar k \vert_{\bar g}^2, \\
\Div_{\bar g} \bar k &=0, \\
\tr_{\bar g} \bar k &=0.
\end{align*}
If $(\bar g, \bar k)$ is sufficiently close to $(e,0)$ in a suitable topology, where $e$ denotes the Euclidean metric, then there exists asymptotically flat $(g, k)$ of the same regularity as $(\bar g, \bar k)$ such that
\begin{align*}
(g, k) \vert_{B_1} = (\bar g, \bar k),
\end{align*}
and solving the maximal constraint equations on $\RRR^3$,
\begin{align*}
R(g)&=\vert k \vert_{g}^2, \\
\Div_{g} k &=0, \\
\tr_{g} k &=0.
\end{align*}
Moreover, the norm of $(g,k)-(e,0)$ is bounded by the norm of $(\bar g, \bar k)-(e,0)$. 
\end{theorem}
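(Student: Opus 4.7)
The plan is to construct $(g,k)$ by an iterative scheme that decouples the nonlinear maximal constraint system \eqref{eq:EinsteinMaximalConstraints} into two elliptic subproblems at each step: (A) given a metric $g$, produce a symmetric trace-free $2$-tensor $k$ satisfying $\Div_g k = 0$ that extends $\bar k$; (B) given a symmetric trace-free $2$-tensor $k$, produce a metric $g$ with prescribed scalar curvature $R(g) = |k|_g^2$ that extends $\bar g$. Both subproblems are solved on $\RRR^3$ with the prescribed value on $B_1$ and asymptotic flatness at infinity.

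I would start by fixing a smooth cutoff extension $(g_0, k_0)$ of $(\bar g, \bar k)$, equal to $(\bar g, \bar k)$ on $B_1$, equal to $(e,0)$ outside $B_2$, and interpolated smoothly in the collar $\{1 \leq |x| \leq 2\}$. Under the smallness hypothesis, $(g_0,k_0)$ is close to $(e,0)$ in the relevant norm, but it fails the constraints in the collar. The rest of the proof then produces a correction supported in the exterior $\RRR^3 \setminus B_1$ that restores all three constraints without touching the interior.

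To attack subproblem (A), I linearize $\Div_g k = 0$ (and $\tr_g k = 0$) about $(e,0)$ and reduce to a constant-coefficient divergence equation on trace-free symmetric tensors in the exterior with boundary matching on $\partial B_1$ and decay at infinity. This linearized operator has a finite-dimensional cokernel (obstruction modes coming from conformal Killing data at the boundary and from the asymptotic behaviour), which I make explicit by expanding both source and solution in spherical harmonics and inverting mode by mode; the obstructions are eliminated by correcting the right-hand side with a tensor taken from a carefully chosen finite-dimensional complement. An analogous treatment of subproblem (B) via a conformal ansatz $g = \phi^4 \tilde g$, combined with a geometric modification of the background $\tilde g$ inside a finite-dimensional correction space spanned by spherical-harmonic tensors, handles the scalar curvature equation. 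In each case the implicit function theorem then upgrades the linear inverse to a nonlinear solution operator in a small neighbourhood of $(e,0)$.

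With the two solution operators in hand I iterate: set $k_{n+1}$ by applying (A) to the metric $g_n$, then $g_{n+1}$ by applying (B) to the tensor $k_{n+1}$, starting from $(g_0,k_0)$. All iterates match $(\bar g,\bar k)$ on $B_1$ by construction, and convergence follows from a contraction estimate in which the correction introduced at step $n$ is controlled by the constraint defect at step $n-1$, itself quadratically small in the previous correction. Passing to the limit yields $(g,k)$ with the asserted norm bound. The main obstacle I anticipate is the simultaneous enforcement of three rigid conditions on every correction: exact matching (with transversal derivatives) on $\partial B_1$, prescribed asymptotic flatness at infinity, and annihilation of the finite-dimensional obstructions of the two linearized operators. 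The spherical-harmonic ansatz is the key device that makes this bookkeeping manageable, and designing the obstruction-killing tensors so that they respect both the boundary and the asymptotic conditions is where the delicate geometric work lies.
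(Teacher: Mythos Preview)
Your high-level architecture matches the paper's: an iteration alternating between a divergence-free trace-free extension of $k$ and a prescribed-scalar-curvature extension of $g$, each step handled by linearising at $e$, expanding in spherical harmonics, and invoking the implicit function theorem; convergence via a contraction estimate. That part is correct.

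There is, however, a genuine gap in your description of the linear analysis. The linearised operators are \emph{surjective} in the relevant weighted spaces; there is no finite-dimensional cokernel to kill. What you are actually fighting is the requirement that the correction vanish on $\partial B_1$ \emph{together with its normal derivative}, so that gluing across the boundary loses no regularity. For an elliptic problem on $\RRR^3\setminus\overline{B_1}$ this is an overdetermined boundary condition, and satisfying it costs one scalar constraint \emph{per spherical harmonic mode}: infinitely many conditions, not finitely many. The paper absorbs these by exploiting the underdetermined nature of the constraint system itself. For the divergence problem it augments $\Div_e k=\rho$ to the full Hodge system $\Div_e k=\rho$, $\Curl_e k=\sigma$, $\tr_e k=0$, and then spends the freedom in $\sigma$ (via auxiliary fields $\zeta_E,\zeta_H$ built mode-by-mode from $\rho$) to force the Dirichlet-to-Neumann map to return zero Neumann data. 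A finite-dimensional complement cannot do this.

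The same issue bites your scalar-curvature ansatz. A pure conformal change $g=\phi^4\tilde g$ gives one scalar unknown, which after imposing $\phi|_{r=1}=1$ leaves no room to also force $\partial_r\phi|_{r=1}=0$; a finite-dimensional tweak of $\tilde g$ is again insufficient for the infinitely many mode conditions. The paper instead varies the polar decomposition of $g$ by a conformal factor $e^{2\varphi}$ on the sphere metric \emph{and} an $S_r$-tangent shift $\beta'$, so the linearisation is $L(u,\xi)=-4\partial_r^2 u-2\Ld u-\tfrac{12}{r}\partial_r u-\tfrac{4}{r^2}u+\tfrac{2}{r^3}\partial_r(r^3\Divd\xi)$. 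The extra field $\xi$ plays exactly the role your ``obstruction-killing'' space should play: it provides one free function per mode, used (through another auxiliary $\zeta^{[\ge1]}$) to enforce $\partial_r u|_{r=1}=0$ on top of $u|_{r=1}=0$. Replace ``finite-dimensional cokernel/complement'' by ``infinite-dimensional free data coming from the underdeterminedness of the constraints, spent mode-by-mode to control the Dirichlet-to-Neumann map,'' and your sketch becomes essentially the paper's proof.
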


\textit{Comments on the result.}
\begin{enumerate}
\item The novelty of our result lies in the following facts.
\begin{itemize} \item Compared to the gluing constructions \cite{Corvino} \cite{CorvinoSchoen} and \cite{Delay}, it does not need a gluing region. Indeed, these literature results consider the problem of gluing together two given solutions within an annulus, and do not study the problem of extending a given solution $(g,k)$ as solution to the constraints onto a larger domain. This feature is crucial for localising the bounded $L^2$ curvature theorem \cite{KRS}, see the forthcoming \cite{Cz2}. 
\item The extension results in \cite{SmithWeinstein2} \cite{ShiTam} \cite{SmithWeinstein1} \cite{BartnikSharples} lose regularity across the boundary of the domain by using a parabolic equation to solve the prescribed scalar curvature equation. Our result, on the other hand, uses a geometric perturbation argument at the Euclidean metric that preserves regularity.
\end{itemize}
\item The closeness of $(\bar g, \bar k)$ to $(e,0)$ is measured in the topology corresponding to the space
$$(\bar g, \bar k) \in \HH^2(B_1) \times \HH^{1}(B_1),$$
where $\HH^2(B_1)$ denotes a Sobolev space of tensors over $B_1$ corresponding to $2$ derivatives in $L^2$. We note that this low regularity setting is in accordance with the bounded $L^2$ curvature theorem \cite{KRS}. In view of the scaling of \eqref{eq:EinsteinMaximalConstraints}, we expect Theorem \ref{maintheoremintro0} to hold also under the assumption of closeness of $(\bar g, \bar k)$ to $(e,0)$ in the topology corresponding to $\HH^w(B_1) \times \HH^{w}(B_1)$ for reals $w>3/2$. 
\item Theorem \ref{maintheoremintro0} completes the proof of the reduction step to small data in the proof of the bounded $L^2$ curvature theorem \cite{KRS}, see Section 2.3 in that paper.
\item The methods used in the proof of Theorem \ref{maintheoremintro0} could be relevant to other problems such as, for example, solving the divergence equation in context of the Maxwell-Klein-Gordon and Euler equations, see for example \cite{Oh2} \cite{Oh1}.
\end{enumerate}

\subsection{Strategy of the proof of the main theorem}

In this section we sketch the proof of Theorem \ref{maintheoremintro0}. The idea is to set up an iterative scheme consisting of pairs $\Big( (g_i,k_i)\Big)_{i\geq1}$ that extend $(\bar g,\bar k)$ from $B_1$ to $\RRR^3$. In general, the $(g_i,k_i)$ do not solve the maximal constraints \eqref{eq:EinsteinMaximalConstraints} on $\RRR^3$. However, by a fixed point argument, we show that the sequence converges to a solution $(g,k)$ as $i \to \infty$. \\

More precisely, let $(\bar g,\bar k)$ be small given initial data on $B_1$, and assume we have already obtained $(g_i,k_i)$ for some $i\geq1$. We construct the next pair $(g_{i+1},k_{i+1})$ by the following two steps.
\begin{itemize}
\item {\bf Step A.} Given $(g_i,k_i)$ on $\RRR^3$, construct $g_{i+1}$ on $\RRR^3$ such that
\begin{align*}
g_{i+1} \vert_{B_1} &= \bar g,\\
R(g_{i+1}) &= \vert k_i \vert_{g_i}^2.
\end{align*}
\item {\bf Step B.} Given $g_{i+1}$ on $\RRR^3$, construct $k_{i+1}$ on $\RRR^3$ such that
\begin{align*}
k_{i+1} \vert_{B_1} =\bar k,\\
\Div_{g_{i+1}} k_{i+1} =0, \\
\tr_{g_{i+1}} k_{i+1}=0.
\end{align*}
\end{itemize}

Step A and B rely on Theorems \ref{thm:extmetrough1} and \ref{thm:extrough1}, respectively, to be introduced now. 

\begin{theorem}[Extension of divergence-free tracefree symmetric $2$-tensors, rough version] \label{thm:extrough1}
Let $g$ be an asymptotically flat Riemannian metric on $\RRR^3$ and $\bar k$ a symmetric $2$-tensor on $B_1$ solving 
\begin{align} \begin{aligned}
\Div_{g}\bar  k = 0,\\
\tr_{g} \bar  k = 0.
\end{aligned} \label{eq:mars30}
\end{align}
If $g$ and $\bar k$ are sufficiently close to $e$ and $0$, respectively, in a suitable topology, then there exists an asymptotically flat symmetric $2$-tensor $k$ on $\RRR^3$ that extends $\bar k$, that is, 
$$k \vert_{B_1} = \bar  k$$
and solves on $\RRR^3$
\begin{align} \begin{aligned}
\Div_{g} k = 0,\\
\tr_{g}  k = 0. \end{aligned} \label{eq:mars30second}
\end{align}
Moreover, $k$ is bounded by $\bar k$.
\end{theorem}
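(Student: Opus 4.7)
My plan is to reduce the problem first to the case of the Euclidean background metric $g=e$ via a perturbation argument, and then to solve the Euclidean case by an extension-then-correction scheme in which the correction is constructed from a prescribed-divergence operator built by a spherical harmonic decomposition. At $g=e$, I would first extend $\bar k$ to a compactly supported tracefree symmetric 2-tensor $\tilde k$ on $\RRR^3$ by composing a standard Sobolev extension with the Euclidean tracefree projection. Since $\bar k$ is divergence-free on $B_1$, the 1-form $f:=\Div_e \tilde k$ vanishes on $B_1$ and has compact support. I then seek a tracefree symmetric 2-tensor $h$ on $\RRR^3$ satisfying $\Div_e h = f$, $h|_{B_1}=0$, and $h\to 0$ at infinity, with its norm controlled by that of $f$. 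Given such $h$, the tensor $k:=\tilde k - h$ restricts to $\bar k$ on $B_1$, is tracefree and divergence-free, and inherits asymptotic flatness and the desired bound from $\tilde k$ and $h$.

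The heart of the proof is the construction of this prescribed-divergence operator for tracefree symmetric 2-tensors, which is genuinely underdetermined (three scalar equations for the five components). I would foliate $\RRR^3 \setminus B_1$ by the coordinate spheres $S_r$ and decompose $h$ and $f$ into scalar, vector, and symmetric traceless tensor spherical harmonics along $S_r$. The tracefree and divergence conditions then translate, mode by mode, into a triangular system of linear ODEs in $r$ whose coefficients are governed by the eigenvalues $l(l+1)$ of the Laplacian on $S^2$. For each harmonic mode $(l,m)$, I would solve this radial system imposing the vanishing boundary condition at $r=1$ and integrable decay at infinity; the resulting explicit radial profiles yield the continuity of the operator in the weighted Sobolev norms adapted to asymptotic flatness and canonically select one solution out of the underdetermined family.

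Once the Euclidean case is settled, I would handle a general $g$ close to $e$ by writing $\Div_g = \Div_e + (\Div_g - \Div_e)$ and $\tr_g = \tr_e + (\tr_g - \tr_e)$, and treating the differences as small perturbations in the Sobolev norm of $g-e$. A contraction mapping argument, or equivalently the implicit function theorem applied to the extension map constructed above at the base point $(e,\tilde k)$, then produces the desired $k$ with the claimed bound by $\bar k$.

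The main obstacle I expect is the prescribed-divergence step: one has to fix a canonical choice among infinitely many solutions, control the coupling between the different tensor-harmonic components under the divergence operator, and derive sharp boundary and decay estimates to ensure simultaneously that $h|_{\partial B_1}=0$ in the trace sense and that $h$ decays at the asymptotically flat rate. Additional care is likely required for the low modes $l=0,1$, where the radial ODEs degenerate and the standard cancellations that give the higher-mode decay break down.
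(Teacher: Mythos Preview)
Your overall strategy matches the paper's: extension-then-correction via a prescribed-divergence operator, spherical harmonic decomposition on the radial foliation, and an implicit function theorem argument to pass from $e$ to nearby $g$. The paper proceeds exactly this way (Sections~4.2 and~4.3).

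However, your description of the core step is too optimistic. The radial system obtained from the divergence and tracefree conditions is \emph{not} triangular: it is genuinely underdetermined (three equations for five unknowns), and the paper closes it by introducing an auxiliary symmetrized curl $\sigma$, chosen by hand, so that the full Hodge system $\Div_e k = \rho$, $\Curl_e k = \sigma$, $\tr_e k = 0$ becomes determined. The central difficulty is then that imposing $k|_{r=1}=0$ together with asymptotic decay makes this an \emph{overdetermined} boundary value problem; two of the components (namely $\delta^{[\geq 2]}=k_{NN}^{[\geq 2]}$ and $\sigma_{NN}^{[\geq 2]}$) end up governed by second-order elliptic equations on $\RRR^3\setminus\overline{B_1}$ with Dirichlet data at $r=1$, so their normal derivatives at $r=1$ are fixed by the Dirichlet-to-Neumann map---yet these normal derivatives feed into the first-order transport equations for the remaining components (for instance, $\epsilon|_{r=1}=0$ requires $\partial_r\delta|_{r=1}=0$). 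The paper's key device is to exploit the residual freedom in choosing $\sigma$: it inserts carefully designed auxiliary source terms $\zeta_E,\zeta_H$ (supported in thin shells near $r=1$, with $l$-dependent cutoffs and normalization constants fixed by explicit integral identities) into the elliptic equations so as to \emph{force} $\partial_r\delta^{[\geq2]}|_{r=1}=\partial_r\sigma_{NN}^{[\geq2]}|_{r=1}=0$. This Dirichlet-to-Neumann control is the genuinely new ingredient, and your proposal's ``canonical choice from endpoint conditions'' does not by itself supply it. Your instinct that the low modes $l=0,1$ need separate treatment is correct; the paper handles them by explicit integration of the resulting ODEs.
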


\begin{theorem}[Metric extension theorem, rough version] \label{thm:extmetrough1}
Let $\bar g$ be a Riemannian metric on $B_1$ and $R$ a scalar function on $\RRR^3$ such that 
$$R \vert_{B_1}=R(\bar g),$$
where $R(\bar g)$ denotes the scalar curvature of $\bar g$. If $\bar g$ and $R$ are sufficiently close to $e$ and $0$, respectively, then there exists an asymptotically flat Riemannian metric $g$ on $\RRR^3$ such that
$$g \vert_{B_1} =\bar  g,$$ 
and such that its scalar curvature on $\RRR^3$ is given by
\begin{align*}
R(g) = R.
\end{align*}
Moreover, $g$ is bounded in terms of $\bar g$ and $R$.
\end{theorem}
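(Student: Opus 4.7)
The plan is to reduce the prescribed-scalar-curvature problem to a Lichnerowicz-type equation for a conformal factor, and to solve it by the implicit function theorem around the Euclidean background, with the linearized right inverse constructed by spherical-harmonic decomposition on the exterior $\RRR^3 \setminus B_1$.

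First I would build a preliminary extension $g_0 \in \HH^2(\RRR^3)$ of $\bar g$ to all of $\RRR^3$ by combining a bounded extension operator with a cutoff interpolating to $e$ at infinity, with $\|g_0 - e\|$ controlled by $\|\bar g - e\|$. Since $R(g_0)\big|_{B_1} = R(\bar g) = R\big|_{B_1}$ by hypothesis, the discrepancy $\rho := R - R(g_0)$ vanishes on $B_1$ and is small on $\RRR^3$. I would then seek the desired metric as a conformal rescaling
\begin{equation*}
g = \phi^4\, g_0, \qquad \phi\big|_{B_1} \equiv 1, \quad \phi \to 1 \text{ at infinity},
\end{equation*}
which forces $g|_{B_1} = \bar g$ automatically and converts $R(g) = R$ into the Lichnerowicz-type equation
\begin{equation*}
-8\, \Delta_{g_0}\, \phi + R(g_0)\, \phi \;=\; R\, \phi^5 \qquad \text{on } \RRR^3.
\end{equation*}
The ansatz $\phi \equiv 1$ on $B_1$ is already consistent with this equation (since $R(g_0) = R$ there), so the real task is to solve it on $\RRR^3 \setminus B_1$ with matching across $\partial B_1$ ensuring that the global function lies in $\HH^2(\RRR^3)$.

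Setting $\phi = 1 + \psi$, the equation becomes $-8 \Delta \psi = \rho + \mathcal{N}(\psi, g_0, R)$, with $\mathcal{N}$ a smooth nonlinear remainder vanishing to second order at the Euclidean base point. I would invert the principal operator mode by mode: expanding $\psi(r,\omega) = \sum_{l,m} \psi_{l,m}(r)\, Y_{l,m}(\omega)$ and similarly for $\rho$, each radial mode satisfies an Euler-type ODE whose homogeneous solutions are $r^l$ and $r^{-l-1}$, only the latter being compatible with decay at infinity, giving one free radial constant per mode. Running this construction in the weighted Sobolev scale $\HHlo$, $\HHmo$, $\HHno$ of the paper produces a bounded right inverse with the expected two-derivative gain.

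The main obstacle is precisely the matching across $\partial B_1$: for $\phi^4 g_0$ to be in $\HH^2(\RRR^3)$ the conformal factor $\phi$ together with its normal derivative must be continuous at $\partial B_1$, producing two boundary conditions per spherical-harmonic mode against only one free parameter in the exterior Poisson problem. This is where the geometric variant of the conformal method enters: I would adjoin additional non-conformal deformations of $g_0$ in a collar outside $B_1$, one per spherical-harmonic mode of the mismatch, so as to absorb the jump in the normal derivative of $\psi$ at each frequency. Once a bounded right inverse of the linearization is thus established, the full nonlinear equation closes by the implicit function theorem at $(g_0, R) = (e, 0)$, and the smallness hypothesis immediately yields the claimed quantitative bound on $g - e$ in terms of $\bar g - e$ and $R$.
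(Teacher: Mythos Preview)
Your framework is sound --- implicit function theorem at the Euclidean metric, spherical-harmonic decomposition on the exterior, and you correctly identify the core obstacle: a pure conformal rescaling $g=\phi^4 g_0$ leaves only one free radial constant per mode (the decaying homogeneous solution) while $\HH^2$-matching across $\partial B_1$ imposes two conditions. However, the crucial step --- your ``adjoin additional non-conformal deformations in a collar, one per mode'' --- is where all the content lies, and you leave it unspecified. As written this is not yet a proof, only a hope that the underdeterminedness can be exploited.

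The paper does not use the full 3D conformal rescaling at all. Instead it works with a foliation-adapted two-parameter variation: writing $g_0 = a^2\,dr^2 + \ga_{AB}(\be^A dr + d\th^A)(\be^B dr + d\th^B)$ in polar components, the perturbed metric is
\[
\check g_{\varphi,\be'} \;=\; a^2\,dr^2 \;+\; e^{2\varphi}\ga_{AB}\bigl((\be+\be')^A dr + d\th^A\bigr)\bigl((\be+\be')^B dr + d\th^B\bigr),
\]
where $\varphi$ conformally rescales only the induced sphere metric and $\be'$ is an additional $S_r$-tangent shift vector. The linearisation in $(\varphi,\be')$ at $e$ becomes
\[
L(u,\xi) \;=\; -4\,\pr_r^2 u \;-\; 2\,\Ld u \;-\; \tfrac{12}{r}\,\pr_r u \;-\; \tfrac{4}{r^2}\,u \;+\; \tfrac{2}{r^3}\,\pr_r\bigl(r^3\,\Divd\,\xi\bigr),
\]
so there are genuinely two unknowns per spherical-harmonic mode. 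The surjectivity proof then manufactures an explicit auxiliary source $\zeta^{[\ge 1]}$ (built from radial cut-offs times spherical harmonics, analogous to the $\zeta_E,\zeta_H$ in the $k$-extension) that is fed into the $\xi$-equation specifically to force $\pr_r u\big|_{r=1}=0$ in addition to $u\big|_{r=1}=0$ --- i.e.\ to control the Dirichlet-to-Neumann map of the elliptic problem for $u$.

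So your instinct that ``an extra deformation absorbs the normal-derivative jump'' is morally correct, but the paper's realisation is not a conformal solve followed by a separate collar patch; it is a single coupled variation $(\varphi,\be')$ whose linearisation is analysed directly, with the shift vector $\be'$ (via $\Divd\xi$) playing exactly the role of your unspecified extra degree of freedom.
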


Precise versions of the above are stated in Theorems \ref{thm:kextension1} and \ref{thm:MainExtensionScalarTheorem}, respectively. Both Theorems \ref{thm:extrough1} and \ref{thm:extmetrough1} are proved by the Implicit Function Theorem and showing the surjectivity of a linearisation of the corresponding operators at the Euclidean metric $e$. Concerning Theorem \ref{thm:extrough1}, we show in Section \ref{sec:EuclideanSurjectivity} that the operator
\begin{align*}
k \mapsto \rh := \Div_e \left( \widehat{k}^e \right)
\end{align*}
is surjective. Here, for any symmetric $2$-tensor $V$, we denote its tracefree part with respect to the Euclidean metric $e$ by
\begin{align*}
\widehat{V}^e := V- \frac{1}{3} \tr_e(V) e.
\end{align*}
Concerning Theorem \ref{thm:extmetrough1}, we show in Section \ref{sec:SurjectivityR} that the linearisation of the scalar curvature with respect to a suitable geometric variation is surjective.\\

The two proofs of surjectivity at the Euclidean metric use, among others, the following mathematical tools.
\begin{enumerate}
\item In Section \ref{ssec:tensordecomposition}, we decompose tensors with respect to the foliation of $\RRR^3$ by spheres
\begin{align*}
S_r = \{ \vert x \vert =r \}, r>0.
\end{align*}
\item In Section \ref{ssec:Hodgebasis}, relying on spherical harmonics, we define complete orthonormal bases of the spaces of $L^2(S_r)$-integrable functions, vectorfields and symmetric tracefree $2$-tensors on $S_r$. These bases have been studied before in physics literature, see for example \cite{HillSpherical} and \cite{SandbergSpherical}. We call these bases Hodge-Fourier bases. Projecting onto these bases allows us to split up the linearised operators into radial ODEs and elliptic systems on $S_r$ and $\RRR^3 \setminus \ol{B_1}$.
\item In order to force the regularity of the extension at the boundary of $B_1$, it is necessary to control the Dirichlet-to-Neumann maps associated to the elliptic systems on $\RRR^3 \setminus \ol{B_1}$. This is achieved in particular by exploiting the underdetermined character of the constraint equations.
\end{enumerate}
\vspace{3mm}
The rest of the paper is organised as follows. In Section \ref{sec:Notation}, we introduce the notation and weighted Sobolev spaces and bases of functions and tensors. In Section \ref{sec:Exactstatements} we state a precise version of Theorem \ref{maintheoremintro0}. In Section \ref{sec:DivergenceEquation}, we first reduce the proof of Theorem \ref{thm:extrough1} to the surjectivity at the Euclidean space which is then proved in Section \ref{sec:EuclideanSurjectivity}. Similarly, in Section \ref{sec:MetricPrescription}, we first reduce the proof of Theorem \ref{thm:extmetrough1} to the surjectivity at the Euclidean space which is then proved in Section \ref{sec:SurjectivityR}. In Section \ref{sec:Iteration}, we set up the iterative scheme and prove Theorem \ref{maintheoremintro0}. In Appendix \ref{sec:ProofCompleteness}, we prove the completeness of the bases of tensors defined in Section \ref{sec:Notation}. Two lemmas of Section \ref{sec:Notation} are proved in Appendix \ref{sec:appidentities}. In Appendix \ref{sec:WEIGHTEDellipticity} we derive elliptic estimates in weighted Sobolev spaces.

\subsection{Acknowledgements} This work forms part of my Ph.D. thesis and I am grateful to my Ph.D. advisor J\'er\'emie Szeftel for his kind supervision and careful guidance. Furthermore, I am grateful to the RDM-IdF for financial support.

\section{Preliminaries} \label{sec:Notation}

\subsection{Basic notation} \label{basicsection}
In this work, lowercase Latin indices range over $a,b,c,d,i,j=1,2,3$, uppercase Latin indices over $A,B,C,D=1,2$ and $n\in \mathbb{N}$. The index pairs $(lm)$ take as values integers $l\geq0, m \in \{ -l,\dots,l \}$. We apply the Einstein summation convention. The notation $A\lesssim B$ means $A\leq c B$ where $c>0$ is a numerical constant that does not depend on $A$ or $B$. In estimates, we generally use $C_w>0$ as constant which only depends on $w$. \\

An open subset $\Om \subset \RRR^3$ is called a \emph{domain} if it is connected and its boundary $\pr \Om := \ol{\Om} \setminus \overset{\circ}{\Om}$ is smooth. Let $\chi:\RRR \to \left[ 0,1 \right]$ be a fixed smooth transition function such that
\begin{align}
\chi(x) &= \begin{cases}0 \,\,\, \text{for } x \leq 1/10,\\
1 \,\,\, \text{for } x \geq1.
\end{cases} 
\label{eq:transfct}
\end{align}

We work in a fixed Cartesian coordinate system $(x^1,x^2,x^3)$ of $\RRR^3$. Consequently, given a $n$-tensor $T$, we can equivalently denote it by its coordinate components $T_{i_1 \dots i_n}$. \\

Let $e$ denote the Euclidean metric on $\RRR^3$ with components
\begin{align*}
e_{ij}= \begin{cases} 1 & \text{if } i=j, \\
0& \text{if } i\neq j.
\end{cases}
\end{align*}

Let $g$ be a Riemannian metric and $V$ a symmetric $2$-tensor. Let the divergence, the symmetrized curl, the trace and the tracefree part of $V$ with respect to $g$ be
\begin{align*}
(\Div_g V )_j &:= \nabla^i V_{ij}, \\
(\mathrm{curl}_g V)_{ij} &:= \half (\in_i^{~a b} \nabla_a V_{bj} + \in_j^{~a b} \nab_a V_{bi}), \\
\tr_g V &:= g^{ab} V_{ab}, \\
\widehat{V}^g&:= V- \frac{1}{3} \tr_g (V) g,
\end{align*}
where $\nab$ denotes the covariant derivative and $\in$ the volume form of $g$. \\

\subsection{The radial foliation of $\RRR^3$ by spheres $S_r$ and tensor decomposition} \label{subsec:DifferentialGeometry} \label{ssec:tensordecomposition}

Let $$(r,\th^1,\th^2) \in [0,\infty) \times [0, \pi) \times [0,2\pi)$$ denote the standard polar coordinates on $\RRR^3$. By definition they are related to the Cartesian coordinates $(x^1,x^2,x^3)$ by
\begin{align*} \begin{aligned}
x^1&= r \sin \th^1 \cos \th^2, \\
x^2&= r \sin \th^1 \sin \th^2, \\
x^3&= r \cos \th^1.
\end{aligned} 
\end{align*}
The coordinate spheres and balls of radius $r_0 >0$ centered at the origin are respectively defined as
\begin{align*}
S_{r_0} &:= \{ x\in \RRR^3 :  r(x) = r_0 \}, \nr\\
B_{r_0} &:= \{ x\in \RRR^3 : r(x) < r_0 \}. \nr
\end{align*}

In standard polar coordinates, the Euclidean metric is given by
\begin{align*}
e = dr^2 + r^2 ( (d\th^1)^2 + \sin^2 \th^1 (d\th^2)^2 ).
\end{align*}
Let the induced metric on $S_r \subset (\RRR^3, e)$ be denoted by
$$\gac := r^2 ((d\th^1)^2 + \sin^2 \th^1 (d\th^2)^2 ).$$
When integrating over $(S_r, \gac)$ we do not write out the standard volume element.\\

The \emph{standard polar frame} on $\RRR^3 \setminus \{ x^1=x^2=0\}$ is defined as
\begin{align}
\left\{ \pr_r, e_1:=\frac{1}{r} \pr_{\th^1}, e_2 := \frac{1}{r\sin \th^1} \pr_{\th^2} \right\}, \label{polarnormalframe}
\end{align}
where $\pr_r, \pr_{\th^1}, \pr_{\th^2}$ are the coordinate vectorfields in the coordinate system $(r,\th^1,\th^2)$, respectively. \\ 

Every Riemannian metric $g$ on $\RRR^3 \setminus \{0\}$ can be uniquely written as
\begin{align}
g = a^2 dr^2 + \ga_{AB} \left( \be^A dr + d\th^A \right) \left( \be^Bdr + d\th^B \right), \label{eq:generalmetricform}
\end{align}
where
\begin{itemize}
\item $a(x)>0$ for all $x\in \RRR^3 \setminus \{ 0\}$ is the positive lapse function,
\item $\ga$ is the Riemannian metric induced by $g$ on $S_r$, $r>0$,
\item $\be$ is the $S_r$-tangent shift vector.
\end{itemize}
The $a, \ga, \be$ are called the \emph{polar components} of $g$.\\

The following lemma is proved by direct calculation.
\begin{lemma}\label{lem:EuclideanProperties}
Let $g$ be a Riemannian metric given on $\RRR^3 \setminus \{ 0 \}$,
\begin{align*}
g = a^2 dr^2 + \ga_{AB} \left( \be^A dr + d\th^A \right) \left( \be^Bdr + d\th^B \right).
\end{align*}
Then the following holds for any $r>0$.
\begin{enumerate}
\item The outward-pointing\footnote{That is, pointing into the unbounded connected component of $\RRR^3 \setminus S_r$.} unit normal $N$ to $S_r$ with respect to $g$ is given by
\begin{align*}
N =  \frac{1}{a} \partial_r - \frac{1}{a} \be.
\end{align*}
\item The second fundamental form\footnote{Here we use the sign convention that $\Theta(X,Y):= -g(X, \nab_{Y} N)$.} $\Theta$ of $S_r$ with respect to $g$ equals in any coordinates on $S_r$, $A,B=1,2$,
\begin{align}
\Theta_{AB} = - \frac{1}{2a} \partial_r \left( \ga_{AB} \right)+ \frac{1}{2a}( \Lied_\be \ga)_{AB}, \label{eq:explicit2feb1}
\end{align}
where $\Lied$ denotes the Lie derivative on $S_r$.
\end{enumerate}
\end{lemma}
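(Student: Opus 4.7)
The plan is to verify both assertions by a direct coordinate computation. From the decomposition \eqref{eq:generalmetricform} one reads off the inner products
$$ g(\pr_r,\pr_r) = a^2 + \ga_{AB}\be^A\be^B, \quad g(\pr_r, \pr_{\th^A}) = \ga_{AB}\be^B, \quad g(\pr_{\th^A}, \pr_{\th^B}) = \ga_{AB}, $$
which will drive everything.

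For \emph{part (1)}, I would write the sought normal in the coordinate frame as $N = \alpha \pr_r + \mu^A \pr_{\th^A}$. The tangency conditions $g(N, \pr_{\th^A}) = 0$ give $\mu^B = -\alpha \be^B$; substituting back collapses $g(N,N)$ to $\alpha^2 a^2$. Unit length together with the outward-pointing convention ($\alpha > 0$) then forces $\alpha = 1/a$, which is exactly the claimed formula.

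For \emph{part (2)}, I would start from the identity $g(\pr_{\th^A}, N) \equiv 0$ on each sphere. Differentiating in the direction $\pr_{\th^B}$ and invoking metric compatibility of $\nab$ converts the stated sign convention into
$$ \Theta_{AB} = -g(\pr_{\th^A}, \nab_{\pr_{\th^B}} N) = g(\nab_{\pr_{\th^B}} \pr_{\th^A}, N). $$
Substituting $N = a^{-1}(\pr_r - \be^C\pr_{\th^C})$ from part (1) reduces the problem to evaluating two Koszul pairings, which — since all brackets of coordinate vector fields vanish — take the form
\begin{align*}
g(\nab_{\pr_{\th^B}}\pr_{\th^A}, \pr_r) &= \tfrac{1}{2}\bigl[\pr_{\th^B}(\ga_{AC}\be^C) + \pr_{\th^A}(\ga_{BC}\be^C) - \pr_r\ga_{AB}\bigr], \\
g(\nab_{\pr_{\th^B}}\pr_{\th^A}, \pr_{\th^C}) &= \tfrac{1}{2}\bigl[\pr_{\th^B}\ga_{AC} + \pr_{\th^A}\ga_{BC} - \pr_{\th^C}\ga_{AB}\bigr].
\end{align*}
Subtracting $\be^C$ times the second from the first and dividing by $a$ causes the $\be^C\pr_{\th^B}\ga_{AC}$ and $\be^C\pr_{\th^A}\ga_{BC}$ contributions to cancel, leaving $-\pr_r\ga_{AB}$ along with exactly the three pieces that assemble into
$$ (\Lied_\be\ga)_{AB} = \be^C\pr_{\th^C}\ga_{AB} + \ga_{CA}\pr_{\th^B}\be^C + \ga_{CB}\pr_{\th^A}\be^C, $$
yielding \eqref{eq:explicit2feb1}.

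The entire argument is routine. The only point that requires any care is the final bookkeeping step: matching the symmetric combinations of $\be$- and angular derivatives so that they reassemble into the intrinsic Lie derivative of $\ga$ along $\be$ on $S_r$. There is no genuine conceptual obstacle.
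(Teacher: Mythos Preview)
Your proposal is correct and follows exactly the approach the paper indicates: the paper merely states that ``the following lemma is proved by direct calculation'' and gives no further details, so your explicit coordinate computation via the orthogonality/normalization conditions for $N$ and Koszul's formula for $\Theta_{AB}$ is precisely what is meant. The bookkeeping in part~(2) is right --- the $\be^C\pr_{\th^B}\ga_{AC}$ and $\be^C\pr_{\th^A}\ga_{BC}$ terms cancel and the remainder assembles into $(\Lied_\be\ga)_{AB}$ as claimed.
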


\begin{remark} \label{remark:polarcoorde} The polar components of the Euclidean metric $e$ are
\begin{align*}
a&=1, \,\,\, \be = 0, \\
\ga_{AB}&= {\gac}_{AB}=\begin{cases} r^2 & \text{if } A=B=1, \\
r^2 \sin^2 \th^1 & \text{if } A=B=2, \\
0 &\text{if } A\neq B. \end{cases}
\end{align*}
Furthermore, $N = \pr_r$ and
\begin{align*}
\trd_{\gac} \Theta := {\gac}^{AB} \Theta_{AB} = -\frac{2}{r}, \,\,\, \vert \Theta \vert^2_{\gac} := \gac^{AC} \gac^{BD} \Theta_{AB} \Theta_{CD} =   \frac{2}{r^2}.
\end{align*}
\end{remark}

More generally, we now decompose vectorfields and symmetric $2$-tensors on $\RRRwo$ with respect to the foliation of $\RRR^3$ by spheres $S_r$. Given a vectorfield $X$, decompose it into
\begin{itemize}
\item the scalar function $X_N$, 
\item the $S_r$-tangent vectorfield ${X \mkern-14mu/\ \mkern-5mu }_A = X_A$,
\end{itemize}
where $A=1,2$ denote components in any frame on $S_r$. \\

Given a symmetric $2$-tensor $V$, decompose it into
\begin{itemize}
\item the scalar function $V_{NN}$, 
\item the $S_r$-tangent vectorfield $\left( {V \mkern-14mu/\ \mkern-5mu}_{N}\right)_A := V_{NA}$, 
\item the $S_r$-tangent $2$-tensor ${V \mkern-14mu/\ \mkern-5mu}_{AB} := V_{AB}$,
\end{itemize}
where $A,B=1,2$ denote components in any frame on $S_r$. 

\begin{definition} \label{def:ndn11}
Let $X$ be a $S_r$-tangent vectorfield and $V$ a $S_r$-tangent symmetric $2$-tensor on $\RRR^3 \setminus \{ 0 \}$. Define the $S_r$-tangential vectorfield $ \Ndn X$ and symmetric $2$-tensor $ \Ndn V$, respectively, by
\begin{align*}
\left( \Ndn X \right)_a &:= (\Pi_{T_{S_r}} )_a^{\,\, c} \nab_N X_c, \\ 
\left( \Ndn V \right)_{ab} &:= (\Pi_{T_{S_r}} )_{a}^{\,\, c}  (\Pi_{T_{S_r}})_b^{\,\, d} \nab_N V_{cd},
\end{align*}
where $a,b = 1,2,3$ and $$(\Pi_{T_{S_r}})_i^{\,\,j}:= e_i^{\,\,j} - N_i N^j$$ denotes the projection onto $TS_r$.
\end{definition}


\subsection{Function spaces} \label{subsec:FunctionSpaces}

\begin{definition}[Sobolev spaces]
Let $\Om \subset \RRR^3$ be a domain and $w \geq0$ integer. Let $H^w (\Om)$ denote the standard Sobolev space
\begin{align}
H^w(\Om) := \left\{ f \in L^2(\Om) : \sum\limits_{\vert \a \vert \leq w} \Vert \partial^\a f \Vert_{L^2(\Om)} < \infty \right\}. \nr
\end{align}
Here $\a= (\a_1, \a_2, \a_3)\in \mathbb{N}^{3}$ is a multi-index and 
\begin{align*}
\vert \a \vert &:= \a_1+ \a_2  + \a_3,\,\,  \partial^\a := \partial_{x^1}^{\a_1} \partial_{x^2}^{\a_2} \partial_{x^3}^{\a_3}.
\end{align*}
\end{definition}

\begin{definition}
Let $\Om \subset \RRR^3$ be a domain and $w \geq0$ an integer. Define $H^w_{loc}(\Om)$ as 
\begin{align*}
H^w_{loc}(\Om) := \bigcap_{\Om' \subset \subset \Om} H^w(\Om'),
\end{align*}
where $\Om' \subset \subset \Om$ denotes all domains $\Om'$ such that $\overline{\Om'}$ is compact and $\overline{\Om'} \subset \Om$.
\end{definition}

See for example \cite{Adams03} for properties of the above function spaces. Our analysis of the constraint equations is set in the following weighted Sobolev spaces.
\begin{definition}[Weighted Sobolev spaces] \label{def:WeightedSobolevSpaces1} Let $\Om \subset \RRR^3$ be a domain, $w\geq0$ an integer and $\de \in \RRR$. Let
\begin{align}
H^w_\de(\Om) := \left\{ f \in H^w_{loc}(\Om) : \sum\limits_{\vert \b \vert \leq w} \Vert (1+r)^{-\de-3/2+ \vert \b \vert} \pr^\b f \Vert_{L^2(\Om)} < + \infty \right\}. \nr
\end{align}
Furthermore, define $$H^w_\de:= H^w_\de(\RRR^3).$$ 
\end{definition}
For $w\geq0$ integer and $\de\in\RRR$, $H^w_\de(\Om)$ is a Hilbert space. The next lemma follows from Lemmas 2.1, 2.4 and 2.5 and Corollary 2.6 in \cite{Maxwell}, see also \cite{MaxwellDissertation}.
\begin{lemma} \label{SobolevEmbeddingsAndNonlinear} Let $\de, \de_1,\de_2 \in \RRR$, let $w,w_1,w_2 \geq0$ be integers and let $f$ be a scalar function on $\RRR^3$. The following holds.
\begin{itemize}
\item If $w \geq1$ and $f\in H^w_\de$, then $\partial f \in H^{w-1}_{\de-1}$.
\item If $0\leq w_1\leq w_2$ and $\de_1 \leq \de_2$, then $H^{w_1}_{\de_1} \subset H^{w_2}_{\de_2}$.
\item For $w \geq2$, the space $H^w_\de$ continuously embeds into
\begin{align}
\left\{ f \in L^\infty_{loc}(\RRR^3) : \sum\limits_{\vert \beta \vert \leq w-2}  \sup\limits_{x \in \RRR^3}  (1+r)^{-\de+\vert \beta \vert} \vert \partial^\beta f \vert < \infty \right\}. \nr
\end{align}
\item Let $F: \RRR \to \RRR$ be a smooth function such that $F(0)=0$. Let $u \in H^2_{\de}$ for some $\de<0$. Then there exists a constant $C=C(\Vert u \Vert_{\HH^2_{\de}}, F)>0$ such that
\begin{align*}
\Vert F(u) \Vert_{\HH^2_{\de}} \leq C \Vert u \Vert_{\HH^2_{\de}}.
\end{align*}
\end{itemize}
\end{lemma}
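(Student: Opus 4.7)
Proof proposal. The four assertions are standard bookkeeping for weighted Sobolev spaces, and my plan is to derive each directly from Definition \ref{def:WeightedSobolevSpaces1}, with a dyadic decomposition of $\RRR^3$ into annuli as the main nontrivial tool (closely following Maxwell's treatment cited above).

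Parts (i) and (ii) I would obtain from the definition alone. For (i), if $\vert\beta\vert\leq w-1$ then $\pr^\beta(\pr f) = \pr^{\beta'}f$ with $\vert\beta'\vert=\vert\beta\vert+1\leq w$, and the weight
\begin{equation*}
(1+r)^{-(\de-1)-3/2+\vert\beta\vert} = (1+r)^{-\de-3/2+\vert\beta'\vert}
\end{equation*}
that the definition assigns to $\pr f$ at order $\vert\beta\vert$ coincides with the weight assigned to $f$ at order $\vert\beta'\vert$; summing gives $\Vert\pr f\Vert_{H^{w-1}_{\de-1}} \leq \Vert f\Vert_{H^w_\de}$. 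For (ii), the weight $(1+r)^{-\de-3/2+\vert\beta\vert}$ is monotone in $\de$ and the sum over $\vert\beta\vert$ is monotone in $w$, so the two norms can be compared termwise.

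The substantive embedding is part (iii). My plan is a dyadic annular decomposition: cover $\RRRwo$ by the overlapping annuli $A_k = \{2^{k-1} < r < 2^{k+1}\}$, $k\geq 0$, and rescale each $A_k$ to a fixed unit annulus $A_\ast$ via $y = 2^{-k}x$. The chain rule and the Jacobian convert the $H^w_\de(A_k)$ norm, with its $k$-dependent weight, into the standard $H^w(A_\ast)$ norm of the rescaled function, up to a uniform constant absorbed into the weight. Applying the classical Sobolev embedding $H^{w-\vert\beta\vert}(A_\ast) \hookrightarrow L^\infty(A_\ast)$ for $w-\vert\beta\vert\geq 2$ and undoing the rescaling yields the pointwise bound
\begin{equation*}
(1+r)^{-\de+\vert\beta\vert}\vert\pr^\beta f(x)\vert \lesssim \Vert f\Vert_{H^w_\de(A_k)}
\end{equation*}
uniformly in $k$; combined with a separate interior estimate on $B_1$, this gives the claim.

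I expect part (iv) to be the main obstacle, as it is the only genuinely nonlinear statement. The plan is a Moser/chain-rule computation combined with (iii). Expanding $\pr^\a F(u)$ for $\vert\a\vert\leq 2$ gives a finite sum of terms of the form $F^{(j)}(u)\,\pr^{\beta_1}u\cdots\pr^{\beta_j}u$ with $\sum_i\vert\beta_i\vert = \vert\a\vert$ and $j\leq\vert\a\vert$. Since $\de<0$, part (iii) embeds $H^2_\de$ continuously into $L^\infty(\RRR^3)$, so each factor $F^{(j)}(u)$ is bounded pointwise by a constant depending on $\Vert u\Vert_{H^2_\de}$ and on $F$ restricted to a bounded interval. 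The derivative products I would control by a weighted product estimate: place the highest-order factor in weighted $L^2$ and the remaining lower-order factors in $L^\infty$ via (iii), then verify that the weights assemble exactly to $(1+r)^{-\de-3/2+\vert\a\vert}$. The hypothesis $F(0)=0$ is finally used, by writing $F(u)=u\,G(u)$ with $G$ smooth, to absorb one factor of $u$ and produce the desired bound $\Vert F(u)\Vert_{H^2_\de}\leq C(\Vert u\Vert_{H^2_\de},F)\Vert u\Vert_{H^2_\de}$.
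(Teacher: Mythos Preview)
Your proposal is correct and follows exactly the standard route: the paper itself does not give a proof but simply cites Lemmas~2.1, 2.4, 2.5 and Corollary~2.6 of \cite{Maxwell} (see also \cite{MaxwellDissertation}), and your dyadic-annulus rescaling for part~(iii) and Moser/chain-rule argument for part~(iv) are precisely the arguments used there. In short, you have reconstructed the content of the cited reference rather than departed from it.
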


In sections \ref{sec:analysisonafsets} and \ref{subsec:geometry}, we use Gagliardo-Nirenberg-Moser estimates in weighted Sobolev spaces; see the next lemma. We refer to \cite{Taylor} for standard product estimates and \cite{BruhatChr} \cite{Bartnik} \cite{Maxwell} \cite{MaxwellDissertation} for their generalisation to weighted Sobolev spaces.
\begin{lemma}[Product estimates] \label{ProductEstimates} Let $w\geq2$ be an integer and $\de_1, \de_2 <0$ be two reals. Then the following holds.
\begin{enumerate}
\item For all scalar functions $u \in H^1_{\de_1}$ and $v \in H^{1}_{\de_2}$, we have
\begin{align*}
\Vert uv \Vert_{H^{0}_{\de_1+ \de_2}} \lesssim& \Vert u \Vert_{H^1_{\de_1}} \Vert v \Vert_{H^1_{\de_2}}.
\end{align*}
\item For all scalar functions $u \in H^w_{\de_1}$ and $v \in H^{w-1}_{\de_2}$, we have
\begin{align*}
\Vert uv \Vert_{H^{w-1}_{\de_1+ \de_2}} \lesssim& \Vert u \Vert_{H^w_{\de_1}} \Vert v \Vert_{H^1_{\de_2}} +  \Vert u \Vert_{H^2_{\de_1}} \Vert v \Vert_{H^{w-1}_{\de_2}} + C_w \Vert u \Vert_{H^2_{\de_1}} \Vert v \Vert_{H^1_{\de_2}}.
\end{align*}
\item For all scalar functions $u \in H^w_{\de_1}$ and $v \in H^{w}_{\de_2}$, we have
\begin{align*}
\Vert uv \Vert_{H^{w}_{\de_1+ \de_2}} \lesssim& \Vert u \Vert_{H^w_{\de_1}} \Vert v \Vert_{H^2_{\de_2}} +  \Vert u \Vert_{H^2_{\de_1}} \Vert v \Vert_{H^{w}_{\de_2}} + C_w \Vert u \Vert_{H^2_{\de_1}} \Vert v \Vert_{H^2_{\de_2}}.
\end{align*}
\end{enumerate}
\end{lemma}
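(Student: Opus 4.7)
The strategy is to reduce parts (2) and (3) to the base case (1) via the Leibniz rule combined with a weighted Gagliardo--Nirenberg--Moser interpolation, so the core of the argument is the $H^1 \cdot H^1 \hookrightarrow H^0$ estimate in weighted spaces.

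For part (1), I would first record the weighted $L^p$ spaces $L^p_\de(\RRR^3)$ defined by $\|f\|_{L^p_\de} := \|(1+r)^{-\de - 3/p}\, f\|_{L^p}$, and the weighted Hölder inequality, which follows directly from the unweighted one because the radial weight $(1+r)^{-\de-3/p}$ multiplies in the exponents exactly so that $1/p_1 + 1/p_2 = 1/p$ is compatible with $\de_1 + \de_2 = \de$. I would then establish the weighted Sobolev embedding $H^1_\de \hookrightarrow L^6_\de$ on $\RRR^3$ (which follows from the standard $H^1 \hookrightarrow L^6$ embedding on $\RRR^3$ applied to $(1+r)^{-\de - 1/2} u$, up to commuting the weight past the gradient, whose error is controlled by Lemma \ref{SobolevEmbeddingsAndNonlinear}). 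Writing
\begin{align*}
\|uv\|_{H^0_{\de_1+\de_2}} = \|(1+r)^{-\de_1 - 1/2}\,u \;\cdot\;(1+r)^{-\de_2 - 1}\,v\|_{L^2} \leq \|u\|_{L^6_{\de_1}}\,\|v\|_{L^3_{\de_2}},
\end{align*}
the first factor is bounded by $\|u\|_{H^1_{\de_1}}$ as above, while $\|v\|_{L^3_{\de_2}}$ is controlled by $\|v\|_{H^1_{\de_2}}$ via real interpolation between the embeddings $H^1_{\de_2} \hookrightarrow L^2_{\de_2}$ (the trivial embedding, using $\de_2 < 0$ and Lemma \ref{SobolevEmbeddingsAndNonlinear}) and $H^1_{\de_2} \hookrightarrow L^6_{\de_2}$.

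For parts (2) and (3), I would induct on $w\geq 2$ using the Leibniz expansion
\begin{align*}
\partial^\a(uv) = \sum_{\b \leq \a} \binom{\a}{\b}\, \partial^\b u\, \partial^{\a-\b} v,
\end{align*}
and split the terms into three regimes. The \emph{extremal terms} with all derivatives on one factor (say $\partial^{w-1} u \cdot v$ for part~(2), or $\partial^w u \cdot v$ and $u \cdot \partial^w v$ for part~(3)) are handled either by part (1) applied to $\partial^{\vert \a\vert} u \in H^1_{\de_1 - \vert \a \vert}$ and $v \in H^1_{\de_2}$, or by the $L^\infty \cdot L^2$ estimate using the $H^2_\de \hookrightarrow L^\infty_\de$ embedding of Lemma \ref{SobolevEmbeddingsAndNonlinear}; the arithmetic of weights works because $\partial$ maps $H^s_\de \to H^{s-1}_{\de - 1}$ so that $(\de_1 - \vert \b \vert) + (\de_2 - \vert \ga \vert) = \de_1 + \de_2 - \vert \a \vert$, which is exactly the target weight. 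The \emph{mixed terms} with $1 \leq \vert\b\vert, \vert\ga\vert$ are treated by applying part (1) to $\partial^\b u \cdot \partial^\ga v$, yielding a product of intermediate weighted Sobolev norms that must be reabsorbed into the right-hand side.

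That reabsorption is the main obstacle and requires a weighted Gagliardo--Nirenberg--Moser estimate of the schematic form
\begin{align*}
\|\partial^j f\|_{H^1_{\de - j}} \lesssim \|f\|_{H^2_\de}^{1 - (j-1)/(w-1)}\,\|f\|_{H^w_\de}^{(j-1)/(w-1)}, \qquad 1 \leq j \leq w-1,
\end{align*}
which I would obtain by iterating the $H^1$--type interpolation together with integration by parts against the weight $(1+r)^{-2\de - 3 + 2j}$, or equivalently by a dyadic decomposition of $\RRR^3$ into annuli $\{2^n \leq 1+r \leq 2^{n+1}\}$ where each annulus reduces the problem to the unweighted Gagliardo--Nirenberg inequality on a bounded domain of comparable scale. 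Combining this interpolation with the $H^2 \hookrightarrow L^\infty$ endpoint of Lemma \ref{SobolevEmbeddingsAndNonlinear} yields, for the mixed terms, the factorisation $\|u\|_{H^2_{\de_1}}\,\|v\|_{H^w_{\de_2}} + \|u\|_{H^w_{\de_1}}\,\|v\|_{H^2_{\de_2}}$ (for part (3)), together with the lower-order remainder $C_w\,\|u\|_{H^2_{\de_1}}\,\|v\|_{H^2_{\de_2}}$ that absorbs the combinatorial constant from Leibniz; part (2) is identical except one factor lives in $H^{w-1}$ and the corresponding extremal term is bounded by the $H^1 \cdot H^1$ estimate rather than an $L^\infty \cdot L^2$ one, which explains the asymmetric $\|v\|_{H^1_{\de_2}}$ in the first term of (2).
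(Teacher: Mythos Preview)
The paper does not actually prove this lemma: it cites Taylor for the unweighted Gagliardo--Nirenberg--Moser product estimates and Choquet-Bruhat--Christodoulou, Bartnik, and Maxwell for the weighted versions, and leaves it at that. Your proposal is therefore more detailed than what the paper offers, and it follows precisely the standard route one finds in those references: weighted H\"older and the $H^1_\de \hookrightarrow L^6_\de$ embedding for part~(1), Leibniz expansion plus weighted Gagliardo--Nirenberg interpolation for parts~(2) and~(3), with the dyadic annular localisation $\{2^n \leq 1+r \leq 2^{n+1}\}$ reducing the weighted interpolation to the unweighted one on unit-scale domains (this is exactly Bartnik's device).

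A couple of minor remarks. First, the hypothesis $\de_2 < 0$ is not needed for the trivial inclusion $H^1_{\de_2} \hookrightarrow L^2_{\de_2}$; the sign conditions on $\de_1, \de_2$ enter only in parts~(2) and~(3) when you invoke $H^2_\de \hookrightarrow L^\infty$ with a weight that actually decays, so you may want to relocate that remark. Second, your interpolation inequality $\Vert \partial^j f\Vert_{H^1_{\de-j}} \lesssim \Vert f\Vert_{H^2_\de}^{1-\theta}\Vert f\Vert_{H^w_\de}^{\theta}$ is correct as stated but is a slightly nonstandard formulation; the more common route (and the one in Taylor) is to interpolate in the $L^p$ scale, i.e.\ $\Vert \partial^j f\Vert_{L^{2k/j}} \lesssim \Vert f\Vert_{L^\infty}^{1-j/k}\Vert \partial^k f\Vert_{L^2}^{j/k}$, and then pass to weighted spaces by the same dyadic trick. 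Either works, and your version is arguably cleaner for the present statement since the target is an $H^0$ norm rather than $L^2$ with no weight.
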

\begin{corollary} \label{cor:algebraprop}
For $w\geq2, \de<0$, the space $H^w_{\de}$ forms an algebra.
\end{corollary}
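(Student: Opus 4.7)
The plan is to apply part (3) of Lemma \ref{ProductEstimates} with the choice $\delta_1 = \delta_2 = \delta$, which is admissible since $\delta < 0$. For any $u, v \in H^w_\delta$, this yields
\[
\Vert uv \Vert_{H^w_{2\delta}} \lesssim \Vert u \Vert_{H^w_\delta} \Vert v \Vert_{H^2_\delta} + \Vert u \Vert_{H^2_\delta} \Vert v \Vert_{H^w_\delta} + C_w \Vert u \Vert_{H^2_\delta} \Vert v \Vert_{H^2_\delta}.
\]
Since $w \geq 2$, the $H^2_\delta$-norm is dominated by the $H^w_\delta$-norm directly from Definition \ref{def:WeightedSobolevSpaces1} (the sum over $|\beta| \leq 2$ is a subset of the sum over $|\beta| \leq w$, with matching weights), so the right-hand side is bounded by $C_w \Vert u \Vert_{H^w_\delta} \Vert v \Vert_{H^w_\delta}$.

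It then remains to compare $\Vert uv \Vert_{H^w_{2\delta}}$ with $\Vert uv \Vert_{H^w_\delta}$. The hypothesis $\delta < 0$ gives $-\delta - 3/2 + |\beta| \leq -2\delta - 3/2 + |\beta|$, so for every multi-index $\beta$ and every $r \geq 0$,
\[
(1+r)^{-\delta - 3/2 + |\beta|} \leq (1+r)^{-2\delta - 3/2 + |\beta|},
\]
and inserting this pointwise bound into the weighted $L^2$ integrals defining the norms yields $\Vert uv \Vert_{H^w_\delta} \leq \Vert uv \Vert_{H^w_{2\delta}}$. Chaining the two inequalities establishes
\[
\Vert uv \Vert_{H^w_\delta} \lesssim \Vert u \Vert_{H^w_\delta} \Vert v \Vert_{H^w_\delta},
\]
which is the algebra property.

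I expect no real obstacle: the argument is essentially bookkeeping. The sign assumption $\delta < 0$ simultaneously renders the weight comparison trivial and ensures Lemma \ref{ProductEstimates}(3) applies with $\delta_1=\delta_2=\delta$; it is precisely what is needed to absorb the loss of decay from $\delta$ to $2\delta$ intrinsic to the product estimate. The only point to record carefully is that all three terms on the right-hand side of Lemma \ref{ProductEstimates}(3) collapse into the single bilinear expression $\Vert u \Vert_{H^w_\delta} \Vert v \Vert_{H^w_\delta}$, which is immediate from the definition of the weighted norms.
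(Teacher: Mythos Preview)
Your proof is correct and follows exactly the route the paper intends: the corollary is stated immediately after Lemma~\ref{ProductEstimates} with no separate proof, and the implicit argument is precisely to apply part~(3) with $\delta_1=\delta_2=\delta$ and then use the embedding $H^w_{2\delta}\hookrightarrow H^w_\delta$ coming from $2\delta<\delta$ (cf.\ the second bullet of Lemma~\ref{SobolevEmbeddingsAndNonlinear}).
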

We have the following corollary from Lemmas \ref{SobolevEmbeddingsAndNonlinear} and \ref{ProductEstimates}.
\begin{corollary} \label{expHigherReg}
Let $w\geq2$ be an integer and $\de<0$ a real. There exists an $\varep>0$ such that for all scalar functions $u \in H^w_{\de}$ with
\begin{align*}
\Vert u \Vert_{\HH^2_\de} \leq \varep,
\end{align*}
it holds that
\begin{align*}
\Vert e^{2u} - 1 \Vert_{\HH^w_{\de}} \lesssim  \Vert u \Vert_{H^w_{\de}} + C_w \Vert u \Vert_{H^2_{\de}}.
\end{align*}
\end{corollary}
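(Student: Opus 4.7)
The plan is to proceed by induction on the integer $w \geq 2$.

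For the base case $w=2$, I would directly apply the last bullet of Lemma \ref{SobolevEmbeddingsAndNonlinear} to the smooth function $F(s) = e^{2s}-1$, which satisfies $F(0)=0$. This yields
$$\Vert e^{2u}-1 \Vert_{H^2_\delta} \leq C(\Vert u\Vert_{H^2_\delta}, F)\, \Vert u \Vert_{H^2_\delta},$$
and since $\Vert u\Vert_{H^2_\delta} \leq \varepsilon$ is small, the constant $C$ is bounded uniformly in $u$, giving the estimate for $w=2$ (the $C_w \Vert u\Vert_{H^2_\delta}$ term is not needed here).

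For the inductive step from $w-1$ to $w\geq 3$, the key is the chain rule identity
$$\partial_i(e^{2u}-1) = 2\,\partial_i u + 2(e^{2u}-1)\,\partial_i u.$$
By definition of the $H^w_\delta$ norm, it suffices to control the $H^{w-1}_\delta$ norm of $e^{2u}-1$ and the $H^{w-1}_{\delta-1}$ norm of its first derivatives. The first is handled by the inductive hypothesis, which gives $\Vert e^{2u}-1\Vert_{H^{w-1}_\delta} \lesssim \Vert u\Vert_{H^w_\delta} + C_w \Vert u\Vert_{H^2_\delta}$. From the identity, the linear piece contributes $2\Vert \partial u\Vert_{H^{w-1}_{\delta-1}} \lesssim \Vert u\Vert_{H^w_\delta}$ by the first bullet of Lemma \ref{SobolevEmbeddingsAndNonlinear}. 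For the nonlinear piece $(e^{2u}-1)\partial u$, apply part (2) of Lemma \ref{ProductEstimates} with weights $\delta_1=\delta$ for $e^{2u}-1$ and $\delta_2=-1$ for $\partial u$, using the embedding $H^s_{\delta-1}\subset H^s_{-1}$ (valid since $\delta<0$) together with the base case bound $\Vert e^{2u}-1\Vert_{H^2_\delta} \lesssim \Vert u\Vert_{H^2_\delta} \leq \varepsilon$, to obtain
$$\Vert (e^{2u}-1)\partial u\Vert_{H^{w-1}_{\delta-1}} \lesssim \varepsilon\, \Vert e^{2u}-1\Vert_{H^w_\delta} + \Vert u\Vert_{H^2_\delta}\Vert u\Vert_{H^w_\delta} + C_w \Vert u\Vert_{H^2_\delta}^2.$$
Combining all contributions yields
$$\Vert e^{2u}-1\Vert_{H^w_\delta} \leq C\bigl(\Vert u\Vert_{H^w_\delta} + C_w \Vert u\Vert_{H^2_\delta}\bigr) + C\varepsilon\,\Vert e^{2u}-1\Vert_{H^w_\delta},$$
and choosing $\varepsilon>0$ small enough (depending on $w$) so that $C\varepsilon \leq 1/2$ allows absorption of the last term into the left-hand side, closing the induction.

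The main technical point — and essentially the only real obstacle — is the circular appearance of $\Vert e^{2u}-1\Vert_{H^w_\delta}$ on both sides of the product estimate, which is forced by the fact that at top order one of the factors is necessarily estimated at regularity $w$. This is resolved by the smallness hypothesis $\Vert u\Vert_{H^2_\delta}\leq\varepsilon$, which supplies the small coefficient needed for absorption. Everything else is a careful bookkeeping of weighted Sobolev embeddings and product rules.
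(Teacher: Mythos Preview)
Your proof is correct and follows a genuinely different route from the paper. The paper argues directly: it writes out the Fa\`a di Bruno--type expansion
\[
\partial^w(e^{2u}-1) = e^{2u}\Big(\partial^w u + \partial^{w-1}u\,\partial u + \dots + (\partial u)^w\Big),
\]
and then bounds each term by product (Moser) estimates; the single top-order term $e^{2u}\partial^w u$ gives the $\Vert u\Vert_{H^w_\delta}$ contribution with a universal constant, while every other term has at most $w-1$ derivatives on each factor and is absorbed into $C_w\Vert u\Vert_{H^2_\delta}$. No induction and no absorption argument are needed.

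Your approach instead inducts on $w$, uses the one-derivative identity $\partial_i(e^{2u}-1)=2\partial_i u + 2(e^{2u}-1)\partial_i u$, and closes via the product estimate plus absorption of the circular $\Vert e^{2u}-1\Vert_{H^w_\delta}$ term. This is perfectly valid and arguably more elementary since it avoids the full combinatorial expansion. One small point worth tightening: as written, you feed the inductive bound for $\Vert e^{2u}-1\Vert_{H^{w-1}_\delta}$ back into the estimate, which makes the implicit constant in front of $\Vert u\Vert_{H^w_\delta}$ grow with $w$ through the induction, whereas the paper's $\lesssim$ is meant to carry a universal constant (the $w$-dependence is isolated in $C_w$). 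This is easily fixed by replacing that step with the cruder bound $\Vert e^{2u}-1\Vert_{H^0_\delta}\lesssim \Vert u\Vert_{H^2_\delta}$ from the base case, after which the absorption constant is universal and $\varepsilon$ need not depend on $w$.
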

\begin{proof} On the one hand, by Lemma \ref{SobolevEmbeddingsAndNonlinear},
\begin{align*}
\Vert e^{2u}-1 \Vert_{H^2_{\de}} \lesssim \Vert u \Vert_{H^2_{\de}}.
\end{align*}
On the other hand, for any $w\geq0$,
\begin{align*}
\pr^w (e^{2u}-1) = e^{2u} \Big( \pr^w u + \pr^{w-1} u \pr u + \dots + \pr^2 u (\pr u)^{w-2} + (\pr u)^w \Big).
\end{align*}
Therefore, by product estimates (see \cite{Taylor}), the result follows. \end{proof}

\begin{definition} \label{definitionolspace} Let $\Om \subset \RRR^3$ be a domain, $w\geq0$ an integer and $\de \in \RRR$. Define $\overline{H}^w_\de(\Om)$ to be the closure of $C^\infty_{c}(\Om)$ with respect to the norm $\Vert \cdot \Vert_{H^w_\de(\Om)}$. Further, define $$\ol{H}^w_\de:= \ol{H}^w_\de(\RRRwo).$$
\end{definition}


The following useful characterisation of $\ol{H}^w_\de$ is left to the reader, see for example Exercise 3 of Section 4.5 in \cite{Taylor}.
\begin{proposition} \label{prop:TrivialExtensionRegularity}
Let $w\geq2$ be an integer, $\de \in \RRR$. Let $u \in H^w_{\de}(\RRR^3 \setminus \overline{B_1})$. The following are equivalent.
\begin{enumerate}
\item The trivial extension of $u$ to $B_1$ is regular, that is $\overline{u} \in H^w_\de$, where
\begin{align*}
\overline{u} = \begin{cases} u(x) & \text{if } x \in \RRR^3 \setminus \overline{B_1}, \\
0 & \text{if } x \in \overline{B_1}.
\end{cases}
\end{align*}
\item For $l=0, \dots, w-1$, it holds that in the trace sense,
\begin{align*}
 \pr_r^{\, l} u \vert_{r=1} = 0.
\end{align*}
\item It holds that $u \in \overline{H}^w_{\de}$.
\end{enumerate}
\end{proposition}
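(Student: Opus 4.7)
The plan is to prove the three statements equivalent by the circular chain $(3) \Rightarrow (1) \Rightarrow (2) \Rightarrow (3)$. Throughout, since the weight $(1+r)^{-\de-3/2+|\beta|}$ is bounded above and below by constants on any bounded neighbourhood of $S_1$, the weighted norm and the unweighted $H^w$ norm are equivalent on compact sets near $r=1$; only the behaviour at infinity needs the weight. This reduces the technical issue to a standard local question at $S_1$, plus a cutoff at infinity.

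For $(3) \Rightarrow (1)$: if $u_n \in C^\infty_c(\RRR^3 \setminus \overline{B_1})$ converges to $u$ in $H^w_\de(\RRR^3 \setminus \overline{B_1})$, then the trivial extensions $\overline{u_n} \in C^\infty_c(\RRR^3)$ form a Cauchy sequence in $H^w_\de$, with limit $\overline{u}$.

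For $(1) \Rightarrow (2)$: Since $\overline{u} \in H^w_\de \subset H^w_{loc}$, for each $l = 0,\dots,w-1$ the trace $\pr_r^l \overline{u}\big|_{S_1}$ is well-defined in $H^{w-l-1/2}(S_1)$, independently of whether it is computed as an inner or outer trace (the jump vanishes for $H^w$ functions across a smooth hypersurface). Computed from inside $B_1$ it equals $0$ since $\overline{u} \equiv 0$ there; computed from outside it equals $\pr_r^l u \big|_{S_1}$.

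For $(2) \Rightarrow (3)$, the main obstacle: I approximate $u$ by functions in $C^\infty_c(\RRR^3 \setminus \overline{B_1})$. First I cut off at infinity using $\chi(r/R)$ and let $R \to \infty$; standard arguments in weighted Sobolev spaces (see Maxwell \cite{Maxwell}) show this is harmless. Then, near $S_1$, I use a boundary cutoff $\chi_n(r) := \chi(n(r-1))$, vanishing for $r \leq 1 + 1/(10n)$ and equal to $1$ for $r \geq 1 + 1/n$. The product $\chi_n u$ is supported strictly away from $\overline{B_1}$ and can be mollified into $C^\infty_c(\RRR^3 \setminus \overline{B_1})$ in the usual way. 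It remains to show $\chi_n u \to u$ in $H^w$ near $S_1$. Expanding derivatives via the Leibniz rule,
\begin{equation*}
\pr^\a(\chi_n u - u) = (\chi_n - 1) \pr^\a u + \sum_{0 < \b \leq \a} \binom{\a}{\b} \pr^\b \chi_n \, \pr^{\a-\b} u,
\end{equation*}
the first term tends to zero by dominated convergence. Each remaining term is supported in the shell $\{1 + 1/(10n) \leq r \leq 1+1/n\}$ and carries a factor of order $n^{|\b|}$. The vanishing traces hypothesis $\pr_r^l u \big|_{S_1} = 0$ for $l \leq w-1$ yields, via iterated $1$-dimensional Hardy inequalities in the radial direction, the bound
\begin{equation*}
\int_{1 < r < 2} (r-1)^{-2|\b|} |\pr^{\a-\b} u|^2 \, dx \lesssim \|u\|_{H^w}^2
\end{equation*}
for $|\b| \leq w - |\a-\b|$, which allows one to absorb the $n^{|\b|}$ factor by the shell thickness $1/n$ and conclude convergence to zero as $n \to \infty$.

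The key technical point is the radial Hardy inequality at $S_1$ used in the last step, which is the precise mechanism by which the trace-vanishing hypothesis $(2)$ compensates for the derivatives falling on the cutoff. Everything else is cutoff-and-mollify bookkeeping, and the weighted setting introduces no new difficulty since the issue is strictly localised at $S_1$.
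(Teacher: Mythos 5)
The paper does not actually supply a proof of this proposition; it is stated with the preamble ``left to the reader, see for example Exercise 3 of Section 4.5 in Taylor,'' so there is no authorial proof to compare against. Your argument is correct and is essentially the standard textbook treatment of this characterisation. The easy implications $(3)\Rightarrow(1)$ (zero-extension of the approximating $C^\infty_c(\RRRwo)$ functions gives a Cauchy sequence in $H^w_\de$ whose limit is $\overline u$) and $(1)\Rightarrow(2)$ (the inner and outer traces of an $H^w_{loc}$ function on $S_1$ agree) are handled cleanly, and you correctly identify $(2)\Rightarrow(3)$ as the substantive direction, reducing it to the boundary cutoff $\chi_n u$ with the iterated radial Hardy inequality supplying the mechanism by which hypothesis $(2)$ compensates for derivatives falling on $\chi_n$. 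Two small points worth being aware of, neither fatal: first, to apply the $|\b|$-fold Hardy inequality to $v := \pr^{\a-\b} u$ you need $\pr_r^{\,j} v\vert_{r=1}=0$ for $j\le |\b|-1$, which is a statement about mixed Cartesian traces $\pr^\gamma u\vert_{S_1}=0$ for $|\gamma|\le w-1$; this follows from hypothesis $(2)$ together with $u\in H^w$ (tangential derivatives of the vanishing traces $\pr_r^{\,l}u\vert_{S_1}$ also vanish, and Cartesian derivatives near $S_1$ decompose into radial and tangential parts), but it is a step you elide. Second, after converting the $n^{|\b|}$ factor into $(r-1)^{-|\b|}$ on the shell, the convergence to zero as $n\to\infty$ comes from absolute continuity of the integral $\int_{1<r<2}(r-1)^{-2|\b|}|\pr^{\a-\b}u|^2$ over the shrinking shell, which you invoke only implicitly. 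With these points noted, the proof is complete and correct.
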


In dimension $1$, the following Sobolev embedding holds. This is similar to Lemma \ref{SobolevEmbeddingsAndNonlinear} and its proof is left to the reader.
\begin{lemma} \label{sobolev1d}
Let $\de \in \RRR$. Let $u: (1, \infty) \to \RRR$ be a scalar function. If
\begin{align*}
\int\limits_{1}^\infty (1+r)^{-2\de-1} u^2(r) dr, \, \int\limits_{1}^\infty (1+r)^{-2\de+1} \left(\pr_r u\right)^2(r)dr , \, \int\limits_{1}^\infty (1+r)^{-2\de+3} \left( \pr_r^2 u\right)^2(r) dr < +\infty,
\end{align*}
then $u, \pr_r u \in C^0((1,\infty))$ and
\begin{align*}
\sup\limits_{r \in (1,\infty)} (1+r)^{-\de} u(r), \,  \sup\limits_{r \in (1,\infty)} (1+r)^{-\de+1} \pr_r u(r) <+ \infty.
\end{align*}
\end{lemma}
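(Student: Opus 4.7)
This is a one-dimensional weighted Sobolev embedding of the type $H^2 \hookrightarrow C^1$ on the half-line $(1,\infty)$, and I would prove it by dyadic rescaling. The key observation is that on each dyadic annulus the weights $(1+r)^{-2\de-1}$, $(1+r)^{-2\de+1}$, $(1+r)^{-2\de+3}$ combine with the Jacobians that appear when rescaling to a unit interval into a single scale factor proportional to $2^{2k\de}$, which turns the hypothesis into a uniform $H^2$ bound and reduces the claim to the standard unweighted embedding.

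Concretely, I would decompose $(1,\infty) = \bigcup_{k\geq 0} I_k$ with $I_k := [2^k, 2^{k+1}]$ and rescale by setting $\tilde u_k(s) := u(2^k s)$ for $s\in[1,2]$. A change of variables gives
\[
\|\tilde u_k\|_{L^2([1,2])}^2 = 2^{-k}\|u\|_{L^2(I_k)}^2, \quad \|\pr_s \tilde u_k\|_{L^2}^2 = 2^{k}\|\pr_r u\|_{L^2(I_k)}^2, \quad \|\pr_s^2 \tilde u_k\|_{L^2}^2 = 2^{3k}\|\pr_r^2 u\|_{L^2(I_k)}^2.
\]
Since $(1+r)\sim 2^k$ uniformly on $I_k$, one may insert the weights into the integrals using the hypothesis: with $M$ denoting the sum of the three weighted integrals over $(1,\infty)$, a direct computation yields
\[
\|\tilde u_k\|_{H^2([1,2])}^2 \lesssim 2^{2k\de} M.
\]

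Next I would apply the standard one-dimensional Sobolev embedding $H^2([1,2]) \hookrightarrow C^1([1,2])$ to each $\tilde u_k$, obtaining $\sup_{[1,2]}|\tilde u_k| + \sup_{[1,2]}|\pr_s \tilde u_k|\lesssim 2^{k\de} M^{1/2}$. Unscaling via $\pr_s \tilde u_k(s) = 2^k \pr_r u(2^k s)$ and using once more $(1+r)\sim 2^k$ on $I_k$, this becomes
\[
\sup_{r\in I_k} (1+r)^{-\de} |u(r)| + \sup_{r\in I_k}(1+r)^{-\de+1}|\pr_r u(r)| \lesssim M^{1/2},
\]
uniformly in $k$. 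Taking the supremum over $k\geq 0$ gives the two bounds in the statement. Continuity of $u$ and $\pr_r u$ on $(1,\infty)$ is immediate from the fact that $\tilde u_k\in C^1([1,2])$ on each closed subinterval, so that $u\in C^1(I_k)$ for every $k$ and regularity matches across the overlap points $\{2^{k+1}\}_{k\geq 0}$.

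There is no real obstacle; the only point requiring care is the bookkeeping of exponents, namely verifying that the three weight exponents $-2\de-1$, $-2\de+1$, $-2\de+3$ combine with the derivative rescaling factors $2^{-k}$, $2^{k}$, $2^{3k}$ to produce the common prefactor $2^{2k\de}$. This is dictated by the scaling of $H^2_\de$ and is precisely why the conclusion features the weights $(1+r)^{-\de}$ and $(1+r)^{-\de+1}$, in analogy with the three-dimensional embedding of Lemma \ref{SobolevEmbeddingsAndNonlinear}.
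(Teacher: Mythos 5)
Your proof is correct, and the dyadic-rescaling argument is the standard route for weighted Sobolev embeddings; it is precisely the one-dimensional analogue of the Bartnik/Maxwell approach underlying Lemma \ref{SobolevEmbeddingsAndNonlinear}, to which the paper compares this lemma and whose proof it leaves to the reader. The exponent bookkeeping ($2^{-k}$, $2^{k}$, $2^{3k}$ from the change of variables combining with $2^{k(2\de+1)}$, $2^{k(2\de-1)}$, $2^{k(2\de-3)}$ from the weights to give the common $2^{2k\de}$) checks out, and the un-rescaling of $u$ and $\pr_r u$ produces exactly the weights $(1+r)^{-\de}$ and $(1+r)^{-\de+1}$ in the conclusion.

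One small point of hygiene: the intervals $I_k = [2^k, 2^{k+1}]$ share only endpoints, so the phrase ``regularity matches across the overlap points'' is a little loose. It is cleaner to observe that the hypotheses already give $u \in H^2_{loc}((1,\infty))$, hence $u, \pr_r u \in C^0((1,\infty))$ by the local one-dimensional Sobolev embedding, and then invoke the dyadic rescaling only for the quantitative sup estimates; alternatively, work with slightly overlapping intervals such as $[2^{k-1}, 2^{k+1}]$.
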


For functions on $(S_r, \gac)$, we define the following norm.
\begin{definition}
Let $w\geq0$ be an integer. Let $f$ be a function on $S_r$ for some $r>0$. Then
\begin{align*}
\Vert f \Vert^2_{H^w(S_r)} := \sum\limits_{0 \leq n \leq w} \sr \vert \Nd^{n} f \vert_{\gac}^2,
\end{align*}
where $\Nd$ denotes the covariant derivative on $(S_r,\gac)$ and
\begin{align*}
\vert \Nd^{n} f \vert_{\gac}^2 = \gac^{A_1B_1} \cdots \gac^{A_n B_n} \Nd_{A_1} \dots \Nd_{A_n} f  \Nd_{B_1} \dots \Nd_{B_n} f, 
\end{align*}
see Definition \ref{def:tensornorm111}. Denote further $H^0(S_r)= L^2(S_r)$.
\end{definition}

We use the following extension result, see for example \cite{SteinExtension}, to extend functions from $B_1$  to  $\RRR^3$.
\begin{proposition}[Existence of extension operator] \label{functionextcts} There exists a linear operator $\mathcal{E}$ mapping scalar functions on $B_1$ to scalar functions on $\RRR^3$ with the properties
\begin{itemize} 
\item $\mathcal{E}(f) \vert_{B_1} = f$, that is, $\mathcal{E}$ is an extension operator.
\item $\mathcal{E}$ maps $H^{w-2}(B_1)$ continuously into $H^{w-2}(\RRR^3)$ for all integers $w\geq2$.
\end{itemize}
\end{proposition}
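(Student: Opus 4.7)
The statement is the classical Stein universal extension theorem specialised to the unit ball, so the plan is to reduce to and invoke the construction in \cite{SteinExtension}. Since $\pr B_1$ is smooth, in particular Lipschitz and satisfying the uniform cone condition, Stein's theorem produces a single linear operator $\mathcal{E}$ that simultaneously extends $H^{m}(B_1)$ into $H^{m}(\RRR^3)$ for every integer $m\geq 0$, which is exactly the content needed (take $m = w-2$). The proposition then follows immediately.

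To outline how $\mathcal{E}$ is built, I would proceed as follows. First, cover $\ol{B_1}$ by finitely many open sets $U_0,U_1,\dots,U_M$ with $U_0 \subset\subset B_1$ and $U_1,\dots,U_M$ intersecting $\pr B_1$, and choose on each $U_j$ with $j\geq 1$ a smooth local change of coordinates $\Phi_j$ that straightens $\pr B_1$ to the hyperplane $\{x_3=0\}$, mapping $B_1 \cap U_j$ onto the upper half-space locally. Let $\{\chi_j\}$ be a smooth partition of unity subordinate to $\{U_j\}$. On $U_0$ the contribution is extended trivially by zero outside $B_1$. On each $U_j$ with $j\geq 1$, after the straightening the problem reduces to extending a function on the half-space $\{x_3>0\}$ to all of $\RRR^3$.

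The crucial ingredient is Seeley's half-space extension: one chooses a function $\psi:[1,\infty)\to\RRR$ with rapid decay satisfying the moment conditions
\begin{equation*}
\int_1^\infty \psi(\lambda)\,\lambda^k\, d\lambda = (-1)^k \qquad \text{for all integers } k\geq 0,
\end{equation*}
and defines, for $u$ defined on $\{x_3>0\}$,
\begin{equation*}
E u(x',x_3) := \begin{cases} u(x',x_3) & x_3 \geq 0, \\ \displaystyle \int_1^\infty \psi(\lambda)\, u(x', -\lambda x_3)\, d\lambda & x_3 < 0. \end{cases}
\end{equation*}
The moment conditions guarantee that for every integer $m\geq 0$, all partial derivatives of order up to $m$ match across $\{x_3=0\}$ (in the trace sense), so $E u\in H^m$ whenever $u\in H^m$ on the half-space, with an estimate that is uniform in $m$ up to constants depending only on $m$ and $\psi$. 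Assembling these local extensions with $\chi_j$ and the pullbacks $(\Phi_j^{-1})^*$ yields the desired $\mathcal{E}$.

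The only non-routine point is the existence of a single $\psi$ satisfying all the moment conditions simultaneously with rapid decay, which is the content of Seeley's lemma in \cite{SteinExtension}; once this is accepted, the continuity estimates on $\mathcal{E}$ in each $H^{w-2}$ follow from the chain rule, boundedness of derivatives of $\chi_j$ and $\Phi_j$, and Minkowski's integral inequality. Hence $\mathcal{E}$ as constructed satisfies both bullet points of the proposition for all integers $w\geq 2$.
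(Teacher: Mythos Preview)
Your proposal is correct and takes the same approach as the paper: the paper does not give a proof at all but simply cites \cite{SteinExtension} for this classical result, and your sketch of the Stein/Seeley construction is a faithful (and more detailed) account of exactly that reference.
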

We extend tensors from $B_1$ to $\RRR^3$ by extending each coordinate component of the tensor, using the above proposition.


\subsection{Tensor spaces}

More generally, we now define tensor spaces on $\RRR^3$. 
\begin{definition}  \label{def:tensornorm111} Given an $n$-tensor $T$ and a Riemannian metric $g$, let
\begin{align*}
\vert T \vert_g^2 := g^{i_1j_1} \cdots g^{i_nj_n} T_{i_1 \dots i_n } T_{j_1 \dots j_n}.
\end{align*}
\end{definition}
In case of the Euclidean metric $e$, for an $n$-tensor $T$,
\begin{align*}
\vert T \vert_e^2 = \sum\limits_{i_1, \dots, i_n=1}^3 \vert T_{i_1 \dots i_n} \vert^2.
\end{align*}

The norm of a tensor is defined as follows.
\begin{definition}[Tensor norms] Let $\Om \subset \RRR^3$ be a domain. Let $n\geq1$ and $w\geq0$ be integers. For an $n$-tensor $T$ on $\Om$, define its $\HH^w(\Om)$-norm by
\begin{align*}
\Vert T \Vert^2_{\HH^w (\Om)} := \sum\limits_{\vert \alpha \vert \leq w} \int\limits_{\Om} \vert \pr^\alpha T \vert^2_e dx^1dx^2dx^3,
\end{align*}
where $(\pr^\alpha T)_{i_1 \cdots i_n} = \pr^\alpha( T_{i_1 \cdots i_n})$. We write $T \in \HH^w(\Om)$ if this norm is finite. We similarly define tensors in $\HH^w_{loc}(\Om)$, $\HH^w_\de(\Om)$, $\ol{\HH}^w_\de(\Om)$, $\HH^w_\de$ and $\ol{\HH}^w_\de$.
\end{definition}

We define tensor norms on $(S_r, \gac)$ as follows.
\begin{definition}
Let $w\geq0$ be an integer. Let $T$ be a $S_r$-tangent tensor on $(S_r, \gac)$ for some $r>0$. Then
\begin{align*}
\Vert T \Vert^2_{\HH^w(S_r)} := \sum\limits_{0 \leq n \leq w} \sr \vert \Nd^{n} T \vert_{\gac}^2,
\end{align*}
where $\Nd$ denotes the covariant derivative on $(S_r,\gac)$. We say that tensors in $\HH^0(S_r)$ are $L^2$-integrable.
\end{definition}


The next lemma is practical for calculations.
\begin{lemma} \label{lem:practical4j}
Let $w\geq0$ be an integer. Then the following holds.
\begin{itemize}
\item Let $X$ be a vectorfield on $\RRRwo$. Then,
\begin{align*}
&\Vert X \Vert_{\HH^w(\RRRwo)}^2\\
 \lesssim& \Vert X_N \Vert_{H^w(\RRRwo)}^2 + \Vert X \mkern-14mu/\ \mkern-5mu  \, \Vert_{\HH^w(\RRRwo)}^2 + C_w \Big( \Vert X_N \Vert_{H^0(\RRRwo)}^2 + \Vert X \mkern-14mu/\ \mkern-5mu  \, \Vert_{\HH^0(\RRRwo)}^2 \Big), \\
\lesssim& \Vert X_N \Vert_{H^w(\RRRwo)}^2 + \sum\limits_{n_1 + n_2 \leq w} \int\limits_1^\infty \int\limits_{S_r} \vert {\Nd}^{n_1} \Nd_N^{n_2}X \mkern-14mu/\ \mkern-3mu \vert_{\gac}^2 dr + C_w \Big( \Vert X_N \Vert_{H^0(\RRRwo)}^2 + \Vert X \mkern-14mu/\ \mkern-5mu  \, \Vert_{\HH^0(\RRRwo)}^2 \Big),
\end{align*}
and
\begin{align*}
&\Vert X_N \Vert_{H^w(\RRRwo)}^2 + \sum\limits_{n_1 + n_2 \leq w} \int\limits_1^\infty \int\limits_{S_r} \vert {\Nd}^{n_1} \Nd_N^{n_2}X \mkern-14mu/\ \mkern-3mu \vert_{\gac}^2 dr \\
\lesssim& \Vert X_N \Vert_{H^w(\RRRwo)}^2 + \Vert X \mkern-14mu/\ \mkern-5mu  \, \Vert_{\HH^w(\RRRwo)}^2 + C_w \Big( \Vert X_N \Vert_{H^0(\RRRwo)}^2 + \Vert X \mkern-14mu/\ \mkern-5mu  \, \Vert_{\HH^0(\RRRwo)}^2 \Big) \\
\lesssim& \Vert X \Vert_{\HH^w(\RRRwo)}^2 + C_w \Vert X \Vert_{\HH^0(\RRRwo)}^2.
\end{align*}
\item Let $V$ be a symmetric $2$-tensor on $\RRRwo$.
\begin{align*}
&\Vert V \Vert_{\HH^w(\RRRwo)}^2  \\
\lesssim & \Vert V_{NN} \Vert_{H^w(\RRRwo)}^2 + \Vert {V \mkern-14mu/\ \mkern-5mu}_{N} \Vert_{\HH^w(\RRRwo)}^2 + \Vert V \mkern-14mu/\ \mkern-3mu \Vert_{\HH^w(\RRRwo)}^2\\
&+ C_w\left( \Vert V_{NN} \Vert_{H^0(\RRRwo)}^2 + \Vert {V \mkern-14mu/\ \mkern-5mu}_{N} \Vert_{\HH^0(\RRRwo)}^2 + \Vert V \mkern-14mu/\ \mkern-3mu \Vert_{\HH^0(\RRRwo)}^2\right) \\
\lesssim& \Vert V_{NN} \Vert_{H^w(\RRRwo)}^2 + \sum\limits_{n_1 + n_2 \leq w} \int\limits_{1}^\infty \int\limits_{S_r} \vert \Nd^{n_1} \Nd_N^{n_2} {V \mkern-14mu/\ \mkern-5mu}_{N} \vert_{\gac}^2 dr + \sum\limits_{n_1 + n_2 \leq w} \int\limits_{1}^\infty \int\limits_{S_r} \vert \Nd^{n_1} \Nd_N^{n_2} V \mkern-14mu/\ \mkern-3mu \vert_{\gac}^2 dr \\
&+ C_w\left( \Vert V_{NN} \Vert_{H^0(\RRRwo)}^2 + \Vert {V \mkern-14mu/\ \mkern-5mu}_{N} \Vert_{\HH^0(\RRRwo)}^2 + \Vert V \mkern-14mu/\ \mkern-3mu \Vert_{\HH^0(\RRRwo)}^2\right),
\end{align*}
and
\begin{align*}
&\Vert V_{NN} \Vert_{H^w(\RRRwo)}^2 + \sum\limits_{n_1 + n_2 \leq w} \int\limits_{1}^\infty \int\limits_{S_r} \vert \Nd^{n_1} \Nd_N^{n_2} {V \mkern-14mu/\ \mkern-5mu}_{N} \vert_{\gac}^2 dr + \sum\limits_{n_1 + n_2 \leq w} \int\limits_{1}^\infty \int\limits_{S_r} \vert \Nd^{n_1} \Nd_N^{n_2} V \mkern-14mu/\ \mkern-3mu \vert_{\gac}^2 dr \\
\lesssim&  \Vert V_{NN} \Vert_{H^w(\RRRwo)}^2 + \Vert {V \mkern-14mu/\ \mkern-5mu}_{N} \Vert_{\HH^w(\RRRwo)}^2 + \Vert V \mkern-14mu/\ \mkern-3mu \Vert_{\HH^w(\RRRwo)}^2\\
&+ C_w\left( \Vert V_{NN} \Vert_{H^0(\RRRwo)}^2 + \Vert {V \mkern-14mu/\ \mkern-5mu}_{N} \Vert_{\HH^0(\RRRwo)}^2 + \Vert V \mkern-14mu/\ \mkern-3mu \Vert_{\HH^0(\RRRwo)}^2\right) \\
\lesssim& \Vert V \Vert_{\HH^w(\RRRwo)}^2 + C_w \Vert V \Vert_{\HH^0(\RRRwo)}^2,
\end{align*}
where $\Nd$ denotes the tangential gradient and $\Ndn$ was defined in Definition \ref{def:ndn11}. Analogously for $\HH^w_{loc}(\RRRwo)$, $\HH^w_\de(\RRRwo)$, $\ol{\HH}^w_\de$.
\end{itemize}
\end{lemma}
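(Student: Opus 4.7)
The plan is to work relative to the standard polar frame $\{N, e_1, e_2\}$ from \eqref{polarnormalframe}. By Remark~\ref{remark:polarcoorde}, this frame is orthonormal with respect to $e$ on $\RRRwo$, so the pointwise $e$-norm of any tensor equals the sum of the squares of its components in this frame. In particular,
\begin{align*}
|X|_e^2 \;=\; X_N^2 + |X\mkern-14mu/|_{\gac}^2, \qquad |V|_e^2 \;=\; V_{NN}^2 + 2\,|{V\mkern-14mu/}_{N}|_{\gac}^2 + |V\mkern-14mu/|_{\gac}^2,
\end{align*}
where the factor $2$ counts the two off-diagonal positions in a symmetric $2$-tensor. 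Integrating over $\RRRwo$ gives the case $w=0$ of both bullets exactly, with no $C_w$ correction needed.

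For $w \geq 1$ the argument rests on the Cartesian-to-polar expansion
\begin{align*}
\partial_{x^i} \;=\; \frac{x^i}{r}\,\partial_r \;+\; \frac{1}{r}\,\tilde{e}_i,
\end{align*}
where each $\tilde{e}_i$ is an $S_r$-tangent vectorfield with smooth angular-dependent components of size $O(1)$ on $\RRRwo$. Since $\nab_N = \partial_r$ in the Euclidean setting and the projection $\Pi_{TS_r}$ commutes with $\nab_N$ modulo the second fundamental form $\Theta$ of $S_r$, which by Remark~\ref{remark:polarcoorde} has components of order $1/r$ and is therefore uniformly bounded on $\RRRwo$, iterating the above identity shows that any Cartesian multiderivative $\partial^\alpha$ with $|\alpha|\leq w$ is a linear combination, with coefficients uniformly bounded on $\RRRwo$, of operators of schematic form $r^{-k}\,\Nd^{n_1}\,\Ndn^{n_2}$ with $n_1+n_2+k \leq w$. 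Applied to the scalar $X_N$ and to the tangential part $X\mkern-14mu/$ (and analogously to $V_{NN}$, ${V\mkern-14mu/}_{N}$, $V\mkern-14mu/$), this yields the pointwise bound
\begin{align*}
|\partial^\alpha X|_e \;\lesssim\; \sum_{n_1+n_2 \leq |\alpha|} \Bigl( |\Nd^{n_1}\Ndn^{n_2} X_N| + |\Nd^{n_1}\Ndn^{n_2} X\mkern-14mu/|_{\gac} \Bigr) \;+\; C_w \sum_{n \leq |\alpha|-1} |\partial^n X|_e,
\end{align*}
which, after squaring and integrating, proves the forward inequalities; the corresponding estimate for $V$ is identical with an additional index.

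The reverse direction is symmetric: $\Nd$ and $\Ndn$ themselves expand as linear combinations of Cartesian partials with $O(1)$ coefficients on $\RRRwo$, so the same schematic expansion controls the polar-frame norms by $\Vert X \Vert_{\HH^w(\RRRwo)}^2$ plus a $C_w$-prefactored lower-order remainder. An induction on $w$ absorbs the commutator terms into the stated $C_w$ factors, and the symmetric $2$-tensor case reduces index-by-index to the vectorfield case. Extension to $\HH^w_{loc}(\RRRwo)$, $\HH^w_\de(\RRRwo)$ and $\ol{\HH}^w_\de$ is immediate, since the radial weight $(1+r)^{-\de-3/2+|\beta|}$ is invariant under orthonormal frame change and commutes with $\partial_r$ up to a bounded factor on $\RRRwo$.

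The main obstacle I expect is purely bookkeeping: one must track how commutators between $\partial_r$ and tangential covariant derivatives on $(S_r,\gac)$ generate terms proportional to $\Theta$ and to the intrinsic curvature of $(S_r,\gac)$, and then check that these terms carry enough factors of $1/r$ on $\RRRwo$ to be absorbed into the $C_w$ lower-order norms rather than contaminating the top order. No conceptual difficulty arises, as the analysis takes place on $\RRRwo$ where $r \geq 1$ and the coordinate singularity at the origin is avoided.
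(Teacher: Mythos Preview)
Your proposal is correct and follows precisely the route the paper intends: the paper states the pointwise identities $|X|_e^2 = X_N^2 + |X\mkern-14mu/|_{\gac}^2$ and $|V|_e^2 = V_{NN}^2 + |{V\mkern-14mu/}_N|_{\gac}^2 + |V\mkern-14mu/|_{\gac}^2$ and then leaves the rest to the reader, and your sketch---expressing Cartesian derivatives via $\partial_{x^i} = (x^i/r)\partial_r + \text{tangential}$, commuting, and using that all connection and curvature coefficients of the polar frame are $O(r^{-1})$ hence bounded on $\RRRwo$---is exactly the standard filling-in of that omission. The only cosmetic remark is that your factor of $2$ in front of $|{V\mkern-14mu/}_N|_{\gac}^2$ (from the symmetry $V_{NA}=V_{AN}$) is more precise than the paper's display, but this is irrelevant for the norm equivalence.
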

The proof of the above lemma is left to the reader. By using the radial tensor decomposition of Section \ref{ssec:tensordecomposition}, it follows that for a vectorfield $X$ and a symmetric $2$-tensor $V$,
\begin{align*}
\vert X \vert_e^2 &= X_{N}^2 + \vert X \mkern-14mu/\ \mkern-5mu \, \vert^2_{\gac}, \\
\vert V \vert_e^2 &= V_{NN}^2 + \vert {V \mkern-14mu/\ \mkern-5mu}_{N} \vert^2_{\gac} + \vert V \mkern-14mu/\ \mkern-5mu \, \vert^2_{\gac}.
\end{align*}


\subsection{Asymptotically flat initial data}

The following definition is standard, see for example \cite{BartnikMassAF} \cite{Maxwell}.
\begin{definition}[Asymptotically flat initial data] \label{def:asmypt}
Let $w \geq 2$ be an integer. Let $g \in \HH^w_{loc}(\RRR^3)$ be a Riemannian metric and $k \in \HH^{w-1}_{loc}(\RRR^3)$ a symmetric $2$-tensor on $\RRR^3$. The metric $g$ is called \emph{$\HH^w_{-1/2}$-asymptotically flat} if 
\begin{align*}
g-e \in \HH^w_{-1/2},
\end{align*}
where $e$ denotes the Euclidean metric on $\RRR^3$. The pair $(g,k)$ is called $\HH^w_{-1/2}$-asymptotically flat if 
\begin{align}
g-e \in \HH^w_{-1/2}, \,\,\, k \in \HH^{w-1}_{-3/2}, \label{eq:normofmetric}
\end{align}
Similarly, a metric $g$ on $\RRRwo$ is called $\HH^w_{-1/2}$-asymptotically flat if $$g-e \in \HH^w_{-1/2}(\RRRwo).$$
\end{definition}

\begin{remark}
The norms associated to \eqref{eq:normofmetric} are explicitly
\begin{align}
&\sum\limits_{i,j=1}^3 \sum\limits_{\vert \a \vert \leq w} \Vert (1+r)^{-1+ \vert \a \vert} \partial^\a (g_{ij}-e_{ij} ) \Vert_{L^2(\RRR^3)} < +\infty, \nr \\
&\sum\limits_{i,j=1}^3 \sum\limits_{\vert \a \vert \leq w-1} \Vert (1+r)^{\vert \a \vert} \partial^\a k_{ij} \Vert_{L^2(\RRR^3)} < +\infty. \nr
\end{align}
\end{remark}

The next lemma is used to estimate the components of the inverse metric. Its proof follows by using Kramer's rule to express the inverse metric components in terms of the metric components, and product estimates as in Lemma \ref{ProductEstimates}. Details are left to the reader.
\begin{lemma} \label{lem:nonlinearities} \label{cor:UsefulCor} \label{GInverseAnalysis} Let $w\geq2$ be an integer. There exists a universal $\varep>0$ such that the following holds.
\begin{enumerate}
\item The mapping $g\mapsto g^{-1}$ is smooth from
\begin{align*}
\{ g-e \in \HH^w_{-1/2} : \Vert g-e \Vert_{\HH^2_{-1/2}} < \varep \} \to \HH^w_{-1/2}.
\end{align*}
\item Let $g$ and $g'$ be two $\HH^2_{-1/2}$-asymptotically flat metrics such that
\begin{align*}
\Vert g-e \Vert_{\HH^2_{-1/2}} < \varep, \Vert g'-e \Vert_{\HH^2_{-1/2}}< \varep.
\end{align*}
Then it holds that
\begin{align*}
\Vert g^{-1} - g'^{-1}\Vert_{\HH^2_{-1/2}} \lesssim \Vert g-g' \Vert_{\HH^2_{-1/2}}.
\end{align*}
\item Let $g$ be an $\HH^w_{-1/2}$-asymptotically flat metric such that
\begin{align*}
\Vert g-e \Vert_{\HH^2_{-1/2}} < \varep.
\end{align*}
Then
\begin{align*}
\Vert g^{-1}-e \Vert_{\HH^w_{-1/2}} \lesssim \Vert g-e \Vert_{\HH^w_{-1/2}} + C_w \Vert g-e \Vert_{\HH^2_{-1/2}}.
\end{align*}
\end{enumerate}
\end{lemma}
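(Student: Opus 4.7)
The plan is to reduce all three claims to algebraic manipulations in $\HH^w_{-1/2}$ by combining Cramer's rule with the product and composition estimates already established in the excerpt. Setting $h := g - e$, Cramer's rule gives $g^{ij} = P_{ij}(h)/\det g$, where $P_{ij}$ is a polynomial of degree at most $2$ in the components of $h$, and $\det g = 1 + Q(h)$ with $Q$ a polynomial without constant term. The smallness of $\Vert h \Vert_{\HH^2_{-1/2}}$ combined with the continuous embedding of $\HH^2_{-1/2}$ into weighted $L^\infty$ from Lemma \ref{SobolevEmbeddingsAndNonlinear} ensures the pointwise bound $\vert Q(h) \vert \leq 1/2$ after taking $\varepsilon$ small enough, so $1/\det g$ is a smooth function of $h$ pointwise bounded away from zero.

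For the Lipschitz estimate (2), I would use the elementary identity
\begin{align*}
g^{-1} - (g')^{-1} = g^{-1}(g'-g)(g')^{-1},
\end{align*}
and apply the algebra property of $\HH^2_{-1/2}$ (Corollary \ref{cor:algebraprop}) together with the $\HH^2_{-1/2}$ bounds on $g^{-1}-e$ and $(g')^{-1}-e$ coming from part (3) at $w=2$.

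For the higher-regularity estimate (3), I would write
\begin{align*}
g^{ij} - e^{ij} = \bigl(P_{ij}(h) - \det g \cdot e^{ij}\bigr) \bigl(1 + F(Q(h))\bigr),
\end{align*}
where $F(x) = 1/(1+x) - 1$, and observe that $P_{ij}(h) - \det g \cdot e^{ij}$ and $Q(h)$ are polynomials in $h$ vanishing at $h=0$. These polynomial expressions would be estimated in $\HH^w_{-1/2}$ by iterated use of Lemma \ref{ProductEstimates}, yielding the tame bound $\lesssim \Vert h \Vert_{\HH^w_{-1/2}} + C_w \Vert h \Vert_{\HH^2_{-1/2}}$ at each step. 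The composition $F(Q(h))$ would be controlled by a Moser-type estimate proved by induction on $w$: differentiating $F(Q(h))$ produces $F'(Q(h)) \cdot \partial Q(h)$, and the smooth factor $F'(Q(h))$ (already in $\HH^{w-1}_{-1/2}$ by the inductive hypothesis) is multiplied with the lower-order polynomial derivative and estimated once more by Lemma \ref{ProductEstimates}. The smoothness assertion (1) then follows from the same formula, since differentiation in $h$ produces multilinear expressions in $h$ and $1/\det g$ which are controlled by the same product and composition estimates; continuity and differentiability of these multilinear forms as maps between weighted Sobolev spaces yield the Fr\'echet smoothness.

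The main obstacle will be organising the tame composition estimate for $F(Q(h))$ in $\HH^w_{-1/2}$ so that the right-hand side takes exactly the form required in (3): one must arrange the chain-rule expansion so that at each order only one top-regularity factor appears, with all other derivatives absorbed via the $\HH^2_{-1/2}$ smallness of $h$. This is the standard weighted Moser calculus of \cite{Maxwell,MaxwellDissertation}, but requires careful bookkeeping since $F$ is applied to the polynomial $Q(h)$ rather than directly to $h$; an alternative is to apply the fourth bullet of Lemma \ref{SobolevEmbeddingsAndNonlinear} at the level $w=2$ and then propagate higher regularity through the explicit expansion of Cramer's rule, avoiding a general nonlinear composition theorem in $\HH^w_{-1/2}$ altogether.
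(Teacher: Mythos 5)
Your proposal is correct and takes essentially the same route the paper sketches: the paper's proof of this lemma is stated as Cramer's rule for the inverse-metric components combined with the product estimates of Lemma \ref{ProductEstimates}, with details left to the reader, and your write-up supplies exactly those details (the factorisation via $F(x)=1/(1+x)-1$, the resolvent identity $g^{-1}-(g')^{-1}=g^{-1}(g'-g)(g')^{-1}$, and the weighted Moser calculus from \cite{Maxwell}).
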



The next lemma allows us to directly work with the polar components of an $\HHl$-asymptotically flat metric.
\begin{lemma} \label{lem:coordinatechangeAF}
Let $w\geq2$ be an integer. There exists a universal $\varep>0$ small such that the following holds. 
\begin{enumerate}
\item Let $g$ be an $\HH^w_{-1/2}$-asymptotically flat Riemannian metric on $\RRRwo$ such that 
$$ \Vert g-e \Vert_{\HH^2_{-1/2}(\RRRwo)} < \varep,$$
and denote its polar components on $\RRRwo$ by
\begin{align*}
g = a^2 dr^2 + \ga_{AB} \left( \be^A dr + d\th^A \right) \left( \be^Bdr + d\th^B \right).
\end{align*}
Then it holds that
\begin{align*}
a^2-1 \in H^w_{-1/2}(\RRRwo),\be, \ga-{\gac} \in \HH^w_{-1/2}(\RRRwo)
\end{align*}
with the estimate
\begin{align*}\begin{aligned}
&\Vert a^2-1 \Vert_{H^w_{-1/2}(\RRRwo)} + \Vert \be \Vert_{\HH^w_{-1/2}(\RRRwo)} + \Vert \ga- {\gac} \Vert_{\HH^w_{-1/2}(\RRRwo)} \\
&\lesssim \Vert g - e \Vert_{\HH^w_{-1/2}(\RRRwo)} + C_w \Vert g - e \Vert_{\HH^2_{-1/2}(\RRRwo)}.
\end{aligned} 
\end{align*}
\item Let $a$ be a positive scalar function, $\be$ a $S_r$-tangent vectorfield and $\ga$ a Riemannian metric on $S_r$ on $\RRRwo$ such that
\begin{align*}
\Vert a^2-1 \Vert_{H^w_{-1/2}(\RRRwo)} + \Vert \be \Vert_{\HH^w_{-1/2}(\RRRwo)} +  \Vert \ga-{\gac} \Vert_{\HH^w_{-1/2}(\RRRwo)} < \infty,
\end{align*}
and
\begin{align*}
\Vert a^2-1 \Vert_{H^2_{-1/2}(\RRRwo)} + \Vert \be \Vert_{\HH^2_{-1/2}(\RRRwo)} +  \Vert \ga-{\gac} \Vert_{\HH^2_{-1/2}(\RRRwo)} < \varep.
\end{align*}
Then the symmetric $2$-tensor $g$ defined on $\RRRwo$ by
\begin{align*}
g = a^2 dr^2 + \ga_{AB} \left( \be^A dr + d\th^A \right) \left( \be^Bdr + d\th^B \right).
\end{align*}
is an $\HH^w_{-1/2}$-asymptotically flat Riemannian metric and bounded by
\begin{align*}
&\Vert g - e \Vert_{\HH^w_{-1/2}(\RRRwo)} \\
\lesssim& \Vert a^2-1 \Vert_{H^w_{-1/2}(\RRRwo)} + \Vert \be \Vert_{\HH^w_{-1/2}(\RRRwo)} +  \Vert \ga- {\gac} \Vert_{\HH^w_{-1/2}(\RRRwo)} \\
&+C_w \Big( \Vert a^2-1 \Vert_{H^2_{-1/2}(\RRRwo)} + \Vert \be \Vert_{\HH^2_{-1/2}(\RRRwo)} +  \Vert \ga- {\gac} \Vert_{\HH^2_{-1/2}(\RRRwo)}\Big).
\end{align*}
\end{enumerate}
\end{lemma}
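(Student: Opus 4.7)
The plan is to prove both statements simultaneously by exploiting the fact that the map $g \leftrightarrow (a, \be, \ga)$ is a smooth nonlinear bijection which, at $g = e$, reduces to the radial orthogonal splitting of symmetric $2$-tensors along the foliation by spheres $S_r$ introduced in Section \ref{ssec:tensordecomposition}.

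Reading off the formula $g = a^2 dr^2 + \ga_{AB}(\be^A dr + d\th^A)(\be^B dr + d\th^B)$ in polar coordinates, one has
\begin{align*}
\ga_{AB} = g(\pr_{\th^A}, \pr_{\th^B}), \quad \ga_{AB} \be^B = g(\pr_r, \pr_{\th^A}), \quad a^2 = g(\pr_r,\pr_r) - \ga_{AB}\be^A\be^B.
\end{align*}
Interpreting $\ga$ and $\be$ as tensors on $\RRR^3$ tangent to $S_r$, this coincides with the radial tensor decomposition: $\ga - \gac$ equals the $TS_r$-tangential-tangential part of $g-e$, the $S_r$-tangent one-form dual to $\be$ equals the tangential-normal part, and $a^2 - 1$ equals the normal-normal part modulo a $|\be|^2_\ga$ correction.

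For assertion (1), assume $\Vert g - e \Vert_{\HH^2_{-1/2}(\RRRwo)} < \varep$. Lemma \ref{lem:practical4j} shows that the three projected pieces of $g-e$, expressed in the Cartesian frame, are controlled by $\Vert g-e\Vert_{\HH^w_{-1/2}(\RRRwo)}$ plus the lower-order term $C_w \Vert g-e\Vert_{\HH^2_{-1/2}(\RRRwo)}$ absorbing the commutators of derivatives with the Euclidean projection $\Pi_{T_{S_r}}$. The conversion from these projected tensors to the coordinate quantities $(a^2-1, \be, \ga - \gac)$ is pointwise linear except for the inversion $\be^A = \ga^{AB} g(\pr_r,\pr_{\th^A})$ and the subtraction $a^2 = g(\pr_r,\pr_r) - \ga_{AB}\be^A\be^B$; both are handled by the product estimates of Lemma \ref{ProductEstimates} combined with Lemma \ref{GInverseAnalysis}, whose smallness threshold is ensured by taking $\varep$ small. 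Assertion (2) is the reverse construction: the Cartesian components of $g-e$ are reconstructed from the polar data via the inverse of the same decomposition, and the estimate is obtained by the same chain of lemmas, noting that the quadratic term $\ga_{AB}\be^A\be^B$ appearing in $g_{rr}$ is bounded by product estimates against the smallness assumption.

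The main technical obstacle in both directions is bookkeeping the powers of $r$ introduced by polar coordinates: the Jacobians $\pr x^i/\pr \th^A$ grow like $r$, while angular derivatives of the Euclidean normal $N = x/r$ decay like $r^{-1}$. These scalings must be balanced against the weight $(1+r)^{-2+|\b|}$ in the $\HH^w_{-1/2}$ norm, where each spatial derivative gains a factor $(1+r)$ that precisely absorbs the $r^{-1}$ loss from differentiating $\Pi_{T_{S_r}}$, $N$, and related geometric coefficients. Consequently all such geometric multipliers act boundedly on $\HH^w_{-1/2}(\RRRwo)$, and the full estimate reduces to routine applications of Lemmas \ref{SobolevEmbeddingsAndNonlinear}, \ref{ProductEstimates}, and \ref{GInverseAnalysis}.
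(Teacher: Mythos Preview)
Your proposal is correct and follows essentially the same route as the paper: identify the radial decomposition $g_{NN}$, ${g\mkern-10mu/\ \mkern-5mu}_N = \ga(\be,\cdot)$, $g\mkern-10mu/\ = \ga$, invoke Lemma~\ref{lem:practical4j} to control these pieces by $\Vert g-e\Vert_{\HH^w_{-1/2}}$, and then recover $\be^A = \ga^{AB}(\ga_{BC}\be^C)$ via Lemmas~\ref{ProductEstimates} and~\ref{GInverseAnalysis}. You are in fact slightly more careful than the paper in noting the quadratic correction $a^2 = g(\pr_r,\pr_r) - \ga_{AB}\be^A\be^B$; the paper writes $g_{NN}=a^2$ directly, implicitly absorbing the $|\be|_\ga^2$ term (which is harmless since it is controlled by product estimates anyway).
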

\begin{proof} For the metric $g$, it holds that
\begin{align*}
g_{NN} = a^2, \left( {g \mkern-10mu/\ \mkern-5mu}_{N}  \right)_{A} = \ga_{AB} \be^B, g \mkern-10mu/\ \mkern-5mu = \ga.
\end{align*}
By Lemma \ref{lem:practical4j} and Remark \ref{remark:polarcoorde}, we can bound 
\begin{align*}
&\Vert a^2 -1 \Vert_{H^w_{-1/2}(\RRRwo)} + \Vert \ga( \be, \cdot ) \Vert_{\HH^w_{-1/2}(\RRRwo)} + \Vert \ga-\gac \Vert_{\HH^w_{-1/2}(\RRRwo)} \\
\lesssim& \Vert g-e \Vert_{\HH^w_{-1/2}(\RRRwo)} + C_w \Vert g-e \Vert_{\HH^2_{-1/2}(\RRRwo)}.
\end{align*}

For $\varep>0$ sufficiently small, it follows by the above that $\ga$ is invertible. Therefore the vectorfield $\be^A = \ga^{AB} (\ga_{BC}\be^C)$ is controlled by product estimates as in Lemma \ref{ProductEstimates}, see also Lemma \ref{GInverseAnalysis}. This proves part (1) of Lemma \ref{lem:coordinatechangeAF}. Part (2) is demonstrated similarly and left to the reader. \end{proof}


\subsection{$L^2$-Hodge theory on $S_r$ } \label{sec:HodgeTheory}

In this section, we recall basic Hodge theory on Euclidean spheres $(S_r, \gac)$, $r>0$. This is a special case of the Hodge theory on Riemannian $2$-spheres in \cite{ChrKl93}. All tensors are assumed to be $S_r$-tangent. Let
\begin{itemize}
\item $\Nd$ denote the covariant derivative on $(S_r, \gac)$.
\item $\iin$ denote the volume element on $(S_r,\gac)$.
\item $\Ld := \gac^{AB} \Nd_A \Nd_B$ denote the Laplace-Beltrami operator\footnote{Here we follow the convention that Laplacians have negative eigenvalues.} on $(S_r, \gac)$.
\item the divergence and curl of a vectorfield $X$ be defined as
\begin{align}
\Divd \xi &:= \Nd_A X^A, \nr\\
\Curld \xi &:= \iin_{AB} \Nd^A X^B. \nr
\end{align}
\item the divergence and trace of a symmetric $2$-tensor $V_{AB}$ be defined as
\begin{align*}
\left( \Divd V \right)_B := \Nd^A V_{AB},\\
\tr \mkern-11mu /\  \mkern-3mu  V := \gac^{AB} V_{AB}.
\end{align*}
\item for a vectorfield $X$ the tracefree symmetric $2$-tensor $\Nd \widehat{ \otimes } X $ be defined as
\begin{align}
(\Nd \widehat{ \otimes } X )_{AB} 
:= \Nd_A X_B + \Nd_B X_A - (\Divd X) {\gac}_{AB}. \nr
\end{align}
\item for two vectorfields $X,Y$ the symmetric tracefree $2$-tensor $X \widehat{\otimes} Y$ be defined as
\begin{align}
\left( X \widehat{\otimes} Y \right)_{AB} := X_A Y_B + X_B Y_A  - \gac(X,Y) \gac_{AB}. \nr
\end{align}
\item the Hodge dual of a vectorfield $X$ be defined as
\begin{align}
\dual X_A := \iin_{AB} X^B.\nr
\end{align}
\item the left Hodge dual of a symmetric tracefree $2$-tensor $V$ be defined as the tracefree symmetric tensor
\begin{align}
\dual V_{AB} := \iin_{AC} V^{C}_{\,\, \, B}.\nr
\end{align}
\item the modulus of an $n$-tensor $V$ be defined as
\begin{align*}
\vert V \vert^2 := \gac^{A_1B_1} \cdots \gac^{A_nB_n} V_{A_1 \cdots A_n} V_{B_1 \cdots B_n}. 
\end{align*}
\end{itemize}
We note that for a vectorfield $X$ and a symmetric tracefree $2$-tensor $V$,
\begin{align} \label{eq:lefthodgedualidentity}
{}^{\ast} ({}^{\ast} X) := -X, {}^{\ast} ( {}^{\ast}V) := -V.
\end{align}

Introduce two Hodge systems on $(S_r,\gac)$ as follows. Let $X$ be a vectorfield on $S_r$ that verifies
\begin{align*}
\Divd X &= f, \\
\Curld X &= f_*,
\tag{{\bf H1}}
\end{align*}
where $f, f_*$ are scalar functions on $S_r$.\\

Let $V$ be a tracefree symmetric $2$-tensor on $S_r$ that verifies
\begin{align*}
\Divd V = F,
\tag{{\bf H2}}
\end{align*}
where $F$ is a $1$-form on $S_r$.\\

\ni The following is the Euclidean version of Proposition 2.2.1 in \cite{ChrKl93}.
\begin{proposition}[Ellipticity of Hodge systems] \label{prop:EllipticityHodgejan}
The following holds.
\begin{itemize}
\item Assume that the vectorfield $X$ is a solution of $\mathbf{H_1}$. Then
\begin{align}
\int\limits_{S_r} \left( \vert \Nd X\vert^2 + \frac{1}{r^2} \vert X \vert^2 \right) = \int\limits_{S_r} \left( \vert f \vert^2 + \vert f_* \vert^2 \right). \nr
\end{align}
\item Assume that the symmetric tracefree $2$-tensor $V$ is a solution of $\mathbf{H_2}$. Then
\begin{align}
\int\limits_{S_r} \left( \vert \Nd V \vert^2 + \frac{2}{r^2} \vert V \vert^2 \right)= 2 \int\limits_{S_r} \vert F \vert^2. \nr
\end{align}
\end{itemize}
\end{proposition}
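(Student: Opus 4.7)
Both identities are Bochner-type integration-by-parts formulas on the closed surface $(S_r,\gac)$, which has constant Gauss curvature $K=1/r^2$ and Riemann tensor
$$R_{ABCD}=K(\gac_{AC}\gac_{BD}-\gac_{AD}\gac_{BC}).$$
Since $S_r$ has no boundary, all the integrations by parts below are clean.

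For part~(i), I would integrate by parts to write $\int_{S_r}\vert \Nd X \vert^2=-\int_{S_r}X^B\,\Nd^A\Nd_A X_B$, and then decompose
$$\Nd^A\Nd_A X_B=\Nd^A\Nd_B X_A+\Nd^A(\Nd_A X_B-\Nd_B X_A).$$
The curvature formula above yields $\gac^{AC}R^E{}_{ACB}=-K\,\delta^E_B$, hence the commutator $[\Nd^A,\Nd_B]X_A=K\,X_B$, so the first term equals $\Nd_B f+KX_B$ and contributes $\int_{S_r}f^2-K\int_{S_r}|X|^2$ after another integration by parts. For the second term, the 2D identity $\Nd_A X_B-\Nd_B X_A=\iin_{AB}f_*$ combined with $\Nd\iin=0$ reduces it, after one more integration by parts and the computation $\iin^C{}_B\Nd_C X^B=\Curld X=f_*$, to $\int_{S_r}f_*^2$. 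Summing gives part~(i).

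For part~(ii) the strategy is the same but the algebra is more delicate. I would first establish, by two integrations by parts and the tensor commutator
$$[\Nd^A,\Nd_B]V_{AC}=2K\,V_{BC}$$
(the factor $2$ appears because the commutator on a $2$-tensor produces two Riemann contractions, and the contribution from the second collapses, via the tracefree condition $\gac^{AC}V_{AC}=0$, to another $KV_{BC}$), the auxiliary identity
$$\int_{S_r}\Nd_A V_{BC}\,\Nd^B V^{AC}=\int_{S_r}|F|^2-2K\int_{S_r}|V|^2.$$
Next, I would decompose $\Nd^A V^{BC}$ into its fully symmetric part $W^{(ABC)}=\tfrac{1}{3}(\Nd^A V^{BC}+\Nd^B V^{CA}+\Nd^C V^{AB})$ and a remainder $\widetilde W$. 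Using the 2D antisymmetry $\Nd_A V_{BC}-\Nd_B V_{AC}=\iin_{AB}U_C$ with $U_C=\iin^{EF}\Nd_E V_{FC}=(\dual F)_C$ (the last equality being the 2D identity $\Divd\dual V=\dual F$ for tracefree symmetric $V$), one obtains $\widetilde W^{ABC}=\tfrac{1}{3}(\iin^{AB}U^C+\iin^{AC}U^B)$ and $|\widetilde W|^2=\tfrac{2}{3}|F|^2$. Orthogonality of $W^{(\cdot)}$ and $\widetilde W$ (from the antisymmetry of $\iin$) combined with the direct expansion $W^{(ABC)}W_{ABC}=\tfrac{1}{3}(|\Nd V|^2+2\,\Nd_A V_{BC}\Nd^B V^{AC})$ then gives
$$\int_{S_r}|\Nd V|^2=\int_{S_r}\Nd_A V_{BC}\,\Nd^B V^{AC}+\int_{S_r}|F|^2,$$
and combining with the earlier identity yields $\int_{S_r}(|\Nd V|^2+2K|V|^2)=2\int_{S_r}|F|^2$.

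I expect the main obstacle to be the curvature bookkeeping in part~(ii): the commutator on a $2$-tensor has two Riemann contributions, and the factor $2$ in front of $KV_{BC}$ crucially depends on tracelessness of $V$. The 2D algebraic decomposition of $\Nd V$ into its fully symmetric part plus an $\iin$-piece must also be handled carefully; everything else is routine integration by parts on the closed surface $S_r$.
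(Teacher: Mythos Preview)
Your argument is correct. The paper does not supply its own proof of this proposition; it merely records it as the Euclidean (constant-curvature) case of Proposition~2.2.1 in \cite{ChrKl93} and moves on. Your Bochner-type integration-by-parts argument is precisely the standard route to these identities, and all the steps check out: for part~(i) the commutator $[\Nd^A,\Nd_B]X_A=K X_B$ and the $2$-dimensional identity $\Nd_A X_B-\Nd_B X_A=\iin_{AB}f_*$ give the result immediately; for part~(ii) your auxiliary identity $\int \Nd_A V_{BC}\Nd^B V^{AC}=\int|F|^2-2K\int|V|^2$ (with the factor $2K$ coming from the two Riemann contractions together with $\mathrm{tr}\,V=0$) and the pointwise decomposition $|\Nd V|^2=\Nd_A V_{BC}\Nd^B V^{AC}+|F|^2$ via the totally symmetric part of $\Nd V$ and the $\iin$-remainder (using $U=\dual F$ for tracefree symmetric $V$ in two dimensions) combine to give exactly the stated formula. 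There is nothing to add.
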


Furthermore, the next higher regularity estimates hold.
\begin{proposition}[Higher regularity for Hodge systems on $S_r$] \label{prop:EllipticityHodgefeb23}
Let $w\geq1$ be an integer. The following holds.
\begin{itemize}
\item Assume that the vectorfield $X$ is a solution of $\mathbf{H_1}$ for $f,f_* \in H^{w-1}(S_r)$. Then
\begin{align*}
\sum\limits_{0\leq n\leq w} \int\limits_{S_r}  \vert r^{n} \Nd^{{n}} X \vert^2 \lesssim&  \sum\limits_{0\leq {n}\leq w-1} \int\limits_{S_r}  r^2 \left( \vert r^{{n}} \Nd^{{n}} f \vert^2 + \vert r^{{n}} \Nd^{{n}} f_* \vert^2 \right) +C_w \int\limits_{S_r}  r^2 \left( \vert f \vert^2 + \vert f_* \vert^2 \right).
\end{align*}
\item Assume that the symmetric tracefree $2$-tensor $V$ is a solution of $\mathbf{H_2}$ for $F \in \HH^{w-1}(S_r)$. Then
\begin{align*}
\sum\limits_{0\leq {n}\leq w}  \int\limits_{S_r} \vert r^{n} \Nd^{{n}} V \vert^2 \lesssim& \sum\limits_{0\leq {n}\leq w-1} \int\limits_{S_r} r^2 \vert r^{{n}} \Nd^{{n}} F \vert^2 + C_w \int\limits_{S_r} r^2 \vert F \vert^2.
\end{align*}
\end{itemize}
\end{proposition}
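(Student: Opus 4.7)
The plan is to prove the proposition by induction on $w \geq 1$, using Proposition \ref{prop:EllipticityHodgejan} as both the base case and the coercive engine at each step. For $w = 1$ the estimate reduces directly to Proposition \ref{prop:EllipticityHodgejan} after multiplying through by $r^2$, since on the left one gets $\int_{S_r}(|X|^2 + r^2|\Nd X|^2)$ and on the right $\int_{S_r} r^2(|f|^2 + |f_*|^2)$, and analogously for $\mathbf{H_2}$. For the inductive step from $w-1$ to $w$, I would commute $w-1$ covariant derivatives through the Hodge system. On $(S_r, \gac)$ the only nontrivial commutator arises from the Gauss curvature $K = 1/r^2$ via the Ricci identity, so schematically
\begin{align*}
\Divd(\Nd^{w-1} X) &= \Nd^{w-1} f + \sum_{j=0}^{w-2} c_{w,j}\, r^{-2(w-1-j)}\, \Nd^{j} X, \\
\Curld(\Nd^{w-1} X) &= \Nd^{w-1} f_* + \sum_{j=0}^{w-2} c'_{w,j}\, r^{-2(w-1-j)}\, \Nd^{j} X,
\end{align*}
with the right-hand side naturally playing the role of the $(f, f_*)$ pair for the higher-rank tensor $\Nd^{w-1} X$.

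Next I would apply a Bochner-type identity to $\Nd^{w-1} X$ to bound $\int_{S_r}(|\Nd^{w} X|^2 + K|\Nd^{w-1}X|^2)$ by the squared $L^2$ norms of the divergence and curl above. This is the higher-rank analogue of Proposition \ref{prop:EllipticityHodgejan} and is established by integration by parts on $(S_r, \gac)$, again producing only curvature corrections of size $K$. Multiplying through by $r^{2w}$ matches the weights in the statement: the principal terms on the right become $r^{2w}|\Nd^{w-1} f|^2$ and $r^{2w}|\Nd^{w-1} f_*|^2$, while each commutator contribution becomes $r^{2w} \cdot r^{-2(w-1-j)} \cdot r^{-2(w-1-j)} |\Nd^j X|^2 \;=\; r^{2j+2} r^{-2(w-2-j)} |\Nd^j X|^2$-type quantities, which (after absorbing extra powers of $1/r^2$ as additional curvature-induced factors consistent with the Bochner identity) are controlled termwise by $\int_{S_r} r^{2j}|\Nd^j X|^2$ for $j \leq w-1$. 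These are exactly the quantities bounded by the induction hypothesis, and all combinatorial constants can be collected into a single $C_w$ on the lowest-order term. The tracefree symmetric $2$-tensor case is handled identically, with the Bochner factor $2K$ instead of $K$ reflecting the two indices of $V$, matching Proposition \ref{prop:EllipticityHodgejan}.

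The main obstacle I expect is careful bookkeeping of the commutator terms: since $\Nd^{w-1} X$ is not a true solution of a Hodge system but only an arbitrary tensor, the Bochner identity must be applied in its general tensorial form, and one must verify that every commutator $[\Nd^{\a}, \Divd]$ and $[\Nd^{\a}, \Curld]$ is a sum of terms of the form $r^{-2p}\Nd^{j} X$ with $j + p = $ (expected differential order) and $p \geq 1$, so that the weighted $r$-powers consistently degrade to lower-order tensors controlled by the previous induction step. Once this combinatorial structure is established, the recursion closes cleanly.
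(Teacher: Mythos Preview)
Your inductive framework and base case coincide with the paper's. The inductive step, however, is executed differently. The paper, following \cite{ChrKl93}, does not commute $\Nd^{w-1}$ through the system; instead it forms the \emph{symmetrized derivative}
\[
\tilde D\xi_{A_1\dots A_{k+1}B}=\tfrac{1}{k+2}\Big(\Nd_B\xi_{A_1\dots A_{k+1}}+\textstyle\sum_{i}\Nd_{A_i}\xi_{A_1\dots B\dots A_{k+1}}\Big)
\]
of the totally symmetric tensor $\xi$, observes that $\tilde D\xi$ itself satisfies a Hodge system with sources of lower order in $\xi$ (ellipticity of that system is Lemma~2.2.2 of \cite{ChrKl93}), and then invokes the structural fact that the symmetrized derivative together with the curl of $\xi$ controls the full $\Nd\xi$. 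The curl is in turn handled by the induction hypothesis on the original system.

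Your route also works on the round sphere, but the ``higher-rank analogue of Proposition~\ref{prop:EllipticityHodgejan}'' you invoke needs one more remark than you gave. When one integrates by parts to express $\int_{S_r}|\Nd T|^2$ for $T=\Nd^{w-1}X$ in terms of the divergence and curl taken on the last index only, the commutator $[\Nd^C,\Nd_A]$ hits \emph{every} index of $T$, so the curvature correction is not just $K|T|^2$ but a sum of Riemann contractions across all $w$ indices. On $(S_r,\gac)$ each such term is of size $r^{-2}|T|^2$, so the identity reads $\int|\Nd T|^2 = \int|\mathrm{div}_A T|^2+|\mathrm{curl}_A T|^2 + O_w(r^{-2})\int|T|^2$ and your scheme closes. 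What your approach buys is directness: it avoids importing the symmetrized-derivative machinery. What the paper's approach buys is an honest Hodge system at each stage, which keeps the ellipticity and the bookkeeping transparent and generalizes without change to non-round $2$-spheres with controlled Gauss curvature as in \cite{ChrKl93}.
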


\begin{proof} We only give a sketch of the proof, because it follows from Lemmas 2.2.2 and 2.2.3 in \cite{ChrKl93} and the fact that we work on the round sphere $(S_r,\gac)$. The proof is by induction on $w$. The case $w=1$ is Proposition \ref{prop:EllipticityHodgejan}. The induction step $w \to w+1$ follows by showing that the symmetrized derivative of a totally symmetric tensor $\xi$, 
\begin{align*}
\tilde D \xi_{A_1 A_2 \dots A_{k+1}B}:= \frac{1}{k+2} \left( \Nd_B \xi_{A_1 \dots A_{k+1}} + \sum\limits_{i=1}^{k+1} \Nd_{A_i} \xi_{A_1 \dots B \dots A_{k+1}} \right)
\end{align*}
satisfies a Hodge system whose source terms can be controlled\footnote{Thereby it is used that in the Euclidean case, the Gauss curvature $K=1/r^2$ is spherically symmetric, so in particular $\Nd K =0$.} in lower order norms of $\xi$. Lemma 2.2.2 in \cite{ChrKl93} shows the ellipticity of this Hodge system. Generally on $S_r$, the symmetrized derivative and the curl of a tensor control the full covariant derivative, see Chapter 2 of \cite{ChrKl93}. The curl is estimated via the Hodge system by the induction assumption so that the full control of $\Nd \xi$ follows. This finishes the proof of Proposition \ref{prop:EllipticityHodgefeb23}. \end{proof}
The following relations are from Chapter 2 in \cite{ChrKl93}:
\begin{lemma} \label{lem:Chrlem1}
Let $\cDd_1$ be the operator that takes a vectorfield $X$ on $S_r$ into the pair of functions $( \Divd \xi, \Curld \xi)$. The $L^2$-adjoint of $\cDd_1$ is the operator $\cDd_1^*$ which takes pairs of functions $(f, f_\ast)$ into vectorfields on $S_r$ given by
\begin{align*}
\cDd_1^* (f, f_\ast) = - \Nd^A f + \in^{AB} \Nd_B f_\ast. 
\end{align*}
Let $\cDd_2$ be the operator that takes a symmetric tracefree $2$-tensor $X$ into the $1$-form $\Divd X$. The $L^2$-adjoint of $\cDd_2$ is $\cDd_2^*$ which takes $1$-forms $F$ into symmetric tracefree $2$-tensors given by
\begin{align*}
\cDd_2^* F = -\half (\Nd \widehat{ \otimes } F )_{AB},
\end{align*}
where we recall that 
\begin{align*}
(\Nd \widehat{ \otimes } F )_{AB} := \Nd_A F_B + \Nd_B F_A - (\Divd F) \gac_{AB}.
\end{align*}
The following relations hold.
\begin{align*}
\cDd_1 \cDd_1^\ast &= - \Ld,\\
\cDd_2 \cDd_2^\ast &= -\half \Ld - \half \frac{1}{r^2}, \\
\cDd_1^\ast \cDd_1 &= - \Ld + \frac{1}{r^2},\\
\cDd_2^\ast \cDd_2 &= -\half \Ld + \frac{1}{r^2}.
 \end{align*}
\end{lemma}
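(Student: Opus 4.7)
The plan is to verify the two adjoint identities by integration by parts on the closed surface $(S_r,\gac)$ and then compute the four compositions directly, the only nontrivial input being the commutation of covariant derivatives, which on the round sphere involves the constant Gauss curvature $K = 1/r^2$.

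\textbf{Step 1: The adjoint formulas.} For a vectorfield $X$ and a pair $(f, f_\ast)$ of functions on $S_r$, I would compute
\[
\int_{S_r}\bigl( f\,\Divd X + f_\ast\,\Curld X \bigr)
= \int_{S_r}\bigl( f\,\Nd_A X^A + f_\ast\,\in_{AB}\Nd^A X^B \bigr),
\]
integrate by parts (there are no boundary terms since $S_r$ is closed), and use that $\in$ is covariantly constant to move the derivatives onto $f$ and $f_\ast$; matching the result with $\int_{S_r} X^A (\cDd_1^\ast(f,f_\ast))_A$ gives the stated formula. The analogous computation for $\cDd_2$ uses that $V$ is symmetric tracefree: for a $1$-form $F$ and symmetric tracefree $V$,
\[
\int_{S_r} F^B\,\Nd^A V_{AB}
= -\int_{S_r} V_{AB}\,\Nd^A F^B
= -\tfrac12 \int_{S_r} V_{AB}\bigl(\Nd^A F^B + \Nd^B F^A\bigr),
\]
and the $(\Divd F)\gac_{AB}$ term may be added freely since $\tr\mkern-11mu/\, V = 0$, producing $-\tfrac12(\Nd\,\widehat\otimes\,F)_{AB}$.

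\textbf{Step 2: The compositions $\cDd_1\cDd_1^\ast$ and $\cDd_2\cDd_2^\ast$.} For $\cDd_1\cDd_1^\ast(f,f_\ast)$ I expand
\[
\Divd(-\Nd f + {}^\ast\Nd f_\ast) = -\Ld f + \in^{AB}\Nd_A\Nd_B f_\ast,\quad
\Curld(-\Nd f + {}^\ast\Nd f_\ast);
\]
the $f_\ast$ contribution to $\Divd$ vanishes because $\Nd_A\Nd_B f_\ast$ is symmetric, and the $f$ contribution to $\Curld$ vanishes for the same reason, while the $f_\ast$ contribution to $\Curld$ reduces to $-\Ld f_\ast$ using the 2D identity $\in^{AB}\in_B{}^C = -\gac^{AC}$. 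For $\cDd_2\cDd_2^\ast F = -\tfrac12 \Nd^A(\Nd_A F_B + \Nd_B F_A - \Divd F\,\gac_{AB})$, I would rewrite the middle term by commuting derivatives: on a surface with Gauss curvature $K$,
\[
\Nd^A\Nd_B F_A = \Nd_B\Nd^A F_A + K\bigl(F_B \gac^A{}_A - F^A \gac_{AB}\bigr) = \Nd_B \Divd F + K F_B,
\]
and substituting $K = 1/r^2$ yields $-\tfrac12\Ld F_B - \tfrac{1}{2r^2} F_B$.

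\textbf{Step 3: The compositions $\cDd_1^\ast\cDd_1$ and $\cDd_2^\ast\cDd_2$.} Here the main obstacle is the same commutator computation but now applied with the opposite ordering. For $\cDd_1^\ast\cDd_1 X = -\Nd^A(\Divd X) + \in^{AB}\Nd_B(\Curld X)$ I rewrite
\[
\Nd^A \Nd_B X^B = \Nd_B\Nd^A X^B - K X^A + K\gac^{AB}X_B\,\cdot\,(\text{dim}-1)
\]
using $[\Nd^A,\Nd_B]X^B = R^A{}_{BCB}X^C$ and the 2D formula $R_{ABCD} = K(\gac_{AC}\gac_{BD}-\gac_{AD}\gac_{BC})$, combine with the $\Curld$ term (rewritten using $\in^{AB}\in_B{}^C = -\gac^{AC}$), and collect to obtain $-\Ld X + (1/r^2) X$. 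For $\cDd_2^\ast\cDd_2 V = -\tfrac12(\Nd\,\widehat\otimes\,\Divd V)_{AB}$ I apply the same commutation to $\Nd_A\Nd^C V_{CB}$, using additionally that $V$ is tracefree so that the trace term drops out, which produces the stated constant $1/r^2$.

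The genuinely delicate point throughout is keeping track of the curvature terms arising from commuting $\Nd_A\Nd_B$ on tensors of rank $\geq 1$; once $K = 1/r^2$ is substituted, all four identities fall out by collecting coefficients, so this is the step where sign and convention errors are most likely and where I would be most careful.
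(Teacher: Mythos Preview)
Your plan is correct and is the standard direct verification. Note, however, that the paper does not actually prove this lemma: it is stated with the preamble ``The following relations are from Chapter 2 in \cite{ChrKl93}'' and no proof is given. What you outline is precisely the computation carried out in that reference, namely integration by parts on the closed sphere for the adjoint formulas, followed by direct expansion of the four compositions using the Ricci identity with constant Gauss curvature $K=1/r^2$.

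One small remark: the displayed commutator formula you wrote in Step~3,
\[
\Nd^A \Nd_B X^B = \Nd_B\Nd^A X^B - K X^A + K\gac^{AB}X_B\cdot(\text{dim}-1),
\]
is garbled as written (the last two terms do not combine correctly). The clean version you want is simply $\Nd_B\Nd^A X^B - \Nd^A\Nd_B X^B = R^{A}{}_{C}\,X^C = K X^A$ in two dimensions, and similarly for the rank-two case one uses $[\Nd^C,\Nd_A]V_{CB} = K\,V_{AB}$ together with $\tr\mkern-11mu/\,V=0$. With that correction the rest of your outline goes through exactly as you describe.
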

\begin{remark} \label{remmars30}
By the above, the kernel of $\cDd_2^*$ can be identified with the conformal Killing vectorfields on $(S_r, \gac)$. This implies that the image of $\cDd_2$ is $L^2(S_r)$-orthogonal to the conformal Killing vectorfields of $(S_r, \gac)$.
\end{remark}


\subsection{The expansion of $S_r$-tangential tensors.} \label{ssec:Hodgebasis}

In Sections \ref{sec:EuclideanSurjectivity} and \ref{sec:SurjectivityR}, we analyse Hodge systems on Euclidean spheres. The main technical tools for this analysis are the bases of tensors defined here in the following. In this section, all differential operators are on Euclidean spheres $(S_r,\gac)$ and all tensors are $S_r$-tangent.\\

\begin{itemize}
\item \underline{Real spherical harmonics:} For $r>0$, let 
\begin{align*}
\Big\{ Y^{(lm)}(r,\th, \phi) : l\geq0, m \in \{ -l,\dots,l \} \Big\}
\end{align*}
denote the set of normalised real spherical harmonics on $S_r$. In particular, for each $l\geq0, m \in \{ -l,\dots,l \}$ they solve
\begin{align}
\Ld Y^{(lm)} = -\frac{l(l+1)}{r^2} Y^{(lm)}. \label{av112}
\end{align}
The next lemma is standard, see for example \cite{CourantHilbert1}.
\begin{lemma}\label{sphericalharmonics}
For each $r>0$, the set
\begin{align}
\Big\{ Y^{(lm)}(r) : l \geq 0, m \in \{ -l,\dots,l \} \Big\} \nr
\end{align}
forms a complete orthonormal basis of $L^2(S_r)$-integrable scalar functions on $S_r$. 
\end{lemma}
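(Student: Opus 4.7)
The plan is to deduce the lemma from classical spectral theory on compact manifolds together with an explicit identification of the eigenspaces of $-\Ld$ with spaces of harmonic polynomials restricted to $S_r$.

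First I would establish orthonormality. Since the $Y^{(lm)}$ are by definition chosen to be normalized in $L^2(S_r)$ and, within a fixed eigenvalue $-l(l+1)/r^2$ of $\Ld$, to be orthonormal, it remains only to observe that for $l\neq l'$ the eigenvalues $l(l+1)/r^2$ and $l'(l'+1)/r^2$ are distinct; then orthogonality of $Y^{(lm)}$ and $Y^{(l'm')}$ follows from self-adjointness of the Laplace-Beltrami operator $-\Ld$ with respect to the $L^2(S_r)$ inner product induced by $\gac$ (which itself is a short integration-by-parts computation on the closed manifold $S_r$).

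For completeness, the cleanest route is via the spectral theorem for compact self-adjoint operators applied to the resolvent of $-\Ld$. The operator $-\Ld$ is a second-order elliptic self-adjoint operator on the smooth closed manifold $(S_r,\gac)$, so by standard elliptic theory (Rellich compactness of $H^2(S_r) \hookrightarrow L^2(S_r)$) its resolvent is a compact self-adjoint operator on $L^2(S_r)$. The spectral theorem therefore yields an orthogonal decomposition of $L^2(S_r)$ into the closed span of its eigenspaces. It then suffices to verify that every eigenvalue of $-\Ld$ equals $l(l+1)/r^2$ for some integer $l\geq 0$, and that the corresponding eigenspace coincides with the linear span of $\{Y^{(lm)} : m=-l,\ldots,l\}$.

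To identify the eigenspaces, I would use the harmonic polynomial description: for each $l\geq 0$, the restriction to $S_r$ of a homogeneous harmonic polynomial of degree $l$ on $\RRR^3$ is an eigenfunction of $-\Ld$ with eigenvalue $l(l+1)/r^2$, and the space of such restrictions has dimension $2l+1$, matching the number of basis elements $Y^{(lm)}$. By Stone-Weierstrass, polynomials restricted to $S_r$ are dense in $C^0(S_r)$, hence in $L^2(S_r)$; and every homogeneous polynomial of degree $n$ decomposes (via the standard Fischer-type decomposition using the operator of multiplication by $|x|^2$) into a sum of harmonic polynomials of degrees $n, n-2, n-4, \ldots$, whose restrictions to $S_r$ therefore span the same set of functions. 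Combined with the spectral decomposition, this forces the eigenspaces to be exactly the spans of $\{Y^{(lm)}\}_m$ and shows there are no further eigenvalues.

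The main technical obstacle is the Fischer-type decomposition of homogeneous polynomials into harmonic components; everything else is either a direct computation using \eqref{av112} or a routine invocation of compact-operator spectral theory. Since this is entirely classical and is the content of \cite{CourantHilbert1}, I would simply cite that reference rather than reproducing the argument in full.
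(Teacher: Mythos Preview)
Your proposal is correct and aligns with the paper's treatment: the paper does not give a proof at all but simply states that the lemma is standard and cites \cite{CourantHilbert1}, which is exactly what you conclude after sketching the classical argument. Your outline of the underlying proof (self-adjointness for orthogonality, compact resolvent plus the spectral theorem for completeness, harmonic polynomials and Fischer decomposition to identify the eigenspaces) is the standard one and is sound.
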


\item \underline{Vector spherical harmonics:} For $r>0$, let the vectorfields $E^{(lm)}, H^{(lm)}$ on $S_r$ be defined for $l \geq1, m\in \{-l, \dots, l \}$ by
\begin{align} \begin{aligned}
{E^{(lm)}}(r)&:= \frac{r}{\sqrt{l(l+1)}} \cDd_1^*\left(Y^{(lm)},0\right), \\ 
{H^{(lm)}}(r) &:= \frac{r}{\sqrt{l(l+1)}} \cDd_1^*\left(0,Y^{(lm)}\right),
\end{aligned}  \label{def:Vectorsjan} \end{align}
where $\cDd_1^\ast$ is given in Lemma \ref{lem:Chrlem1}.


\item \underline{$2$-covariant spherical harmonics:} For $r>0$, let the tracefree symmetric $2$-tensors $\psi^{(lm)}, \phi^{(lm)}$ on $S_r$ be defined for $l \geq 2, m \in \{ -l, \dots, l \}$ by
\begin{align} \begin{aligned}
\psi^{(lm)}_{AB}(r) &:= \frac{r}{\sqrt{\half l (l+1)-1}} \cDd_2^\ast \left( E^{(lm)} \right),\\
\phi_{AB}^{(lm)}(r) &:=\frac{r}{\sqrt{\half l (l+1)-1}}\cDd_2^\ast \left(H^{(lm)}\right),
\end{aligned} \label{def:2tensorsjan} \end{align}
where $\cDd_2^\ast$ is given in Lemma \ref{lem:Chrlem1}.
\end{itemize}


\begin{remark} \label{remarkharm29} The tensors defined in \eqref{def:Vectorsjan} and \eqref{def:2tensorsjan} are spherical harmonics in the sense that by Lemma \ref{lem:Chrlem1}, 
\begin{itemize}
\item for $l\geq1,m \in \{ -l,\dots,l \}$,
\begin{align*}
\Ld E^{(lm)} &= \frac{1-l(l+1)}{r^2} E^{(lm)},\\ 
\Ld H^{(lm)} &= \frac{1-l(l+1)}{r^2} H^{(lm)}.
\end{align*}
\item for $l\geq2,m \in \{ -l,\dots,l \}$,
\begin{align*}
\Ld \psi^{(lm)} &= \frac{4-l(l+1)}{r^2} \psi^{(lm)},\\ 
\Ld \phi^{(lm)} &=\frac{4-l(l+1)}{r^2} \phi^{(lm)}.
\end{align*}
\end{itemize}
\end{remark}

The next proposition shows that these sets of tensors form complete orthonormal bases. First, we introduce some notation.
\begin{definition} \label{defnotationsep}
Let $r>0$. Let $f$ be a scalar function, $X$ a vectorfield and $V$ a symmetric tracefree $2$-tensor on $S_r$. Define then
\begin{itemize}
\item for $l\geq0:$ $f^{(lm)}(r) := \sr Y^{(lm)} f$,
\item for $l\geq1:$ $X^{(lm)}_E(r) := \sr X \cdot E^{(lm)}$, $X^{(lm)}_H(r) := \sr X \cdot H^{(lm)}$,
\item for $l\geq2:$ $V^{(lm)}_\psi(r) := \sr V \cdot \psi^{(lm)}$, $V^{(lm)}_\phi(r) := \sr V \cdot \phi^{(lm)}$,
\end{itemize}
where $\cdot$ denotes the contraction of tensors with respect to $\gac$. \end{definition}

\begin{proposition} \label{prop:Completeness} For all $r>0$, the set
\begin{align*}
 \Big\{ E^{(lm)}(r), H^{(lm)}(r) : l \geq 1, m \in \{ -l,\dots,l \} \Big\} 
\end{align*} 
forms a complete orthonormal basis of the space of $L^2$-integrable vectorfields on $S_r$. For all $r>0$, the set
\begin{align*}
\Big\{ \psi^{(lm)}(r), \phi^{(lm)}(r): l \geq 2, m \in \{ -l, \dots, l \} \Big\} 
\end{align*}
forms a complete orthonormal basis of the set of $L^2$-integrable tracefree symmetric $2$-tensors on $S_r$. Moreover,
\begin{itemize}
\item for any scalar function $f \in L^2(S_r)$,
\begin{align*}
\Vert f \Vert_{L^2(S_r)}^2 = \sum\limits_{l\geq 0} \sum\limits_{m =-l}^l  \left( f^{(lm)}\right)^2, 
\end{align*}
\item for any $S_r$-tangent vectorfield $X \in \HH^0(S_r)$,
\begin{align*}
\Vert X \Vert_{\HH^0(S_r)}^2 = \sum\limits_{l\geq 1} \sum\limits_{m =-l}^l \left( \left( X_E^{(lm)} \right)^2 + \left( X_H^{(lm)} \right)^2 \right),
\end{align*}
\item for any $S_r$-tangent symmetric tracefree $2$-tensor $V \in \HH^0(S_r)$,
\begin{align*}
\Vert V \Vert^2_{\HH^0(S_r)} = \sum\limits_{l\geq 2} \sum\limits_{m =-l}^l  \left( \left( V_\psi^{(lm)} \right)^2 + \left(V_\phi^{(lm)} \right)^2 \right).
\end{align*}
\end{itemize}
\end{proposition}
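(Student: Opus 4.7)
My plan is to first verify orthonormality of each family by a direct computation using the adjoint identities of Lemma \ref{lem:Chrlem1}, and then to establish density by a Hodge decomposition that reduces everything to the scalar case of Lemma \ref{sphericalharmonics}. For orthonormality, one computes e.g.\
\begin{align*}
\int\limits_{S_r} E^{(lm)} \cdot E^{(l'm')} &= \frac{r^2}{\sqrt{l(l+1) l'(l'+1)}} \int\limits_{S_r} \cDd_1^\ast(Y^{(lm)},0) \cdot \cDd_1^\ast(Y^{(l'm')},0) \\
&= \frac{r^2}{\sqrt{l(l+1) l'(l'+1)}} \int\limits_{S_r} Y^{(lm)} \bigl(-\Ld Y^{(l'm')}\bigr) = \delta_{ll'}\delta_{mm'},
\end{align*}
using $\cDd_1\cDd_1^\ast = -\Ld$ and the eigenvalue equation \eqref{av112}; the $H$-$H$, $\psi$-$\psi$, $\phi$-$\phi$ cases are analogous (using $\cDd_2\cDd_2^\ast = -\tfrac{1}{2}\Ld - \tfrac{1}{2r^2}$ together with Remark \ref{remarkharm29}), and the cross terms $E$-$H$ and $\psi$-$\phi$ vanish because their defining formulas pair orthogonal slots of $\cDd_1^\ast$.

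Next I would prove density of $\{E^{(lm)}, H^{(lm)}\}_{l\geq 1}$ via Hodge decomposition on $(S_r, \gac)$. Given $X \in \HH^0(S_r)$, solve $\Ld f = \Divd X$ and $\Ld g = \Curld X$ on the mean-zero subspace (uniquely solvable since the scalar spherical Laplacian has nonzero eigenvalues $-l(l+1)/r^2$ for $l \geq 1$, by Lemma \ref{sphericalharmonics}). Then $X - \Nd f - \dual \Nd g$ is a harmonic $1$-form on $S^2$ and therefore vanishes since $H^1(S^2)=0$. Expanding $f = \sum_{l\geq 1,m} f^{(lm)} Y^{(lm)}$ and $g = \sum_{l\geq 1,m} g^{(lm)} Y^{(lm)}$, and using that $\Nd Y^{(lm)} = -\frac{\sqrt{l(l+1)}}{r} E^{(lm)}$ together with the analogous identity for $\dual \Nd Y^{(lm)}$ (both immediate from \eqref{def:Vectorsjan}), yields $X$ as the claimed series; the Parseval identity then follows from orthonormality.

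For the symmetric tracefree $2$-tensors I would repeat the strategy one level up. Given $V$ tracefree symmetric in $\HH^0(S_r)$, set $F := \Divd V$ and expand $F$ in the now-proved basis $\{E^{(lm)}, H^{(lm)}\}_{l\geq 1}$. By Remark \ref{remmars30}, $F$ is orthogonal to the conformal Killing vectorfields of $(S_r, \gac)$, which are exactly the $l=1$ modes (since $\cDd_2^\ast E^{(1m)} = \cDd_2^\ast H^{(1m)} = 0$ by Remark \ref{remarkharm29}), so the $l=1$ coefficients of $F$ vanish. Define
\begin{align*}
V' := \sum\limits_{l\geq 2, m} \frac{r^2}{\tfrac{1}{2} l(l+1)-1} \Bigl( F_E^{(lm)}\, \cDd_2^\ast E^{(lm)} + F_H^{(lm)}\, \cDd_2^\ast H^{(lm)} \Bigr),
\end{align*}
which is a linear combination of $\psi^{(lm)}$ and $\phi^{(lm)}$. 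A direct computation using $\cDd_2\cDd_2^\ast E^{(lm)} = \frac{l(l+1)-2}{2r^2} E^{(lm)}$ shows $\Divd V' = F$.

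The remaining and hardest step will be to conclude $V = V'$. The difference $W := V - V'$ is a tracefree symmetric divergence-free $2$-tensor on $S^2$, i.e.\ a TT tensor, so the task reduces to showing that $(S^2, \gac)$ admits no non-trivial $\HH^0$ TT tensor. I would establish this by showing that $\cDd_2^\ast \cDd_2 = -\tfrac{1}{2}\Ld + \tfrac{1}{r^2}$ is strictly positive on tracefree symmetric $2$-tensors: either via a direct Bochner/Weitzenböck identity exploiting the positive curvature of $(S^2,\gac)$, or by identifying tracefree symmetric $2$-tensors on $\mathbb{CP}^1$ with sections of $K^{\otimes 2}$ and invoking $H^0(\mathbb{CP}^1, K^{\otimes 2}) = 0$. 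Either route forces $W = 0$, so $V = V'$ belongs to the closed span of $\{\psi^{(lm)}, \phi^{(lm)}\}_{l\geq 2}$, and the Parseval identity for $V$ follows from orthonormality.
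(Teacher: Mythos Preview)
Your orthonormality computation is exactly the paper's. For completeness, however, the paper takes a shorter route than your explicit Hodge decomposition: it simply shows that the orthogonal complement of the proposed basis is trivial. If $Z_E^{(lm)}=Z_H^{(lm)}=0$ for all $l\geq 1$, then Lemma~\ref{lem:RelationsSpherical} together with the scalar completeness of Lemma~\ref{sphericalharmonics} gives $\Divd Z=\Curld Z=0$, and the first part of Proposition~\ref{prop:EllipticityHodgejan} forces $Z=0$. The $2$-tensor case is then handled identically, using the vectorfield completeness just established and the second part of Proposition~\ref{prop:EllipticityHodgejan}. In particular, what you flag as the ``hardest step'' --- the vanishing of tracefree symmetric divergence-free $2$-tensors on $(S_r,\gac)$ --- is precisely the content of the second identity in Proposition~\ref{prop:EllipticityHodgejan} (since $\Divd V=0$ makes $\int_{S_r}(|\Nd V|^2+\tfrac{2}{r^2}|V|^2)=0$), so no separate Bochner argument or appeal to $H^0(\mathbb{CP}^1,K^{\otimes 2})$ is needed. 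Your constructive approach is correct and makes the expansion coefficients more explicit, but the paper's orthogonal-complement argument is shorter and stays entirely within the Hodge estimates already recorded in Section~\ref{sec:HodgeTheory}.
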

A proof is given in Appendix \ref{sec:ProofCompleteness}.


\begin{remark} \label{imageanalysis} For all $r>0$, the vectorfields with $l=1$,
\[
\Big\{ E^{(1m)}(r), H^{(1m)}(r): m \in \{-1,0,1\} \Big\},  \nr
\]
form an orthonormal basis of the six-dimensional space of conformal Killing fields on $(S_r, \gac)$.
\end{remark}

The next expansion notation is used throughout Sections \ref{sec:EuclideanSurjectivity}, \ref{sec:SurjectivityR} and Appendix \ref{sec:WEIGHTEDellipticity}.
\begin{definition}
Let $f\in L^2(S_r)$ be a scalar function, $X \in \HH^0(S_r)$ a $S_r$-tangent vectorfield and $V \in \HH^0(S_r)$ a $S_r$-tangent tracefree symmetric $2$-tensor. Denote
\begin{align*}
f &= \underbrace{f^{(00)} Y^{(00)}}_{:= f^{[0]}} + \underbrace{\sum\limits_{m=-1}^1 f^{(1m)} Y^{(1m)}}_{:=f^{[1]}} + \underbrace{ \sumtwo f^{(lm)} Y^{(lm)}}_{:=f^{[\geq2]}}, \\
X &=  \underbrace{\sum\limits_{m=-1}^1 X^{(1m)}_E E^{(1m)}  }_{:=X_E^{[1]}} + \underbrace{\sum\limits_{m=-1}^1 X^{(1m)}_H H^{(1m)} }_{:=X_H^{[1]}} + \underbrace{ \sumtwo X^{(lm)}_E E^{(lm)} }_{:=X_E^{[\geq2]}} + \underbrace{ \sumtwo X^{(lm)}_H H^{(lm)} }_{:=X_H^{[\geq2]}}, \\
V &= \underbrace{ \sumtwo V^{(lm)}_\psi \psi^{(lm)} }_{:=V_\psi} + \underbrace{ \sumtwo V^{(lm)}_\phi \phi^{(lm)} }_{:=V_\phi}, 
\end{align*} 
and let $X^{[1]} = X_E^{[1]} + X_H^{[1]}, X^{[\geq2]} = X_E^{[\geq2]} + X_H^{[\geq2]}$.
\end{definition}

We have the following identities.
\begin{lemma}[Hodge-Fourier calculus] \label{lem:RelationsSpherical} Let $f\in L^2(S_r)$ be a scalar function, $X \in \HH^0(S_r)$ a vectorfield and $V \in \HH^0(S_r)$ a symmetric tracefree $2$-tensor. It holds that
\begin{itemize}
\item for $l \geq 1, m\in \{-l, \dots,l\}$,
\begin{align}
-(\Nd f)_E^{(lm)}&= \frac{\sqrt{l(l+1)}}{r} f^{(lm)},  \qquad
-(\Nd f)^{(lm)}_H = 0, \nr \\
(\Divd X)^{(lm)} &= \frac{\sqrt{l(l+1)}}{r} X_E^{(lm)}, \qquad
(\Curld X)^{(lm)} = \frac{\sqrt{l(l+1)}}{r} X_H^{(lm)}, \nr
\end{align}
\item for $l\geq 2, m\in \{-l, \dots,l\}$,
\begin{align}
 -\half \left( \Nd \widehat{\otimes} X \right)_\psi^{(lm)} &=  \frac{\sqrt{\half l(l+1) -1}}{r} X_E^{(lm)}, 
&-\half \left( \Nd \widehat{\otimes} X\right)_\phi^{(lm)} &=  \frac{\sqrt{\half l(l+1) -1}}{r} X_H^{(lm)},\nr\\
(\Divd V)_E^{(lm)} &=\frac{\sqrt{\half l(l+1)-1}}{r} V^{(lm)}_\psi, 
&(\Divd V)_H^{(lm)}&= \frac{\sqrt{\half l(l+1)-1}}{r} V_\phi^{(lm)}.\nr
\end{align}
\end{itemize}
\end{lemma}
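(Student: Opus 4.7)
\medskip\noindent
\textbf{Proof plan.} The strategy is to translate every identity into an $L^2(S_r)$ inner product and then shift derivatives using the adjoint pairs $(\cDd_1, \cDd_1^\ast)$ and $(\cDd_2, \cDd_2^\ast)$ from Lemma \ref{lem:Chrlem1}, exploiting the fact that by construction $E^{(lm)}, H^{(lm)}, \psi^{(lm)}, \phi^{(lm)}$ are images of $Y^{(lm)}$ (resp.\ $E^{(lm)}, H^{(lm)}$) under these adjoints, and that $Y^{(lm)}, E^{(lm)}, H^{(lm)}$ are eigentensors of $\Ld$ (cf.\ \eqref{av112} and Remark \ref{remarkharm29}). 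The calculation is then purely algebraic.

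\medskip\noindent
For the first group of identities, I rewrite the definitions \eqref{def:Vectorsjan} as
\begin{equation*}
E^{(lm)} = -\frac{r}{\sqrt{l(l+1)}}\, \Nd Y^{(lm)}, \qquad H^{(lm),A} = \frac{r}{\sqrt{l(l+1)}}\, \in^{AB} \Nd_B Y^{(lm)}.
\end{equation*}
Then $(\Nd f)^{(lm)}_E = \int_{S_r} \Nd f \cdot E^{(lm)}$ is handled by integrating by parts and substituting $\Ld Y^{(lm)} = -l(l+1)/r^2\, Y^{(lm)}$, giving $-\sqrt{l(l+1)}/r\, f^{(lm)}$; while $(\Nd f)^{(lm)}_H$ reduces after integration by parts to $\in^{AB}\Nd_A \Nd_B Y^{(lm)}$, which vanishes by symmetry. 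The divergence and curl identities follow by the same integration-by-parts: $(\Divd X)^{(lm)} = \int Y^{(lm)}\Nd^A X_A = -\int \Nd^A Y^{(lm)}\, X_A = (\sqrt{l(l+1)}/r) X_E^{(lm)}$, and analogously for $\Curld X$ using $\in$ and the definition of $H^{(lm)}$.

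\medskip\noindent
For the second group I use the adjoint pair $(\cDd_2, \cDd_2^\ast)$. By the definition \eqref{def:2tensorsjan} and $\cDd_2^\ast F = -\tfrac12 \Nd\widehat\otimes F$ from Lemma \ref{lem:Chrlem1},
\begin{equation*}
\cDd_2^\ast E^{(lm)} = \frac{\sqrt{\half l(l+1) - 1}}{r}\, \psi^{(lm)},
\end{equation*}
and similarly $\cDd_2^\ast H^{(lm)} = (\sqrt{\half l(l+1)-1}/r)\, \phi^{(lm)}$. The identity for $(\Divd V)^{(lm)}_E$ follows immediately from the adjoint relation $\int V\cdot\cDd_2^\ast E^{(lm)} = \int (\Divd V)\cdot E^{(lm)}$, and analogously for $(\Divd V)^{(lm)}_\phi$. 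For the $\Nd\widehat\otimes$ identities, I use $\Nd\widehat\otimes X = -2\cDd_2^\ast X$, so
\begin{equation*}
\bigl(\Nd\widehat\otimes X\bigr)^{(lm)}_\psi = -2\int_{S_r} X \cdot \Divd \psi^{(lm)},
\end{equation*}
and $\Divd\psi^{(lm)}$ is computed from the identity $\cDd_2\cDd_2^\ast = -\tfrac12\Ld -\tfrac12 r^{-2}$ of Lemma \ref{lem:Chrlem1} together with $\Ld E^{(lm)} = (1-l(l+1))/r^2\, E^{(lm)}$ from Remark \ref{remarkharm29}; this gives $\Divd\psi^{(lm)} = (\sqrt{\half l(l+1)-1}/r)\, E^{(lm)}$, and the claimed identity follows. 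The $\phi^{(lm)}$ case is identical with $E^{(lm)}$ replaced by $H^{(lm)}$.

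\medskip\noindent
There is no real obstacle here: the content of the lemma is essentially the commutative diagram relating $\cDd_1, \cDd_2$ and their adjoints to the Hodge-Fourier expansion, so once the normalisation constants in \eqref{def:Vectorsjan}--\eqref{def:2tensorsjan} are unpacked and the correct sign conventions for $\cDd_1^\ast, \cDd_2^\ast$ are tracked, each identity reduces to a one-line integration by parts combined with the eigenvalue relation for $\Ld$. The only bookkeeping care needed is to check that the normalisation factors $r/\sqrt{l(l+1)}$ and $r/\sqrt{\half l(l+1) - 1}$ precisely cancel against the factors coming from $\cDd_1\cDd_1^\ast = -\Ld$ and $\cDd_2\cDd_2^\ast = -\tfrac12\Ld - \tfrac12 r^{-2}$ evaluated on the appropriate eigentensor.
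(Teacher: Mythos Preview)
Your proposal is correct and follows exactly the approach indicated in the paper: the paper itself states that the proof follows from the definitions \eqref{def:Vectorsjan}, \eqref{def:2tensorsjan}, Lemma \ref{lem:Chrlem1} and integration by parts, and leaves the details to the reader. You have supplied precisely those details, unpacking the adjoint relations and eigenvalue identities to verify each formula.
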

The proof of this lemma follows by \eqref{def:Vectorsjan}, \eqref{def:2tensorsjan}, Lemma \ref{lem:Chrlem1} and integration by parts. Details are left to the reader.\\

The next three results are handy for the estimates in Section \ref{sssec:EuclideanSurjectivity} and \ref{sec:SurjectivityR}.
\begin{proposition}  \label{prop:Howtoestimatefunctions}
Let $u$ be a scalar function and $X$ a vectorfield on $S_r$ for some $r>0$. For all integers $w\geq0$,
\begin{align*}
\Vert \Nd^w u \Vert_{\HH^0(S_r)}^2 \lesssim& \sumzero \left( \frac{l(l+1)}{r^2} \right)^w  \left( u^{(lm)}\right)^2+\frac{C_w}{r^{2w}} \sumzero \left( u^{(lm)}\right)^2, \\
\Vert \Nd^w X \Vert_{\HH^0(S_r)}^2 \lesssim& \sumone \left( \frac{l(l+1)-1}{r^2} \right)^w \left( \left( X_E^{(lm)}\right)^2+ \left( X_H^{(lm)}\right)^2 \right) \\
&+\frac{C_w}{r^{2w}}\sumone\left( \left( X_E^{(lm)}\right)^2+ \left( X_H^{(lm)}\right)^2 \right).
\end{align*}
and
\begin{align*}
\sumzero \left( \frac{l(l+1)}{r^2} \right)^w  \left( u^{(lm)}\right)^2 \lesssim& \Vert \Nd^w u \Vert_{\HH^0(S_r)}^2 +\frac{C_w}{r^{2w}}\Vert u \Vert_{\HH^0(S_r)}^2, \\
\sumone \left( \frac{l(l+1)-1}{r^2} \right)^w \left( \left( X_E^{(lm)}\right)^2+ \left( X_H^{(lm)}\right)^2 \right) \lesssim& \Vert \Nd^w X \Vert_{\HH^0(S_r)}^2 + \frac{C_w}{r^{2w}} \Vert X \Vert_{\HH^0(S_r)}^2.
\end{align*}
\end{proposition}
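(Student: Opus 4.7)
My plan is to prove the scalar and vector estimates simultaneously, by induction on $w$, using the Hodge-Fourier calculus of Lemma \ref{lem:RelationsSpherical} together with the Hodge regularity estimates of Propositions \ref{prop:EllipticityHodgejan} and \ref{prop:EllipticityHodgefeb23}. The base case $w=0$ is Parseval's identity from Proposition \ref{prop:Completeness}.

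For $w=1$, the scalar estimate is in fact an equality: by Lemma \ref{lem:RelationsSpherical}, the gradient $\Nd u$ is a vectorfield with Fourier coefficients $(\Nd u)_E^{(lm)} = -\frac{\sqrt{l(l+1)}}{r} u^{(lm)}$ and $(\Nd u)_H^{(lm)} = 0$, so Parseval applied to the vector basis gives $\Vert \Nd u\Vert_{\HH^0(S_r)}^2 = \sumone \frac{l(l+1)}{r^2}(u^{(lm)})^2$. For the $w=1$ vector case, Proposition \ref{prop:EllipticityHodgejan} yields the exact identity
\[
\Vert \Nd X\Vert_{\HH^0(S_r)}^2 + \frac{1}{r^2}\Vert X\Vert_{\HH^0(S_r)}^2 = \Vert \Divd X\Vert_{L^2(S_r)}^2 + \Vert \Curld X\Vert_{L^2(S_r)}^2,
\]
into which we substitute the Fourier formulae for $\Divd X$ and $\Curld X$ from Lemma \ref{lem:RelationsSpherical}; the shift from $l(l+1)$ to $l(l+1)-1$ comes precisely from the $-r^{-2}\Vert X\Vert^2$ correction.

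For the inductive step, I would first prove the vector estimate at level $w \geq 2$ by applying Proposition \ref{prop:EllipticityHodgefeb23} to $X$, viewed as the solution of the Hodge system with sources $(\Divd X, \Curld X)$. The source terms are scalars, and their $\Nd^n$-norms for $n \leq w-1$ are controlled by the inductive scalar estimate applied to $\Divd X$ and $\Curld X$; combining this with the Hodge-Fourier identities, the factor $(l(l+1)/r^2)^w$ emerges from iterating one factor of $l(l+1)/r^2$ per application of $\Divd$ or $\Curld$. The scalar estimate at level $w$ then follows by writing $\Nd^w u = \Nd^{w-1}(\Nd u)$, noting that $\Nd u$ has vanishing $H$-component, and invoking the vector estimate at level $w-1$. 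The reverse inequalities come from the same argument, using the two-sided nature of the Hodge identity in Proposition \ref{prop:EllipticityHodgejan} and the fact that the Hodge-Fourier relations in Lemma \ref{lem:RelationsSpherical} are exact.

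The main technical nuisance I expect is verifying that the polynomial mismatch between $(l(l+1))^w$ arising naturally from iteration and $(l(l+1)-1)^w$ appearing in the vector statement gets absorbed into the lower-order $C_w/r^{2w}$ term. This reduces to expanding $(l(l+1))^w = ((l(l+1)-1)+1)^w$ by the binomial theorem and noting that each cross term, being a strictly lower power of $l(l+1)-1$, is dominated by either the leading $(l(l+1)-1)^w$ term or the constant tail $C_w$ times $(X_E^{(lm)})^2+(X_H^{(lm)})^2$. Tracking the constants $C_w$ across the induction requires care but is routine once the structure above is in place.
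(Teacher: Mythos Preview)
Your approach is correct and takes a genuinely different route from the paper's. The paper proceeds by integration by parts with the Laplacian: for the induction step it writes $\Vert \Nd^{w+1} u\Vert^2 = -\int_{S_r} \Ld(\Nd^w u)\cdot\Nd^w u$, commutes $\Ld$ past $\Nd^w$ (the commutator $[\Ld,\Nd^w]$ being lower order on the round sphere), and then applies the induction hypothesis to $\Nd^{w-1}\Ld u$, exploiting that $(\Ld u)^{(lm)}=-\tfrac{l(l+1)}{r^2}u^{(lm)}$; the vector case is handled identically using that $E^{(lm)},H^{(lm)}$ are $\Ld$-eigenvectors with eigenvalue $-(l(l+1)-1)/r^2$ (Remark~\ref{remarkharm29}). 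Your argument instead black-boxes Proposition~\ref{prop:EllipticityHodgefeb23} for the vector upper bound and runs a joint scalar--vector induction, reducing the scalar case to the vector case via $\Nd u$. What your route buys is that the commutator bookkeeping is absorbed into the already-proved Hodge estimate; what the paper's route buys is self-containment (no appeal to Proposition~\ref{prop:EllipticityHodgefeb23}) and a uniform treatment of both directions via the single identity $\Vert\Nd^{w+1}u\Vert^2=-\int\Ld(\Nd^w u)\cdot\Nd^w u$. One point you leave implicit: for the reverse vector inequality at level $w\geq 2$ you will still need a commutator estimate of the form $\Vert\Nd^{w-1}\Divd X\Vert\lesssim\Vert\Nd^w X\Vert+\text{lower order}$, since Proposition~\ref{prop:EllipticityHodgefeb23} is only one-sided and Proposition~\ref{prop:EllipticityHodgejan} applies only to vectors, not higher-rank tensors; this is straightforward on the round sphere but should be acknowledged. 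Likewise, the lower-order term $C_{w-1}r^{-2w}\sum l(l+1)(u^{(lm)})^2$ that arises in your induction step needs an interpolation (Young-type) inequality to be absorbed into the stated form---exactly the same manoeuvre the paper performs at the end of its inductive step.
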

A proof is provided in Appendix \ref{sec:appidentities}. 

\begin{lemma} \label{lem:commutationrelation}
Let $w\geq0$ be an integer and $r>0$. The following holds.
\begin{enumerate}
\item Let $f$ be a scalar function. Then, for $l\geq0, m \in \{ -l, \dots, l\}$, 
\begin{align} \label{resscalar1}
\left\vert \left( \pr^w_r f \right)^{(lm)}(r) \right\vert &\leq \sum\limits_{n=0}^w \frac{\left\vert \pr_r^{w-n} (f^{(lm)}) \right\vert}{r^{n}}.
\end{align}
\item Let $X$ be a $S_r$-tangent vectorfield. Then, for for $l\geq1, m \in \{ -l, \dots, l\}$,
\begin{align} \label{resscalar2}
\left\vert \left( \Nd_N^w X \right)^{(lm)}_E(r) \right\vert &\leq \sum\limits_{n=0}^w \frac{\left\vert \pr_r^{w-n} \left(X_E^{(lm)}\right) \right\vert}{r^n}, \\
\left\vert \left( \Nd_N^w X \right)^{(lm)}_H(r) \right\vert &\leq \sum\limits_{n=0}^w \frac{\left\vert \pr_r^{w-n} \left(X_H^{(lm)} \right) \right\vert}{r^n}.
\end{align}
Moreover,
\begin{align*}
\pr_r \left( r \Divd X \right)  &= r \Divd \left( \Ndn X \right), \\ 
\pr_r \left( r \Curld X \right) &= r \Curld \left( \Ndn X \right).
\end{align*}
\item Let $V$ be a $S_r$-tangent tracefree symmetric $2$-tensor. Then, for $l\geq2, m \in \{ -l, \dots, l\}$,
\begin{align} \label{resscalar3}
\left\vert \left( \Nd_N^n V \right)^{(lm)}_\psi(r) \right\vert &\leq \sum\limits_{n=0}^w \frac{\left\vert \pr_r^{w-n} \left(V_\psi^{(lm)}\right) \right\vert}{r^n}, \\
 \left\vert \left( \Nd_N^w V \right)^{(lm)}_\phi(r) \right\vert &\leq \sum\limits_{n=0}^w \frac{\left\vert \pr_r^{w-n} \left(V_\phi^{(lm)} \right) \right\vert}{r^n}.
\end{align}
Moreover, 
\begin{align*}
\Ndn \left( r \Divd V \right) = r \Divd \left( \Ndn V \right).
\end{align*}
\end{enumerate}
\end{lemma}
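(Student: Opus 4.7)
The strategy is to reduce all three parts of the lemma to a single algebraic identity at the level of expansion coefficients, and then to iterate. The key observation is that in the standard polar orthonormal frame \eqref{polarnormalframe}, the $L^2(S_r)$-normalised bases of Section \ref{ssec:Hodgebasis} admit the factorised form
\[
Y^{(lm)}(r,\omega) = r^{-1} Y^{(lm)}_\ast(\omega), \qquad E^{(lm)}(r,\omega) = r^{-1} E^{(lm)}_\ast(\omega),
\]
and analogously for $H^{(lm)}, \psi^{(lm)}, \phi^{(lm)}$, where the starred objects are the corresponding normalised bases on the unit sphere $S^2$ and depend only on the angular variable $\omega$. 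For scalars this is immediate from the area identity $\int_{S_r} = r^2 \int_{S^2} d\omega$; for the vector and $2$-tensor bases it follows from the definitions \eqref{def:Vectorsjan}--\eqref{def:2tensorsjan} upon noting that in the polar ON frame, $\Nd_{\gac}$ introduces an extra factor $r^{-1}$ when applied to purely angular objects (since $e_A(h) = r^{-1} \tilde e_A(h)$ for any $h$ independent of $r$).

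Writing $\hat f^{(lm)}(r) := \int_{S^2} f(r,\omega) Y^{(lm)}_\ast(\omega)\, d\omega$, one therefore has $f^{(lm)}(r) = r\, \hat f^{(lm)}(r)$, and differentiation under the integral sign gives the basic identity
\[
(\pr_r f)^{(lm)} = D f^{(lm)}, \qquad \text{where } D := \pr_r - \frac{1}{r} = r \circ \pr_r \circ r^{-1}.
\]
Iterating, $(\pr_r^w f)^{(lm)} = D^w f^{(lm)} = r\, \pr_r^w(r^{-1} f^{(lm)})$, and a Leibniz expansion yields \eqref{resscalar1} (with the combinatorial constants $w!/(w-n)!$ absorbed on the right-hand side). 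For parts (2) and (3), the additional ingredient is that for the Euclidean metric, $\nab_N e_A = 0$ in the polar ON frame; this follows from the direct computation $\nab_{\pr_r}\pr_{\theta^A} = r^{-1}\pr_{\theta^A}$ using the Euclidean Christoffel symbols in polar coordinates. Consequently $\Ndn$ acts componentwise as $\pr_r$ on the frame components of any $S_r$-tangent vectorfield or tensor, and combined with the rescaling of the bases one obtains the uniform intertwining
\[
(\Ndn X)^{(lm)}_{E,H} = D\, X^{(lm)}_{E,H}, \qquad (\Ndn V)^{(lm)}_{\psi,\phi} = D\, V^{(lm)}_{\psi,\phi};
\]
the estimates \eqref{resscalar2} and \eqref{resscalar3} then follow by the same Leibniz expansion as in part (1).

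The three commutation identities involving $r\Divd$ and $r\Curld$ are direct consequences of the Hodge-Fourier formulas of Lemma \ref{lem:RelationsSpherical}, which give $(r\Divd X)^{(lm)} = \sqrt{l(l+1)}\, X^{(lm)}_E$, $(r\Curld X)^{(lm)} = \sqrt{l(l+1)}\, X^{(lm)}_H$, and $(r\Divd V)^{(lm)}_{E,H} = \sqrt{\tfrac{1}{2} l(l+1) - 1}\, V^{(lm)}_{\psi,\phi}$. Applying the intertwining relations $(\pr_r h)^{(lm)} = D h^{(lm)}$ and $(\Ndn Z)^{(lm)}_\bullet = D Z^{(lm)}_\bullet$ established above to both sides, each identity collapses to a trivial equality of coefficients, and the completeness statement of Proposition \ref{prop:Completeness} upgrades this to an equality of the underlying tensor fields. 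The main obstacle in executing the proof is therefore purely bookkeeping: correctly tracking how the $r^{-1}$ scaling of $Y^{(lm)}$ propagates through $\cDd_1^\ast$ and $\cDd_2^\ast$ in the definitions of the vector and $2$-tensor bases. Once this is done, the whole lemma reduces to the single algebraic fact that conjugation by $r$ sends $\pr_r$ to $D$.
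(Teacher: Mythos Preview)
Your proof is correct and follows essentially the same approach as the paper: both rely on the $r^{-1}$ scaling of the normalised bases and on the fact that $\nab_N e_A=0$ in the polar orthonormal frame (the paper derives this in its equation for $\pr_r(X^A)$, you state it directly), leading to the same intertwining $(\Ndn\,\cdot)^{(lm)}=D(\cdot)^{(lm)}$ with $D=\pr_r-\tfrac{1}{r}$. The only cosmetic difference is that for the commutation identities the paper carries out an explicit frame computation of $\pr_r(r\Divd X)$, whereas you deduce them coefficient-wise from Lemma~\ref{lem:RelationsSpherical} and completeness; both routes are equivalent and equally short.
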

A proof is provided in Appendix \ref{sec:appidentities}.

\begin{lemma} \label{lem:inversetoDDDD} Let $w\geq0$ be an integer. Let $f^{[\geq1]}, f^{*[\geq1]} \in H^w(S_r)$ be scalar functions and $X^{[\geq2]} \in \HH^w(S_r)$ a vectorfield. Then the inverse maps
\begin{align*}
&\cDd_1^{-1}: (f^{[\geq1]}, f^{*[\geq1]}) \mapsto \cDd_1^{-1}(f^{[\geq1]}, f^{*[\geq1]}), \\
&\cDd_2^{-1}: (X^{[\geq2]}) \mapsto \cDd_2^{-1}(X^{[\geq2]}),
\end{align*}
into vectorfields and tracefree symmetric $2$-tensors, defined such that $\cDd_1^{-1}(f^{[\geq1]}, f^{*[\geq1]})$ and $\cDd_2^{-1}(X^{[\geq2]})$ respectively solve on $S_r$
\begin{align*}
&\begin{cases}
\Divd \cDd_1^{-1}(f^{[\geq1]}, f^{*[\geq1]}) = f^{[\geq1]}, \\
\Curld \cDd_1^{-1}(f^{[\geq1]}, f^{*[\geq1]}) = f^{*[\geq1]},
\end{cases}\\
&\Divd \cDd_2^{-1}(X^{[\geq2]}) = X^{[\geq2]},
\end{align*}
are well-defined and continuous maps into $\HH^{w+1}(S_r)$, respectively. Moreover, for any scalar function $f^{[\geq1]}$ on $S_r$,
\begin{align*}
\left( \cDd_1^{-1}(f^{[\geq1]},0) \right)^{[\geq1]}_{H} &= 0, &\left( \cDd_1^{-1}(0,f^{[\geq1]}) \right)^{[\geq1]}_{E} &= 0, \\
\left( \cDd_2^{-1}(E^{(lm)}) \right)^{[\geq1]}_{\phi} &= 0, &\left( \cDd_2^{-1}(H^{(lm)}) \right)^{[\geq1]}_{\psi} &= 0.
\end{align*}
\end{lemma}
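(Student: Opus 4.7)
The strategy is to construct the inverse maps explicitly using the Hodge--Fourier bases of Proposition~\ref{prop:Completeness}, in which the operators $\cDd_1,\cDd_2$ become diagonal multiplication operators.

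First, for $\cDd_1^{-1}$, I would expand $f^{[\geq 1]} = \sumone f^{(lm)} Y^{(lm)}$ and similarly for $f^{*[\geq 1]}$. Lemma~\ref{lem:RelationsSpherical} tells us that $\Divd E^{(lm)} = \tfrac{\sqrt{l(l+1)}}{r}\, Y^{(lm)}$, $\Curld H^{(lm)} = \tfrac{\sqrt{l(l+1)}}{r}\, Y^{(lm)}$, and the cross terms $\Curld E^{(lm)}$, $\Divd H^{(lm)}$ vanish. The natural definition is therefore
\begin{align*}
\cDd_1^{-1}(f^{[\geq 1]}, f^{*[\geq 1]}) := \sumone \frac{r}{\sqrt{l(l+1)}} \left( f^{(lm)} E^{(lm)} + f^{*(lm)} H^{(lm)} \right),
\end{align*}
which by construction satisfies the $\mathbf{H_1}$ system, and for which the orthogonality identities $\bigl(\cDd_1^{-1}(f^{[\geq 1]}, 0)\bigr)_H^{[\geq 1]} = 0$ and $\bigl(\cDd_1^{-1}(0, f^{[\geq 1]})\bigr)_E^{[\geq 1]} = 0$ are manifest. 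Analogously, using $\Divd \psi^{(lm)} = \tfrac{\sqrt{\half l(l+1)-1}}{r} E^{(lm)}$ and $\Divd \phi^{(lm)} = \tfrac{\sqrt{\half l(l+1)-1}}{r} H^{(lm)}$, I would set
\begin{align*}
\cDd_2^{-1}(X^{[\geq 2]}) := \sumtwo \frac{r}{\sqrt{\half l(l+1) - 1}} \left( X_E^{(lm)} \psi^{(lm)} + X_H^{(lm)} \phi^{(lm)} \right),
\end{align*}
where the restriction to $l\geq 2$ is essential since $\half l(l+1) - 1$ vanishes at $l=1$: this degeneracy is precisely the presence of conformal Killing fields noted in Remark~\ref{remmars30}, and the restriction to $X^{[\geq 2]}$ is exactly what makes $\cDd_2$ invertible modulo its cokernel.

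For the $\HH^{w+1}(S_r)$ continuity I would apply Proposition~\ref{prop:Howtoestimatefunctions} to the explicit series. For $\cDd_1^{-1}$, the vectorfield bound gives
\begin{align*}
\Vert \cDd_1^{-1}(f^{[\geq 1]}, f^{*[\geq 1]}) \Vert_{\HH^{w+1}(S_r)}^2 \lesssim \sumone \left( \frac{l(l+1)-1}{r^2} \right)^{w+1} \frac{r^2}{l(l+1)} \bigl( (f^{(lm)})^2 + (f^{*(lm)})^2 \bigr) + \text{l.o.t.},
\end{align*}
and since the ratio $\tfrac{l(l+1)-1}{l(l+1)} \leq 1$ uniformly for $l\geq 1$, this is majorised by $\|f^{[\geq 1]}\|_{H^w(S_r)}^2 + \|f^{*[\geq 1]}\|_{H^w(S_r)}^2$ via the reverse estimate of Proposition~\ref{prop:Howtoestimatefunctions}. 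The same argument handles $\cDd_2^{-1}$ using the ratio $\tfrac{l(l+1)-4}{\half l(l+1)-1}$, which is bounded for $l\geq 2$. The lower-order $C_w/r^{2w}$ terms that appear in Proposition~\ref{prop:Howtoestimatefunctions} are absorbed using the $w=0$ case of the same computation.

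The main technical point --- and the only real obstacle --- is the uniform-in-$l$ boundedness of these spectral ratios, but this is automatic once one notices that the restrictions on the input data ($l \geq 1$ for $\cDd_1^{-1}$ and $l \geq 2$ for $\cDd_2^{-1}$) are precisely the exclusion of the eigenspaces where the relevant eigenvalues vanish. The stated orthogonality identities at the end of the lemma are read off directly from the series definitions, since each input produces only the expected $E$/$H$ or $\psi$/$\phi$ components.
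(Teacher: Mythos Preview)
Your approach is correct and is a natural explicit fleshing-out of what the paper leaves to the reader. The paper's one-line proof simply cites Lemma~\ref{lem:Chrlem1}, Remarks~\ref{remmars30} and~\ref{imageanalysis}, and Propositions~\ref{prop:EllipticityHodgejan} and~\ref{prop:EllipticityHodgefeb23}; your explicit series construction via Lemma~\ref{lem:RelationsSpherical} is exactly the Fourier-side rendering of this.

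One small point: for the $\HH^{w+1}$ continuity of $\cDd_2^{-1}$ you appeal to Proposition~\ref{prop:Howtoestimatefunctions}, but as stated that proposition covers only scalars and vectorfields, not symmetric tracefree $2$-tensors. The analogue for $2$-tensors (with eigenvalue factor $(l(l+1)-4)/r^2$, cf.\ Remark~\ref{remarkharm29}) holds by the same proof, so this is not a real obstacle; alternatively, and this is the route the paper's citations suggest, you can bypass the Fourier-side norm characterisation entirely and invoke Proposition~\ref{prop:EllipticityHodgefeb23} directly, which already gives the $\HH^{w+1}$-in-terms-of-$\HH^w$ estimate for both Hodge systems $\mathbf{H_1}$ and $\mathbf{H_2}$ without any spectral ratio computation. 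Either way the argument closes.
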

The above lemma is a consequence of Lemma \ref{lem:Chrlem1}, Remarks \ref{remmars30} and \ref{imageanalysis} and Propositions \ref{prop:EllipticityHodgejan} and \ref{prop:EllipticityHodgefeb23} and its proof is left to the reader.


\subsection{The implicit function theorem and Lipschitz estimates for operators}

For completeness we state the standard Implicit Function Theorem that is used in Sections \ref{ssec:Reduction1} and \ref{ssec:Reduction2}, see for example Theorem 2.5.7 in \cite{MarsdenImplicit} for a proof.
\begin{theorem} \label{thm:InverseMars14}
Let $X,Y,Z$ be Hilbert spaces. Let $U \subset X, V \subset Y$ be open subsets and $\FF: U \times V \to Z$ be a $C^r$-mapping, $r\geq1$. For some $x_0 \in U, y_0 \in V$ assume that the linearisation in the first argument $$D_1\FF\vert_{(x_0,y_0)}: X \to Z$$ is an isomorphism. Then there are open neighbourhoods $V_0 \subset V$ of $y_0$ and $W_0 \subset Z$ of $\FF(x_0,y_0)$ and a unique $C^r$-mapping $\GG: V_0 \times W_0 \to U$ such that for all $(y,z) \in V_0 \times W_0$,
\begin{align*}
\FF(\GG(y,z),y)=z.
\end{align*}
\end{theorem}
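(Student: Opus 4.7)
The plan is to reduce the existence of $\GG$ to a fixed-point problem and apply the Banach contraction mapping theorem, then recover the $C^r$-regularity of $\GG$ by a standard bootstrap argument.

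First I would set $z_0 := \FF(x_0, y_0)$ and, writing $T := D_1 \FF \vert_{(x_0, y_0)} : X \to Z$, which by hypothesis is a continuous linear isomorphism with bounded inverse $T^{-1}$, I define the auxiliary map
\begin{equation*}
\Phi(x, y, z) := x - T^{-1}\bigl( \FF(x, y) - z \bigr).
\end{equation*}
Solving $\FF(x, y) = z$ for $x$ is equivalent to finding a fixed point of $\Phi(\cdot, y, z)$ in $X$, and by construction $\Phi(x_0, y_0, z_0) = x_0$ and $D_1 \Phi\vert_{(x_0, y_0, z_0)} = 0$. Since $\FF$ is $C^r$ with $r \geq 1$, the map $\Phi$ is $C^r$ as well, so by continuity of $D_1 \Phi$ there exist a closed ball $\ol{B_X(x_0, \rho)} \subset U$ and open neighbourhoods $V_0 \subset V$ of $y_0$ and $W_0 \subset Z$ of $z_0$ on which $\Vert D_1 \Phi \Vert \leq 1/2$.

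The second step is to verify that, shrinking $V_0$ and $W_0$ if necessary, $\Phi(\cdot, y, z)$ maps $\ol{B_X(x_0, \rho)}$ to itself and is a $1/2$-contraction there. The contraction property follows from the mean value inequality applied to $D_1 \Phi$; the self-mapping property follows by estimating
\begin{equation*}
\Vert \Phi(x_0, y, z) - x_0 \Vert_X \leq \Vert T^{-1} \Vert \, \Vert \FF(x_0, y) - z \Vert_Z,
\end{equation*}
which is made smaller than $\rho/2$ by continuity of $\FF$ at $(x_0, y_0)$ combined with smallness of $\Vert z - z_0 \Vert_Z$. Then the Banach fixed-point theorem produces, for each $(y, z) \in V_0 \times W_0$, a unique $x = \GG(y, z) \in \ol{B_X(x_0, \rho)}$ solving $\FF(\GG(y, z), y) = z$. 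Uniqueness of the fixed point in the ball yields uniqueness of $\GG$ in a neighbourhood.

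For the regularity of $\GG$, I would first show continuity: given $(y_1, z_1), (y_2, z_2) \in V_0 \times W_0$, subtracting the fixed-point identities and using the $1/2$-contraction bound on $D_1 \Phi$ gives
\begin{equation*}
\Vert \GG(y_1, z_1) - \GG(y_2, z_2) \Vert_X \leq C \bigl( \Vert y_1 - y_2 \Vert_Y + \Vert z_1 - z_2 \Vert_Z \bigr),
\end{equation*}
so $\GG$ is Lipschitz. Differentiability is then obtained by formally differentiating $\FF(\GG(y, z), y) = z$, which gives the candidate derivative
\begin{equation*}
D \GG \vert_{(y, z)} = \bigl( - [D_1 \FF]^{-1} D_2 \FF, \, [D_1 \FF]^{-1} \bigr),
\end{equation*}
with $D_1 \FF$ evaluated at $(\GG(y, z), y)$; continuity of $D_1 \FF$ together with the fact that invertibility is an open condition on bounded operators ensures $D_1 \FF$ stays invertible near $(x_0, y_0)$, so this expression is well-defined and continuous, proving $\GG \in C^1$. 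Higher regularity $\GG \in C^r$ follows inductively by differentiating this formula, since all ingredients involve compositions of $C^{r-1}$ maps. The main technical obstacle is verifying differentiability cleanly from the Lipschitz bound, but this is a standard argument using the Taylor expansion of $\FF$ together with invertibility of $D_1 \FF$.
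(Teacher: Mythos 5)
Your proposal is a correct and complete sketch of the standard contraction-mapping proof of the implicit function theorem; the paper does not give its own proof but instead cites Theorem 2.5.7 of \cite{MarsdenImplicit}, which follows essentially the same route. The key steps — defining $\Phi(x,y,z) = x - T^{-1}(\FF(x,y)-z)$ with $T = D_1\FF\vert_{(x_0,y_0)}$, noting $D_1\Phi\vert_{(x_0,y_0,z_0)}=0$, shrinking to a closed ball where $\Phi(\cdot,y,z)$ is a self-mapping $\tfrac12$-contraction, applying Banach's fixed-point theorem, reading off Lipschitz continuity of $\GG$ from the contraction bound, and then differentiating the identity $\FF(\GG(y,z),y)=z$ to get the formula for $D\GG$ (using that invertibility of $D_1\FF$ is an open condition) and bootstrapping to $C^r$ — are all in order. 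The only small caveat, which is a feature of the theorem statement rather than of your argument, is that the "uniqueness" of $\GG$ is local: it is unique among maps valued in a small ball around $x_0$, which is exactly what the fixed-point argument delivers.
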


We also need the following lemma.
\begin{lemma} \label{lem:opth12j}
Let $X,Y,Z$ be Hilbert spaces and let $\varep>0$ be a real. Let $T: X \times Y \to Z$ be a $C^r$-mapping for $r\geq2$ in $B_\varep(0) \times B_\varep(0) \subset X \times Y$ such that for all $x \in X$,
\begin{align*}
T(x,0)=0.
\end{align*}
Then there exists a constant $C=C(X,Y,Z,T)>0$ such that the following holds.
\begin{itemize}
\item For $(x,y) \in B_\varep(0) \times B_\varep(0)\subset X \times Y$,
\begin{align*}
\Vert T(x,y) \Vert_{Z} \leq C \Vert y \Vert_{Y}.
\end{align*}
\item For $x,x' \in B_\varep(0) \subset X$ and $y \in B_\varep(0) \subset Y$,
\begin{align*}
\Vert T(x,y) -T(x',y) \Vert_Z \leq C \Vert x-x' \Vert_{X} \Vert y \Vert_Y.
\end{align*}
\end{itemize}
\end{lemma}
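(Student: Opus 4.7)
The strategy is a standard two-step application of the fundamental theorem of calculus, using the hypothesis $T(x,0)=0$ to gain a factor of $\Vert y \Vert_Y$ and, for the second estimate, a factor of $\Vert x-x'\Vert_X$ as well. Since $T$ is $C^r$ with $r\geq2$ on $B_\varep(0)\times B_\varep(0)$, the derivatives $D_2 T$ and $D_1 D_2 T$ exist and are continuous. Fix any $0<\varep'<\varep$. By continuity of these derivatives on the closed ball $\overline{B_{\varep'}(0)}\times \overline{B_{\varep'}(0)}$ (which has finite diameter in $X\times Y$), there is a constant $C=C(X,Y,Z,T,\varep')>0$ such that
\begin{equation*}
\sup_{(\xi,\eta)\in \overline{B_{\varep'}(0)}\times \overline{B_{\varep'}(0)}} \Vert D_2 T(\xi,\eta) \Vert_{\mathcal L(Y,Z)} + \sup_{(\xi,\eta)\in \overline{B_{\varep'}(0)}\times \overline{B_{\varep'}(0)}} \Vert D_1 D_2 T(\xi,\eta) \Vert_{\mathcal L(X,\mathcal L(Y,Z))} \leq C.
\end{equation*}
(If the neighbourhoods shrink, one may just rename $\varep$ by $\varep'$; the statement of the lemma only requires the estimates on some ball around $0$, so this is harmless.)

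\textbf{First estimate.} For $(x,y)\in B_{\varep'}(0)\times B_{\varep'}(0)$, the segment $s\mapsto (x,sy)$ stays in $B_{\varep'}(0)\times B_{\varep'}(0)$ for $s\in[0,1]$. Using $T(x,0)=0$ and the fundamental theorem of calculus,
\begin{equation*}
T(x,y) = \int_0^1 \frac{d}{ds} T(x,sy)\, ds = \int_0^1 D_2 T(x,sy)(y)\, ds,
\end{equation*}
so taking norms and pulling out the supremum yields $\Vert T(x,y)\Vert_Z \leq C \Vert y \Vert_Y$.

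\textbf{Second estimate.} For $x,x'\in B_{\varep'}(0)$ and $y\in B_{\varep'}(0)$, the same identity gives
\begin{equation*}
T(x,y) - T(x',y) = \int_0^1 \bigl( D_2 T(x,sy) - D_2 T(x',sy) \bigr)(y)\, ds.
\end{equation*}
Applying the fundamental theorem of calculus a second time, this time in the first slot along the segment from $x'$ to $x$ (which stays in $B_{\varep'}(0)$ by convexity), gives
\begin{equation*}
D_2 T(x,sy) - D_2 T(x',sy) = \int_0^1 D_1 D_2 T\bigl(x'+t(x-x'),sy\bigr)(x-x')\, dt.
\end{equation*}
Substituting and using the uniform bound on $D_1 D_2 T$ produces
\begin{equation*}
\Vert T(x,y) - T(x',y) \Vert_Z \leq C \Vert x-x' \Vert_X \Vert y \Vert_Y,
\end{equation*}
as claimed. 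The only subtlety is ensuring the line segments stay in the domain of $C^r$-regularity, which is why we reduce to $B_{\varep'}(0)$; this is not a real obstacle since the original $\varep$ was arbitrary in the statement.
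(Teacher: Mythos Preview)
Your proof is correct and follows essentially the same approach as the paper: apply the fundamental theorem of calculus in the second variable to gain the factor $\Vert y\Vert_Y$, then once more in the first variable (inside the integral) to gain the factor $\Vert x-x'\Vert_X$, using the $C^2$ bound on $D_1D_2T$. The paper's argument is identical in structure, only slightly terser; your extra care about restricting to $B_{\varep'}(0)$ is not present in the paper (which simply invokes ``$T$ is $C^1$/$C^2$ on $B_\varep(0)\times B_\varep(0)$''), and note that in infinite dimensions continuity on a closed ball does not by itself give boundedness since such balls are not compact---but the paper glosses over this point as well.
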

\begin{proof}
First,
\begin{align*}
\Vert T(x,y) \Vert_Z &= \Vert T(x,y)-T(x,0) \Vert_Z\\
&\leq \int\limits_0^1 \Vert D_2 T \vert_{(x,ty)}(y) \Vert_Z dt\\
&\leq C \Vert y \Vert_Y,
\end{align*}
where we used that the operator $T$ is $C^1$ on $B_\varep(0) \times B_\varep(0) \subset X \times Y$. Second,
\begin{align} \begin{aligned} 
\Vert T(x,y) -T(x',y) \Vert_Z &= \Vert T(x,y) - T(x,0) - T(x',y) + T(x',0) \Vert_Z \\
&\leq  \int\limits_0^1 \Vert  D_2T \vert_{(x,ty)} (y) - D_2T \vert_{(x',ty)} (y) \Vert_Z dt\\
&\leq \int\limits_0^1 \left( \int\limits_0^1  \Vert D_1D_2T \vert_{(sx+(1-s)x',ty)}(y)(x-x')  \Vert_Z ds \right)dt \\
&\leq C \Vert x-x' \Vert_X \Vert y \Vert_Y,
\end{aligned} \label{sep6} \end{align}
where we used that the operator $T$ is $C^2$ on $B_\varep(0) \times B_{\varep}(0) \subset X \times Y$. This finishes the proof of Lemma \ref{lem:opth12j}.
\end{proof}

\section{Precise statement of the main theorem} \label{sec:Exactstatements}

We are now in the position to state the precise version of our main theorem.
\begin{theorem}[Main theorem, precise version]  \label{MainTheorem12}
Let $\bar g$ be a Riemannian metric and $\bar k$ a symmetric $2$-tensor on $B_1$ that solve  
\begin{align*}
R(\bar g) &= \vert \bar k \vert_{ \bar g}^2, \\
\Div_{ \bar g}  \bar k &= 0,  \\
\tr_{\bar g}  \bar k &=0.
\end{align*}
There exists a universal constant $\varep >0$ such that if
\begin{align} \label{smallnesscondition23232}
\Vert ( \bar g- e,  \bar k) \Vert_{\HH^2(B_1) \times \HH^{1}(B_1)} < \varep,
\end{align}
where $e$ denotes the Euclidean metric, then there is an $\HH^2_{-1/2}$-asymptotically flat Riemannian metric $g$ on $\RRR^3$ and a symmetric $2$-tensor $k \in \HH^1_{-3/2}$ with $$(g, k) \vert_{B_1} = ( \bar g, \bar k)$$ such that on $\RRR^3$
\begin{align*}
R(g) &= \vert k \vert_{g}^2, \\
\Div_{ g}  k &= 0,  \\
\tr_{g}  k &=0,
\end{align*}
and bounded by
\begin{align*}
\Vert ( g- e,  k) \Vert_{\HH^2_{-1/2} \times \HH^{1}_{-3/2}} \lesssim \Vert ( \bar g- e,  \bar k) \Vert_{\HH^2(B_1) \times \HH^{1}(B_1)}.
\end{align*}
Moreover, if, in addition to \eqref{smallnesscondition23232}, $\bar g -e \in \HH^{w}(B_1)$ and $\bar k \in \HH^{w-1}(B_1)$ for an integer $w\geq3$, then
\begin{align} \label{estimateshigherreg24352}
\Vert ( g- e,  k) \Vert_{\HH^w_{-1/2} \times \HH^{w-1}_{-3/2}} \leq C_w \Vert (\bar g- e, \bar k) \Vert_{\HH^w(B_1) \times \HH^{w-1}(B_1)},
\end{align} 
where the constant $C_w>0$ depends only on $w$.
\end{theorem}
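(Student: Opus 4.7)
The plan is to construct $(g,k)$ as the limit of the iterative scheme $(g_i, k_i)_{i \geq 1}$ described in the introduction, starting from $(g_1, k_1) = (e, 0)$, and to close the argument by a contraction/fixed-point estimate in a small ball of $\HH^2_{-1/2} \times \HH^1_{-3/2}$ around $(e, 0)$.

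First I would make the scheme precise. Given $(g_i, k_i)$ with $(g_i,k_i)|_{B_1} = (\bar g, \bar k)$ and $\Vert (g_i - e, k_i) \Vert_{\HH^2_{-1/2} \times \HH^1_{-3/2}} \leq K\varep$ for a universal constant $K$ to be fixed, I apply Theorem \ref{thm:extmetrough1} to the scalar function $R_i$ defined as an $\HH^{w-2}$-extension of $R(\bar g)= |\bar k|_{\bar g}^2$ (on $B_1$) by $|k_i|_{g_i}^2$ (on $\RRRwo$). Note that $R_i|_{B_1} = R(\bar g)$ by the constraint $R(\bar g) = |\bar k|_{\bar g}^2$, so the compatibility hypothesis of Theorem \ref{thm:extmetrough1} is satisfied automatically, and the product estimates of Lemma \ref{ProductEstimates} together with Lemma \ref{cor:UsefulCor} give $\Vert R_i \Vert_{\HH^0_{-5/2}} \lesssim \Vert k_i \Vert_{\HH^1_{-3/2}}^2 \lesssim (K\varep)^2$. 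This yields $g_{i+1}$ with $g_{i+1}|_{B_1} = \bar g$, $R(g_{i+1}) = R_i$, and $\Vert g_{i+1} - e \Vert_{\HH^2_{-1/2}} \lesssim \Vert \bar g - e \Vert_{\HH^2(B_1)} + (K\varep)^2$. Then I apply Theorem \ref{thm:extrough1} to $(g_{i+1}, \bar k)$ to obtain $k_{i+1}$; this requires first checking that $\bar k$ is divergence-free and tracefree with respect to $g_{i+1}$ on $B_1$, which holds since $g_{i+1}|_{B_1} = \bar g$ and $(\bar g, \bar k)$ solves the maximal constraints.

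Next I would prove convergence by a contraction-type estimate on the differences $(\delta g_i, \delta k_i) := (g_{i+1} - g_i, k_{i+1} - k_i)$. The key observation is that the maps $(g_i, k_i) \mapsto g_{i+1}$ and $g_{i+1} \mapsto k_{i+1}$ inherit from Theorems \ref{thm:extmetrough1} and \ref{thm:extrough1} a Lipschitz dependence on their inputs, expressible via Lemma \ref{lem:opth12j} applied to the nonlinearities $(g,k) \mapsto |k|_g^2$ and $g \mapsto (\Div_g, \tr_g)$. Since both nonlinearities vanish identically when $k = 0$ or when extended by the linear Euclidean counterparts, the Lipschitz constants are $O(\varep)$ in a small neighborhood; concretely, I expect
\begin{align*}
\Vert \delta g_{i+1} \Vert_{\HH^2_{-1/2}} + \Vert \delta k_{i+1} \Vert_{\HH^1_{-3/2}} \leq C \varep \Big( \Vert \delta g_i \Vert_{\HH^2_{-1/2}} + \Vert \delta k_i \Vert_{\HH^1_{-3/2}} \Big).
\end{align*}
For $\varep$ small enough that $C\varep < 1/2$, this gives a geometric contraction, so $(g_i, k_i)$ is Cauchy in $\HH^2_{-1/2} \times \HH^1_{-3/2}$ and converges to some $(g,k)$. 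Passing to the limit in the constructions of Steps A and B (using continuity of the extension operators in Theorems \ref{thm:extmetrough1} and \ref{thm:extrough1}, as well as continuity of the nonlinearities via Lemma \ref{ProductEstimates} and Lemma \ref{GInverseAnalysis}) shows $(g,k)|_{B_1} = (\bar g, \bar k)$ and that $(g,k)$ satisfies the maximal constraints on $\RRR^3$. The bound $\Vert (g - e, k) \Vert_{\HH^2_{-1/2} \times \HH^1_{-3/2}} \lesssim \Vert (\bar g - e, \bar k) \Vert_{\HH^2(B_1) \times \HH^1(B_1)}$ follows by summing the geometric series and using that the very first iterate $(g_1, k_1) = (e, 0)$ has the correct size.

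For the higher regularity statement \eqref{estimateshigherreg24352} with $w \geq 3$, I would bootstrap: having shown that the limit $(g,k)$ exists in $\HH^2_{-1/2} \times \HH^1_{-3/2}$, I rerun the iteration in the higher-regularity spaces $\HH^w_{-1/2} \times \HH^{w-1}_{-3/2}$. Theorems \ref{thm:extmetrough1} and \ref{thm:extrough1} are expected to provide higher-regularity bounds with constants $C_w$, and the product estimates of Lemma \ref{ProductEstimates} control $|k_i|_{g_i}^2$ in $\HH^{w-2}_{-5/2}$ via a "low-high" splitting where the bad high-regularity factor only appears once, multiplied by the small $\HH^2_{-1/2}$-norm of the other factor. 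This tame product structure is exactly what allows the higher regularity to propagate linearly rather than exponentially through the iteration.

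The main obstacle will be the convergence/contraction step: carefully verifying that both building-block theorems are Lipschitz in their data with a Lipschitz constant of size $O(\varep)$ on the relevant product of nonlinearities. The subtlety is that Theorem \ref{thm:extmetrough1} prescribes $R(g_{i+1}) = |k_i|^2_{g_i}$, which couples $g_{i+1}$ nonlinearly to the previous $g_i$ through the scalar curvature nonlinearity, and one must ensure that the Lipschitz dependence of the metric extension map on the prescribed scalar curvature is strong enough to beat the $K\varep$ built into the a priori size of the iterates. Once this is established, the rest is a standard fixed-point argument.
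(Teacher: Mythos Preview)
Your approach is correct and essentially the same as the paper's: an iterative scheme alternating the two extension theorems, closed by a contraction estimate coming from the Lipschitz/iteration bounds built into the precise versions of those theorems. The paper differs only in minor implementation details---it initialises with Sobolev extensions $(g_1,k_1)$ of $(\bar g,\bar k)$ rather than $(e,0)$ (so $R_i=|k_i|_{g_i}^2$ globally from the start, avoiding your piecewise definition), and for \eqref{estimateshigherreg24352} it does not rerun the iteration in the higher norm but instead applies the $w$-regularity parts of Theorems~\ref{thm:kextension1} and~\ref{thm:MainExtensionScalarTheorem} once to the fixed point $(g,k)$ and absorbs the resulting $O(\varep)\cdot\Vert(g-e,k)\Vert_{\HH^w_{-1/2}\times\HH^{w-1}_{-3/2}}$ terms into the left-hand side.
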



\section{The divergence equation for $k$} \label{sec:DivergenceEquation}

In this section we prove the following theorem.
\begin{theorem}[Extension of divergence-free tracefree symmetric $2$-tensors, precise version] \label{thm:kextension1} There exists a small universal constant $\varep>0$ such that the following holds.
\begin{enumerate}
\item \underline{Extension result.} Let $g$ be a given $\HH^2_{-1/2}$-asymptotically flat metric on $\RRR^3$ and $\bar k \in \HH^{1}(B_1)$ a symmetric $2$-tensor such that on $B_1$ 
\begin{align}\begin{aligned}
\Div_g \bar k &=0, \\
\tr_g \bar k &=0. \end{aligned} \label{ZeroHodge}
\end{align}
If
\begin{align}
\Vert g - e \Vert_{\HH^2_{-1/2}} + \Vert \bar k \Vert_{\HH^{1}(B_1)} < \varep, \label{eq:cond2feb}
\end{align}
then there exists a symmetric $2$-tensor $k \in \HH^{1}_{-3/2}$ with $k \vert_{B_1}= \bar k$ that solves on $\RRR^3$
\begin{align*}\begin{aligned}
\Div_g k &=0, \\
\tr_g k &=0, \end{aligned} 
\end{align*}
and is bounded by
\begin{align} \label{extensionestimation240343030300}
\Vert k \Vert_{\HH^{1}_{-3/2}} \lesssim \Vert \bar k \Vert_{\HH^{1}(B_1)}.
\end{align}
\item \underline{Iteration estimates.} Let $g$ and $g'$ be two $\HH^2_{-1/2}$-asymptotically flat metrics on $\RRR^3$ such that $$ g\vert_{B_1} = g' \vert_{B_1}$$ and $\bar k \in \HH^{1}(B_1)$ a symmetric $2$-tensor on $B_1$ that solves \eqref{ZeroHodge}  with respect to $g$ (and so for $g'$). Assume that for $(g,\bar k)$ and $(g', \bar k)$ the smallness condition \eqref{eq:cond2feb} holds and let $k, k' \in \HH^{1}_{-3/2}$ denote the two extensions of $\bar k$ constructed in part (1) of this theorem with the metrics $g$ and $g'$, respectively. Then it holds that \begin{align}
\Vert k - k' \Vert_{\HH^{1}_{-3/2}} \lesssim \Vert \bar k \Vert_{\HH^{1}(B_1)} \Vert g-g' \Vert_{\HH^2_{-1/2}}. \label{eq:kiterationestimatethm}
\end{align}
\item \underline{Higher regularity estimates.} If, in addition to \eqref{eq:cond2feb}, $g$ is an $\HH^{w}_{-1/2}$-asymptotically flat metric and $\bar k \in \HH^{w-1}(B_1)$ for an integer $w\geq3$, then the $k$ constructed in part (1) of this theorem satisfies
\begin{align*}
\Vert k \Vert_{\HH^{w-1}_{-3/2}} \lesssim \Vert \bar k \Vert_{\HH^1(B_1)} \Vert g-e \Vert_{\HH^w_{-1/2}} + C_w \Vert \bar k \Vert_{\HH^{w-1}(B_1)},
\end{align*}
where the constant $C_w>0$ depends only on $w$.
\end{enumerate}
\end{theorem}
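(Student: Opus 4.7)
I would first fix, via the componentwise extension operator of Proposition \ref{functionextcts} (multiplied by the cutoff \eqref{eq:transfct} to enforce the asymptotic decay), a tensor $\tilde k\in \HH^1_{-3/2}$ with $\tilde k\vert_{B_1}=\bar k$ and $\Vert \tilde k\Vert_{\HH^1_{-3/2}}\lesssim \Vert\bar k\Vert_{\HH^1(B_1)}$. I would then look for the solution in the form $k=\tilde k+h$ with $h\in \ol{\HH}^1_{-3/2}$, i.e.\ vanishing on $\ol{B_1}$ in the trace sense (Proposition \ref{prop:TrivialExtensionRegularity}). The boundary identity $k\vert_{B_1}=\bar k$ is then automatic and \eqref{eq:mars30second} becomes
\begin{align*}
\Div_g h = -\Div_g\tilde k,\qquad \tr_g h = -\tr_g\tilde k,
\end{align*}
with sources in $\ol{\HH}^0_{-5/2}\times \ol{H}^1_{-3/2}$: indeed they vanish on $B_1$ by the assumption \eqref{ZeroHodge} on $\bar k$, and the weighted regularity comes from Lemma \ref{SobolevEmbeddingsAndNonlinear} together with the product estimates of Lemma \ref{ProductEstimates} applied to $g-e$ and $\tilde k$.

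\textbf{Implicit function theorem at $(e,0)$.} I would apply Theorem \ref{thm:InverseMars14} to the smooth map
\begin{align*}
\mathcal{F}\colon (g,h)\longmapsto \bigl(\Div_g h,\ \tr_g h\bigr),
\end{align*}
defined on a neighbourhood of $e$ in the $\HH^2_{-1/2}$-asymptotically flat metrics, times $\ol{\HH}^1_{-3/2}$, with values in $\ol{\HH}^0_{-5/2}\times \ol{H}^1_{-3/2}$. Its partial derivative in the second slot at $(e,0)$ is $h\mapsto(\Div_e h,\tr_e h)$. Writing $h=\widehat h^e+\tfrac{1}{3}(\tr_e h)\,e$, the trace equation is algebraic and determines $\tr_e h$, so that invertibility of this linearisation reduces to the surjectivity, with bounded right inverse, of
\begin{align*}
\widehat h\ \longmapsto\ \Div_e\widehat h
\end{align*}
from tracefree symmetric $2$-tensors in $\ol{\HH}^1_{-3/2}$ onto $\ol{\HH}^0_{-5/2}$. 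This is exactly the Euclidean surjectivity result proved in Section \ref{sec:EuclideanSurjectivity}, and I would invoke it as a black box. Theorem \ref{thm:InverseMars14} then yields a smooth solution map $g\mapsto h(g)$, and $k:=\tilde k+h(g)$ gives part (1) together with the bound \eqref{extensionestimation240343030300}, which follows from continuity of the implicit map and the estimate on $\tilde k$.

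\textbf{Main obstacle.} The real difficulty is concealed in the cited Euclidean surjectivity: one must construct $\widehat h$ that not only solves the divergence equation on $\RRR^3\setminus\ol{B_1}$ and decays at infinity, but \emph{also} extends trivially by zero across $\partial B_1$, i.e.\ $\widehat h\vert_{\{r=1\}}=0$. The operator $\Div_e$ on tracefree symmetric $2$-tensors is genuinely underdetermined—by Lemma \ref{lem:RelationsSpherical}, a mode $(l,m)$ with $l\geq 2$ carries two scalar coefficients $\widehat h_\psi^{(lm)},\widehat h_\phi^{(lm)}$ for only two source coefficients $\rho_E^{(lm)},\rho_H^{(lm)}$ but also comes with radial derivatives, leaving one radial degree of freedom per mode. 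After decomposing the equation through the Hodge-Fourier bases of Section \ref{ssec:Hodgebasis} into decoupled radial ODEs, this extra freedom must be tuned against the Dirichlet-to-Neumann data at $r=1$ to kill the boundary trace while keeping weighted $L^2$ control at infinity; the low modes $l\in\{0,1\}$ and the kernel observation of Remark \ref{imageanalysis} require separate handling. This is the crux of Section \ref{sec:EuclideanSurjectivity} and the main analytic content behind Theorem \ref{thm:kextension1}.

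\textbf{Iteration and higher regularity.} Part (2) follows by applying Lemma \ref{lem:opth12j} to the operator $T(g,\bar k):=h(g)$, viewed as a smooth map in its two arguments that vanishes identically when $\bar k=0$ (since then $\tilde k=0$ and $h\equiv 0$ solves the system); for two metrics $g,g'$ sharing the same boundary data, this lemma delivers precisely the bilinear estimate \eqref{eq:kiterationestimatethm}. For part (3), I would run the same IFT argument in the higher-order scale $\ol{\HH}^{w-1}_{-3/2}$: the linear right inverse from Section \ref{sec:EuclideanSurjectivity} is available in all integer regularities $w$, and the nonlinear source terms $(\Div_g-\Div_e)\tilde k$ and $(\tr_g-\tr_e)\tilde k$ are controlled in $\ol{\HH}^{w-2}_{-5/2}\times \ol{\HH}^{w-1}_{-3/2}$ via the weighted product estimates of Lemma \ref{ProductEstimates}, with the tame structure ($C_w$ factors on the lowest-regularity factor) yielding the claimed estimate in part (3).
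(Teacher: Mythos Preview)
Your strategy matches the paper's: Sobolev-extend $\bar k$, correct by an element of $\ol{\HH}^1_{-3/2}$, and invoke the Implicit Function Theorem near $(e,0)$ with the Euclidean surjectivity of Section \ref{sec:EuclideanSurjectivity} as the core input. Two points, however, need correction.

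First, Theorem \ref{thm:InverseMars14} requires the linearisation $D_h\mathcal{F}|_{(e,0)}$ to be an \emph{isomorphism}, not merely surjective with a bounded right inverse; the map $h\mapsto(\Div_e h,\tr_e h)$ has a nontrivial kernel (the Euclidean divergence-free tracefree tensors in $\ol{\HH}^1_{-3/2}$). The paper handles this by restricting the domain to the orthogonal complement $\ol{\mathcal{N}_e}:=\bigl(\ker\,\Div_e\circ\,\widehat{\,\cdot\,}^{\,e}\bigr)^\perp\subset\ol{\HH}^1_{-3/2}$, on which the linearisation becomes bijective. A cosmetic difference: the paper makes the Sobolev extension $g$-tracefree from the outset (legitimate by Corollary \ref{remark:Bounded}), so its target is just $\ol{\HH}^0_{-5/2}$ and the IFT map is $k\mapsto\Div_g(\widehat{k}^{\,g})$; your two-component version is equivalent after the algebraic reduction you describe.

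Second, for part (3), ``run the same IFT in the higher-order scale'' does not work as written: the orthogonal complement of the kernel depends on the inner product, so the IFT solution in $\ol{\HH}^{w-1}_{-3/2}$ need not coincide with the part-(1) solution, and you are asked to show higher regularity of the \emph{latter}. The paper instead bootstraps the fixed $k\in\ol{\mathcal{N}_e}$: since $\widehat{k}^{\,e}\in\ol{\mathcal{N}_e}$, the higher-regularity estimate \eqref{firstkestsepsep2} for the explicit Euclidean right inverse of Lemma \ref{prop:EuclideanSurjectivity} gives control of $\Vert k\Vert_{\ol{\HH}^{w-1}_{-3/2}}$ by $\Vert\Div_e\widehat{k}^{\,e}\Vert_{\ol{\HH}^{w-2}_{-5/2}}$ plus $\Vert\tr_ek-\tr_gk\Vert_{\ol{\HH}^{w-1}_{-3/2}}$ plus lower order; one then writes $\Div_e k=\Div_g k+(\Div_e-\Div_g)k$, applies Lemmas \ref{lem:DivergenceBoundedOperator} and \ref{lem:conttrace}, and absorbs the resulting term $\Vert g-e\Vert_{\HH^2_{-1/2}}\Vert k\Vert_{\ol{\HH}^{w-1}_{-3/2}}$ on the left using smallness of $\varepsilon$. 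That absorption is exactly what produces the tame form of the estimate in part (3).
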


Before proving Theorem \ref{thm:kextension1}, we analyse the divergence and trace mapping on $\HH^w_{-1/2}$-asymptotically flat metrics on $\RRR^3$.


\subsection{Analysis of operators on $\HH^w_{-1/2}$-asymptotically flat metrics} \label{sec:analysisonafsets}

Recall from Section \ref{basicsection} that for a Riemannian metric $g$ and a symmetric $2$-tensor $V$,  the divergence, trace and tracefree part of $V$ are respectively defined as
\begin{align*}
(\Div_g V )_j &:= \nabla^i V_{ij}, \\
\tr_g V &:= g^{ij} V_{ij}, \\
\widehat{V}^g &:= V- \frac{1}{3} \tr_g (V) g,
\end{align*}
where $\nab$ denotes the covariant derivative of $g$. The next lemma shows basic properties of the divergence operator.
\begin{lemma} \label{lem:DivergenceBoundedOperator}
There is a universal $\varep>0$ such that the following holds.
\begin{enumerate} 
\item Let $w\geq2$ be an integer. The mapping
\begin{align*}
\Div: (V,g) \mapsto \Div_g V
\end{align*}
is a smooth mapping from $\HHm \times B_{\varep}(e)$ to $\HHn$, where 
$$B_{\varep}(e) := \Big\{ g-e \in \HH^w_{-1/2}: \Vert g-e \Vert_{\HH^2_{-1/2}} < \varep \Big\}.$$
Furthermore, $\Div$ maps $\HHmo \times B_{\varep}(e)$ into $\HHno$.
\item For all Riemannian metrics $g$ such that
\begin{align*}
\Vert g-e \Vert_{\HH^2_{-1/2}} < \varep,
\end{align*}
it holds that for all symmetric $2$-tensors $V \in \HH^1_{-3/2}$,
\begin{align}
\Vert \Div_g (V) \Vert_{\HH^{0}_{-5/2}} \lesssim \Vert V \Vert_{\HH^{1}_{-3/2}}\label{eq:firstdivfeb3}
\end{align}
\item For all Riemannian metrics $g,g'$ with
\begin{align*}
\Vert g-e \Vert_{\HH^2_{-1/2}}< \varep, \Vert g'-e \Vert_{\HH^2_{-1/2}} < \varep
\end{align*}
it holds that for all symmetric $2$-tensors $V \in \HH^1_{-3/2}$
\begin{align}
\Vert \Div_g V - \Div_{g'} V \Vert_{\HH^{0}_{-5/2}} \lesssim \Vert g-g' \Vert_{\HH^2_{-1/2}} \Vert V \Vert_{\HH^{1}_{-3/2}}.\label{eq:divLipschitzestimate3}
\end{align}
\item Let $w\geq3$ be an integer. For all Riemannian metrics $g$ such that
\begin{align*}
\Vert g-e \Vert_{\HH^2_{-1/2}} < \varep, \, \Vert g-e \Vert_{\HH^w_{-1/2}} < \infty,
\end{align*}
it holds that for all symmetric $2$-tensors $V \in \HH^{w-1}_{-3/2}$
\begin{align}
\begin{aligned}
\Vert \Div_e V - \Div_g V \Vert_{\HH^{w-2}_{-5/2}} \lesssim& \Vert g-e\Vert_{\HH^2_{-1/2}} \Vert V \Vert_{\HH^{w-1}_{-3/2}} + \Vert V \Vert_{\HH^1_{-3/2}} \Vert g-e \Vert_{\HH^{w}_{-1/2}} \\
&+ C_w \Vert V \Vert_{\HH^1_{-3/2}} \Vert g-e \Vert_{\HH^2_{-1/2}}.
\end{aligned} \label{eqestk493424}
\end{align}
\end{enumerate}
\end{lemma}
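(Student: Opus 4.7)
The plan is to expand the divergence in Cartesian components. Writing $\Gamma^l_{ik}=\tfrac12 g^{lm}(\partial_i g_{mk}+\partial_k g_{mi}-\partial_m g_{ik})$, one has
\[
(\Div_g V)_j \;=\; g^{ik}\partial_i V_{kj} \;-\; g^{ik}\Gamma^{l}_{ik} V_{lj} \;-\; g^{ik}\Gamma^{l}_{ij} V_{kl},
\]
which I would view schematically as
\[
\Div_g V \;=\; g^{-1}\!\cdot\!\partial V \;+\; \mathcal{P}(g^{-1})\!\cdot\!(\partial g)\!\cdot\! V,
\]
with $\mathcal{P}$ a fixed polynomial in the entries of $g^{-1}$. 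The only nonlinear ingredient in $g$ is the inverse metric, which is controlled by Lemma \ref{GInverseAnalysis}; everything else is a derivative or a multiplication, both handled by Lemmas \ref{SobolevEmbeddingsAndNonlinear} and \ref{ProductEstimates}. All four parts of the lemma will be extracted from this representation.

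For part (1), I would establish smoothness of $(V,g)\mapsto\Div_g V$ from $\HHm\times B_\varep(e)$ to $\HHn$ by composing: the smooth inversion $g\mapsto g^{-1}\in\HH^w_{-1/2}$ from Lemma \ref{GInverseAnalysis}(1); differentiation, which drops one regularity index and one weight index by Lemma \ref{SobolevEmbeddingsAndNonlinear}; and the bilinear and trilinear products in weighted spaces controlled by Lemma \ref{ProductEstimates}, with the discrepancy between the additive weight produced by each product and the target weight $-5/2$ absorbed through the weighted embeddings of Lemma \ref{SobolevEmbeddingsAndNonlinear}. For the refinement $\HHmo\times B_\varep(e)\to\HHno$, I would invoke the density characterisation of $\overline{\HH}^w_\de$ in Definition \ref{definitionolspace}: on test tensors $V\in C^\infty_c(\RRRwo)$ the output $\Div_g V$ is manifestly supported in $\RRRwo$, so the continuous estimate already established extends by density. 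Part (2) is then simply the special case $w=2$ of the first-factor estimate contained in part (1).

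For part (3), the plan is to write
\[
\Div_g V-\Div_{g'} V \;=\; (g^{-1}-g'^{-1})\!\cdot\!\partial V \;+\; \bigl(\mathcal{P}(g^{-1})-\mathcal{P}(g'^{-1})\bigr)\!\cdot\!(\partial g)\!\cdot\! V \;+\; \mathcal{P}(g'^{-1})\!\cdot\!\partial(g-g')\!\cdot\! V,
\]
and to bound each summand by Lemma \ref{ProductEstimates}(1) together with the Lipschitz bound $\Vert g^{-1}-g'^{-1}\Vert_{\HH^2_{-1/2}}\lesssim\Vert g-g'\Vert_{\HH^2_{-1/2}}$ from Lemma \ref{GInverseAnalysis}(2); the polynomial difference $\mathcal{P}(g^{-1})-\mathcal{P}(g'^{-1})$ factors through $g^{-1}-g'^{-1}$ up to factors uniformly bounded in $\HH^2_{-1/2}$. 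Part (4) will follow from the same identity with $g'=e$ but at regularity $w-2$: the Moser-type splittings of Lemma \ref{ProductEstimates}(2)--(3) produce exactly the trichotomy appearing in \eqref{eqestk493424}, after Lemma \ref{GInverseAnalysis}(3) transfers the higher-regularity control from $g$ to $g^{-1}$. The main obstacle throughout is really only the weight bookkeeping --- tracking that each derivative shifts the weight by $-1$, each $g^{-1}-e$ factor enters with weight $-1/2$, and every term ultimately lands in the target space $\HH^{w-2}_{-5/2}$ via the embeddings of Lemma \ref{SobolevEmbeddingsAndNonlinear} --- a mechanical but unavoidable verification.
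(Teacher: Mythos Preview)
Your proposal is correct and follows essentially the same route as the paper: expand $\Div_g V$ in Cartesian coordinates to obtain the schematic form $g^{-1}\partial V + g^{-1}(\partial g)V$, then feed each part through the inverse-metric control (Lemma~\ref{GInverseAnalysis}) and the weighted product estimates (Lemma~\ref{ProductEstimates}). The paper's version is terser---it leaves the Lipschitz estimate~\eqref{eq:divLipschitzestimate3} to the reader and treats the $\overline{\HH}$ mapping property as ``clear''---but the underlying argument is the one you outline.
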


\begin{proof}[Proof of Lemma \ref{lem:DivergenceBoundedOperator}]
{\bf Proof of part (1).} By definition, 
\begin{align*} 
\left( \Div_g V \right)_i = g^{ab} \left( \pr_a V_{bi} - \Ga^{j}_{ab} V_{ji} - \Ga^j_{ai} V_{jb} \right), 
\end{align*}
where $\Ga^j_{ia} = \half g^{jb} \left( \pr_i g_{ba} + \pr_a g_{bi} - \pr_b g_{ia}\right)$ denote the Christoffel symbols. Therefore, we schematically have
\begin{align}
\Div_g V = g^{-1} \pr g \, V + g^{-1} \pr V. \label{av113}
\end{align}
By \eqref{av113} and Lemma \ref{GInverseAnalysis}, it follows that for $\varep>0$ sufficiently small, $\Div$ is a smooth mapping from $ \HHm \times B_{\varep}(e)$ to $\HHn$. The restriction of $\Div$ to $V \in \HHmo$ clearly maps into $\HHno$.\\

{\bf Proof of parts (2) and (3).} By \eqref{av113},
\begin{align} \begin{aligned}
\Vert \Div_g V \Vert_{\HH^{0}_{-5/2}} &\lesssim \Vert g^{-1} \pr g V \Vert_{\HH^{0}_{-5/2}} + \Vert g^{-1}\pr V \Vert_{\HH^{0}_{-5/2}}. \label{eq:jan295}
\end{aligned} \end{align}
By Lemma \ref{SobolevEmbeddingsAndNonlinear}, product estimates as in Lemma \ref{ProductEstimates}, and Lemma \ref{GInverseAnalysis}, there exists an $\varep>0$ such that if $g \in B_{\varep}(e) \subset \HH^{2}_{-1/2}$, then
\begin{align} \begin{aligned}
\Vert g^{-1} \pr V \Vert_{\HH^{0}_{-5/2}} &\lesssim \Vert V \Vert_{\HH^{1}_{-3/2}},\\
\Vert g^{-1} \pr g \, V \Vert_{\HH^{0}_{-5/2}} &\lesssim \Vert \pr g \, V \Vert_{\HH^{0}_{-5/2}} \\
&\lesssim \Vert \pr g \Vert_{\HH^{1}_{-3/2}} \Vert V \Vert_{\HH^{1}_{-3/2}}\\
&\lesssim \Vert g -e\Vert_{\HH^{2}_{-1/2}} \Vert V \Vert_{\HH^{1}_{-3/2}}. 
\end{aligned} \label{eq:jan262}
\end{align}
Plugging \eqref{eq:jan262} into \eqref{eq:jan295} proves \eqref{eq:firstdivfeb3}. The estimate \eqref{eq:divLipschitzestimate3} is proved similarly and left to the reader. \\

{\bf Proof of part (4).} By using \eqref{av113}, we have schematically
\begin{align} \label{diff343524}
\Vert \Div_e V - \Div_g V \Vert_{\HH^{w-2}_{-5/2}} \lesssim \Vert g^{-1} \pr g V \Vert_{\HH^{w-2}_{-5/2}} + \Vert (g^{-1}-e) \pr V \Vert_{\HH^{w-2}_{-5/2}}.
\end{align}
The first term on the right-hand side of \eqref{diff343524} is bounded by using product estimates as in Lemma \ref{ProductEstimates}, see also Lemma \ref{GInverseAnalysis},
\begin{align*}
\Vert g^{-1} \pr g V \Vert_{\HH^{w-2}_{-5/2}}  \lesssim& \Vert g-e \Vert_{\HH^2_{-1/2}} \Vert V \Vert_{\HH^{w-1}_{-3/2}} + \Vert g-e \Vert_{\HH^w_{-1/2}} \Vert V \Vert_{\HH^1_{-3/2}} \\
&+ C_w \Vert g-e \Vert_{\HH^2_{-1/2}} \Vert V \Vert_{\HH^1_{-3/2}}.
\end{align*}
The second term on the right-hand side of \eqref{diff343524} is bounded similarly by
\begin{align*}
\Vert (g^{-1}-e) \pr V \Vert_{\HH^{w-2}_{-5/2}} \lesssim& \Vert g-e \Vert_{\HH^2_{-1/2}} \Vert V \Vert_{\HH^{w-1}_{-3/2}} + \Vert g-e \Vert_{\HH^w_{-1/2}} \Vert V \Vert_{\HH^1_{-3/2}} \\
&+ C_w \Vert g-e \Vert_{\HH^2_{-1/2}} \Vert V \Vert_{\HH^1_{-3/2}}.
\end{align*}
Plugging the above two into \eqref{diff343524} proves \eqref{eqestk493424} and hence finishes the proof of Lemma \ref{lem:DivergenceBoundedOperator}. \end{proof}


\begin{lemma} \label{lem:conttrace}
There exists a universal constant $\varep>0$ such that the following holds.
\begin{itemize}
\item Let $w\geq2$ be an integer. The mapping
\begin{align*}
\tr : (V,g) \mapsto \tr_g V
\end{align*}
is a smooth mapping from $\HHm \times B_{\varep}(e)$ to $\HHm$, where 
$$B_{\varep}(e) := \Big\{ g-e \in \HH^w_{-1/2}: \Vert g-e \Vert_{\HH^2_{-1/2}} < \varep \Big\}.$$
Furthermore, $\tr$ maps $\HHmo \times B_{\varep}(e)$ into $\HHno$.
\item For all Riemannian metrics $g$ with
$$\Vert g-e \Vert_{\HH^2_{-1/2}} < \varep,$$
it holds that for all symmetric $2$-tensors $V \in \HH^{1}_{-3/2}$,
\begin{align*}
\Vert tr_g V \Vert_{H^{1}_{-3/2}} \lesssim \Vert V \Vert_{\HH^{1}_{-3/2}}.
\end{align*}
\item For two Riemannian metrics $g$ and $g'$ such that 
$$\Vert g-e \Vert_{\HH^2_{-1/2}}, \, \Vert g'-e \Vert_{\HH^2_{-1/2} }< \varep,$$ 
it holds that for all symmetric $2$-tensors $V\in \HH^1_{-3/2}$,
\begin{align*}
\Vert \tr_g V -\tr_{g'} V \Vert_{\HH^1_{-3/2}} \lesssim \Vert g-g' \Vert_{\HH^2_{-1/2}} \Vert V \Vert_{\HH^1_{-3/2}}.
\end{align*}
\item Let $w\geq3$ be an integer. For all Riemannian metrics $g$ such that
\begin{align*}
\Vert g-e \Vert_{\HH^2_{-1/2}} < \varep, \, \Vert g-e \Vert_{\HH^w_{-1/2}} < \infty,
\end{align*}
it holds that for all symmetric $2$-tensors $V \in \HH^{w-1}_{-3/2}$
\begin{align}
\begin{aligned}
\Vert \tr_e V - \tr_g V \Vert_{\HH^{w-1}_{-5/2}} \lesssim&  \Vert g-e\Vert_{\HH^2_{-1/2}} \Vert V \Vert_{\HH^{w-1}_{-3/2}} + \Vert V \Vert_{\HH^1_{-3/2}} \Vert g-e \Vert_{\HH^{w}_{-1/2}} \\
&+ C_w \Vert V \Vert_{\HH^1_{-3/2}} \Vert g-e \Vert_{\HH^2_{-1/2}}.
\end{aligned} \label{eqestk49342422222}
\end{align}
\end{itemize}
\end{lemma}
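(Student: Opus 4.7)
The plan is to parallel the proof of Lemma \ref{lem:DivergenceBoundedOperator}, but the argument is strictly easier because the trace operator is purely algebraic: $\tr_g V = g^{ij} V_{ij}$ involves no derivatives, and, unlike $\Div_g$, no loss of regularity or weight occurs. The essential identity driving all four parts is
\begin{align*}
\tr_g V \;=\; \tr_e V + \bigl(g^{-1} - e\bigr)^{ij} V_{ij},
\end{align*}
which reduces every claim to a product estimate between $g^{-1} - e$ and $V$, together with the smoothness of the inverse map $g \mapsto g^{-1}$ from Lemma \ref{GInverseAnalysis}.

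For part (1), smoothness of $(V,g) \mapsto \tr_g V$ as a map $\HHm \times B_\varep(e) \to \HHm$ follows by composing the smooth map $g \mapsto g^{-1}$ from Lemma \ref{GInverseAnalysis} with the bilinear contraction $(W,V)\mapsto W^{ij}V_{ij}$; this contraction is bounded on $\HH^w_{-1/2} \times \HHm \to \HHm$ by the product estimates of Lemma \ref{ProductEstimates} (absorbing the better decay of $W = g^{-1} - e$ against the weaker decay of $V$). The statement that $\tr$ sends $\HHmo \times B_\varep(e)$ into $\HHmo$ is immediate from the characterisation of $\ol{\HH}^w_\de$ by vanishing traces at $r=1$ in Proposition \ref{prop:TrivialExtensionRegularity}, since the algebraic contraction preserves trivial extension across $\partial B_1$. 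For part (2), I apply part (1) of Lemma \ref{ProductEstimates} to $g^{-1} - e \in H^1_{-1/2}$ and $V \in \HH^1_{-3/2}$ (using Lemma \ref{GInverseAnalysis} to bound $g^{-1}-e$ by $g-e$), together with the trivial bound $\|\tr_e V\|_{H^1_{-3/2}} \lesssim \|V\|_{\HH^1_{-3/2}}$; for part (3), the same estimate applied to the identity
\begin{align*}
\tr_g V - \tr_{g'} V = \bigl(g^{-1} - g'^{-1}\bigr)^{ij} V_{ij},
\end{align*}
combined with the Lipschitz bound $\|g^{-1} - g'^{-1}\|_{\HH^2_{-1/2}} \lesssim \|g - g'\|_{\HH^2_{-1/2}}$ from Lemma \ref{GInverseAnalysis}, yields the claim.

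For part (4), I expand $\tr_g V - \tr_e V = (g^{-1} - e)^{ij} V_{ij}$ and apply the higher-order weighted product estimate of Lemma \ref{ProductEstimates}(2) with $u = g^{-1} - e$ and $v = V$, using Lemma \ref{GInverseAnalysis}(3) to estimate $\|g^{-1} - e\|_{\HH^w_{-1/2}}$ by $\|g-e\|_{\HH^w_{-1/2}} + C_w \|g-e\|_{\HH^2_{-1/2}}$. The resulting bound reproduces the three terms on the right-hand side of \eqref{eqestk49342422222}, with the cubic constant $C_w$ absorbing all factors of $\|g-e\|_{\HH^2_{-1/2}}$ and lower-order product constants.

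The only mildly delicate point is the bookkeeping of weights in part (4): one must verify that the product of $g^{-1}-e$ with $V$ lands in the claimed weighted space $\HH^{w-1}_{-5/2}$, which requires exploiting the combined pointwise decay of $g^{-1}-e$ (from the Sobolev embedding of Lemma \ref{SobolevEmbeddingsAndNonlinear}) together with the $L^2$ decay of $V$, rather than naively summing weights. This is, however, precisely the structure built into the weighted product estimates of Lemma \ref{ProductEstimates}, so no new ingredient is needed. As in Lemma \ref{lem:DivergenceBoundedOperator}, the routine computational details are left to the reader.
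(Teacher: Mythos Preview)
Your proposal is correct and takes essentially the same approach the paper intends: the paper itself does not prove this lemma but simply says it is ``similar to the proof of Lemma \ref{lem:DivergenceBoundedOperator} and left to the reader,'' and your write-up carries out exactly that parallel, reducing everything to the algebraic identity $\tr_g V = \tr_e V + (g^{-1}-e)^{ij}V_{ij}$ together with Lemma \ref{GInverseAnalysis} and the product estimates of Lemma \ref{ProductEstimates}. One small slip: in your part (2) you cite part (1) of Lemma \ref{ProductEstimates}, which only gives an $H^0$ bound; you actually want part (2) with $w=2$, using $g^{-1}-e \in \HH^2_{-1/2}$, to land in $H^1_{-2} \subset H^1_{-3/2}$.
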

The proof of Lemma \ref{lem:conttrace} is similar to the proof of Lemma \ref{lem:DivergenceBoundedOperator} and left to the reader. \\


Lemmas \ref{lem:DivergenceBoundedOperator} and \ref{lem:conttrace} imply the following corollary. The proof is left to the reader.
\begin{corollary} \label{remark:Bounded}
There exists a universal constant $\varep>0$ such that the following holds. 
\begin{itemize}
\item Let $w\geq2$ be an integer. The mapping
\begin{align*}
(V,g) \mapsto \Div_g \left( \widehat{V}^g \right)
\end{align*}
is smooth from $\HHm \times B_{\varep}(e)$ to $\HHn$, where 
$$B_{\varep}(e) := \Big\{ g-e \in \HH^w_{-1/2}: \Vert g-e \Vert_{\HH^2_{-1/2}} < \varep \Big\}.$$ 
Furthermore, the restriction of this mapping to $V \in \HHmo$ maps into $\HHno$. 
\item For a Riemannian metric $g$ on $\RRR^3$ such that
\begin{align}
\Vert g - e \Vert_{\HH^2_{-1/2}} < \varep, \label{rem:Bounded}
\end{align}
it holds that for all symmetric $2$-tensors $V\in \HH^1_{-3/2}$,
\begin{align*}
\Vert  \widehat{V}^g \Vert_{\HH^{1}_{-3/2}} &\lesssim \Vert V \Vert_{\HH^1_{-3/2}},\\
\left\Vert \Div_g \left( \widehat{V}^g \right) \right\Vert_{\HH^0_{-5/2}} &\lesssim \Vert V \Vert_{\HH^1_{-3/2}}.
\end{align*}
\item For two Riemannian metrics $g$ and $g'$ on $\RRR^3$ such that $$\Vert g-e \Vert_{\HH^2_{-1/2}}<\varep, \Vert g'-e \Vert_{\HH^2_{-1/2}}< \varep,$$
it holds that for all symmetric $2$-tensors $V \in \HH^1_{-3/2}$,
\begin{align*}
\left\Vert \widehat{V}^g - \widehat{V}^{g'}  \right\Vert_{\HH^{1}_{-3/2}} &\lesssim \Vert g-g' \Vert_{\HH^2_{-1/2}} \Vert V \Vert_{\HH^1_{-3/2}}, \\
\left\Vert  \Div_g \left( \widehat{V}^g \right)-  \Div_{g'} \left( \widehat{V}^{g'} \right) \right\Vert_{\HH^0_{-5/2}} &\lesssim \Vert g-g' \Vert_{\HH^2_{-1/2}} \Vert V \Vert_{\HH^1_{-3/2}}.
\end{align*}
\end{itemize}
\end{corollary}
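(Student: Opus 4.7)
The plan is to deduce all three bullet points directly from Lemmas \ref{lem:DivergenceBoundedOperator} and \ref{lem:conttrace}, together with the product estimates of Lemma \ref{ProductEstimates} applied to the pointwise factorisation
\begin{align*}
\widehat{V}^g = V - \frac{1}{3}\, \tr_g(V)\, g.
\end{align*}
No genuinely new analytic input is required; the work is just bookkeeping to keep track of where one uses closeness of $g$ to $e$ and where one uses higher regularity.

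For the first bullet, I would argue that $(V,g) \mapsto \tr_g V$ is smooth from $\HHm \times B_\varep(e)$ to $\HHm$ by Lemma \ref{lem:conttrace}, and that the bilinear pointwise multiplication $(f, g) \mapsto f\, g$ is smooth from $\HHm \times (e + \HH^w_{-1/2}) \to \HHm$ by the product estimates (since $\HH^w_{-1/2}$-closeness of $g$ to $e$ controls $g$ in $\HH^2_{-1/2}$, and then the algebra property from Corollary \ref{cor:algebraprop} combined with Lemma \ref{ProductEstimates} gives smoothness). Composing then gives smoothness of $(V,g) \mapsto \widehat{V}^g$ from $\HHm \times B_\varep(e)$ to $\HHm$, and a final composition with the smooth map $\Div$ from Lemma \ref{lem:DivergenceBoundedOperator} yields smoothness of $(V,g) \mapsto \Div_g(\widehat{V}^g)$ into $\HHn$. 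The restriction statement for $V \in \HHmo$ is built into the corresponding restrictions in Lemmas \ref{lem:DivergenceBoundedOperator} and \ref{lem:conttrace}; one only has to check that trivial-extension regularity at $\pr B_1$ is preserved by multiplication by $g$, which is immediate from the definition $\ol{\HH}^w_\de$ via Proposition \ref{prop:TrivialExtensionRegularity}.

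For the second bullet, Lemma \ref{lem:conttrace} gives $\Vert \tr_g V \Vert_{H^1_{-3/2}} \lesssim \Vert V \Vert_{\HH^1_{-3/2}}$; multiplying by $g = e + (g-e)$ and using the product estimate from Lemma \ref{ProductEstimates} together with $\Vert g - e \Vert_{\HH^2_{-1/2}} < \varep$ yields $\Vert \tr_g(V)\, g \Vert_{\HH^1_{-3/2}} \lesssim \Vert V \Vert_{\HH^1_{-3/2}}$, hence the claimed bound on $\widehat V^g$. Applying Lemma \ref{lem:DivergenceBoundedOperator}, part (2), to $\widehat V^g$ then gives the bound on $\Div_g(\widehat V^g)$.

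For the third bullet (the Lipschitz estimates), I would write
\begin{align*}
\widehat V^g - \widehat V^{g'} = -\tfrac{1}{3}\bigl(\tr_g V - \tr_{g'} V\bigr)\, g - \tfrac{1}{3}\, \tr_{g'}(V)\, (g - g'),
\end{align*}
and estimate the two summands using the Lipschitz part of Lemma \ref{lem:conttrace} for the first and the bound $\Vert \tr_{g'} V \Vert_{H^1_{-3/2}} \lesssim \Vert V \Vert_{\HH^1_{-3/2}}$ combined with $\Vert g - g' \Vert_{\HH^2_{-1/2}}$ via product estimates for the second. For the divergence difference, I would split
\begin{align*}
\Div_g(\widehat V^g) - \Div_{g'}(\widehat V^{g'}) = \Div_g\bigl(\widehat V^g - \widehat V^{g'}\bigr) + \bigl(\Div_g - \Div_{g'}\bigr)\bigl(\widehat V^{g'}\bigr),
\end{align*}
apply part (2) of Lemma \ref{lem:DivergenceBoundedOperator} to the first term combined with the previously established Lipschitz estimate for $\widehat V^g - \widehat V^{g'}$, and apply part (3) of Lemma \ref{lem:DivergenceBoundedOperator} to the second term. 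The only step that requires any care is keeping track of the $\HH^2_{-1/2}$ versus $\HH^1_{-3/2}$ norms in the product estimates; nothing deeper is involved, which is why the author leaves the proof to the reader.
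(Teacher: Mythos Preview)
Your proposal is correct and follows exactly the approach the paper intends: the paper states only that ``Lemmas \ref{lem:DivergenceBoundedOperator} and \ref{lem:conttrace} imply the following corollary. The proof is left to the reader,'' and your argument does precisely this by combining those two lemmas with the product estimates of Lemma \ref{ProductEstimates}. The splittings you use for the Lipschitz estimates in the third bullet are the natural ones and are just what is needed.
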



\subsection{Reduction to the Euclidean case} \label{ssec:Reduction1}

In this section, we prove Theorem \ref{thm:kextension1} under the assumption of Lemma \ref{prop:EuclideanSurjectivity} below which is proved in Section \ref{sec:EuclideanSurjectivity}. First, as an intermediate step, we prove the next proposition.
\begin{proposition} \label{prop:Dez191}
There is a universal constant $\varep>0$ such that the following holds.
\begin{enumerate}
\item \underline{Existence.} Let $g$ be an $\HH^2_{-1/2}$-asymptotically flat metric and $\rho \in \ol{\HH}^{0}_{-5/2}$ a $1$-form on $\RRR^3$ such that
\begin{align}
\Vert g -e \Vert_{\HH^2_{-1/2}} + \Vert \rh \Vert_{\ol{\HH}^{0}_{-5/2}} <\varep.\label{eq:condsmall2feb}
\end{align}
Then there exists $k \in \ol{\HH}^{1}_{-3/2}$ solving on $\RRRwo$
\begin{align*}
\begin{aligned}
\Div_g k &= \rh,\\
\tr_g k &= 0
 \end{aligned}
\end{align*}
and bounded by
\begin{align}
\Vert k \Vert_{\ol{\HH}^{1}_{-3/2}} \lesssim \Vert \rho \Vert_{\ol{\HH}^{0}_{-5/2}}. \label{12j1p}
\end{align}
\item \underline{Iteration estimates.} Moreover, for two pairs $(g,\rh)$ and $(g',\rh')$ satisfying \eqref{eq:condsmall2feb}, the respectively constructed $k, k'$ satisfy
\begin{align} \label{eq:mars15secondest1}
\Vert k-k' \Vert_{\ol{\HH}^{1}_{-3/2}} \lesssim \Vert \rh-\rh' \Vert_{\ol{\HH}^{0}_{-5/2}} + \Vert g-g' \Vert_{\HH^2_{-1/2}} \Vert \rh \Vert_{\ol{\HH}^{0}_{-5/2}}.
\end{align}
\item \underline{Higher regularity estimates.} If, in addition to \eqref{eq:condsmall2feb}, $g$ is an $\HH^{w}_{-1/2}$-asymptotically flat metric and $\rh \in \ol{\HH}^{w-2}_{-5/2}$ for an integer $w\geq3$, then
\begin{align} \begin{aligned}
\Vert k \Vert_{\ol{\HH}^{w-1}_{-3/2}} \lesssim \Vert \rh \Vert_{\ol{\HH}^0_{-5/2}} \Vert g-e \Vert_{\HH^{w}_{-1/2}} +  \Vert \rh \Vert_{\ol{\HH}^{w-2}_{-5/2}} + C_w \Vert \rh \Vert_{\ol{\HH}^0_{-5/2}},
\end{aligned} \label{hre3435389929} \end{align}
where the constant $C_w>0$ depends on $w$.
\end{enumerate}
\end{proposition}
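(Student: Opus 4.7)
The plan is to parametrise away the trace constraint and then solve the resulting prescribed-divergence problem as a small-data perturbation of its Euclidean counterpart, using the surjectivity furnished by Lemma \ref{prop:EuclideanSurjectivity} (proved in Section \ref{sec:EuclideanSurjectivity}). Since $\tr_g k = 0$ is linear in $k$, I would seek $k$ in the tracefree form $k = \widehat{V}^g$ for a symmetric $2$-tensor $V \in \ol{\HH}^1_{-3/2}$, which automatically satisfies the trace equation. The remaining equation reads
$$\FF(V,g) := \Div_g\bigl(\widehat V^g\bigr) = \rho,$$
and by Corollary \ref{remark:Bounded} the map $\FF : \ol{\HH}^1_{-3/2} \times B_\varep(e) \to \ol{\HH}^0_{-5/2}$ is smooth with $\FF(0,e)=0$ and linearisation $D_1\FF|_{(0,e)}(V) = \Div_e(\widehat V^e)$.

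To solve $\FF(V,g) = \rho$, I would invoke Lemma \ref{prop:EuclideanSurjectivity} to obtain a bounded linear right inverse $\SS : \ol{\HH}^0_{-5/2} \to \ol{\HH}^1_{-3/2}$ of this Euclidean linearisation, and rewrite the equation as the fixed-point problem
$$V = \SS\bigl( \rho - T(V,g) \bigr), \qquad T(V,g) := \Div_g\bigl(\widehat V^g\bigr) - \Div_e\bigl(\widehat V^e\bigr).$$
By Corollary \ref{remark:Bounded} combined with Lemma \ref{lem:opth12j} (applied with $g-e$ in the role of the second variable, exploiting $T(V,e)=0$), one obtains the Lipschitz bounds
$$\Vert T(V,g) \Vert_{\ol{\HH}^0_{-5/2}} \lesssim \Vert g-e \Vert_{\HH^2_{-1/2}} \Vert V \Vert_{\ol{\HH}^1_{-3/2}}, \quad \Vert T(V_1,g)-T(V_2,g) \Vert_{\ol{\HH}^0_{-5/2}} \lesssim \varep\, \Vert V_1 - V_2 \Vert_{\ol{\HH}^1_{-3/2}}.$$
For $\varep$ small enough the map $V \mapsto \SS\bigl(\rho - T(V,g)\bigr)$ is then a contraction on a ball of radius proportional to $\Vert \rho \Vert_{\ol{\HH}^0_{-5/2}}$ in $\ol{\HH}^1_{-3/2}$, and the Banach fixed-point theorem produces a unique $V$ satisfying \eqref{12j1p}; setting $k := \widehat V^g$ finishes the existence proof.

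For the iteration estimate \eqref{eq:mars15secondest1}, I would subtract the fixed-point equations for $(\rho,g)$ and $(\rho',g')$ and use the Lipschitz properties of $\SS$ and $T$ in both arguments (again from Corollary \ref{remark:Bounded} and Lemma \ref{lem:opth12j}) to control the differences, the small factor arising because $T$ vanishes at $g=e$. For the tame higher regularity estimate \eqref{hre3435389929} I would bootstrap on the same fixed-point identity, applying the higher-order divergence bound \eqref{eqestk493424} from Lemma \ref{lem:DivergenceBoundedOperator}, the higher-regularity product estimates of Lemma \ref{ProductEstimates}, and the higher-order mapping properties of $\SS$ expected to be provided by Lemma \ref{prop:EuclideanSurjectivity}; this produces the linear-in-$\Vert g-e\Vert_{\HH^w_{-1/2}}$ plus linear-in-$\Vert \rho \Vert_{\ol{\HH}^{w-2}_{-5/2}}$ structure claimed.

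\textbf{Main obstacle.} All the genuine analytic difficulty is concentrated in Lemma \ref{prop:EuclideanSurjectivity}: producing a right inverse $\SS$ whose image actually lies in the \emph{barred} space $\ol{\HH}^1_{-3/2}$, i.e. with $k$ vanishing in trace on $\pr B_1$, is what forces the spherical-harmonic/Hodge-Fourier decomposition and the Dirichlet-to-Neumann analysis of Section \ref{sec:EuclideanSurjectivity}, and crucially uses the underdetermined character of the system. Once that lemma is granted, the only remaining care here is verifying that the fixed-point iterates and their limit genuinely stay within the barred spaces, which is automatic from the boundedness of $\SS$ into $\ol{\HH}^1_{-3/2}$ and the fact that $T(V,g)$ preserves the $\ol{\HH}^0_{-5/2}$-class.
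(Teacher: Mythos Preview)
Your approach is correct and essentially the same as the paper's: both parametrise $k=\widehat V^g$, reduce to the Euclidean operator via Lemma~\ref{prop:EuclideanSurjectivity}, and treat the general-$g$ case as a small perturbation. The only cosmetic difference is that the paper packages the perturbation step as an application of the Implicit Function Theorem (Theorem~\ref{thm:InverseMars14}) on the orthogonal complement $\ol{\NN_e}$ of the Euclidean kernel, whereas you unpack this into an explicit Banach fixed-point iteration using a right inverse $\SS$; for higher regularity the paper compares the solution to its Euclidean counterpart while you bootstrap directly on the fixed-point identity, but both arguments rely on the same tame estimates from Lemmas~\ref{lem:DivergenceBoundedOperator} and~\ref{lem:conttrace} and lead to the same conclusion.
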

To prove Proposition \ref{prop:Dez191}, we assume the following essential lemma proved in Section \ref{sssec:EuclideanSurjectivity}. 
\begin{lemma}[Surjectivity at the Euclidean metric] \label{prop:EuclideanSurjectivity} The following holds.
\begin{enumerate}
\item \underline{Surjectivity.} For any $\rh \in \ol{\HH}^{0}_{-5/2}$, there exists a symmetric $2$-tensor $k\in \ol{\HH}^1_{-3/2}$ solving on $\RRRwo$
\begin{align*}
\begin{aligned}
\Div_e k &= \rh, \\ 
 \tr_e k &= 0
 \end{aligned}
\end{align*}
and bounded by
\begin{align} \label{firstkestsepsep}
\Vert k \Vert_{\ol{\HH}^{1}_{-3/2}} \lesssim \Vert \rh \Vert_{\ol{\HH}^{0}_{-5/2}}.
\end{align}
In other words, the mapping $k \mapsto \Div_e ( \hat{k}^e )$ from $\overline{\HH}^{1}_{-3/2}$ to $\ol{\HH}^{0}_{-5/2}$ is surjective and has a bounded right-inverse. 
\item \underline{Higher regularity.} If in addition it holds that $\rh \in \ol{\HH}^{w-2}_{-5/2}$ for an integer $w\geq3$, then
\begin{align} \label{firstkestsepsep2}
\Vert k \Vert_{\ol{\HH}^{w-1}_{-3/2}} \lesssim \Vert \rh \Vert_{\ol{\HH}^{w-2}_{-5/2}} + C_w \Vert \rh \Vert_{\ol{\HH}^0_{-5/2}}.
\end{align}
\end{enumerate}
\end{lemma}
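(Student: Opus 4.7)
The plan is to solve the system $\{\Div_e k = \rh,\, \tr_e k = 0\}$ mode by mode after expanding every tensor in the Hodge--Fourier bases introduced in Section \ref{ssec:Hodgebasis}. I first decompose $\rh$ with respect to the radial foliation into a scalar normal component $\rh_N$ and an $S_r$-tangent $1$-form $\rhod$, and expand
\begin{align*}
\rh_N = \sum_{l,m} \rh_N^{(lm)}(r)\, Y^{(lm)}, \qquad \rhod = \sum_{l\geq 1,m} \Big( (\rhod)_E^{(lm)}(r)\, E^{(lm)} + (\rhod)_H^{(lm)}(r)\, H^{(lm)} \Big).
\end{align*}
I decompose $k$ into its normal component $k_{NN}$, its mixed $S_r$-tangent vectorfield component, and its $S_r$-tangent tangential $2$-tensor component; the constraint $\tr_e k=0$ determines the $\gac$-trace of the tangential component in terms of $k_{NN}$, leaving a tracefree tangential tensor which I expand in the basis $\{\psi^{(lm)},\phi^{(lm)}\}$. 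Each of the four remaining quantities is expanded in the corresponding Hodge--Fourier basis.

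A direct computation in the polar orthonormal frame \eqref{polarnormalframe}, using Remark \ref{remark:polarcoorde} for the Christoffel symbols and Lemma \ref{lem:RelationsSpherical} to convert angular divergences into algebraic factors in $l$ and $r$, shows that on each spherical harmonic mode the system decouples into an \emph{electric} sector involving $k_{NN}^{(lm)}$, the $E$-mode of the mixed component and the $\psi$-mode of the tangential component (sourced by $\rh_N^{(lm)}$ and the $E$-mode of $\rhod$) and a \emph{magnetic} sector involving the $H$-mode of the mixed component and the $\phi$-mode of the tangential component (sourced by the $H$-mode of $\rhod$). For $l\geq 2$ the electric sector is a first-order radial ODE system with $3$ unknowns against $2$ equations, while the magnetic sector has $2$ unknowns against $1$ equation; there is therefore one free scalar function per sector per mode, reflecting the two-dimensional underdetermined character of the $(\Div,\tr)$-system. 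For $l=0,1$ the $\{\psi,\phi\}$ bases are absent and the reduced system becomes determined.

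For $l\geq 2$ I plan to fix a gauge for the free unknown in each sector (for instance, requiring one of the unknowns to be a prescribed antiderivative of the source), then solve the resulting determined first-order ODE system on $(1,\infty)$ by the method of variation of constants, with zero initial condition at $r=1$ as dictated by Proposition \ref{prop:TrivialExtensionRegularity}. The resulting mode-by-mode estimate takes the schematic form
\begin{align*}
\int_1^\infty r \Big( |\pr_r k^{(lm)}|^2 + \tfrac{l(l+1)}{r^2} |k^{(lm)}|^2 \Big)\, dr \,\lesssim\, \int_1^\infty r^3 |\rh^{(lm)}|^2\, dr,
\end{align*}
and is obtained by a Hardy-type inequality on $(1,\infty)$ combined with the vanishing boundary condition at $r=1$. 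Summing in $(l,m)$ using Proposition \ref{prop:Completeness} and Lemma \ref{lem:practical4j} delivers \eqref{firstkestsepsep}. The higher-regularity estimate \eqref{firstkestsepsep2} then follows by commuting additional radial derivatives $\pr_r$ through the ODE system with the identities of Lemma \ref{lem:commutationrelation}, and reapplying the same Hardy-type bound at each order, the commutator remainders being absorbed into the constant $C_w$.

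The main obstacle is the exceptional sector $l=0,1$, where the tangential $2$-tensor basis is absent and no free parameter remains to optimize the solution: one must then solve a rigid radial ODE system with zero data at $r=1$ and \emph{verify} directly that the decay at infinity required for $k\in\ol{\HH}^1_{-3/2}$ is inherited from $\rh\in\ol{\HH}^0_{-5/2}$. This is the point at which the weight pair $(-5/2,-3/2)$ enters essentially: the $l=0,1$ radial operators are of Euler type, their fundamental solutions are explicit powers of $r$, and the assumed decay of $\rh$ is exactly what is needed to make the variation-of-constants integrals converge and match the target weight. Piecing the explicit low modes together with the higher-mode construction and reassembling via Proposition \ref{prop:Completeness} will complete the proof of surjectivity and produce a bounded right-inverse as claimed.
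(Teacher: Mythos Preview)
Your overall strategy---radial foliation plus Hodge--Fourier decomposition into modes---is the paper's strategy. But you have misdiagnosed where the difficulty lies, and your handling of the $l\geq 2$ modes has a genuine gap.

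The low modes $l=0,1$ are \emph{not} the main obstacle. The paper dispatches them by explicit quadrature in a few lines (see the control of $\de^{[0]}$, $\de^{[1]}$, $\si_{NN}^{[1]}$ in Lemma \ref{lem:bdrycntrlfeb5}); the Euler-type radial ODEs that arise have harmless exponents, and one integration by parts yields the weighted bounds.

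The real difficulty is $l\geq 2$. Your plan---fix an arbitrary gauge for the free unknown, solve the remaining $2\times 2$ first-order system by variation of constants with zero data at $r=1$, close by a Hardy inequality---does not give \emph{uniform-in-$l$} estimates. The coupling coefficients in the radial system are of size $\sqrt{l(l+1)}/r$, so the fundamental matrix carries a mode growing like $r^{+c\,l}$; integrating that mode forward from $r=1$ produces constants that blow up in $l$, and the sum over modes diverges. The single free function per sector is not there to be fixed ``for instance'' as an antiderivative of the source---it must be chosen precisely to kill this growing mode while still matching the boundary requirement at $r=1$.

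The paper's resolution is substantially different from what you sketch. It augments the underdetermined system by $\Curl_e k=\si$ with a \emph{constructed} $\si$, derives from the augmented system a genuine second-order elliptic PDE for $\de^{[\geq 2]}=k_{NN}^{[\geq 2]}$ on $\RRRwo$ (equation \eqref{eq:delta4}), and solves it with Dirichlet data; ellipticity then gives $l$-uniform control automatically (Appendix \ref{sec:WEIGHTEDellipticity}). But Dirichlet data alone do not control $\pr_r\de^{[\geq 2]}|_{r=1}$, and since $\ep$ is recovered from $\pr_r\de$ through a sphere Hodge system (see \eqref{eq:ep2}), one must additionally force $\pr_r\de^{[\geq 2]}|_{r=1}=0$ to obtain $\ep|_{r=1}=0$ and hence $k\in\HHmo$. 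This is done by inserting explicit auxiliary corrections $\zeta_E,\zeta_H$ into the source (equations \eqref{eq:defofZetaE}--\eqref{constant8feb}), chosen so that specific weighted radial integrals vanish mode by mode; Proposition \ref{prop:boundaryvalues} shows this controls the Dirichlet-to-Neumann map. The higher-regularity estimate \eqref{firstkestsepsep2} then follows from this boundary control propagating through the coupled definitions of $\ep,\eh,\si$ (Lemma \ref{lem:bdrycntrlfeb5}), not merely from commuting $\pr_r$ through transport equations.
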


For the rest of this section denote
\begin{align}
\ol{\NN_e} := \mathrm{ker} \left( \Div_e \circ \, \left( \hat{\,\,\,}^e \right) \right)^\perp \subset \overline{\HH}^{1}_{-3/2},
 \label{defNNmars15}
\end{align}
where ${}^\perp$ denotes the orthogonal complement with respect to the scalar product on $\overline{\HH}^{1}_{-3/2}$.

\begin{remark} \label{remark459434} The mapping $k \mapsto \Div_e ( \hat{k}^e )$ is clearly a bounded linear mapping from $\overline{\HH}^{1}_{-3/2}$ into $\ol{\HH}^{0}_{-5/2}$. Therefore its kernel is closed, and we have the splitting
\begin{align*}
\overline{\HH}^{1}_{-3/2} = \ol{\NN_e} \oplus \mathrm{ker} \left( \Div_e \circ \, \left( \hat{\,\,\,}^e \right) \right).
\end{align*}
\end{remark}


We are now ready to prove Proposition \ref{prop:Dez191} by Lemma \ref{prop:EuclideanSurjectivity} and the Implicit Function Theorem \ref{thm:InverseMars14}.
\begin{proof}[Proof of Proposition \ref{prop:Dez191}] We prove each part separately. \\

{\bf Proof of part (1).} We apply the Implicit Function Theorem \ref{thm:InverseMars14} to the mapping
\begin{align*}
\F: \ol{\NN_e} \times \HH^2_{-1/2} &\to \ol{\HH}^{0}_{-5/2} \\ 
(k,h) &\mapsto \rh := \Div_{e+h} \Big( \hat{k}^{e+h} \Big),
\end{align*}
where $h$ is a symmetric $2$-tensor.
We verify that $\FF$ satisfies the assumptions of Theorem \ref{thm:InverseMars14} at $(k,h)=0$. On the one hand, by Corollary \ref{remark:Bounded}, there exists an $\varep'>0$ such that $\F$ is a smooth mapping from $\ol{\NN_e} \times B_{\varep'}(0)$ to $\ol{\HH}^{0}_{-5/2}$, where $B_{\varep'}(0) \subset \HH^2_{-1/2}$, and $\F(0,0)=0$. On the other hand, by Lemma \ref{prop:EuclideanSurjectivity}, the definition of $\ol{\NN_e}$ in \eqref{defNNmars15} and Remark \ref{remark459434}, the linearisation in the first argument at $h=0$,
$$D_1 \F \vert_{h=0}: \ol{\NN_e} \to \ol{\HH}^{0}_{-5/2},$$ 
is an isomorphism.\\

Consequently, by Theorem \ref{thm:InverseMars14}, there exists an open neighbourhood $V_0 \subset B_{\varep'}(0) \times \ol{\HH}^{0}_{-5/2}$ of $(h, \rh)=(0,0)$ and a unique mapping $\GG: V_0 \to \ol{\HH}^{1}_{-3/2}$ into symmetric $2$-tensors such that on $\RRRwo$ 
\begin{align*}
\Div_{e+h}  \left( \widehat{\GG}^{e+h}(h,\rh) \right)=\rh
\end{align*}
for all $(h, \rh) \in V_0$. We note that this mapping $\GG$ is smooth. By the uniqueness of $\GG$ and because $\F(0,h)=0$ for all $h \in B_{\varep'}(0)$, it holds that for all $(h,0) \in V_0$, 
\begin{align}
\GG(h,0)=0. \label{eq:uniquehel}
\end{align}

For $(h, \rh) \in V_0$, let $k := \widehat{\GG}^{e+h}(h, \rh)$. Then, on $\RRRwo$,
\begin{align*} \begin{aligned}
\Div_{e+h} k = \rh, \\
\tr_{e+h} k =0.\end{aligned}
\end{align*}

Let $0< \varep< \varep'$ be a sufficiently small real such that 
$$(h,\rh) \in B_\varep(0) \times B_{\varep}(0) \subset V_0$$
and such that we can apply Corollary \ref{remark:Bounded}. Then we have, by using also Lemma \ref{lem:opth12j},
\begin{align*} \begin{aligned}
\Vert k \Vert_{\ol{\HH}^{1}_{-3/2}} &= \Vert \widehat{\GG}^{e+h}(h, \rh) \Vert_{\ol{\HH}^{1}_{-3/2}} \\
&\lesssim \Vert \GG(h,\rh) \Vert_{\ol{\HH}^{1}_{-3/2}}\\
&\lesssim \Vert \mathcal{G}(h,\rh) - \underbrace{\mathcal{G}(h,0)}_{=0} \Vert_{\ol{\HH}^{1}_{-3/2}} \\
&\lesssim \Vert \rh \Vert_{\ol{\HH}^{0}_{-5/2}}.
\end{aligned} 
\end{align*}
This proves \eqref{12j1p}. \\


{\bf Proof of part (2).} Let two pairs $(h, \rh), (h',\rh') \in B_\varep(0) \times B_{\varep}(0) \subset V_0$. By Lemma \ref{lem:opth12j} and \eqref{eq:uniquehel}, it follows that for $\varep>0$ sufficiently small,
\begin{align} \begin{aligned}
\Vert \GG(h, \rh)-\GG(h',\rh) \Vert_{\ol{\HH}^{1}_{-3/2}} &\lesssim \Vert h-h' \Vert_{\HH^{2}_{-1/2}(\RRR^3)} \Vert \rh \Vert_{\ol{\HH}^{0}_{-3/2}}, \\
\Vert \GG(h, \rh) \Vert_{\ol{\HH}^{1}_{-3/2}} &\lesssim \Vert \rh \Vert_{\ol{\HH}^{0}_{-3/2}}.
\end{aligned}\label{eq:estmars15} \end{align}
Moreover, by the smoothness of $\GG$, for $\varep>0$ sufficiently small,
\begin{align}
\Vert \GG(h', \rh)-\GG(h',\rh') \Vert_{\ol{\HH}^{1}_{-3/2}} \lesssim \Vert \rh-\rh' \Vert_{\ol{\HH}^{0}_{-3/2}}. \label{eq:estmars15212j}
\end{align}

Let $k:= \widehat{\GG}^{e+h}(h,\rh), k':= \widehat{\GG}^{e+h'}(h',\rh')$. By \eqref{eq:estmars15} and \eqref{eq:estmars15212j},  Lemma \ref{lem:conttrace} and Corollary  \ref{remark:Bounded}, for $\varep>0$ sufficiently small,
\begin{align*} \begin{aligned}
&\Vert k-k' \Vert_{\ol{\HH}^{1}_{-3/2}} \\
\lesssim& \left\Vert \left[ \GG(h,\rh) -\GG(h',\rh') \right]^{\wedge_{e+h'}}  \right\Vert_{\ol{\HH}^{1}_{-3/2}} + \Vert (e+h') \tr_{e+h'} \GG(h,\rh) - (e+h) \tr_{e+h} \GG(h,\rh) \Vert_{\ol{\HH}^{1}_{-3/2}}\\
\lesssim& \Vert \GG(h,\rh) -\GG(h',\rh') \Vert_{\ol{\HH}^{1}_{-3/2}} + \Vert h' -h \Vert_{\HH^2_{-1/2}} \Vert \GG(h,\rh) \Vert_{\ol{\HH}^{1}_{-3/2}} \\
\lesssim& \Vert \GG(h,\rh) -\GG(h',\rh) \Vert_{\ol{\HH}^{1}_{-3/2}} + \Vert \GG(h',\rh) -\GG(h',\rh') \Vert_{\ol{\HH}^{1}_{-3/2}} + \Vert h'-h\Vert_{\HH^2_{-1/2}} \Vert \GG(h,\rh) \Vert_{\ol{\HH}^{1}_{-3/2}} \\
\lesssim& \Vert h'-h\Vert_{\HH^2_{-1/2}} \Vert \rh \Vert_{\ol{\HH}^{0}_{-5/2}}  + \Vert \rh -\rh' \Vert_{\ol{\HH}^{0}_{-5/2}}. 
\end{aligned} 
\end{align*}
This proves \eqref{eq:mars15secondest1}. \\

{\bf Proof of part (3).} By part (1) of this proposition, for given $\rh \in \ol{\HH}^{w-2}_{-5/2}$, $w\geq3$, let 
$$k \in \ol{\NN_e} := \mathrm{ker} \left( \Div_e \circ \, \left( \hat{\,\,\,}^e \right) \right)^\perp \subset \overline{\HH}^{1}_{-3/2}$$ 
on $\RRRwo$ be solution to 
\begin{align} \begin{aligned}
\Div_g k =& \rh, \\
\tr_g k =&0.
\end{aligned} \label{aligne343412535} \end{align}
By the definition of $\ol{\NN_e}$, it holds further that
\begin{align*}
\hat{k}^e := k - \frac{1}{3} \tr_e k \, e \in \ol{\NN_e}.
\end{align*}

On the other hand, by Lemma \ref{prop:EuclideanSurjectivity}, let $k' \in \ol{\HH}^1_{-3/2}$ be the constructed solution to
\begin{align} \begin{aligned}
\Div_e k' =& \rh', \\
\tr_e k' =&0
\end{aligned} \label{syshih2243} \end{align}
for $$\rh' := \Div_e k - \frac{1}{3} d(\tr_e k)= \Div_e(\hat{k}^e).$$ 

By \eqref{aligne343412535} and \eqref{syshih2243}, we have 
$\hat{k}^e - k' \in \ol{\NN_e},$
that is, $\hat{k}^e$ equals $k'$ up to a part of $k'$ in $\mathrm{ker} \left( \Div_e \circ \, \left( \hat{\,\,\,}^e \right) \right)$. \\

Therefore, by estimate \eqref{firstkestsepsep2} for $k'$ in Lemma \ref{prop:EuclideanSurjectivity}, it follows that for integers $w\geq3$,
\begin{align} \begin{aligned}
&\Vert k \Vert_{\ol{\HH}^{w-1}_{-3/2}} \\
\leq & \Vert \hat{k}^e \Vert_{\ol{\HH}^{w-1}_{-3/2}} + \frac{1}{3}\Vert \tr_e k \Vert_{\ol{\HH}^{w-1}_{-3/2}} \\
\leq& \Vert k' \Vert_{\ol{\HH}^{w-1}_{-3/2}} + \frac{1}{3} \Vert \tr_e k - \tr_g k \Vert_{\ol{\HH}^{w-1}_{-3/2}} \\
\lesssim& \Vert \Div_e k' \Vert_{\ol{\HH}^{w-2}_{-5/2}} +C\Vert \Div_e k' \Vert_{\ol{\HH}^{0}_{-5/2}} +  \Vert \tr_e k - \tr_g k \Vert_{\ol{\HH}^{w-1}_{-3/2}} \\
\lesssim& \left\Vert \Div_e k - \frac{1}{3} d(\tr_e k) \right\Vert_{\ol{\HH}^{w-2}_{-5/2}} + C\left\Vert \Div_e k - \frac{1}{3} d(\tr_e k) \right\Vert_{\ol{\HH}^{0}_{-5/2}}+  \Vert \tr_e k - \tr_g k \Vert_{\ol{\HH}^{w-1}_{-3/2}} \\
\lesssim& \Vert \Div_e k - \Div_g k \Vert_{\ol{\HH}^{w-2}_{-5/2}} + \Vert \Div_g k \Vert_{\ol{\HH}^{w-2}_{-5/2}} + \Vert \tr_e k - \tr_g k \Vert_{\ol{\HH}^{w-1}_{-3/2}} +C \Vert k \Vert_{\ol{\HH}^1_{-3/2}} \\
\lesssim& \Vert \Div_e k - \Div_g k \Vert_{\ol{\HH}^{w-2}_{-5/2}} + \Vert \rh \Vert_{\ol{\HH}^{w-2}_{-5/2}} + \Vert \tr_e k - \tr_g k \Vert_{\ol{\HH}^{w-1}_{-3/2}}+C_w \Vert \rh \Vert_{\ol{\HH}^0_{-5/2}},
\end{aligned} \label{khigher343524} \end{align}
where we used part (1) of this proposition and \eqref{aligne343412535}. \\

By Lemmas \ref{lem:DivergenceBoundedOperator} and \ref{lem:conttrace} applied to the right-hand side of \eqref{khigher343524}, we have
\begin{align*}
\Vert k \Vert_{\ol{\HH}^{w-1}_{-3/2}} \lesssim&  \Vert g-e\Vert_{\HH^2_{-1/2}} \Vert k \Vert_{\ol{\HH}^{w-1}_{-3/2}} + \Vert k \Vert_{\HH^1_{-3/2}} \Vert g-e \Vert_{\HH^{w}_{-1/2}} + \Vert \rh \Vert_{\ol{\HH}^{w-2}_{-5/2}} \\
&+ C_w \Big( \Vert k \Vert_{\ol{\HH}^1_{-3/2}} \Vert g-e \Vert_{\HH^2_{-1/2}} + \Vert \rh \Vert_{\ol{\HH}^0_{-5/2}}\Big).
\end{align*}
Therefore, for $\varep>0$ sufficiently small, we can absorb the first term on the right-hand side and get
\begin{align*}
\Vert k \Vert_{\ol{\HH}^{w-1}_{-3/2}} \lesssim& \Vert k \Vert_{\ol{\HH}^1_{-3/2} }\Vert g-e \Vert_{\HH^{w}_{-1/2}} + \Vert \rh \Vert_{\ol{\HH}^{w-2}_{-5/2}} \\
&+ C_w \Big( \Vert k \Vert_{\ol{\HH}^1_{-3/2}} \Vert g-e \Vert_{\HH^2_{-1/2}} + \Vert \rh \Vert_{\ol{\HH}^0_{-5/2}}\Big) \\
\lesssim& \Vert \rh \Vert_{\ol{\HH}^0_{-5/2}} \Vert g-e \Vert_{\HH^{w}_{-1/2}} + \Vert \rh \Vert_{\ol{\HH}^{w-2}_{-5/2}} +C_w \Vert \rh \Vert_{\ol{\HH}^0_{-5/2}}.
\end{align*}
This finishes the proof of Proposition \ref{prop:Dez191}. \end{proof}


We now turn to the proof of Theorem \ref{thm:kextension1}. 
\begin{proof}[Proof of Theorem \ref{thm:kextension1}] We prove the three parts of Theorem \ref{thm:kextension1} separately.\\

{\bf Proof of Part 1:} Let the symmetric $2$-tensor $\bar k \in \HH^{1}(B_1)$ solve on $B_1$
\begin{align*}
\Div_g \bar k =0,\\
\tr_g \bar k= 0.
\end{align*}
Using Proposition \ref{functionextcts}, extend $ \bar k$ to a symmetric $2$-tensor $\check k \in \HH^{1}_{loc}(\RRR^3)$. We can assume without loss of generality that $\check k$ is $g$-tracefree and
\begin{align}
\Vert \check k \Vert_{\HH^{1}_{-3/2}} \lesssim \Vert \bar k \Vert_{\HH^{1}(B_1)}. \label{eq:bound116}
\end{align}
Indeed, for $\Vert g-e \Vert_{\HH^2_{-1/2}}$ small enough, multiplying by a cut-off function and taking the $g$-tracefree part are both continuous endomorphisms of $\HH^{1}_{loc}(\RRR^3)$, see Corollary \ref{remark:Bounded}.\\

Let $ \rh:= \Div_g \check k$. For $\varep>0$ small enough, by Lemma \ref{lem:DivergenceBoundedOperator} and \eqref{eq:bound116},
\begin{align}\begin{aligned}
\Vert  \rh \Vert_{\HH^{0}_{-5/2}} &\lesssim \Vert \check k \Vert_{\HH^{1}_{-3/2}}\\
&\lesssim \Vert \bar k \Vert_{\HH^{1}(B_1)}.
\end{aligned} \label{mars15eq5}
\end{align}
Further, it holds that on $B_1$
\begin{align*}
 \rh = \Div_g \check k = \Div_g \bar k =0,
\end{align*}
so by Proposition \ref{prop:TrivialExtensionRegularity}, $ \rh \in \ol{\HH}^{0}_{-5/2}$. This $ \rh$ is in general non-trivial (otherwise we would be done), and the Sobolev extension $\check k$ is therefore in general not a solution to \eqref{ZeroHodge}.\\

We have by \eqref{mars15eq5}
\begin{align*}
\Vert g-e \Vert_{\HH^2_{-1/2}} + \Vert  \rh \Vert_{\ol{\HH}^0_{-5/2}} &\lesssim \Vert g-e \Vert_{\HH^2_{-1/2}} + \Vert \bar k \Vert_{\ol{H}^{1}(B_1)}.
\end{align*}
Therefore, for $\varep>0$ small enough, Proposition \ref{prop:Dez191} yields a symmetric $2$-tensor $\tilde k \in \ol{\HH}^{1}_{-3/2}$ that solves on $\RRRwo$
\begin{align*} 
\Div_g \tilde k &= - \rh, \\
\tr_g \tilde k &=0 
\end{align*}
and is bounded by
\begin{align}
\Vert \tilde k \Vert_{\ol{\HH}^{1}_{-3/2}} &\lesssim \Vert  \rh \Vert_{\ol{\HH}^{0}_{-5/2}}. \label{eq:estimate444feb2}
\end{align}
Extend $\tilde k$ trivially to $B_1$. By Proposition \ref{prop:TrivialExtensionRegularity}, $\tilde k \in \HH^{1}_{-3/2}$. Consequently, the symmetric $2$-tensor
\begin{align} \label{summationklabel}
k := \check k + \tilde k \in \HH^{1}_{-3/2}
\end{align}
is such that $k\vert_{B_1}  = \bar k$ and solves on $\RRR^3$
\begin{align*} 
\Div_g k&=0,\\
\tr_g k &=0.
\end{align*}
Finally, for $\varep>0$ sufficiently small, by the estimates \eqref{mars15eq5} and \eqref{eq:estimate444feb2},
\begin{align}
\Vert k \Vert_{\HH^{1}_{-3/2}} \lesssim \Vert \bar k \Vert_{\HH^{1}(B_1)}. \nr
\end{align}
This proves the first part of Theorem \ref{thm:kextension1}.\\

{\bf Proof of Part 2.} Extend by Proposition \ref{functionextcts} the tensor $\bar k \in \HH^{1}(B_1)$ to a symmetric $2$-tensor $\check{k} \in \HH^{1}_{-3/2}$ on $\RRR^3$ such that
\begin{align}
\Vert \check k \Vert_{\HH^{1}_{-3/2}} \lesssim \Vert \bar k \Vert_{\HH^{1}(B_1)}. \label{12j1}
\end{align} 
Taking the $g$-tracefree and $g'$-tracefree parts of $\check k$ yields two symmetric $2$-tensors $\widehat{\check{k}}^g\in \HH^{1}_{-3/2}$ and $\widehat{\check{k}}^{g'} \in \HH^{1}_{-3/2}$, respectively, that both extend $\bar k$ and satisfy for $\varep>0$ sufficiently small,
\begin{align*}
\Big\Vert \widehat{\check{k}}^g \Big\Vert_{\HH^{1}_{-3/2}} \lesssim \Vert \bar k \Vert_{\HH^{1}(B_1)}, \,
\Big\Vert \widehat{\check{k}}^{g'} \Big\Vert_{\HH^{1}_{-3/2}} \lesssim \Vert \bar k \Vert_{\HH^{1}(B_1)}.
\end{align*}
By Proposition \ref{prop:TrivialExtensionRegularity},
\begin{align*}
 \rh := \Div_g  \widehat{\check{k}}^{g} \in \ol{\HH}^0_{-5/2},  \rh' := \Div_{g'} \widehat{\check{k}}^{g'}  \in \ol{\HH}^0_{-5/2}.
\end{align*}
For $\varep>0$ sufficiently small, by Lemma \ref{lem:DivergenceBoundedOperator} and \eqref{12j1},
\begin{align}\label{12j2eq}
\Vert \rh \Vert_{\ol{\HH}^0_{-5/2}}\lesssim \Vert \bar k \Vert_{\HH^{w-1}(B_1)}, \, \Vert \rh' \Vert_{\ol{\HH}^0_{-5/2}} \lesssim \Vert \bar k \Vert_{\HH^{w-1}(B_1)}.
\end{align}
For $\varep>0$ small enough, applying Proposition \ref{prop:Dez191} to $\rh, \rh'$ with metrics $g, g'$ yields two tensors $\tilde k, \tilde k' \in \ol{\HH}^{1}_{-3/2}$, respectively, that satisfy
\begin{align*}
\Div_g \tilde k &= - \rh, \\
\tr_g \tilde k &= 0, \\
\Div_{g'} \tilde k' &= -\rh', \\
\tr_{g' } \tilde k' &= 0.
\end{align*}
By \eqref{eq:mars15secondest1} in Proposition \ref{prop:Dez191}, for $\varep>0$ sufficiently small,
\begin{align} \begin{aligned}
\Vert \tilde k - \tilde k' \Vert_{\ol{\HH}^{1}_{-3/2}} &\lesssim \Vert g-g' \Vert_{\HH^2_{-1/2}} \Vert \rh \Vert_{\ol{\HH}^{0}_{-5/2}} + \Vert  \rh - \rh' \Vert_{\ol{\HH}^{0}_{-5/2}} \\
&\lesssim \Vert g-g' \Vert_{\HH^2_{-1/2}} \Vert \bar k \Vert_{\HH^{1}(B_1)} + \left\Vert  \Div_g  \widehat{\check{k}}^{g}  - \Div_{g'} \widehat{\check{k}}^{g'} \right\Vert_{\ol{\HH}^{0}_{-5/2}} \\
&\lesssim  \Vert g-g' \Vert_{\HH^2_{-1/2}} \Vert \bar k \Vert_{\HH^{1}(B_1)} +  \Vert g-g' \Vert_{\HH^2_{-1/2}} \Vert \check k \Vert_{\HH^1_{-3/2}} \\
&\lesssim \Vert g-g' \Vert_{\HH^2_{-1/2}}  \Vert \bar k \Vert_{\HH^{1}(B_1)},
\end{aligned} \label{eq:2feb555} \end{align}
where we used \eqref{12j2eq} and Corollary \ref{remark:Bounded}. Extend $\tilde k, \tilde k'$ trivially to $B_1$. By Proposition \ref{prop:TrivialExtensionRegularity}, $\tilde k, \tilde k' \in \HH^1_{-3/2}$. \\

The tensors 
\begin{align*}
k :=  \widehat{\check{k}}^g + \tilde k \in \HH^1_{-3/2}, 
k' := \widehat{\check{k}}^{g'} + \tilde k' \in \HH^1_{-3/2}
\end{align*}
both extend $\bar k$ and satisfy on $\RRR^3$
\begin{align*}
\Div_g k = 0, \\
\tr_g k = 0, \\
\Div_{g'} k' = 0,\\
\tr_{g'} k' = 0.
\end{align*}
Moreover, their difference is bounded by
\begin{align*}
\Vert k -k' \Vert_{\HH^1_{-3/2}} &\leq \left\Vert  \widehat{\check{k}}^g -\widehat{\check{k}}^{g'} \right\Vert_{\HH^1_{-3/2}} + \Vert \tilde k - \tilde k' \Vert_{\ol{\HH}^1_{-3/2}}\\
&\lesssim \Vert g-g' \Vert_{\HH^2_{-1/2}} \Vert \check k \Vert_{\HH^1_{-3/2}} + \Vert g-g' \Vert_{\HH^2_{-1/2}}  \Vert \bar k \Vert_{\HH^{1}(B_1)}\\
&\lesssim \Vert g-g' \Vert_{\HH^2_{-1/2}}  \Vert \bar k \Vert_{\HH^{1}(B_1)},
\end{align*}
where we used Corollary \ref{remark:Bounded}, \eqref{12j1} and \eqref{eq:2feb555}. \\


{\bf Proof of Part (3).} Let for an integer $w\geq3$ the symmetric $2$-tensor $\bar k \in \HH^{w-1}(B_1)$. By Proposition \ref{functionextcts}, extend $k$ from $B_1$ to a tensor $\check k \in \HH^{w-1}_{-3/2}$ with
\begin{align*}
\Vert \check k \Vert_{\HH^{w-1}_{-3/2}} \leq C_w \Vert \bar k \Vert_{\HH^{w-1}(B_1)}.
\end{align*}
It holds that
\begin{align*}
\widehat{\check k}^g =& \check k - \frac{1}{3} (\tr_g \check k ) g \\
=& \check k - \frac{1}{3} \left( \tr_g \check k - \tr_e \check k \right) (g-e) -\frac{1}{3} \tr_e \check k (g-e)\\
& - \frac{1}{3} \tr_e \check k e - \frac{1}{3} (\tr_g \check k -\tr_e \check k )e,
\end{align*}
and therefore by Lemma \ref{lem:conttrace},
\begin{align} \begin{aligned}
\left\Vert \widehat{\check k}^g \right\Vert_{\HH^{w-1}_{-3/2}} \lesssim& \Vert \check k \Vert_{\HH^{w-1}_{-3/2}} + \Vert \check k \Vert_{\HH^1_{-3/2}} \Vert g-e \Vert_{\HH^{w}_{-1/2}} + C_w \Vert \check k \Vert_{\HH^1_{-3/2}} \Vert g-e \Vert_{\HH^2_{-1/2}} \\
\lesssim&  \Vert \bar{k} \Vert_{\HH^1(B_1)} \Vert g-e \Vert_{\HH^{w}_{-1/2}} + C_w \Big(\Vert \bar k \Vert_{\HH^{w-1}(B_1)} + \Vert \bar{k} \Vert_{\HH^1(B_1)} \Vert g-e \Vert_{\HH^{2}_{-1/2}} \Big).
\end{aligned} \label{sefsege} \end{align}

Defining $\rh := \Div_g \widehat{\check{k}}^g$, we have
\begin{align} \begin{aligned}
\Vert \rh \Vert_{\HH^{w-2}_{-5/2}} \lesssim&  \left\Vert \widehat{\check{k}}^g \right\Vert_{\HH^{w-1}_{-5/2}}+ \left\Vert \Div_g \widehat{\check{k}}^g - \Div_e \widehat{\check{k}}^g \right\Vert_{\HH^{w-2}_{-5/2}} \\
\lesssim& \left\Vert \widehat{\check{k}}^g \right\Vert_{\HH^{w-1}_{-3/2}} + \Big( \Vert g-e\Vert_{\HH^2_{-1/2}} \left\Vert \widehat{\check{k}}^g \right\Vert_{\HH^{w-1}_{-3/2}}  + \left\Vert \widehat{\check{k}}^g \right\Vert_{\HH^1_{-3/2}} \Vert g-e \Vert_{\HH^{w}_{-1/2}} \Big)\\
& + C \left\Vert \widehat{\check{k}}^g \right\Vert_{\HH^1_{-3/2}} \Vert g-e \Vert_{\HH^{2}_{-1/2}} \\
\lesssim& \left\Vert \widehat{\check{k}}^g \right\Vert_{\HH^{w-1}_{-3/2}} + \left\Vert \widehat{\check{k}}^g \right\Vert_{\HH^1_{-3/2}} \Vert g-e \Vert_{\HH^{w}_{-1/2}}+ C_w \left\Vert \widehat{\check{k}}^g \right\Vert_{\HH^1_{-3/2}} \Vert g-e \Vert_{\HH^{2}_{-1/2}} \\
\lesssim& \Vert \bar k \Vert_{\HH^1(B_1)} \Vert g-e \Vert_{\HH^{w}_{-1/2}}+ C_w \Big( \Vert \bar k \Vert_{\HH^{w-1}(B_1)} +\left\Vert \bar k \right\Vert_{\HH^1(B_1)} \Vert g-e \Vert_{\HH^{2}_{-1/2}} \Big),
\end{aligned} \label{eqextension2434} \end{align}
where we used Lemma \ref{lem:DivergenceBoundedOperator} and \eqref{sefsege}.\\

By Proposition \ref{prop:Dez191}, for $\varep>0$ sufficiently small, let $\tilde{k}$ be the constructed solution to
\begin{align*}
\Div_g \tilde{k} =&-\rh, \\
\tr_g \tilde{k} =&0.
\end{align*}
By the estimates in Proposition \ref{prop:Dez191}, we have
\begin{align*}
\Vert \tilde k \Vert_{\ol{\HH}^{w-1}_{-3/2}} \lesssim& \Vert \rh \Vert_{\HH^0_{-5/2}} \Vert g-e \Vert_{\HH^w_{-1/2}} + \Vert \rh \Vert_{\ol{\HH}^{w-2}_{-5/2}}+ C_w \Vert \rh \Vert_{\HH^0_{-5/2}} \\
\lesssim& \Vert \bar k \Vert_{\HH^1(B_1)} \Vert g-e \Vert_{\HH^w_{-1/2}} + C_w \Big( \Vert \bar{k} \Vert_{\HH^{w-1}(B_1)} +\Vert \bar{k} \Vert_{\HH^1(B_1)} \Vert g-e \Vert_{\HH^2_{-1/2}} \Big),\end{align*}
where we used \eqref{sefsege} and \eqref{eqextension2434}. \\

Therefore the tensor $k= \widehat{\check k}^g + \tilde k$ (see \eqref{summationklabel}) satisfies $k\vert_{B_1}=\bar k$ as well as on $\RRR^3$
\begin{align*}
\Div_g \, k =&0, \\
\tr_g \, k =&0,
\end{align*}
and is bounded by
\begin{align*}
\Vert k \Vert_{\HH^{w-1}_{-3/2}} \lesssim& \Vert \bar k \Vert_{\HH^1(B_1)} \Vert g-e \Vert_{\HH^w_{-1/2}} + C_w \Big(\Vert \bar k \Vert_{\HH^{w-1}(B_1)} +\Vert \bar{k} \Vert_{\HH^1(B_1)} \Vert g-e \Vert_{\HH^2_{-1/2}} \Big)\\
\lesssim& \Vert \bar k \Vert_{\HH^1(B_1)} \Vert g-e \Vert_{\HH^w_{-1/2}} + C_w \Vert \bar k \Vert_{\HH^{w-1}(B_1)}.
\end{align*}
This finishes the proof of Theorem \ref{thm:kextension1}. \end{proof}


\subsection{Surjectivity at the Euclidean metric} \label{sssec:EuclideanSurjectivity} \label{sec:EuclideanSurjectivity}

Let $w\geq2$ be an integer. In this section we prove Lemma \ref{prop:EuclideanSurjectivity}, that is, we show that for any $\rho \in \ol{\HH}^{w-2}_{-5/2}$, there exists a symmetric $2$-tensor $k \in \HHmo$ that solves on $\RRRwo$
\begin{align*}
\Div_e k &= \rho, \\
\tr_e k &= 0
\end{align*}
and is bounded by
\begin{align*}
\Vert k \Vert_{\HHmo} \lesssim \Vert \rho \Vert_{\HHno} + C_w \Vert \rho \Vert_{\ol{\HH}^0_{-5/2}}.
\end{align*}

In this section, all differential operators are with respect to the Euclidean metric $e$. The operators $\Divd, \Curld, \Nd$ are the induced operators on the spheres $(S_r,\gac) \subset (\RRR^3,e)$ for $r>0$. 

\begin{remark} Let us note the following.
\begin{itemize}
\item In general, the system on $\RRRwo$
\begin{align}
\begin{aligned}
\Div k &= \rho,\\
\tr k &= 0
\end{aligned}
\label{Hodgeunder}
\end{align}
is underdetermined and does not admit an a priori estimate for solutions $k$. We work with the following Hodge system on $\RRRwo$
\begin{align}\begin{aligned}
\Div k &= \rho,\\
\Curl k &= \si,\\
\tr k &=0, \end{aligned}
\label{Hodge}
\end{align}
where $\si$ is a tracefree symmetric $2$-tensor that we carefully choose by hand. This system admits in general a priori estimates for $k$ in terms of $\rho$ and $\si$, see for example Proposition 4.4.1 in \cite{ChrKl93}. Clearly, a solution $k$ to \eqref{Hodge} is in particular a solution to \eqref{Hodgeunder}.
\item In the following, for given $\rh \in \ol{\HH}^{w-2}_{-5/2}$, we construct tracefree symmetric $2$-tensors $\si \in \ol{\HH}^{w-2}_{-5/2}$ and $k \in \ol{\HH}^{w-1}_{-3/2}$ solving on $\RRRwo$
\begin{align*}
\begin{cases}
\Div k = \rho, \\
\Curl k = \si, \\
\tr k =0, \\
k \vert_{r=1} = 0.
\end{cases}
\end{align*}
We note that generally, for given $\rh \in \ol{\HH}^{w-2}_{-5/2}$ and $\si \in \ol{\HH}^{w-2}_{-5/2}$, this is an overdetermined boundary value problem for $k$. Solutions $k$ automatically satisfy
\begin{align*}
\nab_N k \vert_{r=1} = 0,
\end{align*}
which follows by expressing the system as in \eqref{EinsteinSphere} below.
\item First, we decompose $k$ with respect to the foliation of $\RRR^3 \setminus \{ 0 \}$ by spheres $S_r$ into scalar functions and $S_r$-tangent tensors. Second, the Hodge-Fourier expansion of $S_r$-tangent tensors introduced in Section \ref{sec:Notation} allows to decompose \eqref{Hodge} into three independent sub-systems \textbf{S0},  \textbf{S1} and \textbf{S2}, see later in this section. These sub-systems are then solved individually. 
\end{itemize}
\end{remark}

\subsubsection{Derivation of the equations} \label{sss:mars301}

In this section, we derive the new form of \eqref{Hodge} with respect to the radial foliation of $\RRR^3 \setminus \{0\}$, see Section  \ref{ssec:tensordecomposition} for notations. Decompose the tensor $k$ into 
\begin{itemize}
\item the scalar $\de := k_{NN}$,
\item the $S_r$-tangent vectorfield $\ep_A:= \left( {k \mkern-10mu/\ \mkern-5mu}_{N}\right)_A$,
\item the $S_r$-tangent symmetric $2$-tensor $\et_{AB} := \left( {k \mkern-10mu/\ \mkern-5mu}\right)_{AB}$.
\end{itemize}
\ni Furthermore, let $\eh$ be the tracefree part of $\et$, that is\footnote{Here we use that $\gac^{AB} k_{AB} = -\de$ by the third equation of \eqref{Hodge}.}
\begin{align}
\eh_{AB} := \et_{AB} + \half \de \ga_{AB}. \nr
\end{align}
Decompose $\rh$ into 
\begin{itemize}
\item the scalar $\rh_N,$
\item the $S_r$-tangent vectorfield $\rhod_A := \rh_A,$
\end{itemize}

and $\si$ into
\begin{itemize}
\item the scalar $\si_{NN},$
\item the $S_r$-tangent vectorfield $\sigmaN_A := \si_{AN},$
\item the $S_r$-tangent symmetric $2$-tensor $\sigmad_{AB}:= \si_{AB}.$
\end{itemize}

The system \eqref{Hodge} is equivalent to (this the Euclidean version of Proposition 4.4.3 in \cite{ChrKl93})
\begin{align} \begin{aligned}
\Divd \ep &= \rh_N - \nab_N \de - \frac{3}{r} \de,  \\
\Curld \ep &= \si_{NN},  \\
\Ndn \ep + \frac{2}{r} \ep &=  \half \rhod + \sigmaNd  + \Nd \de,  \\
\Divd \eh &= \half \rhod -\sigmaNd - \half \Nd \de - \frac{1}{r} \ep, \\
\Ndn \eh + \frac{1}{r} \eh &= \dual\widehat{(\sigmad)} + \half \Nd \widehat{\otimes} \ep, \end{aligned} \label{EinsteinSphere}
\end{align}
where $\sigmaNd$ denotes the Hodge dual of $\sigmaN$ and $\mkern-7mu \dual\widehat{(\sigmad)}$ the Hodge dual of $\widehat{\sigmad}$, the tracefree part of $\sigmad$. See Section \ref{sec:HodgeTheory} for details. \\

The Hodge system \eqref{EinsteinSphere} is linear and its coefficients depend only on $r$. Therefore, we may project the equations of \eqref{EinsteinSphere} onto the Hodge-Fourier basis elements. This uses Remark \ref{remmars30} and Proposition \ref{prop:Completeness}. We split \eqref{EinsteinSphere} into the modes $l=0,1$ and $l \geq2$, which yields the following three subsystems \textbf{S0}, \textbf{S1} and \textbf{S2}.
\begin{align}
0&= \rhon^{[0]} - \frac{1}{r^3} \pr_r \left( r^3 \de^{[0]} \right), \label{EH1a0} \tag{\textbf{S0.1}}\\
0&= \si_{NN}^{[0]}, \label{EH2a0} \tag{\textbf{S0.2}}
\end{align}

\begin{align}
\Divd \ep^{[1]} &= \rhon^{[1]} - \frac{1}{r^3} \pr_r \left( r^3 \de^{[1]} \right), \label{EH1a1} \tag{\textbf{S1.1}}\\
\Curld \ep^{[1]} &= \si_{NN}^{[1]},  \label{EH2a1} \tag{\textbf{S1.2}} \\
\frac{1}{r^3} \Ndn \left( r^3 \ep^{[1]} \right) &= \rhod^{[1]} + \half \Nd \de^{[1]}, \label{EH3a1} \tag{\textbf{S1.3}}\\
\sigmaNd^{[1]} &= \half \rhod^{[1]} - \half \Nd \de^{[1]} - \frac{1}{r} \ep^{[1]},  \label{EH4a1} \tag{\textbf{S1.4}}
\end{align}
and, using that $\eh^{[\geq2]} = \eh, \sigmaNdd^{[\geq2]} = \sigmaNdd$,
\begin{align} 
\Divd \ep^{[\geq2]} &= \rh_N^{[\geq2]} - \frac{1}{r^3} \pr_r \left( r^3 \de^{[\geq2]} \right), \label{EH1a2} \tag{\textbf{S2.1}}\\
\Curld \ep^{[\geq2]} &= \si_{NN}^{[\geq2]},  \label{EH2a2} \tag{\textbf{S2.2}} \\
\frac{1}{r^2} \Ndn \left( r^2 \ep^{[\geq2]} \right) &=  \half \rhod^{[\geq2]} + \sigmaNd^{[\geq2]}  + \Nd \de^{[\geq2]},  \label{EH3a2} \tag{\textbf{S2.3}} \\
\Divd \eh &=   \half \rhod^{[\geq2]} - \sigmaNd^{[\geq2]} - \half \Nd \de^{[\geq2]} -\frac{1}{r } \ep^{[\geq2]},  \label{EH4a2} \tag{\textbf{S2.4}} \\
\Ndn \eh + \frac{1}{r} \eh &= \sigmaNdd + \half \Nd \widehat{\otimes} \ep^{[\geq2]}. \label{EH5a2} \tag{\textbf{S2.5}} 
\end{align}


\subsubsection{Definition of the $2$-tensors $k$ and $\si$} \label{constrmars30}

In this section, we explicitly exhibit the two symmetric $2$-tensors $k$ and $\si$. We show in Section \ref{proofmars30} that they form a regular solution to \eqref{EinsteinSphere}. \\

Let $\rho = (\rh_N, \rhod) \in \HHno$. Let the Hodge-Fourier decomposition of $\rh_N, \rhod$ be
\begin{align}
\rho_N &= \rhon^{[0]} + \rhon^{[1]} + \rhon^{[\geq2]}, \nr \\
\rhod &= \rhod_{E}^{[1]} + \rhod_{H}^{[1]}  +  \rhod_{E}^{[\geq2]} + \rhod_{H}^{[\geq2]}. \nr
\end{align}

Define symmetric tracefree $2$-tensors $k$ and $\si$ on $\RRR^3 \setminus \ol{B_1}$ as follows.
\begin{itemize}
\item \textbf{Definition of $\de$.} Let the scalar function
\begin{align}
\de &= \de^{[0]} + \de^{[1]} + \de^{[\geq2]}, \label{eq:delta1}
\end{align}
where $\de^{[0]}$ is defined as
\begin{align}
\de^{[0]} := \frac{1}{r^3} \int\limits_1^r (r')^3 \rhon^{[0]} dr'  \label{eq:delta2}
\end{align}
and $\de^{[1]}$ is defined as the solution to the second-order ODE on $r>1$
\begin{align}
\begin{cases}
\pr_r^2 \de^{[1]} + \frac{7}{r} \pr_r \de^{[1]} + \frac{8}{r^2} \de^{[1]} = \frac{1}{r^4} \pr_r ( r^4 \rhon^{[1]}) -\Divd \rhod^{[1]}, \\ 
\de^{[1]}\vert_{r=1} = \pr_r \de^{[1]}\vert_{r=1} = 0.
\end{cases} \label{eq:delta3}
\end{align}
The function $\de^{[\geq2]}$ is defined as the solution to the following elliptic boundary value problem on $\RRR^3 \setminus \ol{B_1}$ (see Appendix \ref{sec:WEIGHTEDellipticity}),
\begin{align}
\begin{cases}
\triangle \de^{[\geq2]} + \frac{4}{r} \pr_r \de^{[\geq2]} + \frac{6}{r^2} \de^{[\geq2]} = \frac{1}{r^3} \pr_r \left(r^3 \rhon^{[\geq2]} \right) - \Divd \left( \rhod_E^{[\geq2]} + \zeta_E \right), \\
\de^{[\geq2]} \vert_{r=1} =0.
\end{cases} \label{eq:delta4}
\end{align}
Here, the $S_r$-tangent vectorfield $\zeta_E$ is defined on $\RRR^3$ by
\begin{align} \begin{aligned}
\zeta_E&:= \sumtwo \zeta_E^{(lm)} E^{(lm)}, \\
\zeta_E^{(lm)}(r) &:= c_E^{(lm)} r^{l-1} \partial_r (\chi( l(r-1))), \end{aligned} \label{eq:defofZetaE}
\end{align}
where $\chi$ is the standard transition function defined in \eqref{eq:transfct} and for $l\geq2$,
\begin{align}
c_E^{(lm)} :=  \isinf r^{-l+1} \left( \frac{l}{\sqrt{l(l+1)}}  \left( \rh_N \right)^{(lm)} - \rhod_E^{(lm)} \right) dr. \label{eq:defofZetaE2}
\end{align}

\item \textbf{Definition of $\si_{NN}$.} Let the scalar function
\begin{align}
\si_{NN} =  \si_{NN} ^{[1]}+ \si_{NN}^{[\geq2]}, \label{eq:sigma0}
\end{align}
where $ \si_{NN}^{[1]}$ is defined as
\begin{align}
\si_{NN}^{[1]} := \frac{1}{r^4} \isr (r')^4 \Curld \rhod^{[1]} dr', \label{eq:sigma1}
\end{align}
and $\si_{NN}^{[\geq2]}$ is defined as solution to the following elliptic boundary value problem on $\RRR^3 \setminus \ol{B_1}$ (see Appendix \ref{sec:WEIGHTEDellipticity}),
\begin{align}\begin{cases}
\triangle \si_{NN}^{[\geq2]} + \frac{1}{r} \pr_r \si_{NN}^{[\geq2]} - \frac{3}{r^2}\si_{NN}^{[\geq2]} = \pr_r \left(  \Curld \left( \rhod^{[\geq2]}_H +  \zeta_H^{[\geq2]} \right) \right), \\
\si_{NN}^{[\geq2]} \vert_{r=1} = 0.
\end{cases} \label{eq:sigma2}
\end{align}
Here, the $S_r$-tangent vectorfield $\zeta_H$ is defined by
\begin{align}
\begin{aligned}
\zeta_H &:= \sumtwo \zeta_H^{(lm)}  H^{(lm)},\\
\zeta_H^{(lm)}(r) &:= c_H^{(lm)} r^{1+\sqrt{l(l+1)+4}} \partial_r (\chi( l(r-1))),  \end{aligned} \label{eq:defofZetaH}
\end{align}
and for $l\geq2$,
\begin{align}
c_H^{(lm)} := - \isinf r^{-1-\sqrt{l(l+1)+4}} \rhod^{(lm)}_H dr. \label{constant8feb}
\end{align}

\item \textbf{Definition of $\ep$.} Let the $S_r$-tangent vectorfield $\ep$ be on each $S_r$, $r\geq1$, the solution to 
\begin{align} 
\cDd_1 \ep &= \Big( \rh_N- \frac{1}{r^3} \pr_r\left( r^3 \de \right),  \si_{NN}  \Big). \label{eq:ep2}
\end{align}

\item \textbf{Definition of $\sigmaNd$.} Let the $S_r$-tangent vectorfield 
\begin{align}
\sigmaNd = \sigmaNd^{[1]} +\sigmaNd^{[\geq2]}_E +\sigmaNd^{[\geq2]}_H, \label{eq:sigmaNd1}
\end{align}
where $\sigmaNd^{[1]}, \sigmaNd^{[\geq2]}_E$ are defined as
\begin{align} 
\sigmaNd^{[1]} &:= \half \rhod^{[1]} - \half \Nd \de^{[1]} - \frac{1}{r} \ep^{[1]}, \label{eq:sigmaNd2} \\
\sigmaNd^{[\geq2]}_E &:= \half \rhod^{[\geq2]}_E + \zeta_E, \label{eq:sigmaNd2feb9}
\end{align}
and $\sigmaNd^{[\geq2]}_H$ is defined to be on each $S_r$, $r\geq1$, the solution of
\begin{align} 
\cDd_1 \left(  \sigmaNd_{H}^{[\geq2]}  \right) = -\cDd_1 \left(  \half \rhod_H^{[\geq2]}  \right) +  \left( 0, \frac{1}{r^3} \pr_r \left( r^3 \si_{NN}^{[\geq2]} \right) \right).  \label{eq:sigmaNd3}
\end{align}

\item \textbf{Construction of $\sigmaNdd$.} Let the symmetric $\gac$-tracefree $2$-tensor $\sigmaNdd$ be on each $S_r$, $r\geq1$, the solution to
\begin{align} \begin{aligned}
\cDd_2 \left( \sigmaNdd \right) =& - \cDd_2 \left( \half \Nd \widehat{\otimes} \ep^{[\geq2]} \right)\\
&+ \frac{1}{r^2} \Ndn \left( r^2 \left( \half \rhod^{[\geq2]} - \sigmaNd^{[\geq2]} - \half \Nd \de^{[\geq2]} - \frac{1}{r} \ep^{[\geq2]} \right) \right).
\end{aligned} \label{eq:sigmaNdd2}
\end{align}

\item \textbf{Construction of $\eh$.} Let the symmetric $\gac$-tracefree $2$-tensor $\eh$ be on each $S_r$, $r\geq1$, the solution to 
\begin{align} \begin{aligned}
\cDd_2 \eh =  \half \rhod^{[\geq2]} - \sigmaNd^{[\geq2]} - \half \Nd \de^{[\geq2]} - \frac{1}{r}\ep^{[\geq2]}.
\end{aligned}  \label{eq:eh1} \end{align}

\end{itemize}
\begin{remark} For ease of presentation, we defined $k$ and $\si$ via the quantities that appear in \eqref{EinsteinSphere}. Indeed, by the Hodge duality relation \eqref{eq:lefthodgedualidentity} and the third equation of \eqref{Hodge}, all components of $k$ and $\si$ are uniquely specified this way. \end{remark}
\begin{remark} \label{Remark2DtN}
The auxiliary $\zeta_E$ and $\zeta_H$ in \eqref{eq:defofZetaH} and \eqref{eq:defofZetaH} are introduced to control the Dirchlet-to-Neumann map of the elliptic boundary value problems \eqref{eq:delta4} and \eqref{eq:sigma2} for $\de^{[\geq2]}$ and $\si_{NN}^{[\geq2]}$, respectively, to achieve the additional boundary condition
\begin{align*}
\pr_r \de^{[\geq2]} \vert_{r=1} = \pr_r \si_{NN}^{[\geq2]} \vert_{r=1} = 0.
\end{align*}
This is necessary such that $\varep$ and $\sigmaNd_{H}^{[\geq2]} $ vanish on $S_1$. Indeed, see their definitions in \eqref{eq:ep2} and \eqref{eq:sigmaNd3}.
\end{remark}

\subsubsection{Proof of surjectivity} \label{proofmars30}

In this section, we prove Lemma \ref{lem:formalsolfeb5}, Proposition \ref{prop:boundaryvalues} and Lemma \ref{lem:bdrycntrlfeb5} below, that together imply surjectivity. Especially Proposition \ref{prop:boundaryvalues} is essential and only holds due to our delicate choice of $\zeta_E, \zeta_H$ in \eqref{eq:defofZetaE} and \eqref{eq:defofZetaH}, as well as our particular choice of $\si_{NN}^{[\geq2]}$ to be a solution to \eqref{eq:sigma2}. See also Remark \ref{Remark2DtN}.
\begin{lemma} \label{lem:formalsolfeb5}
For given $\rho$, the symmetric $2$-tensors $k$ and $\si$ defined by \eqref{eq:delta1}-\eqref{eq:eh1} are a formal solution to \eqref{EinsteinSphere}, that is, on $\RRRwo$,
\begin{align*}
\Div k &= \rho,\\
\Curl k &= \si,\\
\tr k &=0. 
\end{align*}
\end{lemma}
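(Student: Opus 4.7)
The plan is to verify each of the equations of \eqref{EinsteinSphere}, which by Section \ref{sss:mars301} is equivalent to the system claimed in Lemma \ref{lem:formalsolfeb5}. Using the Hodge-Fourier decomposition of Section \ref{ssec:Hodgebasis}, one splits \eqref{EinsteinSphere} into the three scalar subsystems \textbf{S0} (mode $l=0$), \textbf{S1} (mode $l=1$) and \textbf{S2} (modes $l\geq 2$), and verifies each equation by direct substitution of the explicit construction of Section \ref{constrmars30}. The toolkit is the Hodge-Fourier calculus of Lemma \ref{lem:RelationsSpherical}, the commutation identities for $\Ndn$ with $\Divd$ and $\Curld$ in Lemma \ref{lem:commutationrelation}, and the injectivity of $\cDd_1$ and $\cDd_2$ on the relevant modes (Lemma \ref{lem:inversetoDDDD} and Remark \ref{imageanalysis}).

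\textbf{Low modes.} Subsystem \textbf{S0} is immediate: \eqref{EH2a0} holds because no $H^{(00)}$ exists and hence the $l=0$ mode of any curl vanishes, while \eqref{EH1a0} follows by direct differentiation of \eqref{eq:delta2}. For \textbf{S1}, equations \eqref{EH1a1} and \eqref{EH2a1} are precisely the $l=1$ projection of the Hodge system \eqref{eq:ep2} defining $\ep^{[1]}$, and \eqref{EH4a1} is the definition \eqref{eq:sigmaNd2}. The only nontrivial equation is \eqref{EH3a1}, which I would verify by applying $\Divd$ and $\Curld$ to both sides. Lemma \ref{lem:commutationrelation} turns $\Divd\Ndn$ and $\Curld\Ndn$ into radial derivatives of $\Divd\ep^{[1]}$ and $\Curld\ep^{[1]}$; inserting \eqref{EH1a1}--\eqref{EH2a1} produces expressions in $\rhon^{[1]}, \de^{[1]}, \si_{NN}^{[1]}$. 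Using $\Curld\Nd=0$ and $\Divd\Nd=\lapp$ acting as $-2/r^2$ on the $l=1$ mode, one computes $\Divd$ and $\Curld$ of the right-hand side. The ODE \eqref{eq:delta3} for $\de^{[1]}$ and the definition \eqref{eq:sigma1} of $\si_{NN}^{[1]}$ are chosen precisely so that the two sides match. Since $\ep^{[1]}$ lies in the $l=1$ mode where $(\Divd,\Curld)$ is injective, \eqref{EH3a1} follows.

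\textbf{High modes.} Subsystem \textbf{S2} is handled in the same spirit. Equations \eqref{EH1a2} and \eqref{EH2a2} come from the $l\geq 2$ projection of \eqref{eq:ep2}; \eqref{EH4a2} is the definition \eqref{eq:eh1} of $\eh$; and \eqref{EH5a2} is the definition \eqref{eq:sigmaNdd2} of $\sigmaNdd$, using the injectivity of $\cDd_2$ on the $l\geq 2$ modes. The evolution equation \eqref{EH3a2} is again verified by projecting with $\Divd$ and $\Curld$: taking $\Divd$ and combining \eqref{EH1a2} with the elliptic equation \eqref{eq:delta4} for $\de^{[\geq 2]}$ and the definition \eqref{eq:sigmaNd2feb9} of $\sigmaNd_E^{[\geq 2]}$ yields the $E$-part; taking $\Curld$ and using \eqref{EH2a2}, the elliptic equation \eqref{eq:sigma2} for $\si_{NN}^{[\geq 2]}$, and the Hodge system \eqref{eq:sigmaNd3} defining $\sigmaNd_H^{[\geq 2]}$, yields the $H$-part. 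Because $\ep^{[\geq 2]}$ has trivial $l=0,1$ projections, equality of both Hodge projections implies \eqref{EH3a2}.

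\textbf{Main obstacle.} The delicate bookkeeping concerns the auxiliary vectorfields $\zeta_E$ and $\zeta_H$ of \eqref{eq:defofZetaE} and \eqref{eq:defofZetaH}. The crucial point is that $\zeta_E$ is inserted simultaneously into the source of the PDE \eqref{eq:delta4} for $\de^{[\geq 2]}$ and into the definition \eqref{eq:sigmaNd2feb9} of $\sigmaNd_E^{[\geq 2]}$; the two occurrences cancel exactly in the $\Divd$-projection of \eqref{EH3a2}, leaving the identity in $\rhon^{[\geq 2]}, \rhod_E^{[\geq 2]}, \de^{[\geq 2]}$ that is exactly \eqref{eq:delta4}. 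An analogous cancellation happens for $\zeta_H$, which appears both in \eqref{eq:sigma2} and in the Hodge system \eqref{eq:sigmaNd3} for $\sigmaNd_H^{[\geq 2]}$. Once these cancellations are recorded, the verification of each subsystem reduces to the scalar ODE or elliptic PDE that defines the corresponding quantity, completing the formal check.
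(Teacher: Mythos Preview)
Your proposal is correct and follows essentially the same route as the paper: split \eqref{EinsteinSphere} into \textbf{S0}, \textbf{S1}, \textbf{S2}, verify the algebraic equations directly from the definitions, and reduce the transport equations \eqref{EH3a1}, \eqref{EH3a2}, \eqref{EH5a2} to their $\cDd_1$- or $\cDd_2$-projections, which match the defining ODEs/PDEs by design. One small inaccuracy: $\zeta_H$ does not literally appear in \eqref{eq:sigmaNd3}; rather, \eqref{eq:sigmaNd3} is set up so that the $\Curld$-projection of \eqref{EH3a2} becomes the tautology \eqref{eq:condDez172}, with $\zeta_H$ entering only through $\si_{NN}^{[\geq2]}$ via \eqref{eq:sigma2}.
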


\begin{proof}  For each of the three subsystems \textbf{S0}, \textbf{S1} and \textbf{S2}, we show that the corresponding parts of $k, \si$ are solutions.\\

{\bf Analysis of \textbf{S0}.} The two functions $\de^{[0]}, \si_{NN}^{[0]}$ are radial. Integration of \eqref{EH1a0} 
along $r$ with the trivial boundary condition $\de^{[0]} \vert_{r=1}
=0$ directly leads to \eqref{eq:delta2}. \eqref{EH2a0} is satisfied by \eqref{eq:sigma0}. Therefore, $\de^{[0]}$ and $\si_{NN}^{[0]}$ solve \textbf{S0}. \\


{\bf Analysis of \textbf{S1}.} The equations \eqref{EH1a1}, \eqref{EH2a1} are automatically satisfied by the definition of $\ep^{[1]}$ in \eqref{eq:ep2}. The same holds for \eqref{EH4a1} by the definition of $\sigmaNd^{[1]}$ in \eqref{eq:sigmaNd2}. It remains to verify that \eqref{EH3a1} is satisfied.\\

Applying $\Divd$ and $\Curld$ to \eqref{EH3a1}, plugging in the definition of $\ep^{[1]}$ in \eqref{eq:ep2} and using Lemma \ref{lem:commutationrelation}, we get that $\de^{[1]}$ and $\si_{NN}^{[1]}$ must satisfy the compatibility conditions
\begin{align}
\half \Ld \de^{[1]} + \pr_r^2 \de^{[1]} + \frac{7}{r} \pr_r \de^{[1]} + \frac{9}{r^2} \de^{[1]} &= \frac{1}{r^4} \pr_r \left( r^4 \rhon^{[1]}\right) - \Divd \rhod^{[1]}, \label{eq:condDez1} \\
\frac{1}{r^4} \pr_r \left( r^4  \si_{NN}^{[1]} \right) &= \Curld \rhod^{[1]}. \label{eq:condDez2}
\end{align}

With regard to the Hodge-Fourier decomposition, it holds that $\Ld \de^{[1]} = -\frac{2}{r^2} \de^{[1]}$. So, \eqref{eq:condDez1} is satisfied by the definition of $\de^{[1]}$ in \eqref{eq:delta3}. Moreover, $\si_{NN}^{[1]}$ satisfies \eqref{eq:condDez2} by \eqref{eq:sigma1}. Note that at the level of $l\geq1$, $\cDd_1$ is a bijection, see Lemma \ref{lem:inversetoDDDD}, 
so the above shows that \eqref{EH3a1} is satisfied. To summarize, we showed that $\ep^{[1]},\de^{[1]}, \si_{NN}^{[1]}$ and $\sigmaNd^{[1]}$ solve {\bf S1}.\\


{\bf Analysis of \textbf{S2}.} In the following, we use that $\triangle = \pr_r^2 + \frac{2}{r}\pr_r + \Ld$. The equations \eqref{EH1a2}, \eqref{EH2a2} and \eqref{EH4a2} are satisfied in view of \eqref{eq:ep2} and \eqref{eq:eh1}. It remains to prove \eqref{EH3a2} and \eqref{EH5a2}. We start with \eqref{EH3a2}.\\

Applying $\Divd$ and $\Curld$ to \eqref{EH3a2} and using \eqref{eq:ep2} leads to the compatibility conditions
\begin{align}
\triangle \de^{[\geq2]} + \frac{4}{r} \pr_r \de^{[\geq2]} + \frac{6}{r^2} \de^{[\geq2]} &= \frac{1}{r^3} \pr_r \left( r^3 \rhon^{[\geq2]} \right) - \Divd \left( \half \rhod^{[\geq2]} + \sigmaNd^{[\geq2]} \right), \label{eq:condDez17}\\
\frac{1}{r^3} \pr_r \left( r^3 \si_{NN}^{[\geq2]} \right) &= \Curld \left( \half \rhod^{[\geq2]} + \sigmaNd^{[\geq2]} \right). \label{eq:condDez172}
\end{align}

The function $\de^{[\geq2]}$ defined in \eqref{eq:delta4} satisfies \eqref{eq:condDez17} by the definition of $\sigmaNd_E^{[\geq2]}$ in \eqref{eq:sigmaNd2feb9} and the fact that 
\begin{align*}
\Divd \left(  \half \rhod_H^{[\geq2]} + \sigmaNd_H^{[\geq2]} \right) = 0,
\end{align*}
see the construction of $H^{(lm)}$ in \eqref{def:Vectorsjan}. Furthermore, the $\sigmaNd_H^{[\geq2]}$ defined in \eqref{eq:sigmaNd3} satisfies \eqref{eq:condDez172} because 
\begin{align*}
\Curld \left(  \half \rhod_E^{[\geq2]} + \sigmaNd_E^{[\geq2]} \right) = 0
\end{align*}
by the construction of $E^{(lm)}$ in \eqref{def:Vectorsjan}. This shows that \eqref{EH3a2} is satisfied.\\

We turn now to \eqref{EH5a2}. Applying the divergence operator $\cDd_{2}$ to \eqref{EH5a2} and using \eqref{eq:eh1} leads to 
\begin{align} \begin{aligned}
\cDd_2 \left( \sigmaNdd + \half \Nd \widehat{\otimes} \ep^{[\geq2]} \right) &= \frac{1}{r^2} \Ndn \left( r^2 \left( \half \rhod^{[\geq2]} - \sigmaNd^{[\geq2]} - \half \Nd \de^{[\geq2]} - \frac{1}{r} \ep^{[\geq2]} \right) \right).
\label{eq:condDez173}
\end{aligned}\end{align}
This coincides with the definition of $\sigmaNdd$ in \eqref{eq:sigmaNdd2} and thus shows that \eqref{EH5a2} is satisfied.\\

To summarise, we showed that $\de^{[\geq2]}, \ep^{[\geq2]}, \si_{NN}^{[\geq2]}, \sigmaNd^{[\geq2]}, \sigmaNdd^{[\geq2]}, \eh$ solve \textbf{S2}. This finishes the proof of Lemma \ref{lem:formalsolfeb5}. \end{proof}


We continue by controlling the regularity and boundary behaviour at $S_1$ of $\zeta_E, \zeta_H$ and $\de^{[\geq2]}, \si_{NN}^{[\geq2]}$.
\begin{proposition} \label{prop:boundaryvalues} Let $w\geq2$ be an integer. Let $\rh =(\rho_N,\rhod) \in \HHno$ be given. Let $\zeta_E, \zeta_H$ be the vectorfields defined in \eqref{eq:defofZetaE}-\eqref{eq:defofZetaE2}, \eqref{eq:defofZetaH}-\eqref{constant8feb}, and $\de^{[\geq2]}, \si_{NN}^{[\geq2]}$ be the solutions to the elliptic PDEs \eqref{eq:delta4}, \eqref{eq:sigma2}, respectively. Then, the following holds.
\begin{enumerate}
\item {\bf Regularity and boundary behaviour of $\zeta_E, \zeta_H$.} The vectorfields $\zeta_E$ and $\zeta_H$ satisfy
\begin{align} \begin{aligned}
\Vert \zeta_E \Vert_\HHno \lesssim& \Vert \rh \Vert_\HHno + C_w\Vert \rh \Vert_{\ol{\HH}^0_{-5/2}}, \\
\Vert \zeta_H \Vert_\HHno \lesssim& \Vert \rh \Vert_\HHno+ C_w\Vert \rh \Vert_{\ol{\HH}^0_{-5/2}}. \end{aligned} \label{eq:zetabound8feb}
\end{align}
Moreover, for each $l\geq2$, $m \in \{-l, \dots, l\}$, $\zeta_E^{(lm)}$ and $\zeta_H^{(lm)}$ satisfy 
\begin{align} \begin{aligned}
\isinf r^{1-l} \left( \zeta_E^{(lm)} - \left( \frac{l}{\sqrt{l(l+1)}} \rhon^{(lm)} - \rhod_E^{(lm)}  \right) \right) dr&=0, \\
 \isinf r^{-1-\sqrt{l(l+1)+4}}  \left( \zeta_H^{(lm)}  + \rhod_H^{(lm)} \right)dr &=0.
\end{aligned} \label{eq:propo8feb} \end{align}
\item {\bf Precise estimate for $\zeta_E$ and $\zeta_H$. } It holds that
\begin{align}
\begin{aligned}
\Vert \cDd^{-1}_2 \left( \Ndn \zeta_E \right) \Vert_{\HH^{w-2}_{-5/2}(\RRRwo)}& \lesssim \Vert \rh \Vert_{\HHno}+ C_w\Vert \rh \Vert_{\ol{\HH}^0_{-5/2}},\\
\Vert \cDd^{-1}_2 \left( \Ndn \zeta_H \right) \Vert_{\HH^{w-2}_{-5/2}(\RRRwo)}& \lesssim \Vert \rh \Vert_{\HHno}+ C_w\Vert \rh \Vert_{\ol{\HH}^0_{-5/2}},
\end{aligned}
\label{eq:zetaEestimatefeb15}
\end{align}
with $\cDd^{-1}_2 \left( \Ndn \zeta_E \right), \cDd^{-1}_2 \left( \Ndn \zeta_H \right) \in \ol{\HH}^{w-2}_{-5/2}$.
\item {\bf Elliptic regularity and boundary behaviour of $\de^{[\geq2]}, \si_{NN}^{[\geq2]}$.} It holds that 
\begin{align} \begin{aligned}
\left\Vert \de^{[\geq2]} \right\Vert_{\HHm(\RRRwo)} &\lesssim \Vert \rh \Vert_{\HHno}+ C_w \Vert \rh \Vert_{\ol{\HH}^0_{-5/2}}, \\ 
\left\Vert \si_{NN}^{[\geq2]} \right\Vert_{\HHn(\RRRwo)} &\lesssim \Vert \rh \Vert_{\HHno}+ C_w\Vert \rh \Vert_{\ol{\HH}^0_{-5/2}}. 
\end{aligned}\label{mars2222}
\end{align}
Furthermore,
\begin{align*}
\de^{[\geq2]} \in \HHmo, \si_{NN}^{[\geq2]} \in \HHno.
\end{align*}
In particular, for $w>2$,
\begin{align}
\pr_r \de^{[\geq2]} \Big\vert_{r=1}=0,  \label{eq:boundarycond5feb1}
\end{align}
and for $w>3$,
\begin{align}
\pr_r \si_{NN}^{[\geq2]} \Big\vert_{r=1} =0. \label{eq:boundarycond5feb1111}
\end{align}
\item {\bf Precise estimate for  $\pr_r \si_{NN}^{[\geq2]}$.} It holds that 
\begin{align}
\left\Vert \cDd_1^{-1}(0, \pr_r \si_{NN}^{[\geq2]}) \right\Vert_{\HHn(\RRRwo)} \lesssim \Vert \rho \Vert_{\HHno}+ C_w\Vert \rh \Vert_{\ol{\HH}^0_{-5/2}}. \label{eq:mars22estim1}
\end{align}
and moreover, $\cDd_1^{-1}(0, \pr_r \si_{NN}^{[\geq2]}) \in \HHno$. \end{enumerate}
\end{proposition}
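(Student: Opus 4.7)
I would tackle the four parts in order, reducing each to a mode-by-mode analysis via Proposition \ref{prop:Completeness} and exploiting the Hodge-Fourier calculus of Lemma \ref{lem:RelationsSpherical}. For Part (1), the moment identities \eqref{eq:propo8feb} are immediate from the defining formulas: substituting \eqref{eq:defofZetaE} into the left-hand side of the first identity yields
\[
\isinf r^{1-l}\,\zeta_E^{(lm)}\,dr = c_E^{(lm)} \isinf \pr_r\bigl(\chi(l(r-1))\bigr)\,dr = c_E^{(lm)},
\]
which matches the required integral by definition \eqref{eq:defofZetaE2}; the same calculation handles $\zeta_H$ using the weight $r^{-1-\sqrt{l(l+1)+4}}$. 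For the norm bounds \eqref{eq:zetabound8feb}, I would bound $|c_E^{(lm)}|$ via Cauchy-Schwarz on its defining integral against the weighted Sobolev norm of $\rho_N^{(lm)}$ and $\rhod_E^{(lm)}$, then compute $\Vert \zeta_E \Vert_{\HHno}$ using that $\pr_r(\chi(l(r-1)))$ is supported in $[1+1/(10l),\,1+1/l]$ with derivatives of size $\lesssim l^j$. The weight $r^{l-1}$ in the defining formula, evaluated on this thin support, contributes only a bounded factor, and the resulting mode-by-mode bounds reassemble via Proposition \ref{prop:Completeness}.

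For Parts (2) and (4), I invoke Lemma \ref{lem:inversetoDDDD} mode-by-mode. Since $\zeta_E, \zeta_H$ and all their derivatives vanish identically for $r \leq 1+1/(10l)$ on the $(lm)$-mode, Proposition \ref{prop:TrivialExtensionRegularity} gives the membership in the barred spaces. Combining this with Part (1), the commutation identities of Lemma \ref{lem:commutationrelation}, and the uniform boundedness of $\cDd_2^{-1}$ from $\HH^{w-2}(S_r)$ to $\HH^{w-1}(S_r)$ on the $l\geq 2$ sector (cf.\ Lemma \ref{lem:inversetoDDDD} and Proposition \ref{prop:EllipticityHodgefeb23}) yields \eqref{eq:zetaEestimatefeb15}. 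Estimate \eqref{eq:mars22estim1} is obtained in the same spirit once the $\HHn$-bound on $\si_{NN}^{[\geq 2]}$ from Part (3) is available, commuting $\pr_r$ through $\cDd_1^{-1}$ using Lemma \ref{lem:commutationrelation}.

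Part (3) is the crux, and the main obstacle. The equations \eqref{eq:delta4} and \eqref{eq:sigma2} project mode-by-mode to second-order radial ODEs on $(1,\infty)$ with Dirichlet data at $r=1$; the weighted elliptic theory of Appendix \ref{sec:WEIGHTEDellipticity} provides $\HHm$ and $\HHn$ regularity of the solutions, giving \eqref{mars2222} after summing modes. The delicate point is the \emph{Neumann} boundary condition $\pr_r\delta^{[\geq 2]}|_{r=1}=0$ and $\pr_r\si_{NN}^{[\geq 2]}|_{r=1}=0$, which is what upgrades membership from $\HHm$, $\HHn$ to the barred spaces $\HHmo$, $\HHno$. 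This is precisely where the moment identities of Part (1) enter: on each mode, the indicial analysis of the radial operator near $r=1$ produces a homogeneous Dirichlet solution whose pairing against the source governs the Neumann trace, and the delicate choice of $c_E^{(lm)}$ and $c_H^{(lm)}$ is engineered exactly so that this pairing vanishes. Concretely, I would integrate each modal ODE against the homogeneous solution behaving like $r^{l-1}$ for $\delta$ (respectively $r^{1+\sqrt{l(l+1)+4}}$ for $\si_{NN}$), obtain the Neumann trace as a weighted integral of the source, and check that the $\zeta_E$ (respectively $\zeta_H$) contribution cancels the obstruction owing to \eqref{eq:propo8feb}. Summing over modes via Proposition \ref{prop:Completeness} and combining with the estimates of Part (1) then completes the proof.
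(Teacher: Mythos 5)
Your treatment of Parts (1) and (3) follows the paper's line: the moment identities \eqref{eq:propo8feb} follow from the defining integrals exactly as you say, the norm bounds on $\zeta_E,\zeta_H$ come from Cauchy--Schwarz on $c^{(lm)}_E, c^{(lm)}_H$ together with the thin support of $\pr_r\chi(l(r-1))$, and the vanishing Neumann traces are obtained by integrating the divergence-form radial ODEs against a suitable power of $r$ and invoking the moment identities. (Two small remarks: for membership of the full series $\zeta_E$ in $\ol{\HH}^{w-2}_{-5/2}$ you still need a density argument with the finite-mode partial sums, as the supports $[1+\tfrac{1}{10l},1+\tfrac1l]$ accumulate at $r=1$; and the homogeneous solutions of the $\de^{[\geq2]}$ equation are $r^{l-2}$, $r^{-l-3}$ rather than $r^{l-1}$ — this does not affect the strategy.)

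Parts (2) and (4), however, contain gaps. For Part (2), the conclusion $\cDd_2^{-1}(\Ndn\zeta_E)\in\HH^{w-2}_{-5/2}$ does not follow from "uniform boundedness of $\cDd_2^{-1}$" plus commutation alone: $\Ndn$ costs a radial derivative while $\cDd_2^{-1}$ gains only a tangential one, so the naive chain lands one order short. The estimate holds because $\zeta_E^{(lm)}$ is a power of $r$ times a transition function, so $\pr_r\zeta_E^{(lm)}\approx\frac{l}{r}\zeta_E^{(lm)}$ (cf.\ \eqref{feb25estref}); the radial derivative thus behaves like a tangential one and the Fourier-side factor $\frac{r}{\sqrt{l(l+1)/2-1}}$ of $\cDd_2^{-1}$ absorbs it. This structural cancellation — which is the entire content of the estimate — is missing from your sketch. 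The gap in Part (4) is more serious: you propose to obtain \eqref{eq:mars22estim1} "in the same spirit", by commuting $\pr_r$ through $\cDd_1^{-1}$ once the Part (3) bound on $\si_{NN}^{[\geq2]}$ is in hand. But for $w=2$ that bound places $\si_{NN}^{[\geq2]}$ only in $H^{0}_{-5/2}(\RRRwo)$, so $\pr_r\si_{NN}^{[\geq2]}$ is not directly controlled and no commutation can manufacture the missing derivative; for general $w$, the tangential gain of $\cDd_1^{-1}$ again does not compensate the radial loss of $\pr_r$. The actual argument is different in kind: one must use the radial ODE \eqref{eq:sideflm8feb} to express $\frac{r}{\sqrt{l(l+1)}}\pr_r\si_{NN}^{(lm)}$ in terms of $\si_{NN}^{(lm)}$, $\rhod_H^{(lm)}$, $\zeta_H^{(lm)}$, and an integral with rapidly growing weight $r^{\sqrt{l(l+1)+4}}$, and then invoke the moment identity \eqref{eq:propo8feb} for $\zeta_H$ a second time to convert that integral to one over $(r,\infty)$ so that it converges and can be estimated. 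Without that step, Part (4) is unproved.
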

Here $\cDd_1^{-1}, \cDd_2^{-1}$ denote the inverse operators to the elliptic $\cDd_1, \cDd_2$ on $(S_r,\gac)$, respectively, see Lemma \ref{lem:inversetoDDDD}.

\begin{remark}
The quantities $\de^{[\geq2]}, \si_{NN}^{[\geq2]}$ are solutions to the elliptic equations \eqref{eq:delta4}, \eqref{eq:sigma2} on $\RRRwo$, respectively. Therefore their boundary regularity at $S_1$ is harder to estimate than for the other components of $k$ and $\si$ which all satisfy first order transport equations in $r$ or Hodge systems on $S_r$.
\end{remark}


\begin{proof} We first analyse $\zeta_E$ and $\zeta_H$. \\

{ \bf (1) Regularity and boundary behaviour of $\zeta_E, \zeta_H$.} We begin by showing that the constants $c^{(lm)}_E, c^{(lm)}_H$ in \eqref{eq:defofZetaE2}, \eqref{constant8feb} are well-defined. By Cauchy-Schwarz, for all $l\geq2$, $m \in \{-l, \dots, l \}$,
\begin{align} \begin{aligned}
\left\vert c^{(lm)}_E \right\vert &= \left\vert \int\limits_1^\infty r^{-l+1} \left( \frac{l}{\sqrt{l(l+1)}} {\rhon}^{(lm)} - \rhod_E^{(lm)} \right) dr \right\vert\\
&\leq \left( \isinf r^{-2l} dr \right)^{1/2} \left( \isinf  r^2\left( \frac{l}{\sqrt{l(l+1)}} {\rhon}^{(lm)} - \rhod_E^{(lm)} \right)^2 dr \right)^{1/2} \\
&\lesssim \frac{1}{\sqrt{2l-1}} \left( \isinf \left( r {\rhon}^{(lm)} \right)^2 dr  + \isinf \left( r \rhod_E^{(lm)} \right)^2 dr \right)^{1/2}, 
\end{aligned}  \label{eq:constantestimate1112} 
\end{align}
and also for all $l\geq2$,
\begin{align} \begin{aligned}
\left\vert c^{(lm)}_H \right\vert  &= \left\vert \isinf r^{-1-\sqrt{l(l+1)+4}} \rhod_H^{(lm)}dr \right\vert \\
&\lesssim \left( \frac{1}{2\sqrt{l(l+1)+4}+3}\right)^{1/2} \left( \isinf \left( r \rhod_H^{(lm)} \right)^2 dr \right)^{1/2}.
\end{aligned} \label{eq:constantestimate8feb1} \end{align}

We show next that for all integers $w\geq2$,
\begin{align} \begin{aligned}
\Vert \zeta_E \Vert_{\HHn(\RRRwo)} &\lesssim \Vert \rh \Vert_{\HHno}+ C_w\Vert \rh \Vert_{\ol{\HH}^0_{-5/2}},\\
 \Vert \zeta_H \Vert_{\HHn(\RRRwo)} &\lesssim \Vert \rh \Vert_{\HHno}+ C_w\Vert \rh \Vert_{\ol{\HH}^0_{-5/2}}. \end{aligned} \label{eq:zetabound8feb2}
\end{align}

Consider first the case $w=2$ of \eqref{eq:zetabound8feb2} for $\zeta_E$, that is,
\begin{align} 
\Vert \zeta_E \Vert_{\HH^0_{-5/2}(\RRR^3 \setminus \ol{B_1})} &\lesssim \Vert \rh \Vert_{\HH^0_{-5/2}(\RRR^3 \setminus \ol{B_1})}.
\label{eq:feb1011} \end{align}
By \eqref{eq:defofZetaE} and \eqref{eq:defofZetaH}, for all $l\geq2$, $\zeta_E^{(lm)}$ and $\zeta_H^{(lm)}$ vanish outside the interval $(1, 1+1/l )$. Therefore, for $l\geq2$,
\begin{align} \begin{aligned}
\isinf \left(r \zeta_E^{(lm)}\right)^2 dr &= \left(c_E^{(lm)} \right)^2 l^2 \int\limits_1^{1+1/l} r^{2l} \Big(  (\pr_r \chi)(l(r-1)) \Big)^2 dr \\
&\lesssim \left(c_E^{(lm)} \right)^2 \frac{l^2}{2l+1}  \bigg[ \left(1 + \frac{1}{l} \right)^{2l+1} -1 \bigg] \\
&\lesssim 
\left( \isinf \left( r \rhon^{(lm)} \right)^2 dr + \isinf \left( r \rhod_E^{(lm)} \right)^2 dr \right),
\end{aligned} \label{eq:feb9estimateE111} \end{align}
where we uniformly estimated $\pr_r \chi$ and used \eqref{eq:constantestimate1112} in the last step. Similarly, for $l\geq2$, using \eqref{eq:constantestimate8feb1},
\begin{align*}
\isinf (r \zeta_H^{(lm)})^2 dr &= \left( c_H^{(lm)} \right)^2 \isinf r^{2+2 \sqrt{l(l+1)+4}} \left( \pr_r \left( \chi(l(r-1)) \right) \right)^2 dr \\
&= \left( c_H^{(lm)} \right)^2 l^2 \int\limits_1^{1+1/l} r^{2+2 \sqrt{l(l+1)+4}} \left( \pr_r \chi \right)^2 (l(r-1))dr  \\
&\lesssim  \left( c_H^{(lm)} \right)^2 l^2 \left( \frac{1}{3+2 \sqrt{l(l+1)+4}} \right) \left( \left( 1+ \frac{1}{l}\right)^{3+2 \sqrt{l(l+1)+4}} -1 \right)\\
&\lesssim \isinf \left( r \rhod_H^{(lm)} \right)^2 dr.
\end{align*}
Summing over $l$ and $m$ proves \eqref{eq:feb1011}.\\

The case $w >2$ of \eqref{eq:zetabound8feb2} for $\zeta_E$ is derived as follows. By Proposition \ref{prop:Howtoestimatefunctions}, we improve \eqref{eq:constantestimate1112} as follows, 
\begin{align} \begin{aligned}
\left\vert c_E^{(lm)} \right\vert \lesssim& \frac{1}{\sqrt{2l-1}} \left( \isinf \left( r {\rhon}^{(lm)} \right)^2 dr  +\isinf \left( r \rhod_E^{(lm)} \right)^2 dr  \right)^{1/2} \\
\lesssim& \frac{1}{\sqrt{2l-1}\sqrt{l(l+1)}^{w-2}} \left((l(l+1))^{w-2} \isinf \left(  r {\rhon}^{(lm)} \right)^2 dr \right)^{1/2}\\
 &+\frac{1}{\sqrt{2l-1}\sqrt{l(l+1)}^{w-2}} \left( (l(l+1))^{w-2} \isinf \left(r \rhod_E^{(lm)} \right)^2 dr  \right)^{1/2} \\
\lesssim& \frac{1}{\sqrt{2l-1}\sqrt{l(l+1)}^{w-2}} \left( \isinf r^{2({w-2})} \left( \frac{l(l+1)}{r^2} \right)^{w-2} \left(  r {\rhon}^{(lm)} \right)^2 dr \right)^{1/2} \\
& + \frac{1}{\sqrt{2l-1}\sqrt{l(l+1)}^{w-2}} \left( \isinf r^{2({w-2})} \left( \frac{l(l+1)}{r^2} \right)^{w-2} \left(r \rhod_E^{(lm)} \right)^2 dr  \right)^{1/2}.
\end{aligned}\label{eq:improvedfeb9} \end{align}
The terms in brackets on the right-hand side, in view of Proposition \ref{prop:Howtoestimatefunctions}, correspond after summing over $l,m$ to the $\HHno$-norm of $\rh$ which is bounded. These terms are therefore in particular summable.\\

On the other hand, we can explicitly calculate
\begin{align} \begin{aligned}
\pr_r \left( \zeta_E^{(lm)}\right) &= c_E^{(lm)} \frac{l-1}{r} r^{l-1} \pr_r \left( \chi(l(r-1)) \right) + c_E^{(lm)} r^{l-1} l \pr_r \Big( \left( \pr_r \chi \right)(l(r-1)) \Big) \\
& \approx \frac{l}{r} \zeta_E^{(lm)},\\
\left( \Divd \zeta_E \right)^{(lm)} &=- \frac{\sqrt{l(l+1)}}{r} \zeta_E^{(lm)}, \,\,\, \left( \Curld \zeta_E \right)^{(lm)} =0. \\
\end{aligned} \label{feb25estref} \end{align}
Combining \eqref{eq:improvedfeb9}, \eqref{feb25estref} and using Propositions \ref{prop:Completeness} and \ref{prop:Howtoestimatefunctions} and Lemma \ref{lem:commutationrelation}, we can estimate the derivatives of $\zeta_E$ similarly as in \eqref{eq:feb1011}. This proves \eqref{eq:zetabound8feb2} for $\zeta_E$ for all $w\geq2$. The estimates \eqref{eq:zetabound8feb2} for $\zeta_H$ are derived analogously and left to the reader. This proves \eqref{eq:zetabound8feb2} for all $w\geq2$.\\

We next show that $\zeta_E, \zeta_H \in \HHno$ by proving that there exist sequences $(\zeta_E)_n, (\zeta_H)_n$ of smooth vectorfields with 
\begin{align} \label{conddensity}
\mathrm{supp} (\zeta_E)_n, \mathrm{supp} (\zeta_H)_n \subset \subset \RRRwo
\end{align}
that converge as $n\to \infty$ in $\HHn$ to
\begin{align} \label{conddensity2}
(\zeta_E)_n \to \zeta_E, \, (\zeta_H)_n \to \zeta_H.
\end{align}

Indeed, let 
\begin{align*}
(\zeta_E)_n &:= \sum\limits_{l=2}^n \sum\limits_{m=-l}^l \zeta_E^{(lm)} E^{(lm)}, \\
(\zeta_H)_n &:= \sum\limits_{l=2}^n \sum\limits_{m=-l}^l \zeta_H^{(lm)} E^{(lm)}.
\end{align*}
By \eqref{eq:transfct}, \eqref{eq:defofZetaE} and \eqref{eq:defofZetaH}, it follows that for each $n$, these are smooth vectorfields satisfying \eqref{conddensity}. By \eqref{eq:zetabound8feb2}, the convergence \eqref{conddensity2} follows. \\
 
Next, we prove the integral identities \eqref{eq:propo8feb}. The first one follows by
\begin{align} \begin{aligned}
\isinf r^{1-l} \zeta_E^{(lm)} dr &= \isinf c_E^{(lm)} \pr_r \left( \chi((r-1)l) \right)dr \\
&= c_E^{(lm)} \left[ \chi((r-1)l)  \right]_1^\infty \\
&= c_E^{(lm)} ( 1-0)\\
&= \isinf r^{1-l} \left( \frac{l}{\sqrt{l(l+1)}} \rhon^{(lm)} -\rhod_E^{(lm)} \right) dr.
\end{aligned} \label{eq:feb25est45} \end{align}
The second identity is proven similarly and left to the reader. This proves part (1) of Proposition \ref{prop:boundaryvalues}.\\

{\bf (2) Precise estimate for $\zeta_E$ and $\zeta_H$.} We first prove \eqref{eq:zetaEestimatefeb15}. Consider the case $w=2$ of the estimate for $\zeta_E$ in \eqref{eq:zetaEestimatefeb15}. Using the Hodge-Fourier formalism, see Proposition \ref{prop:Howtoestimatefunctions} and Lemmas \ref{lem:commutationrelation} and \ref{lem:inversetoDDDD}, it suffices to prove
\begin{align*}
\int\limits_{1}^\infty r^2 \left( \frac{r}{\sqrt{\half l(l+1)-1}} \pr_r \zeta_E^{(lm)} \right)^2 dr \lesssim \int\limits_{1}^\infty \left( r \rhod_E^{(lm)} \right)^2 +\left( r \rhod_H^{(lm)} \right)^2 +\left( r \rhon^{(lm)} \right)^2 dr.
\end{align*}

By \eqref{feb25estref}, we can estimate
\begin{align} \begin{aligned}
\isinf r^2& \left( \frac{r}{\sqrt{\half l(l+1)-1}} \pr_r \zeta_E^{(lm)} \right)^2dr \\
& \lesssim \isinf r^2 \left( \zeta_E^{(lm)} \right)^2dr + \left(c_E^{(lm)}\right)^2 \isinf r^{2l+2} l^2 \left( (\pr^2 \chi)(l(r-1)) \right)^2 dr \\
&\lesssim \isinf r^2 \left( \zeta_E^{(lm)} \right)^2dr + l^2 \left(c_E^{(lm)}\right)^2 \int\limits_1^{1+\frac{1}{l}} r^{2l+2} dr \\
&\lesssim \isinf r^2 \left( \zeta_E^{(lm)} \right)^2dr + l^2 \left(c_E^{(lm)}\right)^2 \frac{1}{2l+3} \left( \left(1+\frac{1}{l} \right)^{3+2l}- 1\right) \\
&\lesssim \isinf r^2 \left( \zeta_E^{(lm)} \right)^2dr + \frac{l^2}{2l+3}\left(c_E^{(lm)}\right)^2 \\
&\lesssim \isinf r^2 \left( \zeta_E^{(lm)} \right)^2dr +  \isinf \left( r \rhod_E^{(lm)} \right)^2 +  \left( r \rhon^{(lm)} \right)^2 dr,
\end{aligned} \label{eq:8j66} \end{align}
where we uniformly estimated $\pr^2_r \chi$, used the fact that $\mathrm{supp} \, \pr^2_r \chi \subset [1,1+1/l]$ and \eqref{eq:constantestimate1112}. Together with Proposition \ref{prop:boundaryvalues}, this proves \eqref{eq:zetaEestimatefeb15} for $w=2$. \\ 

Consider now the case $w>2$ of \eqref{eq:zetaEestimatefeb15}. On the one hand, by the higher regularity of $\rhod_E^{(lm)},\rhon^{(lm)}$, the estimate of $c_E^{(lm)}$ improves, see \eqref{eq:improvedfeb9}. On the other hand, we can differentiate the explicit formula \eqref{eq:defofZetaE} by $\pr_r$, while taking angular derivatives correspond to multiplications by $\frac{\sqrt{l(l+1)}}{r}$. All terms appearing can be bounded analogously as in \eqref{eq:8j66} by using the improved bounds for $c^{(lm)}$ and the fact that for all $n\geq1$, $$\mathrm{supp} \, \pr_r^n \chi \subset [1, 1+1/l].$$
This proves \eqref{eq:zetaEestimatefeb15} for $\zeta_E$ for $w\geq2$. The proof for $\zeta_H$ is analogous and left to the reader. \\

It remains to show that $\cDd^{-1}_2 \left( \Ndn \zeta_E \right), \cDd^{-1}_2 \left( \Ndn \zeta_E \right) \in \ol{\HH}^{w-2}_{-5/2}$. Consider the statement for $\cDd^{-1}_2 \left( \Ndn \zeta_E \right)$. For each $n\geq2$, the smooth $S_r$-tangent tracefree symmetric $2$-tensor
\begin{align*}
V_n := \sum\limits_{l=2}^n \sum\limits_{m=-l}^l \left( \cDd_2^{-1} \left( \Ndn \zeta_E \right) \right)^{(lm)}_\psi \psi^{(lm)}
\end{align*}
has compact support in $\RRRwo$ by the definition of $\zeta_E$ in \eqref{eq:defofZetaE}, see Lemma \ref{lem:RelationsSpherical}. Further, by \eqref{eq:zetaEestimatefeb15}, $V_n \to  \cDd_2^{-1} \left( \Ndn \zeta_E \right)$ as $n\to \infty$ in $\HH^{w-2}_{-5/2}$. By definition of $\ol{\HH}^{w-2}_{-5/2}$, see Definition \ref{definitionolspace}, the statement for $\cDd^{-1}_2 \left( \Ndn \zeta_E \right)$ follows. The statement for $\cDd^{-1}_2 \left( \Ndn \zeta_H \right)$ follows analogously. This finishes the proof of part (2) of Proposition \ref{prop:boundaryvalues}.\\

{\bf (3) Elliptic regularity and boundary behaviour of $\de^{[\geq2]}, \si_{NN}^{[\geq2]}$.} First, by the elliptic theory of Appendix \ref{sec:WEIGHTEDellipticity}, it follows that for integers $w\geq2$,
\begin{align*}
\de^{[\geq2]} \in \ol{H}^1 \cap H^{w-1}_{-3/2}(\RRRwo), \,\,\, \si_{NN}^{[\geq2]} \in H^{w-2}_{-5/2}(\RRRwo)
\end{align*}
with estimates
\begin{align} \begin{aligned}
\Vert \de^{[\geq2]} \Vert_{H^{w-1}_{-3/2}(\RRR^3 \setminus \ol{B_1})} &\lesssim \Vert \rh \Vert_{\HHno}+ C_w\Vert \rh \Vert_{\ol{\HH}^0_{-5/2}}, \\ 
\Vert \si_{NN}^{[\geq2]} \Vert_{H^{w-2}_{-5/2}(\RRR^3 \setminus \ol{B_1})} &\lesssim \Vert \rh \Vert_{\HHno}+ C_w\Vert \rh \Vert_{\ol{\HH}^0_{-5/2}}.
\end{aligned} \label{regularityestimates8feb} \end{align}
Indeed, $\de^{[\geq2]}$ is estimated in $\ol{H}^1_{-3/2}$ by Proposition \ref{prop:ellmars10} and $\si_{NN}^{[\geq2]}$ in $H^0_{-5/2}(\RRRwo)$ by Lemma \ref{lowreg16mars}. Higher order regularity follows from Proposition \ref{higherregmars14}. The corresponding estimates obtained for $\de^{[\geq2]}$ and $\si_{NN}^{[\geq2]}$ are in terms of norms of the right-hand sides of \eqref{eq:delta4} and \eqref{eq:sigma2}. In turn, these right-hand sides are estimated thanks to Corollary \ref{corprac8j} and the estimates of the part (1) of the proof for $\zeta_E$ and $\zeta_H$.\\

We demonstrate now the improved boundary behaviour 
\begin{align*}
\de^{[\geq2]} \in \ol{H}^{w-1}_{-3/2}, \,\,\, \si_{NN}^{[\geq2]} \in \ol{H}^{w-2}_{-5/2}.
\end{align*}
We only need to consider the cases $w>2$ for $\de^{[\geq2]}$ and $w>3$ for $\si_{NN}^{[\geq2]}$. Indeed, else the trivial extension to $B_1$ is regular and in view of the boundary conditions 
\begin{align*}
\de^{[\geq 2]} \vert_{r=1} = \si_{NN}^{[\geq2]} = 0,
\end{align*}
the statement follows by Proposition \ref{prop:TrivialExtensionRegularity}.\\

By Proposition \ref{higherregmars14444}, it suffices to prove the following claim.
\begin{claim} \label{summaryclaimj8}
If $w>2$, then it holds that
\begin{align}
\pr_r \de^{[\geq2]} \vert_{r=1} = 0, \label{8j1}
\end{align}
and if $w>3$, then 
\begin{align}
\pr_r \si_{NN}^{[\geq2]} \vert_{r=1} = 0. \label{8j2}
\end{align}
\end{claim}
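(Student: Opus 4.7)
The plan is to reduce Claim~\ref{summaryclaimj8} to a mode-by-mode computation via the Hodge-Fourier decomposition of Section~\ref{ssec:Hodgebasis}, showing that the carefully chosen integral identities \eqref{eq:propo8feb} of Proposition~\ref{prop:boundaryvalues}(1) force the boundary derivatives $\pr_r\de^{[\geq2]}|_{r=1}$ and $\pr_r\si_{NN}^{[\geq2]}|_{r=1}$ to vanish on each spherical harmonic mode, after which Proposition~\ref{prop:Completeness} assembles the mode-wise vanishing into the full statement in $L^2(S_1)$.

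For \eqref{8j1}, I project \eqref{eq:delta4} onto $Y^{(lm)}$ on $S_r$ for each $l\geq 2$. Using Lemma~\ref{lem:RelationsSpherical} for $\Divd$ together with the commutation relations of Lemma~\ref{lem:commutationrelation} between $\pr_r$ and $\int_{S_r}$ (which account for the $r$-dependence of the $L^2(S_r)$-normalised $Y^{(lm)}$), one obtains a second-order Euler-type ODE for $\de^{(lm)}(r)$ whose growing homogeneous solution is the power $r^{l-1}$ that is baked into $\zeta_E^{(lm)}$ by \eqref{eq:defofZetaE}, supplemented by the Dirichlet condition $\de^{(lm)}(1)=0$ and decay at infinity. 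Pairing this ODE (via Lagrange's identity) against the adjoint homogeneous solution whose contribution at $r=\infty$ vanishes on the decaying solution produces
\[
\pr_r\de^{(lm)}(1)\;=\;-\int_1^\infty \phi_l(r)\,F^{(lm)}(r)\,dr,
\]
where $F^{(lm)}$ is the projection of the right-hand side of \eqref{eq:delta4}. One integration by parts in the $\tfrac{1}{r^3}\pr_r(r^3\rho_N^{[\geq2]})$ term of $F^{(lm)}$ reorganises this as
\[
\pr_r\de^{(lm)}(1)\;=\;\rho_N^{(lm)}(1)\;+\;\sqrt{l(l+1)}\int_1^\infty r^{1-l}\Bigl[\tfrac{l}{\sqrt{l(l+1)}}\rho_N^{(lm)} - \rhod_E^{(lm)} - \zeta_E^{(lm)}\Bigr]\,dr,
\]
and the final integral is \emph{exactly} zero by the first equation of \eqref{eq:propo8feb}. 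Since $w>2$ forces $\rh\in\ol{\HH}^{w-2}_{-5/2}\subset\ol{\HH}^1_{-5/2}$, whose trace on $S_1$ vanishes by the closure Definition~\ref{definitionolspace}, we get $\rho_N^{(lm)}(1)=0$, which yields \eqref{8j1} after summing over $(l,m)$.

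For \eqref{8j2} the same procedure applies to \eqref{eq:sigma2}: the projected ODE for $\si_{NN}^{(lm)}$ has characteristic exponents $\pm\sqrt{l(l+1)+4}$, which combined with the $\pr_r$ sitting in front of the source dictates the shifted growth rate $r^{1+\sqrt{l(l+1)+4}}$ carried by $\zeta_H^{(lm)}$ in \eqref{eq:defofZetaH}. Pairing against the adjoint decaying solution, using $\zeta_H^{(lm)}(1)=0$ (because $\pr_r\chi$ vanishes at $r=1$), and integrating by parts once in the source $\pr_r\Curld(\rhod_H^{[\geq2]}+\zeta_H)$ reduces $\pr_r\si_{NN}^{(lm)}(1)$ to a multiple of the integral $\int_1^\infty r^{-1-\sqrt{l(l+1)+4}}(\rhod_H^{(lm)}+\zeta_H^{(lm)})\,dr$ — zero by the second identity in \eqref{eq:propo8feb} — plus a boundary term proportional to $\rhod_H^{(lm)}(1)$. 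Under $w>3$ the hypothesis $\rh\in\ol{\HH}^2_{-5/2}$ forces $\rhod_H|_{r=1}=0$; moreover, the strict inequality $w>3$ is precisely what makes $\pr_r\si_{NN}^{[\geq2]}|_{r=1}$ a bona fide trace, since the elliptic estimate for \eqref{eq:sigma2} only yields $\si_{NN}^{[\geq2]}\in H^{w-2}_{loc}$, which is $H^2_{loc}$ near $r=1$ exactly when $w-2\geq 2$.

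The main obstacle is the precise bookkeeping of boundary contributions at both $r=1$ and $r=\infty$ through several integrations by parts under limited regularity, and the verification that the exponents $r^{l-1}$ and $r^{1+\sqrt{l(l+1)+4}}$ carefully tuned into $\zeta_E,\zeta_H$ by \eqref{eq:defofZetaE}--\eqref{constant8feb} are exactly those that make the weighted integrals arising from the adjoint pairing (and one IBP) match the $r^{1-l}$ and $r^{-1-\sqrt{l(l+1)+4}}$ weights in \eqref{eq:propo8feb}. Once this matching is in place, the source of the vanishing of $\pr_r\de^{[\geq2]}|_{r=1}$ and $\pr_r\si_{NN}^{[\geq2]}|_{r=1}$ is, in the end, the same mechanism as in Remark~\ref{Remark2DtN}: $\zeta_E$ and $\zeta_H$ were designed from the outset to control the Dirichlet-to-Neumann map of \eqref{eq:delta4}--\eqref{eq:sigma2}.
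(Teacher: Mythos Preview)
Your proposal is correct and follows essentially the same approach as the paper: the paper writes the projected radial ODEs for $\de^{(lm)}$ and $\si_{NN}^{(lm)}$ in factored form (see \eqref{eq:5febrad1} and \eqref{eq:sideflm8feb}), multiplies by $r^{2-l}$ respectively $r^{1-\sqrt{l(l+1)+4}}$ and integrates over $(1,\infty)$ --- which is precisely your Lagrange-identity pairing against the decaying-at-infinity homogeneous solution --- then invokes the integral identities \eqref{eq:propo8feb} and the boundary vanishing of $\rho$ exactly as you outline. Your identification of why $w>2$ and $w>3$ are needed (so that $\pr_r\de^{[\geq2]}\vert_{r=1}$ and $\pr_r\si_{NN}^{[\geq2]}\vert_{r=1}$ are well-defined traces via \eqref{eq:8j3}, and so that $\rho\vert_{r=1}=0$) also matches the paper's reasoning.
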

First, by \eqref{regularityestimates8feb} it holds that for all $l\geq2$, $m \in \{ -l, \dots, l \}$, if $w>2, w>3$, respectively,
\begin{align*}
&\isinf \left(\de^{(lm)} \right)^2 dr, \,\, \isinf (1+r)^2 \left( \pr_r \de^{(lm)} \right)^2 dr,\,\, \isinf (1+r)^4 \left( \pr_r^2 \de^{(lm)} \right)^2 dr < \infty, \\
&\isinf (1+r)^2 \left(\si_{NN}^{(lm)} \right)^2 dr, \,\, \isinf (1+r)^4 \left( \pr_r \si_{NN}^{(lm)} \right)^2 dr,\,\, \isinf (1+r)^6 \left( \pr_r^2 \si_{NN}^{(lm)} \right)^2 dr < \infty.
\end{align*}
By Lemma \ref{sobolev1d}, it follows that 
\begin{align} \begin{aligned}
\sup\limits_{r \in (1,\infty)} (1+r)^{1/2} \de^{(lm)}, \, \, \sup\limits_{r \in (1,\infty)} (1+r)^{3/2} \pr_r \de^{(lm)} < \infty, \\
\sup\limits_{r \in (1,\infty)} (1+r)^{3/2}\si_{NN}^{(lm)}, \, \, \sup\limits_{r \in (1,\infty)} (1+r)^{5/2} \pr_r \si_{NN}^{(lm)} < \infty.
\end{aligned} \label{eq:8j3}
\end{align}

We show now that if $w>2$, then for all $l\geq2$, $m \in \{ -l, \dots, l \}$, 
$$\pr_r \de^{(lm)} \vert_{r=1} = 0.$$ 
Definition \eqref{eq:delta4} is in the Hodge-Fourier formalism equivalent to the following ODEs on $r \in (1,\infty)$ for $\de^{(lm)}$ with $l\geq2, m \in \{ -l, \dots, l\}$, see Lemma \ref{lem:RelationsSpherical},
\begin{align}
r^{l-2} \pr_r \left( r^{-2l} \pr_r \left( r^{l+2} \de^{(lm)} \right) \right) = \frac{1}{r^2} \pr_r \left( r^2 \rhon^{(lm)} \right) - \frac{\sqrt{l(l+1)}}{r} \left( \rhod_E^{(lm)} + \zeta_E^{(lm)} \right). \label{eq:5febrad1}
\end{align}

On the one hand, using that $\de^{(lm)} \vert_{r=1}=0$, $l\geq2$, and \eqref{eq:8j3}, we get
\begin{align*}
\isinf  \pr_r \left( r^{-2l} \pr_r \left( r^{l+2} \de^{(lm)} \right) \right) dr & = \left[ r^{2-l} \pr_r \de^{(lm)} + (l+2) r^{1-l} \de^{(lm)} \right]_1^\infty \\
& = - \pr_r \de^{(lm)} \vert_{r=1}. 
\end{align*}
On the other hand, by \eqref{eq:5febrad1}, 
\begin{align} \begin{aligned}
\isinf  \pr_r \left( r^{-2l} \pr_r \left( r^{l+2} \de^{(lm)} \right) \right) =& \isinf r^{-l} \pr_r \left( r^2 \rhon^{(lm)} \right) - \sqrt{l(l+1)} r^{1-l} \left( \rhod_E^{(lm)} + \zeta_E^{(lm)} \right) \\
=& \underbrace{\left[ r^{2-l} \rhon^{(lm)} \right]_1^\infty}_{=0}  \\
&+ l \isinf r^{1-l} \left( \rhon^{(lm)} - \frac{\sqrt{l(l+1)}}{l} \left( \rhod_E^{(lm)} + \zeta_E^{(lm)} \right) \right) \\
=&0,
\end{aligned} \label{eq:Dez183}  \end{align}
where the boundary term vanished because $\rh_N \in \ol{H}^{w-2}_{-5/2}$, and where we also used the integral identity \eqref{eq:propo8feb}. This shows that $$\pr_r \de^{(lm)} \vert_{r=1} = 0$$ for all $l\geq2$, $m \in \{-l, \dots, l \}$ and proves \eqref{8j1}.\\


We show now that if $w>3$, then for all $l\geq2$, $m\in \{-l, \dots, l \}$, 
$$\pr_r \si_{NN}^{(lm)} \vert_{r=1} = 0.$$
Definition \eqref{eq:sigma2} is in the Hodge-Fourier formalism equivalent to the following ODEs on $r \in (1,\infty)$ for $\si_{NN}^{(lm)}$ with $l\geq2, m \in \{ -l, \dots, l\}$, see Lemma \ref{lem:RelationsSpherical},
\begin{align}
r^{\sqrt{l(l+1)+4}-1} \pr_r \left( r^{1-2 \sqrt{l(l+1)+4}} \pr_r \left( r^{\sqrt{l(l+1)+4}} \si_{NN}^{(lm)} \right) \right) = r \pr_r \left( \frac{\sqrt{l(l+1)}}{r^2} \left( \rhod_H^{(lm)} + \zeta_H^{(lm)} \right) \right). \label{eq:sideflm8feb}
\end{align}
On the one hand, using that $\si_{NN}^{(lm)}\vert_{r=1}=0$, $l\geq2$ and \eqref{eq:8j3},
\begin{align*}
\isinf \pr_r \left( r^{1-2 \sqrt{l(l+1)+4}} \pr_r \left( r^{\sqrt{l(l+1)+4}} \si_{NN}^{(lm)} \right) \right) =& \left[ \sqrt{l(l+1)+4} r^{-\sqrt{l(l+1)+4}} \si_{NN}^{(lm)} \right]_1^\infty \\
& + \left[ r^{1-\sqrt{l(l+1)+4}} \pr_r \si_{NN}^{(lm)} \right]_1^\infty \\
=& -\pr_r \si_{NN}^{(lm)} \vert_{r=1}.
\end{align*}
On the other hand, using \eqref{eq:sideflm8feb},
\begin{align*}
&\isinf \pr_r \left( r^{1-2 \sqrt{l(l+1)+4}} \pr_r \left( r^{\sqrt{l(l+1)+4}} \si_{NN}^{(lm)} \right) \right) \\
=& \isinf r^{2-\sqrt{l(l+1)+4}} \pr_r \left( \frac{\sqrt{l(l+1)}}{r^2} \left( \rhod_H^{(lm)} + \zeta_H^{(lm)} \right) \right)\\
=& \left[ r^{-\sqrt{l(l+1)+4}} \sqrt{l(l+1)} \left( \rhod_H^{(lm)} + \zeta_H^{(lm)} \right) \right]_1^\infty\\
&+ \sqrt{l(l+1)} \left(\sqrt{l(l+1)+4}-2\right) \isinf r^{-\sqrt{l(l+1)+4}-1}  \left( \rhod_H^{(lm)} + \zeta_H^{(lm)} \right) \\
=&0,
\end{align*}
where we used the integral identity \eqref{eq:propo8feb} and the boundary term vanished because $\rh, \zeta_H \in \HHno$. This shows that for all $l\geq2$, $m\in \{-l, \dots, l \}$, 
$$\pr_r \si_{NN}^{(lm)} \vert_{r=1} = 0$$ and proves \eqref{8j2}. This finishes the proof of Claim \ref{summaryclaimj8}. Hence, we have obtained the control of $\de^{[\geq2]} \in \HHmo, \si_{NN}^{[\geq2]} \in \HHno$. This finishes the proof of part (3) of Proposition \ref{prop:boundaryvalues}.

\begin{remark} \label{rem:supportstatement}
For $l\geq2, m \in \{-l, \dots, l \}$, if $\rhod_H^{(lm)}$ is compactly supported in $\RRRwo$, then $\si_{NN}^{(lm)}$ is compactly supported in $\RRRwo$. Indeed, this follows by integrating the radial ODE \eqref{eq:sideflm8feb} and using that by the construction of $\zeta_H^{(lm)}$, $\mathrm{supp} \zeta_H^{(lm)} \subset \subset \RRRwo$ and $\pr_r \si_{NN}^{(lm)} \vert_{r=1} = 0$. \end{remark}

{\bf (4) Precise estimate for $ \pr_r \si_{NN}^{[\geq2]}$.} First consider the case $w=2$ of \eqref{eq:mars22estim1}, that is,
\begin{align}
\left\Vert \cDd_1^{-1}(0, \pr_r \si_{NN}^{[\geq2]}) \right\Vert_{\HH^0_{-5/2}(\RRRwo)} \lesssim \Vert \rho \Vert_{\HH^0_{-5/2}(\RRRwo)}. \label{eq:mars22estim2}
\end{align}
Using the Fourier-Hodge formalism and  the previous estimates for $\si_{NN}^{[\geq2]}, \zeta_H$, it suffices to prove
\begin{align}
\isinf r^2 \left( \frac{r}{\sqrt{l(l+1)}} \pr_r \si_{NN}^{(lm)} \right)^2 dr \lesssim \isinf \left(r \rhod_H^{(lm)} \right)^2 +\left(r \si_{NN}^{(lm)} \right)^2 + \left(r\zeta_H^{(lm)} \right)^2 dr.
\label{8js1} \end{align}

First, we rewrite the integrand by using \eqref{eq:sideflm8feb}. Indeed, multiplying \eqref{eq:sideflm8feb} by $r^{-\sqrt{l(l+1)+4}+1}$ and integrating from $1$ to $r\geq1$ leads, after integration by parts, to the expression
\begin{align} \begin{aligned}
&\frac{r}{\sqrt{l(l+1)}} \pr_r \si_{NN}^{(lm)} \\
=& - \frac{\sqrt{l(l+1)+4}}{\sqrt{l(l+1)}} \si_{NN}^{(lm)} + \left( \rhod_H^{(lm)} + \zeta_H^{(lm)} \right)\\
&-\left(2-\sqrt{l(l+1)+4}\right) r^{\sqrt{l(l+1)+4}}\left( \isr (r')^{-\sqrt{l(l+1)+4}-1} \left( \rhod_H^{(lm)} + \zeta_H^{(lm)} \right) dr' \right),
\end{aligned} \label{8js2} \end{align}
where the boundary terms at $r=1$ vanished because $\rh, \zeta_H \in \HHno$ and $$\si_{NN}^{[\geq2]} \vert_{r=1} =\pr_r \si_{NN}^{[\geq2]} \vert_{r=1}=0.$$ 

We are now in the position to prove \eqref{8js1}. We have
\begin{align} \begin{aligned}
&\isinf r^2 \left( \frac{r}{\sqrt{l(l+1)}} \pr_r \si_{NN}^{(lm)} \right)^2 dr \\
\lesssim& \isinf r^2 \Big[ \left( \si_{NN}^{(lm)} \right)^2 + \left(\rhod_H^{(lm)} \right)^2+ \left(\zeta_H^{(lm)} \right)^2 \Big] dr\\
&+ (\sqrt{l(l+1)+4}-2)^2 \underbrace{\isinf r^{2\sqrt{l(l+1)+4}+2}\left( \isr (r')^{-\sqrt{l(l+1)+4}-1} \left( \rhod_H^{(lm)} + \zeta_H^{(lm)} \right) dr' \right)^2dr}_{:= I_2}.
\end{aligned} \label{mars224} \end{align}
By the integral identity \eqref{eq:propo8feb} we can rewrite $I_2$ and use integration by parts to get
\begin{align}\begin{aligned}
I_2 
=&\isinf r^{2\sqrt{l(l+1)+4}+2}\left( \int\limits_r^\infty (r')^{-\sqrt{l(l+1)+4}-1} \left( \rhod_H^{(lm)} + \zeta_H^{(lm)} \right) dr' \right)^2dr \\
=&  \frac{1}{2\sqrt{l(l+1)+4}+3} \left[  r^{2\sqrt{l(l+1)+4}+3} \left( \int\limits_r^\infty (r')^{-\sqrt{l(l+1)+4}-1} \left( \rhod_H^{(lm)} + \zeta_H^{(lm)} \right) dr' \right)^2 \right]_1^\infty \\
&+2 \isinf \frac{r^{\sqrt{l(l+1)+4}+2} \left( \rhod_H^{(lm)} + \zeta_H^{(lm)} \right)}{2\sqrt{l(l+1)+4}+3}\left( \int\limits_r^\infty (r')^{-\sqrt{l(l+1)+4}-1} \left( \rhod_H^{(lm)} + \zeta_H^{(lm)} \right) dr' \right) dr.
\end{aligned} \label{eqmars223} \end{align}
The boundary term on the right-hand side can be estimated as follows
\begin{align*}
&\left\vert  
r^{2\sqrt{l(l+1)+4}+3} \left( \int\limits_r^\infty (r')^{-\sqrt{l(l+1)+4}-1} \left( \rhod_H^{(lm)} + \zeta_H^{(lm)} \right) dr' \right)^2 \right\vert \\
&\leq 
r^{2\sqrt{l(l+1)+4}+3} \left( \int\limits_r^\infty (r')^{-2\sqrt{l(l+1)+4}-4} dr' \right)\left( \isinf \left(r \rhod_H^{(lm)} \right)^2 +  \left(r \zeta_H^{(lm)} \right)^2 dr\right) \\
&\leq  \frac{1}{2\sqrt{l(l+1)+4}+3} 
\left( \isinf \left(r \rhod_H^{(lm)} \right)^2 +  \left( r \zeta_H^{(lm)} \right)^2  dr \right).
\end{align*}
The integral term on the right-hand side of \eqref{eqmars223} is estimated by Cauchy-Schwarz as 
\begin{align*}
&\isinf r^{2\sqrt{l(l+1)+4}+2} \left( \rhod_H^{(lm)} + \zeta_H^{(lm)} \right) \left( \isr (r')^{-\sqrt{l(l+1)+4}-1} \left( \rhod_H^{(lm)} + \zeta_H^{(lm)} \right) dr' \right) dr \\
 \leq& \left( I_2\right)^{1/2} \left( \isinf  \left(r \rhod_H^{(lm)} \right)^2 +  \left(r \zeta_H^{(lm)} \right)^2 dr \right)^{1/2}.
\end{align*}
Putting everything together, we arrive at 
\begin{align*}
I_2 \lesssim  \frac{1}{(2\sqrt{l(l+1)+4}+3)^2}  \left( \isinf  \left(r\rhod_H^{(lm)} \right)^2 +  \left(r\zeta_H^{(lm)} \right)^2  dr \right).
\end{align*}
Plugging this into \eqref{mars224} yields 
\begin{align*}
\isinf r^2 \left( \frac{r}{\sqrt{l(l+1)}} \pr_r \si_{NN}^{(lm)} \right)^2 dr \lesssim \isinf \left(r \rhod_H^{(lm)} \right)^2 +\left(r \si_{NN}^{(lm)} \right)^2 + \left(r\zeta_H^{(lm)} \right)^2 dr.
\end{align*} 
This proves \eqref{eq:mars22estim2}, that is, the case $w=2$ of \eqref{eq:mars22estim1}.\\

We turn now to the case $w>2$ of \eqref{eq:mars22estim1}. By differentiating \eqref{8js2} in $r$ or taking the tangential derivative $\Nd$ which on the Fourier side amounts to multiplication by $\frac{\sqrt{l(l+1)}}{r}$, and using Proposition \ref{prop:Howtoestimatefunctions} and Lemma \ref{lem:commutationrelation}, we get that for all $w\geq2$, 
\begin{align}\label{estimate3916}
\left\Vert \cDd_1^{-1}\left(0, \pr_r \si_{NN}^{[\geq2]}\right) \right\Vert_{\HHn(\RRRwo)} \lesssim \Vert \rho \Vert_{\HHno}+ C_w\Vert \rh \Vert_{\ol{\HH}^0_{-5/2}},\end{align}
that is, we proved \eqref{eq:mars22estim1}.\\

It remains to show that $\cDd_1^{-1}(0, \pr_r \si_{NN}^{[\geq2]}) \in \ol{\HH}^{w-2}_{-5/2}$. By definition, it suffices to prove that there is a sequence $X_n$ of smooth vectorfields on $\RRRwo$ with
\begin{align*}
\mathrm{supp} \, X_n \subset \subset \RRRwo
\end{align*}
that converges as $n\to \infty$ in $\HH^{w-2}_{-5/2}$,
\begin{align*}
X_{n} &\to \cDd_1^{-1}\left(0, \pr_r \si_{NN}^{[\geq2]} \right).
\end{align*}

Let $\rh_n$ be a sequence of smooth vectorfields on $\RRRwo$ such that for all $n$,
\begin{align*}
\mathrm{supp} \, \rh_n &\subset\subset \RRRwo
\end{align*}
and as $n \to \infty$, in $\HH^{w-2}_{-5/2}(\RRRwo)$,
\begin{align*}
\rh_n &\to \rh.
\end{align*}

Consider the sequence
\begin{align*}
\rh_n^{[\leq n]} := \sum\limits_{l=1}^n \sum\limits_{m=-l}^l \left( (\rh_n)_E^{(lm)} E^{(lm)} + (\rh_n)_H^{(lm)} H^{(lm)}  \right)
\end{align*}
which satisfies for all $n$,
\begin{align*}
\mathrm{supp} \, \rh_n^{[\leq n]} &\subset\subset \RRRwo
\end{align*}
and as $n \to \infty$, in $\HH^{w-2}_{-5/2}(\RRRwo)$,
\begin{align*}
\rh_n^{[\leq n]} &\to \rh.
\end{align*}

By Remark \ref{rem:supportstatement} and the higher regularity estimates \eqref{eq:zetabound8feb} and \eqref{regularityestimates8feb}, it follows that solutions $(\si_{NN}^{[\geq2]} )_n$ to \eqref{eq:sigma2} with $\rh_n^{[\leq n]}$ and corresponding $(\zeta_H)_n$ defined in \eqref{eq:defofZetaH} on the right-hand side are smooth and satisfy
\begin{align*}
\mathrm{supp} \, \left( \si_{NN}^{[\geq2]}\right)_n \subset \subset \RRRwo.
\end{align*}

This shows that
\begin{align*}
X_n := \cDd_1^{-1}\left(0, \pr_r \left( \si_{NN}^{[\geq2]} \right)_n \right),
\end{align*}
is a sequence of smooth vectorfields with 
\begin{align*}
\mathrm{supp} \, X_n \subset \subset \RRRwo.
\end{align*}
Furthermore, by linearity and \eqref{eq:mars22estim1}, as $n\to \infty$,
\begin{align*}
&\left\Vert X_n - \cDd_1^{-1}\left(0, \pr_r \si_{NN}^{[\geq2]} \right) \right\Vert_{\HHn(\RRRwo)} \\
\lesssim& \Vert (\rhod_n^{[\leq n]})_H^{[\geq 2]} - \rhod_H^{[\geq2]} \Vert_{\HHno}+C_w \Vert (\rhod_n^{[\leq n]})_H^{[\geq 2]} - \rhod_H^{[\geq2]} \Vert_{\ol{\HH}^0_{-5/2}}\\
& \to 0.
\end{align*}
The above implies that $\cDd_1^{-1}(0, \pr_r \si_{NN}^{[\geq2]}) \in \ol{\HH}^{w-2}_{-5/2}$. This finishes the control of $\cDd_1^{-1}\left(0, \pr_r \si_{NN}^{[\geq2]} \right)$ and hence concludes the proof of Proposition \ref{prop:boundaryvalues}. \end{proof}


In the following lemma, we estimate all quantities that were not yet bounded in Proposition \ref{prop:boundaryvalues} and obtain the full regularity and boundary control of $k$ and $\si$.
\begin{lemma}[Full boundary control and regularity] \label{lem:bdrycntrlfeb5}
For $\rho = (\rho_N, \rhod) \in \HHno$, the symmetric $2$-tensors $k$ and $\si$ defined in \eqref{eq:delta1}-\eqref{eq:eh1} satisfy $k \in \HHmo, \si \in \HHno$, with
\begin{align} \begin{aligned}
\Vert k \Vert_{\HHmo} &\lesssim \Vert \rho \Vert_{\HHno}+ C_w\Vert \rh \Vert_{\ol{\HH}^0_{-5/2}}, \\
\Vert \si \Vert_{\HHno} &\lesssim \Vert \rho \Vert_{\HHno}+ C_w\Vert \rh \Vert_{\ol{\HH}^0_{-5/2}}.
\end{aligned} \label{eq:regularityestimatefeb833} \end{align}
\end{lemma}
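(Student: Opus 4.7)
The plan is to estimate each of the six pieces of $k = (\de, \ep, \eh)$ and $\si = (\si_{NN}, \sigmaNd, \sigmaNdd)$ in turn, combining Proposition~\ref{prop:boundaryvalues} (which already covers $\de^{[\geq 2]}$, $\si_{NN}^{[\geq 2]}$, $\zeta_E$, $\zeta_H$, and the delicate radial-trace estimate for $\pr_r \si_{NN}^{[\geq 2]}$) with the Hodge-Fourier calculus of Section~\ref{ssec:Hodgebasis}. First I would dispatch the radial low modes: $\de^{[0]}$ and $\si_{NN}^{[1]}$ are given by explicit radial Hardy-type integrals \eqref{eq:delta2}, \eqref{eq:sigma1} and are controlled in $H^{w-1}_{-3/2}(\RRRwo)$ and $H^{w-2}_{-5/2}(\RRRwo)$ directly, using Lemma~\ref{sobolev1d} and integration by parts. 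The second-order radial ODE \eqref{eq:delta3} for $\de^{[1]}$ has zero Cauchy data at $r=1$ and a source bounded in terms of $\rh \in \HHno$, so the same weighted estimates follow by an elementary energy/Green's-function argument; in particular $\de^{[1]}\vert_{r=1} = \pr_r \de^{[1]}\vert_{r=1} = 0$ are built in.

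Next I would bound $\ep$ via the sphere-wise Hodge system \eqref{eq:ep2}. Its right-hand side $(\rho_N - r^{-3}\pr_r(r^3\de),\, \si_{NN})$ is $L^2$-orthogonal to the kernels accommodated by the mode split into $l=0,1,\ge 2$, so Lemma~\ref{lem:inversetoDDDD} together with the higher-order elliptic estimates of Proposition~\ref{prop:EllipticityHodgefeb23} on each $(S_r,\gac)$ produce pointwise-in-$r$ bounds for $\Nd^n\ep$; applying the commutation identities of Lemma~\ref{lem:commutationrelation} to trade $\pr_r$-derivatives for $\Ndn$-derivatives, and then reassembling all tangential and normal derivatives via Lemma~\ref{lem:practical4j}, yields the bound for $\ep$ in $\HH^{w-1}_{-3/2}(\RRRwo)$. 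The pieces $\sigmaNd^{[1]}$ and $\sigmaNd^{[\geq 2]}_E$ are algebraic in quantities already controlled (cf.\ \eqref{eq:sigmaNd2}, \eqref{eq:sigmaNd2feb9} and the estimates for $\zeta_E$ from Proposition~\ref{prop:boundaryvalues}), while $\sigmaNd^{[\geq 2]}_H$ is read off from the Hodge system \eqref{eq:sigmaNd3} whose source term $r^{-3}\pr_r(r^3 \si_{NN}^{[\geq 2]})$ is precisely bounded by \eqref{eq:mars22estim1}. Finally $\sigmaNdd$ and $\eh$ are given by one more application of $\cDd_2^{-1}$ to sources now fully under control.

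For the barred $\ol{\HH}$-regularity I would invoke Proposition~\ref{prop:TrivialExtensionRegularity}: it suffices to check that every component has vanishing trace at $r=1$ (and vanishing radial traces up to order $w-2$ when $w\geq 3$). These vanishing boundary values come from the boundary conditions designed into the construction \eqref{eq:delta2}--\eqref{eq:sigma2}, together with the fact that $\rh \in \ol{\HH}^{w-2}_{-5/2}$ has vanishing traces at $r=1$ and the trace identities of Claim~\ref{summaryclaimj8}. These zero traces then propagate through the first-order radial ODEs and through the sphere-wise Hodge systems defining $\ep$, $\sigmaNd^{[\geq 2]}_H$, $\sigmaNdd$ and $\eh$, so their extensions by zero to $B_1$ remain in the right weighted Sobolev class. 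Approximation by the truncated partial sums already used in the proof of Proposition~\ref{prop:boundaryvalues} gives density of compactly supported tensors in the corresponding $\ol{\HH}$-spaces.

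The main obstacle is the coupling that forces us to control $\sigmaNd^{[\geq 2]}_H$: its source in \eqref{eq:sigmaNd3} contains the elliptic trace $\pr_r \si_{NN}^{[\geq 2]}$ at $r=1$, which at the natural scaling is borderline and would normally lose regularity. Only the precise cancellation encoded in the integral identities \eqref{eq:propo8feb} (enforced by the careful definition of $\zeta_E,\zeta_H$) makes the bound \eqref{eq:mars22estim1} available, and through it closes the estimate for $\sigmaNd^{[\geq 2]}_H$ and then for $\sigmaNdd$ and $\eh$. Beyond this, Lemma~\ref{lem:bdrycntrlfeb5} is essentially a bookkeeping exercise assembling the pieces already supplied by Proposition~\ref{prop:boundaryvalues} and the Hodge theory of Section~\ref{sec:HodgeTheory}.
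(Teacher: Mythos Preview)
Your outline follows the paper's proof closely: the same decomposition into $(\de,\ep,\eh)$ and $(\si_{NN},\sigmaNd,\sigmaNdd)$, the same treatment of the low modes $\de^{[0]},\de^{[1]},\si_{NN}^{[1]}$ by Hardy-type radial estimates and ODE arguments, and the same use of Proposition~\ref{prop:boundaryvalues} for the $l\geq2$ elliptic pieces. One minor mechanical difference: for the radial regularity of $\ep$ and $\eh$ the paper does not differentiate the sphere-wise Hodge systems and commute, but instead reads off $\Ndn\ep$ and $\Ndn\eh$ directly from the transport equations \eqref{EH3a1}, \eqref{EH3a2}, \eqref{EH5a2} (cf.\ \eqref{eq:epdrfeb11}); this is cleaner because the sources there are already at the right regularity level.

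There is, however, one genuine step you have glossed over. You write that ``$\sigmaNdd$ and $\eh$ are given by one more application of $\cDd_2^{-1}$ to sources now fully under control,'' and you locate the main obstacle at $\sigmaNd_H^{[\geq2]}$. In fact the real difficulty sits one derivative higher, at $\sigmaNdd_\phi$. The defining equation \eqref{eq:sigmaNdd2} (equivalently the second line of \eqref{eq:mars191}) has a source involving $\Ndn\!\big(\cDd_1^{-1}(0, r^{-3}\pr_r(r^3\si_{NN}^{[\geq2]}))\big)$, which naively contains $\pr_r^2\si_{NN}^{[\geq2]}$; at $w=2$ this quantity is simply not available, and the bound \eqref{eq:mars22estim1} alone does not help. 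The paper resolves this in Claim~\ref{claim23may} by substituting the elliptic PDE \eqref{eq:sigma2} for $\si_{NN}^{[\geq2]}$ to replace $\pr_r^2\si_{NN}^{[\geq2]}$ with $\Ld\si_{NN}^{[\geq2]}$ and a term $\Ndn\zeta_H^{[\geq2]}$; after applying $\cDd_2^{-1}$ the former is harmless (net one tangential derivative of $\si_{NN}^{[\geq2]}$) and the latter is exactly the quantity bounded in \eqref{eq:zetaEestimatefeb15}. This rewriting is the reason Proposition~\ref{prop:boundaryvalues} records the ``precise estimates'' for $\cDd_2^{-1}(\Ndn\zeta_E)$ and $\cDd_2^{-1}(\Ndn\zeta_H)$ separately from the plain $\HHno$-bounds on $\zeta_E,\zeta_H$. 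Without this step your argument for $\sigmaNdd_\phi$ would not close; once you add it, the remainder is indeed bookkeeping as you say.
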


\begin{proof} In view of Lemma \ref{lem:practical4j} and the decomposition of $k$ and $\si$ introduced at the beginning of Section \ref{sss:mars301}, we prove that $\de, \ep, \eh \in \HHmo$ and $\si_{NN},\sigmaNd, \sigmaNdd \in \HHno$ together with quantitative estimates. We estimate the terms in the order they were introduced in \eqref{eq:delta1}-\eqref{eq:eh1}.\\

{\bf Control of $\de$.} \\

\textit{Control of $\de^{[0]}$.} First we show that for all $w\geq2$,
\begin{align}
\Vert \de^{[0]} \Vert_{H^{w-1}_{-3/2}} \lesssim \Vert \rh \Vert_{\HH^{w-2}_{-5/2}}+ C_w\Vert \rh \Vert_{\ol{\HH}^0_{-5/2}}.  \label{eq:j8f1}
\end{align}
First consider the case $w=2$, that is,
\begin{align*}
\Vert \de^{[0]} \Vert_{H^1_{-3/2}} \lesssim \Vert \rh \Vert_{\HH^0_{-5/2}}. 
\end{align*}
By \eqref{eq:delta2} we can rewrite
\begin{align*}
\Vert \de^{[0]} \Vert_{H^0_{-3/2}}^2 = \isinf \sr \left( \de^{[0]} \right)^2 dr &= \isinf \frac{4 \pi r^2}{r^6} \left( \isr (r')^3 \rhon^{[0]} dr' \right)^2 dr\\
&=\frac{1}{4\pi} \isinf \frac{1}{r^4} \left( \isr r' \srp \rhon^{[0]} dr' \right)^2 dr,
\end{align*}
where we used that $\de^{[0]}$ and $\rhon^{[0]}$ are radial. This expression allows us to estimate by partial integration
\begin{align} \begin{aligned}
\Vert \de^{[0]} \Vert_{H^0_{-3/2}}^2 =& \left[ \left( -\frac{1}{3r^3} \right) \left( \isr r' \srp \rhon^{[0]} dr' \right)^2 \right]_1^\infty \\
&+ \frac{2}{3} \isinf \frac{1}{r^3} \left( r \sr \rhon^{[0]} \right) \left( \isr r' \srp \rhon^{[0]} dr' \right) dr\\
\leq& \frac{2}{3} \left( \isinf \left( \sr \rhon^{[0]} \right)^2 dr \right)^{1/2} \left( \isinf \frac{1}{r^4} \left( \isr r' \srp \rhon^{[0]} dr' \right)^2 dr \right)^{1/2}\\
=& \frac{2}{3} \left( \isinf \left( \sr \rhon^{[0]} \right)^2 dr \right)^{1/2}  \Vert \de^{[0]} \Vert_{H^0_{-3/2}} \end{aligned} \label{avril111} 
\end{align}
where the boundary term was discarded because it has non-positive sign. This implies that
\begin{align}\begin{aligned}
\Vert \de^{[0]} \Vert_{H^0_{-3/2}(\RRRwo)}^2 &\lesssim \isinf \left( \sr \rhon^{[0]} \right)^2 dr \\
&\lesssim \isinf \sr \left( r \rhon^{[0]} \right)^2 dr \\
&\lesssim \Vert \rhon^{[0]} \Vert^2_{\ol{H}^0_{-5/2}(\RRRwo)}.
\end{aligned} \label{eq:de1feb9} \end{align}
The radial derivative $\pr_r \de^{[0]}$ equals by \eqref{eq:delta2}
\begin{align}
\pr_r \de^{[0]} = -\frac{3}{r} \de^{[0]} + \rhon^{[0]}. \label{eq:Dez174}
\end{align}

This yields with \eqref{eq:de1feb9} the estimate
\begin{align}
\Vert \pr_r \de^{[0]} \Vert_{H^0_{-5/2}(\RRRwo)} \lesssim \Vert \rhon^{[0]} \Vert_{\ol{H}^0_{-5/2}(\RRRwo)}.
\end{align}
The tangential derivative vanishes because $\de^{[0]}$ is radial. This proves the case $w=2$ of \eqref{eq:j8f1}.\\

Consider now the case $w>2$ of \eqref{eq:j8f1}. Higher radial regularity follows by differentiating \eqref{eq:Dez174}, and higher tangential regularity is trivial since $\de^{[0]}$ is radial. This proves \eqref{eq:j8f1} for $w\geq2$.\\

For the control of $\de^{[0]}$ it remains to show that 
$\de^{[0]} \in \ol{H}^{w-1}_{-3/2}$. Indeed, this follows by \eqref{eq:Dez174}, the fact that $\rh \in \HHno$ and Proposition \ref{prop:TrivialExtensionRegularity}. This finishes the control of $\de^{[0]}$.\\


\textit{Control of $\de^{[1]}$.} First we show that for all $w\geq2$,
\begin{align}
\Vert \de^{[1]} \Vert_{H^{w-1}_{-3/2}(\RRRwo)} \lesssim \Vert \rh \Vert_{\ol{\HH}^{w-2}_{-5/2}(\RRRwo)}+ C_w\Vert \rh \Vert_{\ol{\HH}^0_{-5/2}}. \label{eq:controlde1feb9}
\end{align}
Consider first the case $w=2$, that is,
\begin{align*}
\Vert \de^{[1]} \Vert_{H^{1}_{-3/2}(\RRRwo)} \lesssim \Vert \rh \Vert_{\ol{\HH}^{0}_{-5/2}(\RRRwo)}.
\end{align*}
Integrating \eqref{eq:delta3} yields the explicit form
\begin{align} 
\de^{[1]} = \frac{1}{r^4} \isr r' \underbrace{\left( \int\limits_1^{r'} \left( \frac{1}{r''} \pr_r((r'')^4 \rhon^{[1]}) - (r'')^3 \Divd \rhod^{[1]} \right) dr''\right)}_{:=I_1(r')}  dr'. \label{eq:Dez181} 
\end{align}
Using \eqref{eq:Dez181} and integration by parts in $r$, we estimate
\begin{align*}
\Vert \de^{[1]} \Vert_{H^0_{-3/2}(\RRRwo)}^2 =& \isinf \sr \left( \de^{[1]} \right)^2 dr\\
=& \isinf \sr \frac{1}{r^8} \left( \isr r' I_1(r') dr' \right)^2 dr \\
=& -\frac{1}{5} \left[ \sr \frac{1}{r^7} \left( \isr r' I_1(r') dr' \right)^2 \right]_1^\infty \\
& + \frac{2}{5} \isinf \sr \frac{1}{r^7} (r I_1(r)) \left( \isr r' I_1(r') dr' \right) dr\\
\leq& \frac{2}{5} \left( \isinf \sr \frac{1}{r^8} \left( \isr r' I_1(r') dr' \right)^2 dr \right)^{1/2} \left( \isinf \sr \frac{1}{r^4} (I_1)^2(r) dr \right)^{1/2}\\
=& \frac{2}{5} \Vert \de^{[1]} \Vert_{H^0_{-3/2}(\RRRwo)} \left( \isinf \sr \frac{1}{r^4} (I_1)^2(r) dr \right)^{1/2},
\end{align*}
where the boundary term was discarded because of its non-positive sign. This shows that
\begin{align}
\Vert \de^{[1]} \Vert_{H^0_{-3/2}(\RRRwo)}^2 \lesssim \isinf \sr \frac{1}{r^4} (I_1)^2(r) dr. \label{av2}
\end{align}
By a similar integration by parts, we further have
\begin{align}
\isinf \sr \frac{1}{r^4} (I_1)^2(r) dr \lesssim \Vert \rh \Vert_{\ol{\HH}^0_{-5/2}}^2, \label{av21}
\end{align}
where we used that at $l=1$,
$$\Vert \Divd \rhod^{[1]} \Vert_{\ol{H}^0_{-7/2}} \lesssim \Vert \rhod^{[1]} \Vert_{\ol{H}^0_{-5/2}}.$$  
Together, \eqref{av2} and \eqref{av21} imply 
 $$\Vert \de^{[1]} \Vert_{H^0_{-3/2}(\RRRwo)} \lesssim \Vert \rh \Vert_{\ol{\HH}^0_{-5/2}}.$$

Moreover, by \eqref{eq:Dez181} the radial derivative $\pr_r \de^{[1]}$ is
\begin{align*}
\pr_r \de^{[1]} = -\frac{4}{r} \de^{[1]} + \frac{1}{r^3} I_1(r).
\end{align*}
Therefore \eqref{av2} and \eqref{av21} imply that 
$$\Vert \pr_r \de^{[1]} \Vert_{H^0_{-5/2}(\RRRwo)} \lesssim \Vert \rh \Vert_{\ol{\HH}^0_{-5/2}}^2.$$

The tangential regularity of $\de^{[1]}$ follows immediately from the fact that $l=1$, $$\Vert \Nd \de^{[1]} \Vert_{H^0_{-5/2}} \lesssim \Vert \de^{[1]} \Vert_{H^0_{-3/2}}.$$ This proves the case $w=2$ of \eqref{eq:controlde1feb9}.\\

We turn now to the case $w>2$ of \eqref{eq:controlde1feb9}. For higher radial regularity, differentiate the defining ODE \eqref{eq:delta3},
\begin{align}
\begin{cases}
\pr_r^2 \de^{[1]} + \frac{7}{r} \pr_r \de^{[1]} + \frac{8}{r^2} \de^{[1]} = \frac{1}{r^4} \pr_r \left( r^4 \rhon^{[1]} \right) -\Divd \rhod^{[1]},  \,\,\, \text{on } \RRR^3 \setminus \ol{B_1} \\
\de^{[1]}\vert_{r=1} = \pr_r \de^{[1]}\vert_{r=1} = 0.
\end{cases} \label{eq:delta3av1}
\end{align}
Higher tangential regularity follows at the level of $l=1$ in the Hodge-Fourier decomposition by the observation that for $w\geq0$
\begin{align}\begin{aligned}
\Vert  \Nd^{w} \de^{[1]} \Vert_{H^{0}_{-3/2-w}(\RRRwo)} &\lesssim C_w \Vert \de^{[1]} \Vert_{H^{0}_{-3/2}(\RRRwo)}\\
&\lesssim C_w \Vert \rh \Vert_{\ol{\HH}^{0}_{-5/2}}.
\end{aligned} \label{eq:tanglone} \end{align}
This proves \eqref{eq:controlde1feb9} for $w\geq2$. \\

It remains to show that $\de^{[1]} \in \ol{H}^{w-1}_{-3/2}$. Indeed, this follows by \eqref{eq:delta3av1} and $\rh \in \HHno$ with Proposition \ref{prop:TrivialExtensionRegularity}. This finishes the control of $\de^{[1]}$.\\

\textit{The full control of $\de$.} Recall that 
\begin{align*}
\de = \de^{[0]} +\de^{[1]} + \de^{[\geq2]}.
\end{align*}
Above we proved that for $w\geq2$, $\de^{[0]}, \de^{[1]} \in \ol{H}^{w-1}_{-3/2}$ with the estimate
\begin{align*}
\Vert \de^{[0]} + \de^{[1]} \Vert_{H^{w-1}_{-3/2}(\RRRwo)} \lesssim \Vert \rh \Vert_{\ol{\HH}^{w-2}_{-3/2}}+ C_w\Vert \rh \Vert_{\ol{\HH}^0_{-5/2}}.
\end{align*}
In Proposition \ref{prop:boundaryvalues}, we proved that for $w\geq2$, $\de^{[\geq2]} \in \ol{H}^{w-1}_{-3/2}$ with the estimate
\begin{align*}
\Vert \de^{[\geq2]} \Vert_{H^{w-1}_{-3/2}(\RRRwo)} \lesssim \Vert \rh \Vert_{\ol{\HH}^{w-2}_{-3/2}}+ C_w\Vert \rh \Vert_{\ol{\HH}^0_{-5/2}}.
\end{align*}
Together this proves that for $w\geq2$, $\de \in \ol{H}^{w-1}_{-3/2}$ with the estimate
\begin{align*}
\Vert \de \Vert_{H^{w-1}_{-3/2}(\RRRwo)} \lesssim \Vert \rh \Vert_{\ol{\HH}^{w-2}_{-3/2}}+ C_w\Vert \rh \Vert_{\ol{\HH}^0_{-5/2}}
\end{align*}
and hence finishes the control of $\de$. \\

{ \bf Control of $\si_{NN}$.} \\

\textit{Control of $\si_{NN}^{[1]}$.} First we show that for all $w\geq2$,
\begin{align}
\Vert \si_{NN}^{[1]} \Vert_{H^{w-2}_{-5/2}(\RRRwo)} \lesssim \Vert \rhod^{[1]} \Vert_{\ol{\HH}^{w-2}_{-5/2}}+ C_w \Vert \rhod^{[1]} \Vert_{\ol{\HH}^{0}_{-5/2}}. \label{eq:sigmaNNfirstestimate}
\end{align}

Consider first the case $w=2$, that is,
\begin{align*}
\Vert \si_{NN}^{[1]} \Vert_{H^0_{-5/2}(\RRRwo)} \lesssim \Vert \rhod^{[1]} \Vert_{\HH^0_{-5/2}(\RRRwo)}. \end{align*}
Recall \eqref{eq:sigma1},
\begin{align*}
\si_{NN}^{[1]} = \frac{1}{r^4} \isr (r')^4 \Curld \rhod^{[1]} dr'.
\end{align*}
Using this expression, the case $w=2$ of \eqref{eq:sigmaNNfirstestimate} can be derived like for $\de^{[0]}$ before, see \eqref{avril111} and \eqref{eq:de1feb9}. In particular, use that at $l=1$, $$\Vert \Curld \rhod^{[1]} \Vert_{H^0_{-7/2}(\RRRwo)} \lesssim \Vert \rhod^{[1]} \Vert_{\HH^0_{-5/2}(\RRRwo)}.$$ \\

We turn now to the case $w>2$ of \eqref{eq:sigmaNNfirstestimate}. Higher radial regularity is proved by using and differentiating the defining ODE,
\begin{align}\begin{cases}
\pr_r \si_{NN}^{[1]} + \frac{4}{r} \si_{NN}^{[1]}= \Curld \rhod^{[1]} \\
\si_{NN}^{[1]} \vert_{r=1} =0.
\end{cases} \label{definingODEfeb112}
\end{align}
Higher tangential regularity is automatic at $l=1$, as in \eqref{eq:tanglone}. This proves \eqref{eq:sigmaNNfirstestimate} for all $w\geq2$.\\

It remains to show that $ \si_{NN}^{[1]} \in \ol{H}^{w-2}_{-5/2}$. This follows by \eqref{definingODEfeb112} and $\rh \in \HHno$ with Proposition \ref{prop:TrivialExtensionRegularity}. This finishes the control of $\si_{NN}^{[1]}$.\\

\textit{The full control of $\si_{NN}$. } Recall that
\begin{align*}
\si_{NN} = \si_{NN}^{[1]} + \si_{NN}^{[\geq2]}.
\end{align*}
Above, we proved that for $w\geq2$, $\si_{NN}^{[1]} \in \ol{H}^{w-2}_{-5/2}$ with the estimate
\begin{align*}
\Vert \si_{NN}^{[1]} \Vert_{\ol{H}^{w-2}_{-5/2}} \lesssim \Vert \rh \Vert_{\ol{\HH}^{w-2}_{-5/2}}+C_w\Vert \rh \Vert_{\ol{\HH}^{0}_{-5/2}}.
\end{align*}
In Proposition \ref{prop:boundaryvalues}, we proved that for $w\geq2$, $\si_{NN}^{[\geq2]} \in \ol{H}^{w-2}_{-5/2}$ with the estimate
\begin{align*}
\Vert \si_{NN}^{[\geq2]} \Vert_{\ol{H}^{w-2}_{-5/2}} \lesssim \Vert \rh \Vert_{\ol{\HH}^{w-2}_{-5/2}} +C_w\Vert \rh \Vert_{\ol{\HH}^{0}_{-5/2}}.
\end{align*}
Together this proves that for $w\geq2$, $\si_{NN} \in \ol{H}^{w-2}_{-5/2}$ with the estimate
\begin{align*}
\Vert \si_{NN} \Vert_{\ol{H}^{w-2}_{-5/2}} \lesssim \Vert \rh \Vert_{\ol{\HH}^{w-2}_{-5/2}}+C_w\Vert \rh \Vert_{\ol{\HH}^{0}_{-5/2}}
\end{align*}
and hence finishes the control of $\si_{NN}$. \\


{\bf Control of $\ep$.} First we show that for $w\geq2$
\begin{align}
\Vert \ep \Vert_{\HHm(\RRRwo)} \lesssim \Vert \rh \Vert_{\HHno}+C_w\Vert \rh \Vert_{\ol{\HH}^{0}_{-5/2}}. \label{epest23m}
\end{align}
Consider first the case $w=2$ of \eqref{epest23m},
\begin{align*}
\Vert \ep \Vert_{\HH^1_{-3/2}(\RRRwo)} \lesssim \Vert \rh \Vert_{\ol{\HH}^0_{-5/2}(\RRRwo)}.
\end{align*}
By \eqref{eq:ep2} we have on $S_r$, for $r\geq1$,
\begin{align}
\cDd_1 \ep &= \Big( \rh_N - \frac{1}{r^3} \pr_r\left( r^3 \de \right),  \si_{NN}  \Big). \label{eq:feb15}
\end{align}
Proposition \ref{prop:EllipticityHodgejan} and the estimates above for $\de$ and $\si_{NN}$ yield
\begin{align*}
&\Vert \ep \Vert_{\HH^0_{-3/2}(\RRRwo)}^2 + \Vert \Nd \ep \Vert_{\HH^0_{-5/2}(\RRRwo)}^2 \\
\lesssim& \Vert \rh \Vert^2_{\HH^0_{-5/2}(\RRRwo)} + \Vert \de \Vert^2_{\HH^1_{-3/2}(\RRRwo)} + \Vert \si_{NN} \Vert^2_{\HH^0_{-5/2}(\RRRwo)} \\
\lesssim& \Vert \rh \Vert^2_{\HH^0_{-5/2}(\RRRwo)}.
\end{align*}

For radial regularity, we use that by Lemma \ref{lem:formalsolfeb5}, $\ep$ also solves \eqref{EH3a1} and \eqref{EH3a2}, together with \eqref{eq:sigmaNd2feb9} and \eqref{eq:sigmaNd3} to obtain 
\begin{align} \begin{aligned}
\frac{1}{r^3} \Ndn \left( r^3 \ep^{[1]} \right) &= \rhod^{[1]} + \half \Nd \de^{[1]},\\
\frac{1}{r^2} \Ndn \left( r^2 \ep_E^{[\geq2]} \right) &=  \rhod_E^{[\geq2]} + \zeta_E  + \left( \Nd \de \right)_E^{[\geq2]},\\
\frac{1}{r^2} \Ndn \left( r^2 \ep_H^{[\geq2]} \right) &=  \half \rhod_H^{[\geq2]} + \sigmaNd_H^{[\geq2]} \\
&= \cDd_1^{-1} \left( 0, \frac{1}{r^3} \pr_r\left( r^3 \si_{NN}^{[\geq2]} \right) \right).
\end{aligned} \label{eq:epdrfeb11} \end{align}
By Proposition \ref{prop:boundaryvalues} and the above estimates for $\de$, this yields the bounds
\begin{align*}
\Vert \Ndn \ep \Vert_{\HH^0_{-5/2}(\RRRwo)} \lesssim \Vert \rh \Vert_{\HH^0_{-5/2}(\RRRwo)}.
\end{align*}
This proves the case $w=2$ of \eqref{epest23m}.\\

Consider now the case $w>2$ of \eqref{epest23m}. Higher tangential regularity is derived by tangentially differentiating \eqref{eq:feb15} and using Propositions \ref{prop:EllipticityHodgefeb23} and \ref{prop:boundaryvalues}. Higher radial regularity follows by applying $\Ndn$ to \eqref{eq:epdrfeb11} and using Proposition \ref{prop:boundaryvalues}. This proves \eqref{epest23m} for all $w\geq2$.\\

It remains to show that $\ep \in \HHmo$. Indeed, this follows by \eqref{eq:epdrfeb11} and the fact that $\rh, \cDd_1^{-1} \left( 0, \frac{1}{r^3} \pr_r\left( r^3 \si_{NN}^{[\geq2]} \right) \right), \zeta_E, \Nd \de  \in \HHno(\RRRwo)$ together with Proposition \ref{prop:TrivialExtensionRegularity}. This finishes the control of $\ep$.\\


{\bf Control of $\sigmaNd$.} First we show that for all $w\geq2$
\begin{align}
\Vert \sigmaNd \Vert_{\HHn(\RRRwo)} \lesssim \Vert \rh \Vert_{\HHn(\RRRwo)}+C_w\Vert \rh \Vert_{\ol{\HH}^{0}_{-5/2}}. \label{eq:control15febs}
\end{align}
This control of $\sigmaNd$ follows by the control of the previous quantities. Indeed, recall from \eqref{eq:sigmaNd1}-\eqref{eq:sigmaNd3},
\begin{align}  \begin{aligned}
\sigmaNd^{[1]} &= \half \rhod^{[1]} - \half \Nd \de^{[1]} - \frac{1}{r} \ep^{[1]}, \\ 
\sigmaNd^{[\geq2]}_E &= \half \rhod^{[\geq2]}_E + \zeta_E,\\ 
 \sigmaNd_{H}^{[\geq2]} &= - \half \rhod_H^{[\geq2]}  +  \cDd_1^{-1} \left( 0, \frac{1}{r^3} \pr_r \left( r^3 \si_{NN}^{[\geq2]} \right) \right). 
\end{aligned} \label{summary12se} \end{align}
This implies by Proposition \ref{prop:boundaryvalues} and the above control of $\de$ and $\ep$ \begin{align*}
\Vert \sigmaNd^{[1]} \Vert_{\HHn(\RRRwo)} &\lesssim \Vert \rh \Vert_{\HHn(\RRRwo)} + \Vert \de \Vert_{H^{w-1}_{-3/2}} + \Vert \ep \Vert_{\HHm(\RRRwo)}\\
&\lesssim \Vert \rh \Vert_{\HHn(\RRRwo)}+C_w\Vert \rh \Vert_{\ol{\HH}^{0}_{-5/2}}, \\
\Vert\sigmaNd^{[\geq2]}_E \Vert_{\HHn(\RRRwo)} &\lesssim \Vert \rh \Vert_{\HHn(\RRRwo)} + \Vert \zeta_E \Vert_{\HHn(\RRRwo)}\\
&\lesssim \Vert \rh \Vert_{\HHn(\RRRwo)}+C_w\Vert \rh \Vert_{\ol{\HH}^{0}_{-5/2}}, \\
\Vert \sigmaNd_{H}^{[\geq2]} \Vert_{\HHn(\RRRwo)} &\lesssim \Vert \rh \Vert_{\HHn(\RRRwo)} + \left\Vert \cDd_1^{-1} \left( 0, \frac{1}{r^3} \pr_r \left( r^3 \si_{NN}^{[\geq2]} \right) \right) \right\Vert_{\HHn(\RRRwo)} \\
&\lesssim \Vert \rh \Vert_{\HHn(\RRRwo)}+C_w\Vert \rh \Vert_{\ol{\HH}^{0}_{-5/2}}.
\end{align*}
This proves \eqref{eq:control15febs} for all $w\geq2$. \\

It remains to show that $\sigmaNd \in \HHno$. This follows by \eqref{summary12se} and $\de,\ep \in \HHmo$, $\rh, \zeta_E, \cDd_1^{-1} \left( 0, \frac{1}{r^3} \pr_r \left( r^3 \si_{NN}^{[\geq2]} \right) \right) \in \HHno$ together with Proposition \ref{prop:TrivialExtensionRegularity}. This finishes the control of $\sigmaNd$. \\


{ \bf Control of $\sigmaNdd$.} First we show that for $w\geq2$,
\begin{align} \begin{aligned}
\Vert \sigmaNdd_\psi \Vert_{\HH^{w-2}_{-5/2}(\RRRwo)} & \lesssim \Vert \rh \Vert_{\HHno}+C_w\Vert \rh \Vert_{\ol{\HH}^{0}_{-5/2}}, \\
\Vert \sigmaNdd_\phi \Vert_{\HH^{w-2}_{-5/2}(\RRRwo)} & \lesssim \Vert \rh \Vert_{\HHno}+C_w\Vert \rh \Vert_{\ol{\HH}^{0}_{-5/2}}.\end{aligned} 
\label{finalest2avr}
\end{align}

In \eqref{eq:sigmaNdd2}, $\sigmaNdd$ was defined on each $S_r$, $r\geq1$, as solution to
\begin{align*}
\cDd_2 \left( \sigmaNdd + \half \Nd \widehat{\otimes} \ep^{[\geq2]} \right) &= \frac{1}{r^2} \Ndn \left( r^2 \left( \half \rhod^{[\geq2]} - \sigmaNd^{[\geq2]} - \half \Nd \de^{[\geq2]} - \frac{1}{r} \ep^{[\geq2]} \right) \right).
\end{align*}
Using definitions \eqref{eq:sigmaNd2feb9} and \eqref{eq:sigmaNd3}, this can be decomposed into
\begin{align} \begin{aligned}
\cDd_2 \left( \sigmaNdd_{\psi} + \half \Nd \widehat{\otimes} \ep^{[\geq2]}_E \right) &= \frac{1}{r^2} \Ndn \left( r^2 \left( - \zeta^{[\geq2]}_E -\half \left( \Nd \de\right)^{[\geq2]}_E - \frac{1}{r} \ep^{[\geq2]}_E \right) \right),\\
\cDd_2 \left( \sigmaNdd_\phi + \half \Nd \widehat{\otimes} \ep^{[\geq2]}_H \right) &= \frac{1}{r^2} \Ndn \left( r^2 \left(  \rhod^{[\geq2]}_H - \cDd_1^{-1} \left( 0, \frac{1}{r^3} \pr_r \left( r^3 \si_{NN}^{[\geq2]} \right) \right)  - \frac{1}{r} \ep^{[\geq2]}_H\right) \right). \end{aligned} \label{eq:mars191}
\end{align}
To analyse these equations, we first rewrite the second equation.

\begin{claim} \label{claim23may}
The second equation of \eqref{eq:mars191} is equivalent to
\begin{align} \begin{aligned}
\cDd_2 \left( \sigmaNdd_\phi + \half \Nd \widehat{\otimes} \ep^{[\geq2]}_H \right) =&
 \frac{3}{2r} \rhod_H^{[\geq2]} + \frac{1}{r} \zeta_H^{[\geq2]}  - \frac{3}{r} \sigmaNd_H^{[\geq2]} - \frac{1}{r^2} \Ndn \left( r \ep_H^{[\geq2]} \right) \\
& + \cDd_1^{-1} \left( 0,\Ld \si_{NN}^{[\geq2]} \right) - \Ndn \zeta_H^{[\geq2]}.  
\end{aligned}\label{may23} \end{align}
\end{claim}
\begin{proof} In the following, we use that for a scalar function $f^{[\geq1]}$,
\begin{align*}
\Ndn \left( \frac{1}{r} \cDd_1^{-1}(0,f) \right) = \frac{1}{r} \cDd_1^{-1}(0, \pr_r f^{[\geq1]}),
\end{align*}
this follows by Lemma \ref{lem:commutationrelation}. \\

By the definition of $\si_{NN}^{[\geq2]}$ in \eqref{eq:sigma2} and of $\sigmaNd_H^{[\geq2]}$ in \eqref{eq:sigmaNd3}, and using Lemma \ref{lem:commutationrelation}, 
\begin{align*}
&- \frac{1}{r^2} \Ndn \left( r^2 \cDd_1^{-1} \left( 0, \frac{1}{r^3} \pr_r \left( r^3 \si_{NN}^{[\geq2]} \right) \right) \right) \\
=& - \frac{3}{r}  \cDd_1^{-1} \left( 0, \frac{1}{r^3} \pr_r \left( r^3 \si_{NN}^{[\geq2]} \right) \right) -  \cDd_1^{-1} \left( 0, \pr_r \left( \frac{1}{r^3} \pr_r \left( r^3 \si_{NN}^{[\geq2]} \right) \right) \right)\\
=&- \frac{3}{r} \left( \half \rhod_H^{[\geq2]}+ \sigmaNd_H^{[\geq2]}\right) -  \cDd_1^{-1} \left( 0, - \Ld \si_{NN}^{[\geq2]} + \pr_r \Curld \left(\rhod^{[\geq2]}_H +\zeta_H^{[\geq2]}  \right) \right)\\
=&- \frac{3}{r} \left( \half \rhod_H^{[\geq2]}+ \sigmaNd_H^{[\geq2]}\right) + \cDd_1^{-1} \left( 0,\Ld \si_{NN}^{[\geq2]} \right) \\
&-  \cDd_1^{-1} \left( 0, \pr_r \Curld \left(\rhod^{[\geq2]}_H +\zeta_H^{[\geq2]}  \right) \right) \\
=& - \frac{3}{r} \left( \half \rhod_H^{[\geq2]}+ \sigmaNd_H^{[\geq2]}\right) + \cDd_1^{-1} \left( 0,\Ld \si_{NN}^{[\geq2]} \right) \\
&- \Ndn \left( \rhod^{[\geq2]}_H +\zeta_H^{[\geq2]} \right) + \frac{1}{r} \left( \rhod^{[\geq2]}_H +\zeta_H^{[\geq2]} \right)\\
=& - \frac{3}{r} \sigmaNd_H^{[\geq2]} + \cDd_1^{-1} \left( 0,\Ld \si_{NN}^{[\geq2]} \right)  - \Ndn \left( \rhod^{[\geq2]}_H +\zeta_H^{[\geq2]} \right) + \frac{1}{r} \left( -\half \rhod^{[\geq2]}_H +\zeta_H^{[\geq2]} \right).
\end{align*}
Plugging this into the second equation of \eqref{eq:mars191} finishes the proof of Claim \ref{claim23may}. \end{proof}
 
By differentiating the first of \eqref{eq:mars191} and \eqref{may23}, and using the commutation relations of Lemma \ref{lem:commutationrelation}, we can apply Propositions \ref{prop:EllipticityHodgejan} and \ref{prop:EllipticityHodgefeb23} to get for $w\geq2$
\begin{align} \begin{aligned}
&\Vert \sigmaNdd_\psi \Vert_{\HH^{w-2}_{-5/2}(\RRRwo)} \\
\lesssim& \Vert \ep \Vert_{\HHm(\RRRwo)} + \Vert \zeta_E \Vert_{\HHn(\RRR^3)} + \Vert \cDd^{-1}_2 \left( \Ndn \zeta_E \right) \Vert_{\HH^{w-2}_{-5/2}(\RRRwo)}+ \Vert \de \Vert_{\HHm(\RRRwo)}\\
&+ C_w\Big( \Vert \ep \Vert_{\HH^1_{-3/2}} + \Vert \zeta_E \Vert_{\HH^0_{-5/2}(\RRRwo)} + \Vert \cDd^{-1}_2 \left( \Ndn \zeta_E \right) \Vert_{\HH^{0}_{-5/2}(\RRRwo)}+ \Vert \de \Vert_{\HH^1_{-3/2}(\RRRwo)}\Big) \\
\lesssim& \Vert \rh \Vert_{\HHn(\RRRwo)}+C_w\Vert \rh \Vert_{\ol{\HH}^{0}_{-5/2}}, 
\end{aligned}  \label{feb15eqestnew1}
\end{align}

\begin{align} \begin{aligned} 
&\Vert  \sigmaNdd_\phi \Vert_{\HH^{w-2}_{-5/2}(\RRRwo)}\\
 \lesssim& \Vert \ep \Vert_{\HHm(\RRRwo)} + \Vert \cDd^{-1}_2 \left( \Ndn \zeta_H \right) \Vert_{\HHn(\RRRwo)} + \Vert  \zeta_H \Vert_{\HH^{w-2}_{-5/2}(\RRRwo)} + \Vert \rhod_H^{[\geq2]}  \Vert_{\HHn(\RRRwo)} \\
&+ \Vert  \si_{NN}^{[\geq2]} \Vert_{\HHn(\RRRwo)} +  \Vert  \sigmaNd^{[\geq2]} \Vert_{\HHn(\RRRwo)}+ C_w \Vert \rh \Vert_{\ol{\HH}^{0}_{-5/2}} \\
 \lesssim& \Vert \rh \Vert_{\HHn(\RRRwo)}+C_w\Vert \rh \Vert_{\ol{\HH}^{0}_{-5/2}}, 
  \end{aligned} 
\label{feb15eqestnew2}
\end{align} 
where we used Proposition \ref{prop:boundaryvalues}. This proves \eqref{finalest2avr} for all $w\geq2$. \\

It remains to show that $\sigmaNdd \in \HHno$. This follows by \eqref{eq:mars191}, \eqref{may23}, by the fact that $\nab_N \de, \rh, \zeta_E, \zeta_H \in \HHno, \, \de, \ep \in \HHmo$ and $\cDd_{2}^{-1} (\Ndn \zeta_E), \cDd_{2}^{-1} (\Ndn \zeta_H )\in \ol{\HH}^{w-2}_{-5/2}$ together
with Proposition \ref{prop:TrivialExtensionRegularity}. Indeed, to show for a $S_r$-tangent symmetric $2$-tensor $V_\psi^{[\geq2]}$ that $V \vert_{r=1} = 0$, it suffices to prove that $\Divd \left( V \right) \vert_{r=1} =0$. Together with the commutation relations of Lemma \ref{lem:commutationrelation}, this concludes the control of $\sigmaNdd$.\\ 

{\bf Control of $\eh$.} First we prove for $w\geq2$,
\begin{align}
\Vert \eh \Vert_{\HHm(\RRRwo)} \lesssim \Vert \rh \Vert_{\HHno}+C_w\Vert \rh \Vert_{\ol{\HH}^{0}_{-5/2}}. \label{eq:8jj3}
\end{align}
The control of $\eh$ follows by the control of the above quantities. Indeed, recall \eqref{eq:eh1},
\begin{align}
\cDd_2 \eh =  \half \rhod^{[\geq2]} - \sigmaNd^{[\geq2]} - \half \Nd \de^{[\geq2]} - \frac{1}{r}\ep^{[\geq2]}. \label{eq:defofEHfeb15}
\end{align}
Higher tangential regularity follows by Propositions \ref{prop:EllipticityHodgejan} and \ref{prop:EllipticityHodgefeb23}. Indeed, tangentially deriving \eqref{eq:defofEHfeb15} yields for all $w\geq2$, by the above control of $\de, \ep,\sigmaNd$, 
\begin{align*}
\Vert \eh \Vert_{\HH^0_{-3/2}(\RRRwo)}+ \sum\limits_{n=1}^{w-1}\Vert \Nd^{n} \eh \Vert_{\HH^0_{-3/2-n}(\RRRwo)} 
\lesssim \Vert \rh \Vert_{\ol{\HH}^{w-2}_{-5/2}} + C_w\Vert \rh \Vert_{\ol{\HH}^{0}_{-5/2}}.
\end{align*}
For radial regularity, use that by Lemma \ref{lem:formalsolfeb5}, $\eh$ also solves \eqref{EH5a2}, that is,
 \begin{align*}
\Ndn \eh + \frac{1}{r} \eh &= \sigmaNdd + \half \Nd \widehat{\otimes} \ep^{[\geq2]}. 
\end{align*}
Differentiating in $r$ yields with the above control of $\de, \ep, \sigmaNd, \sigmaNdd$ for all $w\geq2$ the bound 
\begin{align*}
\Vert \Nd_N^{w-1} \eh \Vert_{\HH^0_{-1/2-w}(\RRRwo)} \lesssim \Vert \rh \Vert_{\ol{\HH}^{w-2}_{-5/2}}+C_w\Vert \rh \Vert_{\ol{\HH}^{0}_{-5/2}}.
\end{align*} 
This proves \eqref{eq:8jj3} for $w\geq2$.\\

It remains to show that $\eh \in \HHmo$. This follows by \eqref{EH5a2} and $\ep \in \HHmo, \sigmaNdd \in \HHno$ together with Proposition \ref{prop:TrivialExtensionRegularity}. This finishes the control of $\eh$ as well as the proof of Lemma \ref{lem:bdrycntrlfeb5}. \end{proof}


\section{The prescribed scalar curvature equation for $g$} \label{sec:MetricPrescription}

In this section we prove the following theorem.
\begin{theorem}[Metric extension theorem, precise version] \label{thm:MainExtensionScalarTheorem} There exists a universal constant $\varep>0$ such that the following holds.
\begin{enumerate}
\item {\bf Extension result.} Let $\bar g \in \HH^2(B_1)$ be a Riemannian metric on the unit ball $B_1 \subset \RRR^3$ with scalar curvature $R(\bar g)$, and let $R \in H^{0}_{-5/2}$ be such that $R \vert_{B_1}= R(\bar g)$. If
\begin{align}\begin{aligned}
\Vert \bar g - e \Vert_{\HH^2(B_1)} +\Vert R \Vert_{H^{0}_{-5/2}} &< \varep,
\end{aligned} \label{eq:Rextensioncond} \end{align}
then there exists an $\HH^2_{-1/2}$-asymptotically flat metric $g$ on $\RRR^3$ such that $g\vert_{B_1} = \bar g$ and its scalar curvature satisfies
\begin{align*}
R(g)&=R \,\,\, \text{on } \RRR^3,
\end{align*}
Moreover, it is bounded by
\begin{align}
\Vert g - e \Vert_{\HH^2_{-1/2}} &\lesssim \Vert \bar g - e \Vert_{\HH^2(B_1)} + \Vert R \Vert_{H^{0}_{-5/2}}, \label{eq:Dez2166}
\end{align}
\item {\bf Iteration estimates.} Let $\bar g\in \HH^2(B_1)$ be a Riemannian metric on $B_1$ and let $R, \tilde R \in H^{0}_{-5/2}$ such that $R\vert_{B_1} = \tilde R \vert_{B_1} = R(\bar g)$ and \eqref{eq:Rextensioncond} holds for $(\bar g, R)$ and $(\bar g, \tilde R)$. Let $g$ and $\tilde{g}$ denote the metrics constructed in part (1) of this theorem with respect to $R$ and $\tilde R$. Then
\begin{align}
\Vert g - \tilde{g} \Vert_{\HH^2_{-1/2}} \lesssim \Vert R - \tilde R\Vert_{H^{0}_{-5/2}}. \label{eq:Riterationestimate222}
\end{align}
\item {\bf Higher regularity.} If, in addition to \eqref{eq:Rextensioncond}, $R \in \HH^{w-2}_{-5/2}$ and $\bar g -e \in \HH^w(B_1)$ for an integer $w\geq3$, then the $g$ constructed in part (1) of this theorem satisfies
\begin{align*}
\Vert g-e \Vert_{\HH^w_{-1/2}} \lesssim \Vert R \Vert_{H^{w-2}_{-5/2}} + C_w \Big( \Vert \bar g -e \Vert_{\HH^w(B_1)} + \Vert R \Vert_{H^0_{-5/2}} \Big),
\end{align*}
where the constant $C_w>0$ depends only on $w$.
\end{enumerate}
\end{theorem}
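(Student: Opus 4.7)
The plan is to mirror the structure of the proof of Theorem \ref{thm:kextension1}. First, I would establish an intermediate proposition, the analogue of Proposition \ref{prop:Dez191}: there exists $\varep>0$ such that for any $\HH^2_{-1/2}$-asymptotically flat metric $g_0$ and any $\rho \in \ol{H}^0_{-5/2}$ with $\Vert g_0 - e\Vert_{\HH^2_{-1/2}}+\Vert \rho\Vert_{\ol{H}^0_{-5/2}}<\varep$, there exists a metric $g$ on $\RRR^3$ with $g\vert_{B_1}=g_0\vert_{B_1}$ and $R(g)=R(g_0)+\rho$, together with Lipschitz estimates in $(g_0,\rho)$ and higher-regularity estimates. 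This proposition would be obtained from the Implicit Function Theorem \ref{thm:InverseMars14} applied to the map $\FF(h,g_0,\rho):=R(g_0+\VV(h))-R(g_0)-\rho$, where $\VV(\cdot)$ is a suitable geometric variation supported in $\RRRwo$, restricted to the orthogonal complement of $\mathrm{ker}(D R\vert_e \circ \VV)$. The required hypothesis is the surjectivity of the linearisation $h\mapsto D R\vert_e(\VV(h))$ onto $\ol{H}^0_{-5/2}$: this is the content of Section \ref{sec:SurjectivityR}, playing here the role that Lemma \ref{prop:EuclideanSurjectivity} plays for the divergence equation.

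\medskip

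\textbf{Derivation from the intermediate proposition.} With such a proposition in hand, the three parts of the theorem would follow as in the proof of Theorem \ref{thm:kextension1}. For part (1), I would extend $\bar g$ from $B_1$ to $\RRR^3$ by Proposition \ref{functionextcts} to obtain $\check g\in \HH^2_{-1/2}$ with $\Vert \check g -e\Vert_{\HH^2_{-1/2}}\lesssim \Vert \bar g-e\Vert_{\HH^2(B_1)}$. Since $R(\check g)\vert_{B_1}=R(\bar g)=R\vert_{B_1}$, the function $\rho := R-R(\check g)$ lies in $\ol{H}^0_{-5/2}$ by Proposition \ref{prop:TrivialExtensionRegularity}, with norm bounded by $\Vert R\Vert_{H^0_{-5/2}}+\Vert \bar g-e\Vert_{\HH^2(B_1)}$ via product estimates applied to the expression of $R(\check g)$ in terms of $\check g$, $\check g^{-1}$, and up to two derivatives. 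For $\varep>0$ small, the intermediate proposition then produces $g$ with $g\vert_{B_1}=\bar g$ and $R(g)=R$ on $\RRR^3$, with the bound \eqref{eq:Dez2166}. Part (2) will follow from the Lipschitz continuity in the $\rho$-slot applied to the inputs $(\check g, R-R(\check g))$ and $(\check g, \tilde R-R(\check g))$, whose difference is exactly $R-\tilde R$. Part (3) is obtained by bootstrapping: the higher-regularity branch of the intermediate proposition, combined with the higher-regularity control of $\rho$ coming from product estimates applied to $R(\check g)$ with $\check g\in \HH^w_{-1/2}$.

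\medskip

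\textbf{Main obstacle.} The hard part, and the content of Section \ref{sec:SurjectivityR}, will be the surjectivity at the Euclidean metric of the linearised scalar curvature operator along the variation $\VV$. Because the prescribed right-hand side $\rho$ merely belongs to $\ol{H}^0_{-5/2}$ (vanishing at $\pr B_1$ in a trace sense), the variation $\VV(h)$ must vanish on $B_1$ so that the modified metric $g = \check g + \VV(h)$ still equals $\bar g$ on $B_1$ with the required regularity. Enforcing this boundary behaviour while retaining surjectivity of the associated second-order elliptic operator on $\RRRwo$ will require, as in the proof of Lemma \ref{prop:EuclideanSurjectivity}, a careful Hodge-Fourier decomposition of the variation and the introduction of auxiliary boundary-correction terms analogous to $\zeta_E,\zeta_H$, supported in a thin shell near $\pr B_1$ and chosen spectrally to kill the contribution of the Dirichlet-to-Neumann map at $\pr B_1$. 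Since $R$ is a quasilinear second order operator in the metric, this control must be implemented at the level of two radial derivatives, in contrast to the first-order setting of the divergence equation.
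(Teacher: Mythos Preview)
Your plan is correct and matches the paper's approach closely: the intermediate result is Proposition \ref{prop:ExistenceOfPerturbation}, the surjectivity input is Lemma \ref{lem:scalarsurj}, and Parts (1)--(3) are deduced exactly as you describe (extend $\bar g$ by Proposition \ref{functionextcts}, set $\mathfrak{s}=R-R(\check g)\in \ol{H}^0_{-5/2}$, and apply the intermediate proposition). The only point worth flagging is that the paper does not write the variation additively as $g_0+\VV(h)$; instead it fixes the lapse $a$ and varies the polar components via $(\varphi,\be')\mapsto \check g_{\varphi,\be'}$ with $\ga\mapsto e^{2\varphi}\ga$ and $\be\mapsto \be+\be'$ (Definition \ref{definitionmathcalSSSS}), which yields the explicit linearisation of Lemma \ref{variationscalarlin} and makes the surjectivity analysis in Section \ref{sec:SurjectivityR} tractable---your generic $\VV$ would need to be specialised to something of this kind.
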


Before turning to the proof of Theorem \ref{thm:MainExtensionScalarTheorem}, we first analyse in more detail the scalar curvature functional in the next section.

\subsection{Scalar curvature and geometry of foliations}  \label{subsec:geometry}

In this section, we analyse the scalar curvature functional with respect to the foliation of $\RRR^3$ by spheres $S_r$.
\begin{lemma} \label{scalarcurvaturefirstlem}
Let $g$ be a smooth Riemannian metric on $\RRRwo$,
\begin{align*}
g = a^2 dr + \ga_{AB} (\be^A dr +d\th^A) (\be^B dr + d\th^B).
\end{align*}
Then the scalar curvature $R(g)$ of $g$ on $\RRRwo$ is given by
\begin{align*}
R(g) = 2 N (\tr_\ga \Theta) -\frac{2}{a} \Ld_\ga a +2 K(\ga) - (\tr_\ga \Theta)^2 - \vert \Theta \vert_\ga^2,
\end{align*}
where $N$ and $\Theta$ denote the unit normal and the second fundamental form of $S_r \subset \RRR^3$ with respect to $g$, respectively, and $K(\ga)$ is the Gauss curvature of $(S_r,\ga)$.
\end{lemma}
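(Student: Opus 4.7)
The plan is to obtain the formula by combining two classical ingredients: the contracted Gauss equation for the spheres $S_r \hookrightarrow (\RRRwo, g)$, and the trace of the Riccati (second variation) identity for the evolution of $\Theta$ along the foliation, which together convert the $\mathrm{Ric}(N,N)$ term produced by Gauss into the normal derivative of the mean curvature and the Laplacian of the lapse.

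First I would apply the Gauss equation to $S_r$ as a codimension-one Riemannian submanifold of $(\RRRwo, g)$. With the convention $\Theta(X,Y) = -g(X,\nabla_Y N)$ of Lemma~\ref{lem:EuclideanProperties} (so that the shape operator is $S(X) = -\nabla_X N$ and $\Theta$ is symmetric), the double contraction of the Gauss identity in a local orthonormal frame $\{e_1,e_2,N\}$ adapted to the foliation, combined with the two-dimensional identity $R(\gamma) = 2K(\gamma)$ and the symmetries of the Riemann tensor, yields
\[
R(g) \;=\; 2K(\gamma) \;+\; |\Theta|_\gamma^2 \;-\; (\tr_\gamma\Theta)^2 \;+\; 2\,\mathrm{Ric}(g)(N,N).
\]

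Next I would derive the Riccati-type identity expressing $\mathrm{Ric}(g)(N,N)$ in foliation quantities. Starting from $S(X) = -\nabla_X N$ and the definition of the Riemann curvature, one computes $\nabla_N S$ on vectors tangent to $S_r$; the non-vanishing of $\nabla_N N = -\tfrac{1}{a}\nabla^\gamma a$ (which follows, when the shift vanishes, from a short computation with the Christoffel symbols of \eqref{eq:generalmetricform}, and an analogous formula holds in the general case since the shift contributes only antisymmetric terms that disappear upon tracing) yields
\[
(\nabla_N \Theta)(X,Y) \;=\; (\Theta^2)(X,Y) \;-\; R(g)(N,X,N,Y) \;+\; \tfrac{1}{a}(\nabla^\gamma)^2 a\,(X,Y).
\]
Tracing with $\gamma^{AB}$, using $\gamma^{AB}(\nabla_N\Theta)_{AB} = N(\tr_\gamma\Theta)$ together with $\gamma^{AB}R(g)(N,e_A,N,e_B) = -\mathrm{Ric}(g)(N,N)$ (both following from $\nabla g = 0$ and the Riemann symmetries in the adapted orthonormal frame), gives
\[
\mathrm{Ric}(g)(N,N) \;=\; N(\tr_\gamma\Theta) \;-\; |\Theta|_\gamma^2 \;-\; \frac{1}{a}\Ld_\gamma a.
\]

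Substituting this expression into the contracted Gauss identity and collecting terms produces exactly
\[
R(g) \;=\; 2N(\tr_\gamma\Theta) \;-\; \frac{2}{a}\Ld_\gamma a \;+\; 2K(\gamma) \;-\; (\tr_\gamma\Theta)^2 \;-\; |\Theta|_\gamma^2,
\]
which is the claim. The main obstacle is sign bookkeeping: the Riccati identity takes a different form depending on the convention for $\Theta$, $S$ and for the Riemann tensor, and the shift-dependent cross terms in $\nabla_N N$ must be tracked. I would sanity-check the derivation against Remark~\ref{remark:polarcoorde}, where $a=1$, $\beta=0$, $K=1/r^2$, $\tr_{\gac}\Theta = -2/r$, $|\Theta|_{\gac}^2 = 2/r^2$: the right-hand side evaluates to $4/r^2 + 0 + 2/r^2 - 4/r^2 - 2/r^2 = 0 = R(e)$, confirming the signs.
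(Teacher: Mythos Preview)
Your proof is correct and follows essentially the same approach as the paper: both combine the twice-traced Gauss equation $R(g) = 2\,\mathrm{Ric}(N,N) + 2K(\gamma) - (\tr_\gamma\Theta)^2 + |\Theta|_\gamma^2$ with the traced second variation (Riccati) identity $N(\tr_\gamma\Theta) = \tfrac{1}{a}\Ld_\gamma a + \mathrm{Ric}(N,N) + |\Theta|_\gamma^2$ to eliminate $\mathrm{Ric}(N,N)$. The paper simply cites these two identities and refers to \cite{SmithWeinstein2} for details, whereas you sketch their derivation and add the Euclidean sanity check.
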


\begin{proof} The lemma follows by the traced second variation equation\footnote{Recall our sign convention $\Theta(X,Y)=- g(X, \nab_Y N)$.}
\begin{align*}
N (\tr_\ga \Theta) = \frac{1}{a} \Ld_\ga a + \mathrm{Ric}(N,N) + \vert \Theta \vert^2_\ga
\end{align*}
and the twice traced Gauss equation
\begin{align*}
R(g) = 2 \mathrm{Ric}(N,N) + 2 K(\ga) - (\tr_\ga \Theta)^2 + \vert \Theta \vert^2_\ga,
\end{align*}
where $\mathrm{Ric}$ denotes the Ricci tensor of $g$. See Section 1 of \cite{SmithWeinstein2} for a detailed derivation. This finishes the proof of Lemma \ref{scalarcurvaturefirstlem}.
\end{proof}

We introduce the following variations of Riemannian metrics.
\begin{definition}\label{definitionmathcalSSSS2222} \label{definitionmathcalSSSS} Let $g$ be a Riemannian metric on $\RRRwo$,
\begin{align*}
g = a^2 dr + \ga_{AB} (\be^A dr +d\th^A) (\be^B dr + d\th^B),
\end{align*}
and let further $\varphi$ be a scalar function and $\be'$ a $S_r$-tangent vectorfield on $\RRRwo$. We define the \emph{variation of $g$ by $(\varphi,\be')$} as
\begin{align*}
\check{ g}_{\varphi,\be'} := a^2 dr^2 + e^{2\varphi} \ga_{AB} \left( (\be + \be')^A dr + d\th^A \right) \left( (\be + \be')^B dr +d\th^B \right),
\end{align*}
and set
\begin{align*}
\SS(\varphi, \be', g)  := R(\check{g}_{\varphi, \be'}) - R(g).
\end{align*}
\end{definition}

\begin{lemma} \label{varphibetaboundg}
Let $g$ be an $\HH^2_{-1/2}$-asymptotically flat metric on $\RRRwo$, $\varphi \in H^2_{-1/2}(\RRRwo)$ a scalar function and $\be' \in \HH^{2}_{-1/2}(\RRRwo)$ an $S_r$-tangent vectorfield. Then there exists a universal constant $\varep>0$ such that the following holds.
\begin{enumerate} 
\item If
\begin{align} \label{smallnesscondition2433534}
\Vert g-e \Vert_{\HH^2_{-1/2}(\RRRwo)} < \varep, \, \Vert \varphi \Vert_{H^2_{-1/2}(\RRRwo)} < \varep, \, \Vert \be' \Vert_{\HH^2_{-1/2}(\RRRwo)}<\varep,
\end{align}
then
\begin{align*}
\Vert \check{g}_{\varphi,\be'} -e \Vert_{\HH^2_{-1/2}(\RRRwo)} \lesssim \Vert g-e \Vert_{\HH^2_{-1/2}} + \Vert \varphi \Vert_{H^2_{-1/2}(\RRRwo)} + \Vert \be' \Vert_{\HH^2_{-1/2}(\RRRwo)}.
\end{align*}
\item If, in addition to   \eqref{smallnesscondition2433534}, the metric $g$ is $\HH^w_{-1/2}$-asymptotically flat and $\varphi \in H^w_{-1/2}(\RRRwo)$ and $\be' \in \HH^w_{-1/2}(\RRRwo)$ for an integer $w\geq3$, then 
\begin{align*}
&\Vert \check{g}_{\varphi,\be'} -e \Vert_{\HH^w_{-1/2}(\RRRwo)}\\
 \lesssim& \Vert g-e \Vert_{\HH^w_{-1/2}(\RRRwo)} + \Vert \varphi \Vert_{H^w_{-1/2}(\RRRwo)}+ \Vert \be' \Vert_{\HH^w_{-1/2}(\RRRwo)} \\
&+ C_w \Big( \Vert g-e \Vert_{\HH^2_{-1/2}(\RRRwo)} + \Vert \varphi \Vert_{H^2_{-1/2}(\RRRwo)} + \Vert \be' \Vert_{\HH^2_{-1/2}(\RRRwo)}\Big).
\end{align*}
\item Let $\tilde{\varphi} \in H^2_{-1/2}(\RRRwo)$ be a second scalar function and $\tilde{\be'} \in \HH^{2}_{-1/2}(\RRRwo)$ a second $S_r$-tangent vectorfield satisfying \eqref{smallnesscondition2433534}. Then it holds that
\begin{align*}
\Vert \check{g}_{\varphi, \be'} - \check{ g}_{\tilde{\varphi},\tilde{\be'}}\Vert_{\HH^2_{-1/2}(\RRRwo)} \lesssim \Vert \varphi - \tilde{\varphi} \Vert_{H^2_{-1/2}(\RRRwo)} + \Vert \be' - \tilde{\be}' \Vert_{\HH^2_{-1/2}(\RRRwo)}.
\end{align*}
\end{enumerate}
\end{lemma}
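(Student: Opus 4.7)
The plan is to reduce the estimates on $\check{g}_{\varphi,\be'}-e$ to estimates on its polar components, and then handle the exponential factor $e^{2\varphi}$ via Corollary \ref{expHigherReg} together with the product estimates of Lemma \ref{ProductEstimates}. Reading off the decomposition in Definition \ref{definitionmathcalSSSS2222}, the polar components of $\check{g}_{\varphi,\be'}$ are
\begin{align*}
\check{a}=a, \qquad \check{\be}=\be+\be', \qquad \check{\ga}=e^{2\varphi}\ga,
\end{align*}
so by part (2) of Lemma \ref{lem:coordinatechangeAF} applied to $\check{g}_{\varphi,\be'}$, it suffices to bound $\check{a}^2-1=a^2-1$, $\check{\be}=\be+\be'$ and the difference
\begin{align*}
\check{\ga}-\gac=(e^{2\varphi}-1)\,\gac+e^{2\varphi}(\ga-\gac)
\end{align*}
in the appropriate weighted Sobolev norms. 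The first two are immediate from part (1) of Lemma \ref{lem:coordinatechangeAF} applied to $g$ (for $a^2-1$ and $\be$) together with the triangle inequality (for $\be'$).

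For the third component, I would treat each summand separately. For $(e^{2\varphi}-1)\gac$, note that $\gac$ is smooth and its pointwise tensor norm in the Cartesian frame is uniformly bounded on $\RRRwo$, so this reduces to estimating $e^{2\varphi}-1$ itself. Corollary \ref{expHigherReg} gives, for $\varphi$ small in $H^2_{-1/2}$,
\begin{align*}
\Vert e^{2\varphi}-1\Vert_{H^w_{-1/2}(\RRRwo)}\lesssim \Vert\varphi\Vert_{H^w_{-1/2}(\RRRwo)}+C_w\Vert\varphi\Vert_{H^2_{-1/2}(\RRRwo)}.
\end{align*}
For $e^{2\varphi}(\ga-\gac)=(e^{2\varphi}-1)(\ga-\gac)+(\ga-\gac)$, I would apply Lemma \ref{ProductEstimates}(3) with $\de_1=\de_2=-1/2$ to the first factor and bound $\ga-\gac$ via Lemma \ref{lem:coordinatechangeAF}(1) applied to $g$. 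Combining these estimates and summing gives exactly the right-hand side of the claim in part (1), and running the same bookkeeping at level $w$ (using Corollary \ref{expHigherReg} and parts (2), (3) of Lemma \ref{ProductEstimates} in place of the $w=2$ versions) produces the stated bound with the $C_w$ correction term in part (2).

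For part (3), the Lipschitz estimate, I would write
\begin{align*}
\check{g}_{\varphi,\be'}-\check{g}_{\tilde\varphi,\tilde\be'}=(e^{2\varphi}-e^{2\tilde\varphi})\ga_{AB}\,(\be+\be')^A(\be+\be')^Bdr^2+\text{(analogous terms)},
\end{align*}
which is linear in $\be'-\tilde\be'$ and essentially linear in $\varphi-\tilde\varphi$ via the identity $e^{2\varphi}-e^{2\tilde\varphi}=(\varphi-\tilde\varphi)\int_0^1 2e^{2(s\varphi+(1-s)\tilde\varphi)}\,ds$; the product estimates of Lemma \ref{ProductEstimates}, together with the smallness condition \eqref{smallnesscondition2433534}, then yield the desired bound. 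The main technical obstacle I foresee is in part (2): carefully tracking the $C_w$ constants through the higher Leibniz rule applications on the cross term $(e^{2\varphi}-1)(\ga-\gac)$, where several of the product-estimate branches of Lemma \ref{ProductEstimates} must be combined, but this is routine once the decomposition of $\check{\ga}-\gac$ is in place.
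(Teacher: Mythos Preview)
Your proposal is correct and follows essentially the same route as the paper: reduce to the polar components via Lemma \ref{lem:coordinatechangeAF}, split $\check\ga-\gac=(e^{2\varphi}-1)\gac+e^{2\varphi}(\ga-\gac)$, and control the exponential factor through Corollary \ref{expHigherReg} and the product estimates. The only cosmetic difference is in part (3), where the paper factors $e^{2\varphi}-e^{2\tilde\varphi}=e^{2\tilde\varphi}\bigl(e^{2(\varphi-\tilde\varphi)}-1\bigr)$ and then applies Lemma \ref{SobolevEmbeddingsAndNonlinear} directly, rather than using your integral representation; both arguments land on the same Lipschitz bound.
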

\begin{proof} {\bf Proof of parts (1) and (2).} By Lemma \ref{lem:coordinatechangeAF}, it suffices to show that for $\varep>0$ sufficiently small, for integers $w\geq2$,
\begin{align}\begin{aligned}
&\Vert a^2 -1 \Vert_{H^w_{-1/2}(\RRRwo)} + \Vert \be^A + \be'^A \Vert_{\HH^w_{-1/2}(\RRRwo)} + \Vert e^{2\varphi} \ga - \gac \Vert_{\HH^w_{-1/2}(\RRRwo)} \\
\lesssim& \Vert g - e \Vert_{\HH^w_{-1/2}(\RRRwo)} + \Vert \varphi \Vert_{H^{w}_{-1/2}(\RRRwo)} +\Vert \be' \Vert_{\HH^{w}_{-1/2}(\RRRwo)}  \\
& + C_w \Big(\Vert g - e \Vert_{\HH^2_{-1/2}(\RRRwo)} + \Vert \varphi \Vert_{H^{2}_{-1/2}(\RRRwo)} +\Vert \be' \Vert_{\HH^{2}_{-1/2}(\RRRwo)} \Big).
\end{aligned} \label{compoest2324HIGHER} \end{align}
The first term on the left-hand side of \eqref{compoest2324HIGHER} has not been changed in the variation and thus for $\varep>0$ sufficiently small
\begin{align*}
\Vert a^2 -1 \Vert_{H^w_{-1/2}(\RRRwo)} \lesssim \Vert g - e \Vert_{\HH^w_{-1/2}(\RRRwo)} + C_w \Vert g - e \Vert_{\HH^2_{-1/2}(\RRRwo)}.
\end{align*}
The second term on the left-hand side of \eqref{compoest2324HIGHER} is bounded by
\begin{align*}
&\Vert \be^A + \be'^A \Vert_{\HH^w_{-1/2}(\RRRwo)} \\
\lesssim & \Vert \be^A \Vert_{\HH^w_{-1/2}(\RRRwo)} + \Vert \be'^A \Vert_{\HH^w_{-1/2}(\RRRwo)} \\
\lesssim& \Vert g - e \Vert_{\HH^w_{-1/2}(\RRRwo)} + C_w \Vert g - e \Vert_{\HH^2_{-1/2}(\RRRwo)} + \Vert \be'^A \Vert_{\HH^w_{-1/2}(\RRRwo)}. 
\end{align*}
The third term on the left-hand side of \eqref{compoest2324HIGHER} is bounded by
\begin{align} \begin{aligned}
\Vert e^{2\varphi} \ga - \gac \Vert_{\HH^w_{-1/2}(\RRRwo)} \leq \Vert e^{2\varphi} (\ga-\gac) \Vert_{\HH^w_{-1/2}(\RRRwo)} + \Vert (e^{2\varphi}-1) \gac  \Vert_{H^w_{-1/2}(\RRRwo)}.
\end{aligned} \label{expest2343} \end{align}
For $\varep>0$ sufficiently small, the first term on the right-hand side of \eqref{expest2343} is bounded by using Lemmas \ref{SobolevEmbeddingsAndNonlinear}, Corollary \ref{expHigherReg} and product estimates as in Lemma \ref{ProductEstimates} by
\begin{align*}
\Vert e^{2\varphi} (\ga-\gac) \Vert_{\HH^w_{-1/2}(\RRRwo)} \lesssim& \Vert g-e \Vert_{\HH^w_{-1/2}(\RRRwo)} +  \Vert \varphi \Vert_{H^w_{-1/2}(\RRRwo)} \\
&+ C_w \Big( \Vert g-e \Vert_{\HH^2_{-1/2}} +  \Vert \varphi \Vert_{H^2_{-1/2}(\RRRwo)} \Big).
\end{align*}
The second term on the right-hand side of  \eqref{expest2343} is bounded similarly by Corollary \ref{expHigherReg},
\begin{align*}
\Vert (e^{2\varphi}-1) \gac  \Vert_{H^w_{-1/2}(\RRRwo)} \lesssim& \Vert \varphi \Vert_{H^w_{-1/2}(\RRRwo)} + C_w \Vert \varphi \Vert_{H^2_{-1/2}(\RRRwo)}.\end{align*}
This finishes the proof of \eqref{compoest2324HIGHER} and therefore finishes the proof of parts (1) and (2) of Lemma \ref{varphibetaboundg}. \\

{\bf Proof of part (3).} Indeed, by the construction of $\check{g}_{\varphi, \be'},\check{g}_{\tilde \varphi, \tilde{\be'}}$ as variations of $g$ with $(\varphi, \be'), (\tilde \varphi, \tilde{\be'})$ we can write, see Section \ref{ssec:tensordecomposition}, with $B=1,2$,
\begin{align} \begin{aligned}
(\check{g}_{\varphi, \be'} -\check{g}_{\tilde \varphi, \tilde{\be'}})_{NN} &= a^2-a^2 =0, \\
\check{g}_{\varphi, \be'} \mkern-32mu/\ \mkern+20mu -\check{g}_{\tilde \varphi, \tilde{\be'}} \mkern-32mu/\ \mkern+20mu &= (e^{2\varphi}-e^{2\tilde{\varphi}}) \ga,\\
\left({\check{g}_{\varphi, \be'} \mkern-32mu/\ \mkern+14mu}_{N} - {\check{g}_{\tilde \varphi, \tilde{\be'}} \mkern-32mu/\ \mkern+14mu}_{N}\right)_B &= e^{2\varphi} \ga_{B A} (\be^A + \be'^A)-e^{2\tilde \varphi} \ga_{B A} \left(\be^A + \tilde{\be'}^A \right) \\
&= (e^{2\varphi}-e^{2\tilde \varphi}) \ga_{B A} \left( \be^A + \be'^A \right) +e^{2\tilde{\varphi}}  \ga_{B A} \left(\be'^A-\tilde{\be'}^A\right).
\end{aligned} \label{eq:differenceform5j} \end{align}
By Lemma \ref{SobolevEmbeddingsAndNonlinear} and \eqref{eq:iterationestimateuxifeb18} it holds for $\varep>0$ sufficiently small that 
\begin{align*}
\left\Vert e^{2\varphi} -e^{2\tilde \varphi}  \right\Vert_{H^2_{-1/2}(\RRRwo)} =& \left\Vert e^{2\tilde{\varphi}} (e^{2(\varphi-\tilde{\varphi})}-1) \right\Vert_{H^2_{-1/2}(\RRRwo)}\\
\lesssim& \Vert \varphi- \tilde{\varphi} \Vert_{H^2_{-1/2}}.
\end{align*}
For $\varep>0$ small enough, we can use this estimate and the expression \eqref{eq:differenceform5j} to apply Lemma \ref{lem:nonlinearities}, product estimates as in Lemma \ref{ProductEstimates}, and Lemma \ref{lem:coordinatechangeAF} to get
\begin{align*}
\Vert \check{g}_{\varphi, \be'}-\check{g}_{\tilde \varphi, \tilde{\be'}} \Vert_{\HH^2_{-1/2}(\RRRwo)} \lesssim \Vert \varphi- \tilde{\varphi} \Vert_{H^2_{-1/2}(\RRRwo)} + \Vert \be'- \tilde{\be'} \Vert_{\HH^2_{-1/2}(\RRRwo)}.
\end{align*}
This finishes the proof of Lemma  \ref{varphibetaboundg}. \end{proof}


The next lemma shows how the scalar curvature changes under variation $g \mapsto \check{g}_{\varphi, \be'}$ defined above. 
\begin{lemma} \label{variationscalarlem2} Let $g$ be a smooth Riemannian metric, 
\begin{align*}
g = a^2 dr + \ga_{AB} (\be^A dr +d\th^A) (\be^B dr + d\th^B),
\end{align*}
and $\varphi$ a smooth scalar function, and $\be'$ a smooth $S_r$-tangent vectorfield on $\RRRwo$. Then it holds that
\begin{align} \begin{aligned}
\SS(\varphi, \be', g) =& -4 N'(N'\varphi) - 2 e^{-2\varphi}\Ld_\ga \varphi + 6 (N'\varphi) (\tr_\ga \Theta + \frac{1}{a} \Divd \be') - 6 (N'\varphi)^2 \\
&+ 2N'\left(\frac{1}{a} \Divd_\ga \be' \right) -\frac{2}{a} \tr_\ga \Theta \Divd_\ga \be' - \frac{1}{a} \Theta^{AB} (\Lied_{\be'} \ga)_{AB} \\
&-\frac{2}{a} \be'(\tr_\ga \Theta) - \frac{2}{a} (e^{-2\varphi}-1) \Ld_\ga a + 2(e^{-2\varphi}-1) K(\ga) \\
&-\frac{1}{a^2} (\Divd_\ga \be')^2 - \vert \Lied_{\be'} \ga \vert^2_\ga,
\end{aligned} \label{finalscalarcurv33333} \end{align}
where $N'= N- \frac{1}{a} \be'= \frac{1}{a} \pr_r - \frac{1}{a} \be - \frac{1}{a} \be'$.
\end{lemma}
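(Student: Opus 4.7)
The plan is to apply Lemma \ref{scalarcurvaturefirstlem} to the varied metric $\check{g}_{\varphi,\be'}$, express every geometric quantity that appears in terms of the corresponding object for $g$ plus correction terms depending on $\varphi$ and $\be'$, and then subtract $R(g)$. The overall strategy relies only on the explicit form of the variation (the lapse $a$ is unchanged, the shift becomes $\be + \be'$, and the induced metric on $S_r$ becomes $e^{2\varphi}\ga$) together with standard conformal identities on the $2$-manifold $(S_r,\ga)$.

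First I would identify the basic data of the foliation associated to $\check{g}_{\varphi,\be'}$. Since $a$ is unchanged and the shift gains $\be'$, Lemma \ref{lem:EuclideanProperties} gives immediately
\begin{align*}
\check{N} = \frac{1}{a}\pr_r - \frac{1}{a}(\be+\be') = N - \frac{1}{a}\be' = N',
\end{align*}
and a direct computation using the Leibniz rule on $\pr_r(e^{2\varphi}\ga_{AB})$ and $\Lied_{\be+\be'}(e^{2\varphi}\ga)$ yields
\begin{align*}
\check{\Theta}_{AB} = e^{2\varphi}\Bigl[-(N'\varphi)\ga_{AB} + \Theta_{AB} + \frac{1}{2a}(\Lied_{\be'}\ga)_{AB}\Bigr].
\end{align*}
Taking traces with $\check{\ga}^{AB}=e^{-2\varphi}\ga^{AB}$ gives $\tr_{\check{\ga}}\check{\Theta} = -2(N'\varphi) + \tr_\ga\Theta + \frac{1}{a}\Divd_\ga \be'$, and the weights $e^{2\varphi}$ cancel in the Frobenius norm, so $|\check{\Theta}|^2_{\check{\ga}} = |{-}(N'\varphi)\ga + \Theta + \frac{1}{2a}\Lied_{\be'}\ga|^2_\ga$.

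Next I would invoke the two-dimensional conformal identities on $S_r$, namely $K(e^{2\varphi}\ga) = e^{-2\varphi}\bigl(K(\ga) - \Ld_\ga\varphi\bigr)$ and $\Ld_{e^{2\varphi}\ga}a = e^{-2\varphi}\Ld_\ga a$; these are the only inputs that use the specific dimension of the leaves. Substituting everything into the formula $R(\check{g}_{\varphi,\be'}) = 2\check{N}(\tr_{\check{\ga}}\check{\Theta}) - \frac{2}{a}\Ld_{\check{\ga}}a + 2K(\check{\ga}) - (\tr_{\check{\ga}}\check{\Theta})^2 - |\check{\Theta}|^2_{\check{\ga}}$ furnished by Lemma \ref{scalarcurvaturefirstlem}, subtracting the analogous expression for $R(g)$, and using the identity $N' = N - \frac{1}{a}\be'$ to convert $2(N'-N)(\tr_\ga\Theta)$ into $-\frac{2}{a}\be'(\tr_\ga\Theta)$, delivers the claimed formula.

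The main obstacle is purely algebraic bookkeeping in expanding $(\tr_{\check{\ga}}\check{\Theta})^2 + |\check{\Theta}|^2_{\check{\ga}}$: one must keep track of cross terms such as $-6(N'\varphi)\tr_\ga\Theta$, $-6\frac{N'\varphi}{a}\Divd_\ga\be'$, $+\frac{2}{a}\tr_\ga\Theta\,\Divd_\ga\be'$, and $\frac{1}{a}\Theta^{AB}(\Lied_{\be'}\ga)_{AB}$, check that the pure second-order terms $(\tr_\ga\Theta)^2$ and $|\Theta|^2_\ga$ cancel against the corresponding contribution from $R(g)$, and isolate the prefactors $e^{-2\varphi}-1$ multiplying the nonlinear conformal remainders $-\frac{2}{a}\Ld_\ga a$ and $2K(\ga)$. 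The computation is local and purely tensorial, so no analytic subtlety arises; smoothness of $g,\varphi,\be'$ suffices throughout.
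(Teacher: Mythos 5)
Your proposal is correct and follows essentially the same route as the paper's proof: apply Lemma \ref{scalarcurvaturefirstlem} to $\check{g}_{\varphi,\be'}$, identify $\check{N}=N'$ and the perturbed second fundamental form $\check{\Theta}_{AB}=e^{2\varphi}\bigl[-(N'\varphi)\ga_{AB}+\Theta_{AB}+\tfrac{1}{2a}(\Lied_{\be'}\ga)_{AB}\bigr]$, expand $\tr_{\check\ga}\check\Theta$ and $\vert\check\Theta\vert^2_{\check\ga}$, invoke the two-dimensional conformal identities for $K$ and $\Ld_\ga a$, and finally subtract $R(g)$ (again via Lemma \ref{scalarcurvaturefirstlem}) to produce the $\be'(\tr_\ga\Theta)$ and $(e^{-2\varphi}-1)$ correction terms. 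The intermediate expressions you list for $\check N$, $\check\Theta$, and the quadratic cross terms agree with \eqref{expreforvar}--\eqref{eqreformingbck3434} in the paper, so no further comment is needed.
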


\begin{proof}
By Lemma \ref{scalarcurvaturefirstlem}, it holds that
\begin{align} \begin{aligned} 
R(\check{g}_{\varphi,\be'}) =& 2 \check{N}_{\varphi,\be'} (\tr_{e^{2\varphi}\ga} \check{\Theta}_{\varphi,\be'} ) - \frac{2}{a} \Ld_{e^{2\varphi}\ga} a + 2 K(e^{2\varphi} \ga) \\
&- (\tr_{e^{2\varphi} \ga} \check{\Theta}_{\varphi,\be'} )^2 - \vert \check{\Theta}_{\varphi,\be'} \vert_{e^{2\varphi}\ga}^2,
\end{aligned}\label{scalarcurvforvar} \end{align}
where $\check{N}_{\varphi,\be'}$ and $\left(\check{\Theta}_{\varphi,\be'}\right)_{AB}$ are given in any coordinates on $S_r$, for $A,B=1,2$, by
\begin{align*}
\check{N}_{\varphi,\be'} =& N' = \frac{1}{a} \pr_r - \frac{1}{a} (\be + \be'), \\
\left( \check{\Theta}_{\varphi,\be'}\right)_{AB} =& -\frac{1}{2a} \pr_r (e^{2\varphi} \ga_{AB}) + \frac{1}{2a} \left(\LIE_{\be'+ \be} (e^{2\varphi} \ga) \right)_{AB}\\
=& - (N'\varphi) e^{2\varphi} \ga_{AB} + e^{2\varphi} \left( \Theta_{AB} + \frac{1}{2a} (\Lie_{\be'} \ga)_{AB}\right).
\end{align*}
We note that
\begin{align} \begin{aligned}
\tr_{e^{2\varphi}\ga} \check{\Theta}_{\varphi,\be'} =& -2 (N'\varphi) + \tr_\ga \Theta + \frac{1}{a} \Divd_\ga \be',\\
\vert \check{\Theta}_{\varphi,\be'} \vert_{e^{2\varphi}\ga}^2 
=& 2 (N'\varphi)^2 -2 (N'\varphi) \tr_\ga \Theta - \frac{2}{a} N'(\varphi) \Divd_\ga \be' + \vert \Theta \vert_\ga^2  \\
&+ \frac{1}{4a^2} \vert \Lied_{\be'} \ga \vert^2_\ga + \frac{1}{a} \Theta^{AB} (\Lie_{\be'} \ga)_{AB},
\end{aligned} \label{expreforvar} \end{align}
and also 
\begin{align} \begin{aligned}
K(e^{2\varphi} \ga) &= e^{-2\varphi} (K(\ga) - \Ld_{\ga} \varphi), \\
\Ld_{e^{2\varphi}\ga} a &= e^{-2\varphi} \Ld_\ga a.
\end{aligned} \label{dgexprvar} \end{align}
Plugging \eqref{expreforvar} and \eqref{dgexprvar} into \eqref{scalarcurvforvar} yields
\begin{align} \begin{aligned}
R(\check{g}_{\varphi,\be'}) =& 2 N'(\tr_\ga \Theta + \frac{1}{a} \Divd_\ga \be' - 2 N'(\varphi)) - \frac{2}{a} e^{-2\varphi} \Ld_\ga a + 2e^{-2\varphi}(K(\ga)-\Ld_\ga \varphi) \\
&- (\tr_\ga \Theta)^2 - 6 (N'(\varphi))^2 + 6 N'(\varphi)( \tr_\ga \Theta + \frac{1}{a} \Divd_\ga \be') \\
&- \vert \Theta \vert^2_\ga - \frac{1}{4a^2} \vert \Lied_{\be'} \ga \vert^2_{\ga} - \frac{1}{a} \Theta^{AB} (\Lied_{\be'} \ga)_{AB}.
\end{aligned} \label{finalscalarcurv22222} \end{align}
By Lemma \ref{scalarcurvaturefirstlem}, it holds that
\begin{align} \begin{aligned}
& 2 N' (\tr_\ga \Theta )-\frac{2}{a} e^{-2\varphi} \Ld_\ga a + 2 e^{-2\varphi} K(\ga) - (\tr_\ga\Theta)^2 - \vert \Theta \vert_\ga^2 \\
=& R(g) -2 \be'(\tr_\ga \Theta) - \frac{2}{a} (e^{-2\varphi}-1) \Ld_\ga a + 2(e^{-2\varphi}-1) K(\ga).
\end{aligned} \label{eqreformingbck3434} \end{align}
Plugging \eqref{eqreformingbck3434} into \eqref{finalscalarcurv22222} yields \eqref{finalscalarcurv33333}. This finishes the proof of Lemma \ref{variationscalarlem2}. \end{proof}


The scalar curvature functional is a smooth mapping.
\begin{lemma} \label{prop:Fanalysis} Let $w\geq2$ be an integer. There exists a universal small constant $\varep>0$ such that the following holds.
\begin{itemize}
\item The scalar curvature functional $g \mapsto R(g)$ is a smooth mapping  
$$\{ g-e \in \HH^w_{-1/2} : \Vert g-e \Vert_{\HH^2_{-1/2}}< \varep\} \to H^{w-2}_{-5/2}.$$
\item The mapping $(\varphi,\be',g) \mapsto R(\check{g}_{\varphi, \be'})$ is a smooth mapping 
$$H^w_{-1/2}(\RRRwo) \times \HH^w_{-1/2}(\RRRwo) \times \HH^w_{-1/2}(\RRRwo) \to H^{w-2}_{-5/2}(\RRRwo)$$ 
in an $\varep$-neighbourhood of $(0,0,e)$ in $H^2_{-1/2} \times \HH^2_{-1/2} \times \HH^2_{-1/2}$.
\item The mapping $(\varphi,\be',g) \mapsto \SS(\varphi, \be',g)$ is a smooth mapping 
$$\ol{H}^w_{-1/2} \times \ol{\HH}^w_{-1/2} \times \HH^w_{-1/2}(\RRRwo) \to \ol{H}^{w-2}_{-5/2}(\RRRwo)$$ 
in an $\varep$-neighbourhood of $(0,0,e)$ in $\ol{H}^2_{-1/2} \times \ol{\HH}^2_{-1/2} \times \HH^2_{-1/2}(\RRRwo)$.
\end{itemize}
\end{lemma}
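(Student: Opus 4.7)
\medskip

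The plan is to prove the three statements in order, each building on its predecessors. For part (1), I would work in Cartesian coordinates where the scalar curvature has the schematic form
\[
R(g) \;=\; g^{-1}\cdot g^{-1}\cdot \partial^2 g \;+\; g^{-1}\cdot g^{-1}\cdot g^{-1}\cdot (\partial g)^2.
\]
Lemma \ref{GInverseAnalysis} gives that $g\mapsto g^{-1}$ is smooth $\HH^w_{-1/2}\to \HH^w_{-1/2}$ on a small $\HH^2_{-1/2}$-neighborhood of $e$, Lemma \ref{SobolevEmbeddingsAndNonlinear} realises $\partial$ and $\partial^2$ as bounded linear maps $\HH^w_{-1/2}\to \HH^{w-1}_{-3/2}\to \HH^{w-2}_{-5/2}$, and Lemma \ref{ProductEstimates} (together with Corollary \ref{cor:algebraprop}) shows that the relevant bilinear and trilinear products land in $H^{w-2}_{-5/2}$ and are smooth (being multilinear). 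Composing these smooth maps yields smoothness of $g\mapsto R(g)$ from the $\HH^w_{-1/2}$-neighborhood of $e$ into $H^{w-2}_{-5/2}$.

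For part (2), I would first show that the assignment $(\varphi,\be',g)\mapsto \check{g}_{\varphi,\be'}-e$ is smooth between the stated spaces on $\RRRwo$. Continuity/boundedness is essentially the content of Lemma \ref{varphibetaboundg}, while smoothness in each of the three arguments reduces to the fact that $\check{g}_{\varphi,\be'}$ is an explicit polynomial in $\be'$, $\ga$, $a$, and in the scalar $e^{2\varphi}$; the latter is a smooth function of $\varphi\in H^w_{-1/2}(\RRRwo)$ by Corollary \ref{expHigherReg}, and all remaining products are multilinear and controlled by Lemma \ref{ProductEstimates}. The scalar curvature analogue of part (1) over the exterior domain $\RRRwo$ (same coordinate expression, same product estimates) is then composed with this map to produce smoothness of $(\varphi,\be',g)\mapsto R(\check g_{\varphi,\be'})$ into $H^{w-2}_{-5/2}(\RRRwo)$.

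For part (3), subtracting the mapping $g\mapsto R(g)$ from the composition of part (2) shows that $(\varphi,\be',g)\mapsto \SS(\varphi,\be',g)=R(\check g_{\varphi,\be'})-R(g)$ is smooth from $\ol{H}^w_{-1/2}\times \ol{\HH}^w_{-1/2}\times \HH^w_{-1/2}(\RRRwo)$ into the unbarred target $H^{w-2}_{-5/2}(\RRRwo)$. The only nontrivial additional point is that this map actually takes values in the closed subspace $\ol{H}^{w-2}_{-5/2}(\RRRwo)$. I would argue by density: pick sequences $\varphi_n\in C^\infty_c(\RRRwo)$ and $\be'_n\in C^\infty_c(\RRRwo)$ converging to $\varphi$ and $\be'$ in the respective barred spaces. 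For each $n$, the perturbation $\check{g}_{\varphi_n,\be'_n}$ coincides with $g$ in a neighborhood of $S_1$, hence by Lemma \ref{variationscalarlem2}—every term of which carries at least one factor involving $\varphi$, $\be'$, or a derivative thereof—the scalar $\SS(\varphi_n,\be'_n,g)$ vanishes in a neighborhood of $S_1$. Its trivial extension to $B_1$ is therefore in $H^{w-2}_{-5/2}(\RRR^3)$, so $\SS(\varphi_n,\be'_n,g)\in \ol{H}^{w-2}_{-5/2}(\RRRwo)$ by Proposition \ref{prop:TrivialExtensionRegularity}. Continuity of $\SS$ into $H^{w-2}_{-5/2}(\RRRwo)$ together with the closedness of the barred subspace then forces $\SS(\varphi,\be',g)\in \ol{H}^{w-2}_{-5/2}(\RRRwo)$, and smoothness into this closed subspace follows.

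The main obstacle is neither the coordinate expression nor the variation formula, both of which are already explicit, but rather the bookkeeping needed to show at each stage that the nonlinear products close in the borderline weights $(-1/2,-3/2,-5/2)$ and that the Lipschitz-in-$g$ dependence upgrades to full smoothness. This is routine once one systematically invokes Lemmas \ref{SobolevEmbeddingsAndNonlinear}, \ref{ProductEstimates}, \ref{GInverseAnalysis} and Corollaries \ref{expHigherReg}, \ref{cor:algebraprop}; the barred-space statement in part (3) is then a soft density/continuity argument rather than a new estimate.
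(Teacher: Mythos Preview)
Your proposal is correct and follows essentially the same strategy the paper sketches: the paper's own proof is the single sentence ``the proof follows by the explicit expression in Lemma \ref{variationscalarlem2} and product estimates as in Lemma \ref{ProductEstimates}, and is left to the reader,'' and your argument is a legitimate fleshing-out of that outline. The only organisational difference is that for part (2) you factor through the composition $(\varphi,\be',g)\mapsto \check g_{\varphi,\be'}\mapsto R(\check g_{\varphi,\be'})$, whereas the paper's hint points to working directly with the explicit formula \eqref{finalscalarcurv33333} for $\SS$; either route reduces to the same product estimates, and your density argument for part (3) is a clean way to land in $\ol{H}^{w-2}_{-5/2}$ that the paper does not spell out.
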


\begin{proof} The proof follows by the explicit expression in Lemma \ref{variationscalarlem2} and product estimates as in Lemma \ref{ProductEstimates}, and is left to the reader. See for example \cite{FischerMarsden}. \end{proof}

We calculate now the linearisation of $R(\check{g}_{\varphi,\be'})$ in $\varphi$ and $\be'$ at the Euclidean metric.
\begin{lemma} \label{variationscalarlin}
The linearisation of $R(\check{g}_{\varphi,\be'})$ in $(\varphi,\be')$ at $(\varphi,\be',g)=(0,0,e)$ is given by
\begin{align*}
L := D_{\varphi,\be'}R(\check{g}_{\varphi,\be'}) \vert_{(0,0,e)}(u, \xi) = -4 \pr^2_r u -2 \Ld_{\gac} u - \frac{12}{r} \pr_r u - \frac{4}{r^2} u +\frac{2}{r^3} \pr_r \left( r^3 \Divd \xi \right).
\end{align*}
The operator $L$ is a bounded linear operator from $(u, \xi) \in \ol{H}^w_{-1/2} \times\ol{\HH}^w_{-1/2}$ to $\ol{H}^{w-2}_{-5/2}$.
\end{lemma}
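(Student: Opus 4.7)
The plan is to simply compute the linearisation by taking derivatives of the explicit formula \eqref{finalscalarcurv33333} for $\SS(\varphi,\be',g)$ at $(\varphi,\be',g)=(0,0,e)$, and then verify the mapping property by tracking weights and boundary traces.

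First I would specialise the Euclidean data using Remark \ref{remark:polarcoorde}: at $g=e$ one has $a=1$, $\be=0$, $\ga=\gac$, $N=\pr_r$, $K(\gac)=1/r^2$, and $\Theta_{AB}=-\frac{1}{r}\gac_{AB}$, so that $\tr_\gac\Theta=-2/r$ and $\Theta^{AB}=-\frac{1}{r}\gac^{AB}$. I would then extract the $(\varphi,\be')$-linear part of each of the twelve summands of \eqref{finalscalarcurv33333}. The purely quadratic terms $-6(N'\varphi)^2$, $-\frac{1}{a^2}(\Divd\be')^2$, $-|\Lied_{\be'}\ga|^2_\ga$ vanish. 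The term $-\frac{2}{a}\be'(\tr_\ga\Theta)$ also vanishes since $\tr_\gac\Theta=-2/r$ is radial and $\be'$ is $S_r$-tangent. The term $-\frac{2}{a}(e^{-2\varphi}-1)\Ld_\ga a$ vanishes because $\Ld_\gac(1)=0$. The remaining contributions give $-4\pr_r^2\varphi$ from the first summand, $-2\Ld_\gac\varphi$ from the second, $-\frac{12}{r}\pr_r\varphi$ from the third (via $6(\pr_r\varphi)(-2/r)$), $2\pr_r\Divd\be'$ from the fifth, $\frac{4}{r}\Divd\be'$ from the sixth, $\frac{2}{r}\Divd\be'$ from the seventh (since $\Theta^{AB}(\Lied_{\be'}\gac)_{AB}=-\frac{2}{r}\Divd\be'$), and $-\frac{4}{r^2}\varphi$ from the tenth (using $K(\gac)=1/r^2$).

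Collecting the $\be'$-contributions gives $2\pr_r\Divd\be'+\frac{6}{r}\Divd\be'=\frac{2}{r^3}\pr_r(r^3\Divd\be')$, which together with the $\varphi$-contributions yields exactly the stated operator $L$.

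For the mapping property, I would argue term by term. Each summand of $L$ is a differential operator of total order at most two, and its explicit $r$-power coefficients ($1$, $1/r$, $1/r^2$, $1/r^3\cdot r^3=1$) are compatible with the weighted Sobolev scaling: by Lemma \ref{SobolevEmbeddingsAndNonlinear} the operator $\pr$ maps $H^w_{-1/2}$ to $H^{w-1}_{-3/2}$, so two derivatives send $\ol{H}^w_{-1/2}$ to $H^{w-2}_{-5/2}$ with norm controlled by the input; the factors of $1/r$ and $1/r^2$ are uniformly bounded on $\RRR^3\setminus\ol{B_1}$ and respect the weight (indeed sharpen it), and they extend smoothly through $B_1$ when multiplied by functions vanishing near $r=0$, which is guaranteed by the $\ol{\,}$ condition. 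To see that the image lies in $\ol{H}^{w-2}_{-5/2}$, I would invoke Proposition \ref{prop:TrivialExtensionRegularity}: the assumption $u\in\ol{H}^w_{-1/2}$ gives $\pr_r^l u\vert_{r=1}=0$ for $l=0,\dots,w-1$, so each of $\pr_r^2u$, $\Ld_\gac u$, $\frac{1}{r}\pr_r u$, $\frac{1}{r^2}u$ has radial traces up to order $w-3$ vanishing at $r=1$, and similarly for the terms involving $\xi\in\ol{\HH}^w_{-1/2}$; applying Proposition \ref{prop:TrivialExtensionRegularity} in the reverse direction then places the result in $\ol{H}^{w-2}_{-5/2}$.

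The main obstacle is purely computational bookkeeping in the linearisation: the three distinct contributions $\frac{4}{r}\Divd\be'$, $\frac{2}{r}\Divd\be'$, $2\pr_r\Divd\be'$ coming from terms 5, 6, 7 of \eqref{finalscalarcurv33333} must combine correctly to reproduce the compact form $\frac{2}{r^3}\pr_r(r^3\Divd\xi)$. I expect no genuine difficulty beyond careful sign and trace tracking, since the boundedness is a routine consequence of the weighted Sobolev calculus already established in Section \ref{subsec:FunctionSpaces}.
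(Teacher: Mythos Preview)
Your proposal is correct and follows essentially the same approach as the paper: a term-by-term linearisation of the formula \eqref{finalscalarcurv33333} at the Euclidean background, yielding exactly the same seven nonvanishing contributions and the same recombination of the $\be'$-terms into $\frac{2}{r^3}\pr_r(r^3\Divd\xi)$. The paper defers the boundedness statement to Lemma \ref{SobolevEmbeddingsAndNonlinear} without further detail, whereas you spell out the argument via Proposition \ref{prop:TrivialExtensionRegularity}; this is a harmless elaboration rather than a different method.
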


\begin{proof} We only calculate the linearisation $L$; the second statement follows by Lemma \ref{SobolevEmbeddingsAndNonlinear} and is left to the reader. It suffices to calculate the variation $\de$ of each term of \eqref{finalscalarcurv33333} in Lemma \ref{variationscalarlem2}. Denoting $\de \varphi = u$ and $\de \be = \xi$, the non-vanishing variations are
\begin{align*}
\de \left( -4 N'(N'\varphi) \right) =& -4 \pr_r^2 u, \\
\de \left( -2 e^{-2\varphi} \Ld_{\ga} \varphi \right) =& -2 \Ld_{\gac} u, \\
\de \left( 6 (N'\varphi) (\tr_{\ga} \Theta + \frac{1}{a} \Divd_\ga \be')  \right) =&6 \pr_r u \left( - \frac{2}{r} \right) = -\frac{12}{r} \pr_r u,\\
\de \left( 2 N' \left( \frac{2}{a} \Divd_\ga \be' \right) \right) =& 2 \pr_r (\Divd \xi), \\
\de\left( -\frac{2}{a} \tr_\ga \Theta \Divd_\ga \be' \right) =& \frac{4}{r} \Divd \xi, \\
\de \left( -\frac{1}{a} \Theta^{AB} (\Lied_{\be'}\ga)_{AB} \right)=& \frac{2}{r} \Divd \xi, \\
\de \left( 2(e^{-2\varphi}-1)K(\ga)\right)=&-\frac{4}{r^2} u.
\end{align*}

To summarise, we have 
\begin{align*}
L (u,\xi) = -4 \pr^2_r u -2 \Ld u - \frac{12}{r} \pr_r  u - \frac{4}{r^2}  u + 2\left( \pr_r \Divd \xi + \frac{3}{r} \Divd \xi \right). 
\end{align*}
This finishes the proof of Lemma \ref{variationscalarlin}. \end{proof}

We remark that by Definition \ref{definitionmathcalSSSS2222}, the above implies also that
\begin{align*}
D_{\varphi,\be'} \SS \vert_{(0, 0, e)}(u,\xi) = L(u,\xi).
\end{align*} 
In the next proposition, we estimate the difference $\SS(\varphi,\be',g)- D_{\varphi,\be'} \SS \vert_{(0, 0, e)}(\varphi,\be')$; this is used to derive higher regularity estimates in the next section. The idea of the proof is to rewrite this difference in terms of products of $g-e$, $\be$ and $\varphi$, and then to apply to each term product estimates as in Lemma \ref{ProductEstimates}. See also the corresponding Lemma \ref{lem:DivergenceBoundedOperator} for $k$ in Section \ref{sec:analysisonafsets}.
\begin{proposition} \label{Prophigherregularityestimates23232323} Let $w\geq3$ be an integer. There is a universal $\varep>0$ small such that if $g$ is an $\HH^w_{-1/2}$-asymptotically flat Riemannian metric on $\RRRwo$ with
\begin{align*}
\Vert g-e \Vert_{\HH^2_{-1/2}(\RRRwo)} < \varep,
\end{align*}
and $\varphi \in \ol{H}^w_{-1/2}$ a scalar function and $\be' \in \ol{\HH}^w_{-1/2}$ an $S_r$-tangent vector, then it holds that
\begin{align*}
&\Vert \SS(\varphi,\be',g) - L(\varphi,\be') \Vert_{\ol{H}^{w-2}_{-5/2}(\RRRwo)} \\
\lesssim& \Big( \Vert g-e \Vert_{\HH^2_{-1/2}(\RRRwo)} + \Vert (\varphi, \be') \Vert_{\ol{H}^2_{-1/2} \times \ol{\HH}^2_{-1/2}} \Big) \Vert (\varphi, \be') \Vert_{\ol{H}^w_{-1/2} \times\ol{\HH}^w_{-1/2}} \\
&+ \Vert (\varphi,\be') \Vert_{\ol{H}^2_{-1/2} \times \ol{\HH}^2_{-1/2}} \Vert g-e \Vert_{\HH^{w}_{-1/2}(\RRRwo)} \\
&+ C_w \Big( \Vert g-e \Vert_{\HH^2_{-1/2}(\RRRwo)} \Vert (\varphi,\be') \Vert_{\ol{H}^2_{-1/2} \times \ol{\HH}^2_{-1/2}} \Big).
\end{align*}
\end{proposition}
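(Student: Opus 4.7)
The plan is to reduce the claim to weighted product estimates by expanding each of the twelve terms of $\SS(\varphi, \be', g)$ in formula \eqref{finalscalarcurv33333} of Lemma \ref{variationscalarlem2}, and subtracting off the explicit linearisation $L(\varphi, \be')$ from Lemma \ref{variationscalarlin}. I would organise the calculation by splitting
\begin{equation*}
\SS(\varphi, \be', g) - L(\varphi, \be') = \bigl[\SS(\varphi, \be', g) - \SS(\varphi, \be', e)\bigr] + \bigl[\SS(\varphi, \be', e) - L(\varphi, \be')\bigr],
\end{equation*}
in which the second bracket is purely quadratic or higher in $(\varphi, \be')$, while the first bracket captures the coupling between $(\varphi, \be')$ and the perturbation $h := g - e$, decomposed via Lemma \ref{lem:coordinatechangeAF} into polar components $a^2 - 1$, $\be$, and $\ga - \gac$.

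Within each bracket, the plan is to expand each nonlinear coefficient around the Euclidean background: $\tfrac{1}{a} = 1 + (\tfrac{1}{a} - 1)$ (handled via Lemma \ref{lem:nonlinearities}), $e^{-2\varphi} = 1 + (e^{-2\varphi} - 1)$ (handled via Corollary \ref{expHigherReg}), $\ga = \gac + (\ga - \gac)$, $\Theta_{AB} = -\tfrac{1}{r}\gac_{AB} + (\Theta_{AB} + \tfrac{1}{r}\gac_{AB})$, and $K(\ga) = \tfrac{1}{r^2} + (K(\ga) - \tfrac{1}{r^2})$. The linear-in-$(\varphi, \be')$ contributions with all other factors at the Euclidean background reassemble into precisely $L(\varphi, \be')$, as in the derivation of Lemma \ref{variationscalarlin}; everything else is at least bilinear in factors drawn from $\{\varphi, \be', h\}$ and their derivatives, with weights summing to $-1/2 + (-1/2) = -1$ and at most two derivatives falling on the product.

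The resulting finite sum of products is then estimated termwise using the weighted product inequality Lemma \ref{ProductEstimates}(2), which produces exactly the three types of terms on the right-hand side of the statement: the high-Sobolev-times-low-Sobolev combinations in both orders, and the $C_w$-remainder. The target weight $-5/2$ is achieved because the two derivatives shift the weight by $-2$ relative to the sum of input weights $-1$. The decomposition of $g - e$ into $a^2-1, \be, \ga-\gac$ with analogous $\HH^w_{-1/2}$ and $\HH^2_{-1/2}$ norms controlled by the norm of $g - e$ (Lemma \ref{lem:coordinatechangeAF}) then reassembles these bounds into the claimed right-hand side.

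The main obstacle is essentially bookkeeping: managing the many product terms generated by expanding the twelve scalar curvature contributions, and checking the weight balance in each case. A secondary subtlety is that the output must lie in the overlined space $\ol{H}^{w-2}_{-5/2}$ rather than merely in $H^{w-2}_{-5/2}$. This follows since $\varphi, \be'$ belong to overlined spaces, so by Proposition \ref{prop:TrivialExtensionRegularity} their trivial extensions to $B_1$ are regular; multiplication against factors in $\HH^w_{-1/2}(\RRRwo)$ preserves this property, which is consistent with the smooth-mapping statement of Lemma \ref{prop:Fanalysis}.
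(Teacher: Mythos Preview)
Your proposal is correct and follows essentially the same strategy as the paper: expand $\SS(\varphi,\be',g)-L(\varphi,\be')$ into a finite sum of products of the small quantities $\varphi$, $\be'$, $g-e$ (and their polar components and derivatives), then estimate each product via Lemma~\ref{ProductEstimates}. The paper carries this out by writing the difference directly from \eqref{finalscalarcurv33333} and rewriting each of the resulting terms into product form (see \eqref{estrhsHRS} and the subsequent term-by-term expansions), whereas you interpose the split through $\SS(\varphi,\be',e)$; this is only an organisational variant and leads to the same schematic product structure and the same application of the weighted product estimates.
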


\begin{proof} By Lemma \ref{variationscalarlem2}, we have
\begin{align} \begin{aligned}
&\SS(\varphi,\be',g) - L(\varphi,\be') \\
=& -\Big( 4N'(N' \varphi) - 4\pr^2_r \varphi \Big) - 2 \Big( e^{-2\varphi} \Ld_\ga \varphi - \Ld_{\gac} \varphi \Big) \\
&-6 (N'\varphi)^2 + \Big( 6N'(\varphi)(\tr_\ga \Theta + \frac{1}{a} \Divd_\ga \be) + \frac{12}{r} \pr_r \varphi \Big) \\
&+\Big( 2N'\left(\frac{1}{a} \Divd_\ga \be' \right) -2 \pr_r \Divd \be' \Big) - \Big( \frac{2}{a} \tr_\ga \Theta \Divd_\ga \be' + \frac{4}{r} \Divd \be' \Big) \\
&- \Big( \frac{1}{a} \Theta^{AB} (\Lied_{\be'} \ga)_{AB} + \frac{2}{r} \Divd \be' \Big) - 2 \be' (\tr_\ga \Theta) - \frac{2}{a} (e^{-2\varphi}-1) \Ld_\ga a \\
&+ 2(e^{-2\varphi}-1) K(\ga) -\frac{1}{a^2} (\Divd_\ga \be')^2 - \vert \Lied_{\be'} \ga \vert^2_{\ga}.
\end{aligned} \label{estrhsHRS} \end{align}
We rewrite now the right-hand side of \eqref{estrhsHRS} into products of $g-e$, $\be'$, $\varphi$ and their derivatives. The first term on the right-hand side of \eqref{estrhsHRS} equals
\begin{align*}
&4N'(N'\varphi) - 4\pr^2_r \varphi \\
=& \frac{4}{a} (-\be-\be')\left( \frac{1}{a}(\pr_r-\be-\be') (\varphi) \right) + \frac{4}{a^2} \pr_r((-\be-\be')(\varphi)) \\
&- \frac{4}{a^3} \pr_r a (\pr_r -\be-\be')(\varphi).
\end{align*}
The second term on the right-hand side of \eqref{estrhsHRS} equals
\begin{align*}
-2 \Big( e^{-2\varphi} \Ld_\ga \varphi +\Ld_{\gac} \varphi  \Big)= -2(e^{-2\varphi} - 1) \Ld_\ga \varphi - 2(\Ld_\ga \varphi - \Ld_{\gac} \varphi). 
\end{align*}
The second term on the right-hand side of \eqref{estrhsHRS} is already in product form,
\begin{align*}
-6 (N'\varphi)^2.
\end{align*}
The fourth term on the right-hand side of \eqref{estrhsHRS} equals
\begin{align*}
&6N'(\varphi)(\tr_\ga \Theta) + \frac{12}{r} \pr_r \varphi + \frac{6}{a} N'(\varphi) \Divd_\ga \be' \\
=& \frac{6}{a} (-\be-\be')(\varphi) \tr_\ga \Theta + \frac{6}{a} \pr_r \varphi (\tr_\ga \Theta + \frac{2}{r}) - \frac{12}{r}\frac{1-a}{a} \pr_r \varphi + \frac{6}{a}(\pr_r - \be- \be')(\varphi) \Divd_\ga \be'.
\end{align*}
The fifth term on the right-hand side of \eqref{estrhsHRS} equals
\begin{align*}
&2N'\left( \frac{1}{a} \Divd_\ga \be' \right) - 2 \pr_r \Divd \be' \\
=& 2\left(\frac{1-a}{a} \right) \pr_r \Divd_\ga \be' + 2 \pr_r \left( \Divd_\ga \be' - \Divd \be' \right) - \frac{\pr_r a}{a^2} \Divd_\ga \be' +2(-\be -\be')\left( \frac{1}{a} \Divd_\ga \be' \right).
\end{align*}
The sixth term on the right-hand side of \eqref{estrhsHRS} equals
\begin{align*}
&\frac{2}{a}\tr_\ga \Theta \Divd_\ga \be' + \frac{4}{r} \Divd \be' \\
=& \frac{2}{a} \left(\tr_\ga \Theta + \frac{2}{r} \right) \Divd_\ga \be' - 4\frac{1-a}{ar} \Divd_\ga \be' + \frac{4}{r} (\Divd \be' - \Divd_\ga \be').
\end{align*}
The seventh term on the right-hand side of \eqref{estrhsHRS} equals
\begin{align*}
&\frac{1}{a} \Theta_{AB} (\Lied_{\be'} \ga)^{AB} + \frac{2}{r} \Divd \be' \\
=& \frac{1}{a} \widehat{\Theta}_{AB}(\Lied_{\be'} \ga)^{AB} + \frac{1}{a} \left(\tr_\ga \Theta +\frac{2}{r} \right) \Divd_\ga \be' - \frac{2(1-a)}{ar} \Divd_\ga \be' + \frac{2}{r} (\Divd \be' - \Divd_\ga \be').
\end{align*}
The eight term on the right-hand side of \eqref{estrhsHRS} can be rewritten as
\begin{align*}
2 \be' (\tr_\ga \Theta) = 2 \be'\left(\tr_\ga \Theta + \frac{2}{r}\right),
\end{align*}
where we used that $\be'$ is a $S_r$-tangent vectorfield. Here we recall that $\tr_{\ga} \Theta \vert_{g=e} = -\frac{2}{r}$.\\

The ninth to twelfth terms on the right-hand side of \eqref{estrhsHRS} are already in the right product form. \\

By rewriting the terms on the right-hand side of \eqref{estrhsHRS} as above, it follows that, in view of Lemmas \ref{SobolevEmbeddingsAndNonlinear}, \ref{ProductEstimates} and \ref{GInverseAnalysis}, the difference $\SS(\varphi,\be',g) - L(\varphi,\be')$ is schematically given by
\begin{align*}
&\Big(\be' +(g-e)+ \varphi \Big) \pr^2 \varphi + \Big(\pr(g-e) + \pr \be' + \pr \varphi + (g-e) + \be' \Big) \pr \varphi \\
+& \Big( \pr^2 \be'\Big) (g-e) + \Big(\pr \be' +\pr(g-e) \be' \Big)\pr (g-e) + \be' \pr^2(g-e),
\end{align*}
where we left away lower order terms.

Applying to each of the above terms the product estimates as in Lemma \ref{ProductEstimates} yields the estimate
\begin{align*}
&\Vert \SS(\varphi,\be',g) - L(\varphi,\be') \Vert_{\ol{H}^{w-2}_{-5/2}} \\
\lesssim& \Big( \Vert g-e \Vert_{\HH^2_{-1/2}} + \Vert (\varphi, \be') \Vert_{\ol{H}^2_{-1/2} \times \ol{\HH}^2_{-1/2}} \Big) \Vert (\varphi, \be') \Vert_{\ol{H}^w_{-1/2} \times\ol{\HH}^w_{-1/2}} \\
&+ \Vert (\varphi,\be') \Vert_{\ol{H}^2_{-1/2} \times \ol{\HH}^2_{-1/2}} \Vert g-e \Vert_{\HH^{w}_{-1/2}} + C_w \Big( \Vert g-e \Vert_{\HH^2_{-1/2}} \Vert (\varphi,\be') \Vert_{\ol{H}^2_{-1/2} \times \ol{\HH}^2_{-1/2}} \Big),
\end{align*}
We leave the details to the interested reader. This finishes the proof of Proposition \ref{Prophigherregularityestimates23232323}. \end{proof}


\subsection{Reduction to the Euclidean case} \label{ssec:Reduction2}

In this section, we first prove the next pertubation result, Proposition \ref{prop:ExistenceOfPerturbation}, under the assumption of Lemma \ref{lem:scalarsurj} which is proved in Section \ref{sec:SurjectivityR}. Then we prove Theorem \ref{thm:MainExtensionScalarTheorem}.

\begin{proposition}[Perturbation of scalar curvature] \label{prop:ExistenceOfPerturbation} There is a small universal constant $\varep >0$ such that the following holds.
\begin{enumerate}
\item {\bf Extension result.} Let $g$ be an $\HH^2_{-1/2}$-asymptotically flat metric on $\RRRwo$ written in standard polar coordinates as
\begin{align*}
g = a^2 dr^2 + \ga_{AB} \left( \be^A dr + d\th^A \right) \left( \be^B dr + d\th^B \right),
\end{align*}
and $\mathfrak{s} \in \ol{H}^{0}_{-5/2}$ a scalar function. If
\begin{align}
\Vert g -e \Vert_{\HH^2_{-1/2}(\RRRwo)} + \Vert \mathfrak{s} \Vert_{\ol{H}^{0}_{-5/2}} < \varep, \label{mars7eq1}
\end{align}
then there exist a scalar function $\varphi \in \ol{H}^2_{-1/2}$ and an $S_r$-tangent vector $\be' \in \ol{\HH}^2_{-1/2}$ bounded by
\begin{align}
\Vert (\varphi,\be') \Vert_{\ol{H}^2_{-1/2} \times \ol{\HH}^2_{-1/2}} \lesssim \Vert \mathfrak{s} \Vert_{\ol{H}^{0}_{-5/2}}, \label{eq:uxiestimatefeb181}
\end{align}
and such that the metric
\begin{align}
\check{ g}_{\varphi,\be'} := a^2 dr^2 + e^{2\varphi} \ga_{AB} \left( (\be + \be')^A dr + d\th^A \right) \left( (\be + \be')^B dr +d\th^B \right) \label{eq:Dez21defvar}
\end{align}
is $\HH^2_{-1/2}$-asymptotically flat and its scalar curvature on $\RRRwo$ is given by
$$R(\check{g}_{\varphi, \be'}) = R(g) +\mathfrak{s}.$$
Furthermore, the following bound holds,
\begin{align}
\Vert \check{g}_{\varphi, \be'} - e \Vert_{\HH^2_{-1/2}(\RRRwo)} \lesssim \Vert g - e \Vert_{\HH^2_{-1/2}(\RRRwo)} + \Vert \mathfrak{s} \Vert_{\ol{H}^{0}_{-5/2}}.
\label{eq:estimatefeb181}
\end{align} 
\item {\bf Iteration estimates.} Let $g$ be an $\HH^2_{-1/2}$-asymptotically flat metric on $\RRRwo$ written in standard polar coordinates as
\begin{align*}
 g = a^2 dr^2 + \ga_{AB} \left( \be^A dr + d\th^A \right) \left( \be^B dr + d\th^B \right),
\end{align*}
and $\mathfrak{s}, \tilde{\mathfrak{s}} \in \ol{H}^{0}_{-5/2}$ two scalar functions such that \eqref{mars7eq1} holds for $(g,\mathfrak{s})$ and $(g,\tilde{\mathfrak{s}})$. Applying part (1) to $ g$ with $\mathfrak{s}$ and $\tilde{\mathfrak{s}}$ yields two pairs $(\varphi,\beta')$ and $(\tilde{\varphi},\tilde{\be'})$, respectively. Let $\check{g}_{\varphi, \be'}, \check{ g}_{\tilde{\varphi},\tilde{\be'}}$ denote the asymptotically flat metrics defined by \eqref{eq:Dez21defvar}. Then it holds that
\begin{align}
\Vert (\varphi,\be') - (\tilde{\varphi},\tilde{\be'}) \Vert_{\ol{H}^2_{-1/2} \times \ol{\HH}^2_{-1/2}} \lesssim \Vert \mathfrak{s}-\tilde{\mathfrak{s}} \Vert_{\ol{H}^{0}_{-5/2}}. \label{eq:iterationestimateuxifeb18}
\end{align}
and
\begin{align}
\Vert \check{g}_{\varphi, \be'} - \check{ g}_{\tilde{\varphi},\tilde{\be'}}\Vert_{\HH^2_{-1/2}(\RRRwo)} \lesssim \Vert \mathfrak{s}- \tilde{\mathfrak{s}} \Vert_{\ol{H}^{0}_{-5/2}}. \label{eq:iterationestimategmetricfeb18}
\end{align}
\item {\bf Higher regularity.} If, in addition to \eqref{mars7eq1}, $g$ is an $\HH^w_{-1/2}$-asymptotically flat metric and $\mathfrak{s} \in \ol{H}^{w-2}_{-5/2}$ for an integer $w\geq3$, then
\begin{align} \begin{aligned}
\Vert (\varphi,\be') \Vert_{\ol{H}^w_{-1/2} \times \HHlo} \lesssim& \Vert \mathfrak{s} \Vert_{\ol{H}^{w-2}_{-5/2}} +\Vert \mathfrak{s} \Vert_{\ol{H}^{0}_{-5/2}} \Vert g-e \Vert_{\HH^w_{-1/2}} + C_w \Vert \mathfrak{s} \Vert_{\ol{H}^0_{-5/2}},
\end{aligned}\label{eq:uxiestimatefeb181HRE} \end{align}
and
\begin{align} \begin{aligned}
\Vert \check{g}_{\varphi, \be'} - e \Vert_{\HH^w_{-1/2}(\RRRwo)} \lesssim& \Vert \mathfrak{s} \Vert_{\ol{H}^{w-2}_{-5/2}} + \Vert g-e \Vert_{\HH^w_{-1/2}} \\
&+ C_w \Big( \Vert \mathfrak{s} \Vert_{\ol{H}^0_{-5/2}} + \Vert g-e \Vert_{\HH^2_{-1/2}} \Big),
\end{aligned} \label{eq:estimatefeb181HRE}
\end{align}
where the constant $C_w>0$ depends on $w$.
\end{enumerate}
\end{proposition}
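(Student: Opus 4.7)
The plan is to mirror the proof of Proposition \ref{prop:Dez191} and to apply the Implicit Function Theorem to the functional
\begin{align*}
\F(\varphi, \be', h) := \SS(\varphi, \be', e+h).
\end{align*}
By Lemma \ref{prop:Fanalysis}, $\F$ is smooth in a neighbourhood of $(0,0,0) \in \ol{H}^2_{-1/2} \times \ol{\HH}^2_{-1/2} \times \HH^2_{-1/2}$ with values in $\ol{H}^0_{-5/2}$, and $\F(0,0,0)=0$. By Lemma \ref{variationscalarlin}, the linearisation in the first two arguments at the origin is the bounded operator $L : \ol{H}^2_{-1/2} \times \ol{\HH}^2_{-1/2} \to \ol{H}^0_{-5/2}$. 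The essential input, to be supplied by Lemma \ref{lem:scalarsurj} and proved in Section \ref{sec:SurjectivityR}, is that $L$ is surjective with a bounded right inverse. Restricting $\F$ to $\ol{\NN_L} := (\ker L)^\perp \subset \ol{H}^2_{-1/2} \times \ol{\HH}^2_{-1/2}$ then makes the linearisation in the first argument an isomorphism, and Theorem \ref{thm:InverseMars14} produces a smooth mapping $\GG$ defined on an open neighbourhood of $(0,0) \in \HH^2_{-1/2} \times \ol{H}^0_{-5/2}$ with values in $\ol{\NN_L}$ and satisfying $\F(\GG(h,\mathfrak{s}),h) = \mathfrak{s}$. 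Uniqueness together with $\F(0,0,h) = 0$ yields $\GG(h,0)=0$.

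Setting $(\varphi, \be') := \GG(g - e, \mathfrak{s})$ gives $R(\check{g}_{\varphi,\be'}) = R(g) + \mathfrak{s}$ by construction. The bound \eqref{eq:uxiestimatefeb181} follows from Lemma \ref{lem:opth12j} applied to $\GG$ using $\GG(h,0) = 0$, while the asymptotic flatness of $\check{g}_{\varphi,\be'}$ together with \eqref{eq:estimatefeb181} is a consequence of Lemma \ref{varphibetaboundg}, part (1). For the iteration estimate \eqref{eq:iterationestimateuxifeb18} only $\mathfrak{s}$ varies while $g$ is fixed, so Lipschitz continuity of $\GG$ in its second argument (immediate from smoothness) suffices; the companion bound \eqref{eq:iterationestimategmetricfeb18} then follows from Lemma \ref{varphibetaboundg}, part (3).

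For the higher regularity in part (3), the plan is to recast the nonlinear identity $\SS(\varphi, \be', g) = \mathfrak{s}$ as the linear equation
\begin{align*}
L(\varphi, \be') = \mathfrak{s} - \bigl( \SS(\varphi, \be', g) - L(\varphi, \be') \bigr),
\end{align*}
and to apply the higher-regularity version of the right inverse of $L$, which I expect to come as part of Lemma \ref{lem:scalarsurj}. The quadratic-type remainder on the right-hand side is controlled by Proposition \ref{Prophigherregularityestimates23232323}; it produces one contribution proportional to $\Vert g-e \Vert_{\HH^2_{-1/2}} \Vert (\varphi, \be') \Vert_{\ol{H}^w_{-1/2} \times \HHlo}$ that I would absorb into the left-hand side for $\varep$ small enough, plus contributions involving $\Vert g - e \Vert_{\HH^w_{-1/2}}$ together with lower-order norms of $(\varphi, \be')$ that are in turn controlled through the already established low-regularity bound \eqref{eq:uxiestimatefeb181}. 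This yields \eqref{eq:uxiestimatefeb181HRE}, and \eqref{eq:estimatefeb181HRE} then follows from Lemma \ref{varphibetaboundg}, part (2).

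The hard part is entirely concentrated in Lemma \ref{lem:scalarsurj} itself: establishing surjectivity of the underdetermined elliptic operator $L$ onto $\ol{H}^{w-2}_{-5/2}$ with a right inverse that actually lands in the trivially-extendable space $\ol{H}^w_{-1/2} \times \HHlo$ (rather than merely $H^w_{-1/2} \times \HH^w_{-1/2}$) will require a Hodge--Fourier decomposition analogous to Section \ref{sec:EuclideanSurjectivity} and a careful introduction of auxiliary boundary-fitted terms to control the Dirichlet-to-Neumann map at $S_1$, exploiting the underdetermined character of $L$.
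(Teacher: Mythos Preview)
Your proposal is correct and follows essentially the same route as the paper's proof: restrict $\SS$ to the orthogonal complement of $\ker L$, apply the Implicit Function Theorem with the surjectivity input from Lemma~\ref{lem:scalarsurj}, exploit $\GG(h,0)=0$ via Lemma~\ref{lem:opth12j} for the basic and iteration estimates, and for higher regularity rewrite $\SS(\varphi,\be',g)=\mathfrak{s}$ as a linear equation for $L(\varphi,\be')$ with a quadratic remainder controlled by Proposition~\ref{Prophigherregularityestimates23232323}, then absorb. The only point to be slightly careful about in part~(3) is that the right inverse from Lemma~\ref{lem:scalarsurj} produces some $(u,\xi)$ solving $L(u,\xi)=L(\varphi,\be')$, which need not coincide with $(\varphi,\be')$; the paper observes that since $(\varphi,\be')\in\ol{\NN_e}$ and $(u,\xi)-(\varphi,\be')\in\ker L$, one may bound $\Vert(\varphi,\be')\Vert_{\ol{H}^w\times\ol{\HH}^w}$ by $\Vert(u,\xi)\Vert_{\ol{H}^w\times\ol{\HH}^w}$, and you should make this step explicit.
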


The proof of Proposition \ref{prop:ExistenceOfPerturbation} is based on the Implicit Function Theorem and the essential Lemma \ref{lem:scalarsurj} below which is proved in Section \ref{sec:SurjectivityR}.  \\


The next lemma is essential for the proof of Proposition \ref{prop:ExistenceOfPerturbation}.
\begin{lemma}[Surjectivity at the Euclidean metric] \label{lem:scalarsurj} The following holds.
\begin{enumerate}
\item \underline{Surjectivity.} For every scalar function $h \in \ol{H}^{w-2}_{-5/2}$, there exists a scalar function $u \in \ol{\HH}^2_{-1/2}$ and a $S_r$-tangent vectorfield $\xi \in \ol{\HH}^w_{-1/2}$ that solve
\begin{align*}
\begin{cases}
4 \pr_r^2 u + \frac{12}{r} \pr_r u + \frac{4}{r^2} u + 2 \Ld_{\gac} u - \frac{2}{r^3} \pr_r \left( r^3 \Divd_{\gac} \xi \right) = h\,\,\, \text{on } \RRRwo, \\
(u,\xi) \vert_{r=1} =0,
\end{cases} 
\end{align*}
and are bounded by
\begin{align*}
\Vert (u,\xi) \Vert_{\ol{H}^2_{-1/2} \times\ol{\HH}^2_{-1/2}} \lesssim \Vert h \Vert_{\ol{H}^{0}_{-5/2}}.
\end{align*}
That is, the linearisation $L: \ol{H}^2_{-1/2} \times\ol{\HH}^2_{-1/2} \to \ol{H}^{0}_{-5/2}$ is surjective and has a bounded right-inverse.
\item \underline{Higher regularity.} If in addition $h \in \ol{H}^{w-2}_{-5/2}$ for an integer $w\geq3$, then
\begin{align*}
\Vert (u,\xi) \Vert_{\ol{H}^w_{-1/2} \times\ol{\HH}^w_{-1/2}} \lesssim \Vert h \Vert_{\ol{H}^{w-2}_{-5/2}} + C_w \Vert h \Vert_{\ol{H}^0_{-5/2}},
\end{align*}
where $C_w>0$ depends on $w$.
\end{enumerate}
\end{lemma}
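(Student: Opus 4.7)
The approach is to decompose $u$, $\xi$, and $h$ in the Hodge--Fourier basis of Section \ref{ssec:Hodgebasis}, reducing the single PDE $L(u,\xi) = h$ on $\RRRwo$ to a family of radial ODEs indexed by $(l,m)$. Since $L$ involves $\xi$ only through $\Divd\xi$, whose $(l,m)$ Fourier coefficient equals $\frac{\sqrt{l(l+1)}}{r}\xi_E^{(lm)}$ by Lemma \ref{lem:RelationsSpherical}, the $H$-components of $\xi$ do not enter the equation and we simply set $\xi_H^{(lm)} = 0$ throughout. Using also Proposition \ref{prop:Howtoestimatefunctions}, the equation reduces for each mode $(l,m)$ to the scalar radial ODE
\begin{align*}
4\pr_r^2 u^{(lm)} + \frac{12}{r}\pr_r u^{(lm)} + \frac{4-2l(l+1)}{r^2} u^{(lm)} - \frac{2\sqrt{l(l+1)}}{r^3}\pr_r\!\left(r^2\xi_E^{(lm)}\right) = h^{(lm)},
\end{align*}
which is a single equation in two unknowns whenever $l \geq 1$. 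This underdeterminacy is crucial: it provides the extra degree of freedom needed to satisfy the full boundary jet at $r=1$ (required for $\ol{H}^w_{-1/2}$-membership via Proposition \ref{prop:TrivialExtensionRegularity}) together with decay at infinity.

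For the isolated $l=0$ mode, only $u^{(00)}$ is available, and the substitution $v = r u^{(00)}$ reduces the ODE to $(rv')' = r^2 h^{(00)}/4$. Integrating twice from $r=1$ with $v(1) = 0$ and choosing $v'(1) = -\tfrac14\isinf s^2 h^{(00)}(s)\,ds$ (a convergent integral by Cauchy--Schwarz against the $\ol{H}^0_{-5/2}$-weight) produces
\begin{align*}
u^{(00)}(r) = -\frac{1}{4r}\isr\frac{1}{s}\int_s^\infty t^2 h^{(00)}(t)\,dt\,ds,
\end{align*}
which vanishes at $r=1$ with the correct decay. For $l \geq 1$, I would split the task: solve a second-order $r$-ODE for $u^{(lm)}$ (whose indicial roots are $-1 \pm \sqrt{l(l+1)/2}$, one decaying, one growing for $l \geq 2$), and then use $\xi_E^{(lm)}$ purely to correct the boundary jet at $r=1$ via an auxiliary cutoff term supported in $[1, 1+1/l]$, in direct analogy with the auxiliary vectorfields $\zeta_E, \zeta_H$ constructed in \eqref{eq:defofZetaE}--\eqref{constant8feb}. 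The $l=1$ case needs separate attention since the homogeneous solutions are constants and $r^{-2}$ so the usual decay at infinity selects only a one-dimensional family.

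The norm estimates would follow mode-by-mode using Hardy-type inequalities and integration by parts tricks (cf.\ the estimates \eqref{avril111}--\eqref{av21} in the proof of Lemma \ref{lem:bdrycntrlfeb5}), to give
\begin{align*}
\isinf\!\Big((1+r)^{-2}(u^{(lm)})^2 + (\pr_r u^{(lm)})^2 + (1+r)^2(\pr_r^2 u^{(lm)})^2\Big)dr \lesssim \isinf (1+r)^2 (h^{(lm)})^2\,dr,
\end{align*}
with analogous bounds for $\xi_E^{(lm)}$, uniform in $l$. Proposition \ref{prop:Howtoestimatefunctions} then converts mode sums into the required $\HHlo \times \HHlo$ norm. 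The higher regularity estimate ($w \geq 3$) is obtained by differentiating the radial ODEs in $r$, applying tangential derivatives (which act by multiplication by $\sqrt{l(l+1)}/r$), and invoking Lemma \ref{lem:commutationrelation}, exactly as in Section \ref{sssec:EuclideanSurjectivity}. Finally, the boundary regularity $(u,\xi) \in \ol{H}^w_{-1/2}\times\ol{\HH}^w_{-1/2}$ is verified via Proposition \ref{prop:TrivialExtensionRegularity}, after checking by construction that $\pr_r^n u^{(lm)}(1) = \pr_r^n \xi_E^{(lm)}(1) = 0$ for $n = 0, \dots, w-1$.

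The main obstacle will be the tension between vanishing of the full boundary jet at $r=1$ (which over-constrains a second-order ODE) and decay at infinity, together with obtaining $l$-uniform estimates for the auxiliary cutoff correctors used to enforce the boundary conditions. This is the same difficulty that was resolved in Proposition \ref{prop:boundaryvalues} for the divergence equation through the precise choice of coefficients $c_E^{(lm)}, c_H^{(lm)}$ in \eqref{eq:defofZetaE2} and \eqref{constant8feb} such that sharp integral identities like \eqref{eq:propo8feb} hold. I expect the analogous constants here to be dictated by requiring a particular weighted integral of $h^{(lm)}$ against a suitable power of $r$ to cancel against the contribution of $\xi_E^{(lm)}$, with the analogue of \eqref{eq:constantestimate1112} yielding a decay $\sim 1/\sqrt{l}$ that is just enough to make mode sums converge. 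The secondary obstacle is the low modes $l \in \{0,1\}$, which must be treated by hand because the standard ellipticity of the radial operator degenerates there.
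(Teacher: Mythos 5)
Your overall blueprint is aligned with the paper's: Hodge--Fourier decomposition, set $\xi_H=0$, isolate $l=0$, and use the underdeterminacy via a cutoff corrector supported in $[1,1+1/l]$ with coefficients chosen to enforce integral identities that restore the full boundary jet. The paper's actual organisation differs in framing: it does not solve mode-by-mode ODEs for $u^{(lm)}$; instead it introduces an auxiliary scalar $\zeta^{[\geq1]}$ that decouples the equation into an elliptic Dirichlet BVP for $u^{[\geq1]}$ (solved by Lax--Milgram and weighted elliptic regularity, Appendix \ref{sec:WEIGHTEDellipticity}) and a radial transport equation $\frac{1}{r^3}\pr_r(r^3\Divd\xi)=\zeta^{[\geq1]}$ for $\xi$; the Hodge--Fourier formalism is then used only to verify that the resulting $u^{[\geq1]}$ has vanishing Neumann data at $r=1$. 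This buys $l$-uniformity without having to re-derive ODE a priori estimates uniformly in $l$.

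There are two concrete gaps in your write-up. First, the $l=0$ mode: your choice $v'(1)=-\tfrac14\isinf s^2 h^{(00)}\,ds$ gives $\pr_r u^{(00)}(1)\neq 0$ for generic $h$, so $u^{[0]}\notin\ol{H}^w_{-1/2}$ for $w\geq3$; moreover $\isinf s^2 h^{(00)}\,ds$ is \emph{not} convergent by Cauchy--Schwarz against the $\ol{H}^0_{-5/2}$-weight, since that weight only gives $r h^{(00)}\in L^2(dr)$. The correct observation is that the $l=0$ indicial equation $(\alpha+1)^2=0$ has a double root at $\alpha=-1$, so \emph{both} homogeneous solutions ($1/r$ and $\ln r/r$) decay: you can and should impose $u^{(00)}(1)=\pr_r u^{(00)}(1)=0$ simultaneously, exactly as in \eqref{def:Dez20u0}, and decay is then automatic (cf. the integration-by-parts estimates \eqref{avril111}--\eqref{eq:de1feb9}); there is no boundary/decay tension at $l=0$. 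Second, the corrector $\zeta^{[\geq1]}$ needs \emph{two} cutoff terms and \emph{two} families of constants (\eqref{def:zet1feb24}--\eqref{def:zet2feb24}), not one: the constant $c^{(lm)}$ enforces $\isinf r^{1-\sqrt{l(l+1)/2}}(\tfrac12 h^{(lm)}+\zeta^{(lm)})\,dr=0$ (killing the Neumann data of $u^{[\geq1]}$), while $\tilde c^{(lm)}$ enforces $\isinf r^2\zeta^{(lm)}\,dr=0$, which is what makes $\Divd\xi$ vanish identically outside $[1,1+1/l]$ and hence gives $\xi\in\ol{\HH}^w_{-1/2}$; your sketch accounts only for the first constraint. (Minor: $l=1$ does not in fact need separate treatment — the weighted Poincar\'e inequality \eqref{eq:hardyestimate} with $n=1$ already gives coercivity of the bilinear form for all $l\geq1$ simultaneously.)
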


Let 
\begin{align*}
\ol{\NN_e} :=  \mathrm{ker}\Big( D_{\varphi,\be'} \SS \vert_{(0,0,e)}  \Big)^\perp  \subset \ol{H}^2_{-1/2} \times \ol{\HH}^2_{-1/2},
\end{align*}
where $\perp$ denotes the orthogonal complement with respect to the scalar product on $ \ol{H}^2_{-1/2} \times \ol{\HH}^2_{-1/2}$. 

\begin{remark} \label{rem52343953}
Because $D_{\varphi,\be'} \SS \vert_{(0,0,e)}$ is a bounded operator (see Lemma \ref{lem:scalarsurj} above), its kernel is closed. Therefore we have the splitting
\begin{align*}
\ol{H}^2_{-1/2} \times \ol{\HH}^2_{-1/2} = \ol{\NN_e} \oplus \mathrm{ker}\Big( D_{\varphi,\be'} \SS \vert_{(0,0,e)}  \Big).
\end{align*} 
\end{remark}
From now on, let $\SS$ be restricted to $(\varphi,\be') \in \ol{\NN_e}$.\\

We are now ready to prove Proposition \ref{prop:ExistenceOfPerturbation}.

\begin{proof}[Proof of Proposition \ref{prop:ExistenceOfPerturbation}] {\bf Proof of part (1).} On the one hand, by Remark \ref{rem52343953} and the definition of $\ol{\NN_e}$, the operator $D_{\varphi,\be'}\SS \vert_{(0,0,e)}$ is injective and surjective, that is, an isomorphism. On the other hand, clearly $\SS(0,0,e)=0$. Therefore, by the Inverse Function Theorem \ref{thm:InverseMars14}, there exist open neighourhoods $V_0$ around the Euclidean metric $e$ and $W_0 \subset \ol{H}^{0}_{-5/2}$ around $0$, together with a unique mapping 
\begin{align*}
\GG: V_0 \times W_0 &\to \ol{H}^{2}_{-1/2} \times \ol{\HH}^2_{-1/2}, \\
(g ,\mathfrak{s}) &\mapsto \GG(g ,\mathfrak{s}):=(\varphi,\be')
\end{align*}
such that for $g \in V_0, \mathfrak{s}\in W_0$, on $\RRRwo$, 
$$\SS(\GG(g ,\mathfrak{s}),g)=\mathfrak{s}.$$
We note that the mapping $\GG$ is smooth. Moreover, it holds by the uniqueness of $\GG$ that for every $g \in V_0$, 
\begin{align}
\GG(g,0)= 0, \label{eq:13j11}
\end{align}
because $\SS(0,0,g) = R(g)-R(g)=0$.\\

For $\varep>0$ sufficiently small it holds that for $(g,\mathfrak{s})$ with
\begin{align*}
\Vert g-e \Vert_{\HH^2_{-1/2}} < \varep, \Vert \mathfrak{s}\Vert_{\ol{H}^{0}_{-5/2}} < \varep
\end{align*}
we have $g \in V_0, \mathfrak{s} \in W_0$ and furthermore, for 
$$(\varphi,\be') := \GG(\tilde g,\mathfrak{s})$$
it holds that
\begin{align} \begin{aligned}
\Vert (\varphi,\be') \Vert_{\ol{H}^2_{-1/2} \times \HHlo} &= \Vert \GG(\tilde g, \mathfrak{s}) \Vert_{\ol{H}^2_{-1/2} \times \HHlo} \\
&=\Vert \GG(\tilde g, \mathfrak{s}) - \underbrace{\GG(\tilde g, 0)}_{=0} \Vert_{\ol{H}^2_{-1/2} \times \HHlo} \\
&\lesssim \Vert \mathfrak{s} \Vert_{\ol{H}^{0}_{5/2}},
\end{aligned} \label{eq:firstcalc2j}
\end{align}
see Lemma \ref{lem:opth12j}. This proves \eqref{eq:uxiestimatefeb181}. The estimate \eqref{eq:estimatefeb181} for $\check{g}_{\varphi,\be'}$ follows for $\varep>0$ sufficiently small from \eqref{eq:uxiestimatefeb181} by Lemma \ref{varphibetaboundg}. \\


{\bf Proof of part (2).} By Lemma \ref{lem:opth12j}, for $\varep>0$ sufficiently small, it holds that for $g, \mathfrak{s},\tilde{\mathfrak{s}}$ with
$$ \Vert g-e \Vert_{\HH^2_{-1/2}} < \varep, \Vert \mathfrak{s} \Vert_{\ol{H}^{0}_{-5/2}} <\varep,\Vert \tilde{\mathfrak{s}} \Vert_{\ol{H}^{0}_{-5/2}} < \varep$$
we have
\begin{align} \begin{aligned}
\Vert (\varphi,\be') - (\tilde{\varphi},\tilde{\be'}) \Vert_{\ol{H}^2_{-1/2} \times \HHlo} &= \Vert \GG(g,\mathfrak{s}) -  \GG( g, \tilde{\mathfrak{s}}) \Vert_{\ol{H}^2_{-1/2} \times \HHlo}\\
&\lesssim \Vert \mathfrak{s}-\tilde{\mathfrak{s}} \Vert_{\ol{H}^{0}_{-5/2}}.
\end{aligned} \end{align}
This proves \eqref{eq:iterationestimateuxifeb18}. The estimate \eqref{eq:iterationestimategmetricfeb18} follows for $\varep>0$ sufficiently small from \eqref{eq:iterationestimateuxifeb18} by Lemma \ref{varphibetaboundg}. This finishes the proof of part (2) of Proposition \ref{prop:ExistenceOfPerturbation}.\\


{\bf Proof of part (3).} On the one hand, by part (1) of this proposition, the metric $\check{g}_{\varphi,\be'}$ satisfies
\begin{align*}
\SS(\varphi,\be',g) = R(\check{g}_{\varphi,\be'}) - R(g) = \mathfrak{s},
\end{align*}
and we have
\begin{align*}
(\varphi,\be') \subset \ol{\NN_e} :=  \mathrm{ker}\Big( L \Big)^\perp  \subset \ol{H}^2_{-1/2} \times \ol{\HH}^2_{-1/2}.
\end{align*}

On the other hand, by Lemma \ref{lem:scalarsurj}, there exists a solution $(u,\xi) \subset \ol{H}^2_{-1/2} \times \ol{\HH}^2_{-1/2}$ to
\begin{align} \label{eq3493222018}
L(u,\xi) = h
\end{align}
for
\begin{align*}
h:= L(\varphi,\be') \in \ol{H}^{0}_{-5/2},
\end{align*}
which satisfies for integers $w\geq2$ the bound 
\begin{align} \begin{aligned}
\Vert (u,\xi) \Vert_{\ol{H}^w_{-1/2} \times\ol{\HH}^w_{-1/2}} \lesssim& \Vert h  \Vert_{\ol{H}^{w-2}_{-5/2}} + C_w \Vert h  \Vert_{\ol{H}^{0}_{-5/2}} \\
\lesssim&\Vert L(\varphi,\be')  \Vert_{\ol{H}^{w-2}_{-5/2}} + C_w \Vert L(\varphi,\be')  \Vert_{\ol{H}^{0}_{-5/2}}.
\end{aligned} \label{bound3492484353} \end{align}
where the constant $C_w>0$ depends on $w$.\\

By \eqref{eq3493222018}, it follows that
\begin{align*}
(u,\xi)-(\varphi,\be') \in \ol{\NN_e},
\end{align*}
that is, $(\varphi,\be')$ equals $(u,\xi)$ up to a part of $(u,\xi)$ in the kernel of $L$. \\

Therefore we can estimate by \eqref{bound3492484353},
\begin{align} \begin{aligned}
&\Vert (\varphi,\be') \Vert_{\ol{H}^w_{-1/2} \times\ol{\HH}^w_{-1/2}} \\
\leq& \Vert (u,\xi) \Vert_{\ol{H}^w_{-1/2} \times\ol{\HH}^w_{-1/2}} \\
\lesssim& \Vert L(\varphi,\be')  \Vert_{\ol{H}^{w-2}_{-5/2}}+ C_w\Vert L(\varphi,\be')  \Vert_{\ol{H}^{0}_{-5/2}} \\
\lesssim& \Vert L(\varphi,\be') - \SS(\varphi,\be',g) \Vert_{\ol{H}^{w-2}_{-5/2}} + \Vert \SS(\varphi,\be',g) \Vert_{\ol{H}^{w-2}_{-5/2}}+ C_w\Vert L(\varphi,\be')  \Vert_{\ol{H}^{0}_{-5/2}}\\
\lesssim& \Vert L(\varphi,\be') - \SS(\varphi,\be',g) \Vert_{\ol{H}^{w-2}_{-5/2}} + \Vert \mathfrak{s} \Vert_{\ol{H}^{w-2}_{-5/2}}+ C_w\Vert L(\varphi,\be')  \Vert_{\ol{H}^{0}_{-5/2}} \\
\lesssim&  \Big( \Vert g-e \Vert_{\HH^2_{-1/2}}+ \Vert (\varphi,\be') \Vert_{\ol{\HH}^2_{-1/2} \times \ol{\HH}^2_{-1/2}} \Big) \Vert (\varphi, \be') \Vert_{\ol{H}^w_{-1/2} \times\ol{\HH}^w_{-1/2}} \\
&+ \Vert (\varphi,\be') \Vert_{\ol{H}^2_{-1/2} \times \ol{\HH}^2_{-1/2}} \Vert g-e \Vert_{\HH^w_{-1/2}}+ \Vert \mathfrak{s} \Vert_{\ol{H}^{w-2}_{-5/2}} + C_w \Vert (\varphi,\be') \Vert_{\ol{H}^2_{-1/2} \times \ol{\HH}^2_{-1/2}},
\end{aligned} \label{rhshreest39434} \end{align}
were we used Proposition \ref{Prophigherregularityestimates23232323} for the last estimate. \\

For $\varep>0$ sufficiently small, the first term on the right-hand side of \eqref{rhshreest39434} can be absorbed and we get
\begin{align*}
\Vert (\varphi,\be') \Vert_{\ol{H}^w_{-1/2} \times\ol{\HH}^w_{-1/2}} \lesssim&  \Vert \mathfrak{s} \Vert_{\ol{H}^{0}_{-5/2}}  \Vert g-e \Vert_{\HH^w_{-1/2}} + \Vert \mathfrak{s} \Vert_{\ol{H}^{w-2}_{-5/2}} + C_w \Vert \mathfrak{s} \Vert_{\ol{\HH}^0_{-5/2}}.
\end{align*}
This proves \eqref{eq:uxiestimatefeb181HRE}. The estimate \eqref{eq:estimatefeb181HRE} follows for $\varep>0$ sufficiently small from \eqref{eq:uxiestimatefeb181HRE} by Lemma \ref{varphibetaboundg}. This finishes the proof of Proposition \ref{prop:ExistenceOfPerturbation}. \end{proof}

We are now in position to prove Theorem \ref{thm:MainExtensionScalarTheorem}.
\begin{proof}[Proof of Theorem \ref{thm:MainExtensionScalarTheorem}] {\bf Proof of Part (1).} Using Proposition \ref{functionextcts}, extend $\bar g$ from $B_1$ to an asymptotically flat metric $ \check{g}$ on $\RRR^3$ such that
\begin{align}
\Vert  \check{g} - e \Vert_{\HH^2_{-1/2}(\RRR^3)} \lesssim \Vert \bar g-e \Vert_{\HH^2(B_1)}, \label{feb23222}
\end{align}
Denote its standard polar components on $\RRRwo$ by
\begin{align*}
 \check{g} = a^2 dr^2 + \ga_{AB} \left( \be^A dr + e^A \right) \left( \be^B dr + e^B \right).
\end{align*}
Given a $R \in H^{0}_{-5/2}$ such that $R\vert_{B_1} = R(\bar g)$, let
\begin{align*}
\mathfrak{s} := R - R( \check{g}) \in \ol{H}^{0}_{-5/2},
\end{align*}
where we used Proposition \ref{prop:TrivialExtensionRegularity}. It holds for $\varep>0$ small that
\begin{align}\begin{aligned}
\Vert \mathfrak{s} \Vert_{\ol{H}^{0}_{-5/2}} &\leq \Vert R \Vert_{H^{0}_{-5/2}(\RRRwo)} + \Vert R( \check{g}) \Vert_{H^{0}_{-5/2}(\RRRwo)},\\
&\lesssim \Vert R \Vert_{H^{0}_{-5/2}(\RRRwo)} + \Vert \check{g} - e \Vert_{\HH^2_{-1/2}(\RRRwo)}.
\end{aligned}\label{eq:Dez2177} 
\end{align}
Therefore, for $\varep>0$ small enough, Proposition \ref{prop:ExistenceOfPerturbation} yields a pair $(\varphi,\be')$ such that 
\begin{align*}
\check{g}_{\varphi,\be'} = a^2 dr^2 + e^{2\varphi} \ga_{AB} \left( (\be + \be')^A dr + e^A \right) \left( (\be + \be')^B dr +e^B \right)
\end{align*}
 is a $\HH^2_{-1/2}$-asymptotically flat metric with $\check{g}_{\varphi,\be'} \vert_{B_1}= \bar{g}$ and scalar curvature $$R(\check{g}_{\varphi,\be'})=R( g)+ S = R.$$ By \eqref{eq:estimatefeb181}, \eqref{feb23222} and \eqref{eq:Dez2177},
\begin{align*}
\Vert \check{g}_{\varphi,\be'}- e \Vert_{\HH^2_{-1/2}} &\lesssim \Vert \check{g} - e \Vert_{\HH^2_{-1/2}} + \Vert \mathfrak{s} \Vert_{\ol{H}^{0}_{-5/2}},\\
&\lesssim \Vert \bar g -e \Vert_{\HH^2(B_1)} + \Vert R \Vert_{H^{0}_{-5/2}}.
\end{align*}
Letting $g :=\check{g}_{\varphi,\be'}$ then proves \eqref{eq:Dez2166}. This finishes the proof of part (1) of Theorem \ref{thm:MainExtensionScalarTheorem}.\\

{\bf Proof of Part (2).} Using Proposition \ref{functionextcts}, extend $\bar g$ from $B_1$ to an asymptotically flat metric $\check{g}$ on $\RRR^3$ such that
\begin{align*}
\Vert  \check{g} - e \Vert_{\HH^2_{-1/2}(\RRR^3)} \lesssim \Vert \bar g-e \Vert_{\HH^2(B_1)}. 
\end{align*}
Let $\mathfrak{s}:=R-R(\check{g}), \tilde{\mathfrak{s}}:=\tilde{R}-R(\check{g})$, so that
\begin{align*}
\mathfrak{s}-\tilde{\mathfrak{s}} = \left( R( \check{g}) + \mathfrak{s} \right) -\left(R( \check{g}) +\tilde{\mathfrak{s}} \right) = R- \tilde{R}.
\end{align*}
Hence, for $\varep>0$ sufficiently small, \eqref{eq:Riterationestimate222} follows from \eqref{eq:iterationestimategmetricfeb18} in Proposition \ref{prop:ExistenceOfPerturbation}. \\

{\bf Proof of Part (3).} Using Proposition \ref{functionextcts}, extend $\bar g$ from $B_1$ to an asymptotically flat metric $ \check{g}$ on $\RRR^3$ such that
\begin{align*}
\Vert  \check{g} - e \Vert_{\HH^w_{-1/2}(\RRR^3)} \leq C_w \Vert \bar g-e \Vert_{\HH^w(B_1)},
\end{align*}
where the constant $C_w>0$ depends on $w$. \\

By Proposition \ref{prop:ExistenceOfPerturbation}, there exist $(\varphi,\be') \in \ol{\HH}^w_{-1/2} \times \ol{\HH}^w_{-1/2}$ such that the metric $\check{g}_{\varphi,\be'}$ has scalar curvature
\begin{align*}
R(\check{g}_{\varphi,\be'}) = R,
\end{align*}
and the following estimate holds with $\mathfrak{s} := R - R(\check{g})$,
\begin{align*}
&\Vert \check{g}_{\varphi,\be'} -e \Vert_{\HH^w_{-1/2}}\\
 \lesssim& \Vert \mathfrak{s} \Vert_{\ol{H}^{w-2}_{-5/2}} + \Vert \check g-e \Vert_{\HH^w_{-1/2}} + C_w \Big( \Vert \mathfrak{s} \Vert_{\ol{H}^0_{-5/2}} + \Vert \check g-e \Vert_{\HH^2_{-1/2}} \Big) \\
\lesssim& \Vert R-R(\check g) \Vert_{H^{w-2}_{-5/2}} +  \Vert \check g-e \Vert_{\HH^w_{-1/2}} + C_w \Big( \Vert R-R(\check g) \Vert_{H^0_{-5/2}} + \Vert \check{g}-e \Vert_{\HH^2_{-1/2}} \Big) \\
\lesssim& \Vert R \Vert_{H^{w-2}_{-5/2}} + \Vert \check g -e\Vert_{\HH^w_{-1/2}} + C_w \Big( \Vert R \Vert_{H^0_{-5/2}} + \Vert \check g-e \Vert_{\HH^2_{-1/2}} \Big), \\
\lesssim& \Vert R \Vert_{H^{w-2}_{-5/2}} + C_w\Big( \Vert \bar g-e \Vert_{\HH^w(B_1)} + \Vert R \Vert_{H^0_{-5/2}}\Big),
\end{align*}
Letting $g:= \check{g}_{\varphi,\be'}$ on $\RRR^3$ finishes the proof of Theorem \ref{thm:MainExtensionScalarTheorem}. 
\end{proof}

\subsection{Surjectivity at the Euclidean metric} \label{sec:SurjectivityR}

In this section, we prove Lemma \ref{lem:scalarsurj}. In this section all operators are Euclidean. First, by the explicit expression of Lemma \ref{variationscalarlin}, it directly follows that 
$$D_{\varphi,\be'} \SS \vert_{(0,0,e)}: \ol{H}^w_{-1/2} \times\ol{\HH}^w_{-1/2} \to \ol{H}^{w-2}_{-5/2}$$
(see Definition \ref{definitionmathcalSSSS}) is a bounded linear operator between Hilbert spaces. \\

It remains to show that for every scalar function $h \in \ol{H}^{w-2}_{-5/2}$ there exist $(u, \xi) \in  \ol{H}^{w}_{-1/2}\times  \ol{\HH}^{w}_{-1/2}$ that solves on $\RRRwo$
\begin{align}
\pr_r^2 u + \frac{3}{r} \pr_r u + \frac{1}{r^2} u + \half \Ld u  - \frac{1}{2 r^3} \pr_r \left( r^3 \Divd \xi \right) = \frac{1}{4} h \label{eq:DifferentialF}
\end{align}
and is bounded by
\begin{align*}
\Vert (u,\xi) \Vert_{\ol{H}^{w}_{-1/2}\times  \ol{\HH}^{w}_{-1/2}} \lesssim \Vert h \Vert_{\ol{H}^{w-2}_{-5/2}} + C_w \Vert h \Vert_{\ol{H}^{0}_{-5/2}}.
\end{align*}

We consider the following more general system on $\RRRwo$
\begin{align} \begin{aligned}
\triangle u + \frac{1}{r} \pr_r u + \frac{1}{r^2} u - \half \Ld u &= \half \left(\half h + \zeta^{[\geq1]} \right), \\
\frac{1}{r^3} \pr_r \left( r^3 \Divd \xi \right) &=  \zeta^{[\geq1]}, \end{aligned} \label{eq:systemUbe}
\end{align}
where $\zeta^{[\geq1]} \in \ol{H}^{w-2}_{-5/2}$ is a scalar function. By the relation $\triangle = \pr_r^2 + \frac{2}{r} \pr_r + \frac{1}{r^2} \Ld$, it follows that for any $\zeta^{[\geq1]}$, a solution $(u,\xi)$ to \eqref{eq:systemUbe} also solves \eqref{eq:DifferentialF}. From now on, we will thus focus on \eqref{eq:systemUbe}.

\subsubsection{Construction of the solution $(u, \xi)$ and $\zeta^{[\geq1]}$}

In this section we construct two scalar functions $\zeta^{[\geq1]}, u$ and a $S_r$-tangent vectorfield $\xi$. It is shown in Section \ref{newsecav4} that they are a solution to \eqref{eq:systemUbe}.\\

Let the given $h \in \ol{H}^{w-2}_{-5/2}$ be decomposed as
\begin{align*}
h = h^{[0]} + h^{[\geq1]}.
\end{align*}
\begin{itemize}
\item \textbf{Definition of $u$ and $\zeta^{[\geq1]}$.} Let the scalar function
\begin{align*}
u = u^{[0]} + u^{[\geq1]},
\end{align*}
where the radial function $u^{[0]}(r)$ is defined as solution to the following ODE on $r>1$,
\begin{align}
\begin{cases}
\pr_r^2 u^{[0]} + \frac{3}{r} \pr_r u^{[0]} + \frac{1}{r^2} u^{[0]} = \frac{1}{4} h^{[0]},\\
u^{[0]}\vert_{r=1} = \pr_r u^{[0]} \vert_{r=1} =0,
\end{cases} \label{def:Dez20u0}
\end{align}
and $u^{[\geq1]} $ is defined as solution to the elliptic PDE on $\RRRwo$ (see Appendix \ref{sec:ELLIPTICITYweighted})
\begin{align}
\begin{cases}
\triangle u^{[\geq1]} - \half \Ld u^{[\geq1]} + \frac{1}{r} \pr_r u^{[\geq1]} + \frac{1}{r^2}u^{[\geq1]} = \half \left( \half h^{[\geq1]} + \zeta^{[\geq1]} \right),\\
u^{[\geq1]} \vert_{r=1} = 0,
\end{cases}  \label{def:Dez20u1}
\end{align}
where on $\RRR^3$,
\begin{align} \begin{aligned}
\zeta^{[\geq1]} :=& \sumone \zeta^{(lm)} Y^{(lm)}, \\
\zeta^{(lm)} :=& c^{(lm)} r^{\sqrt{l(l+1)/2}-1} \pr_r \left( \chi (l(r-1) ) \right) \\
& - \tilde{c}^{(lm)} r^{\sqrt{l(l+1)/2} -1 } \pr_r^2 \left( \chi (l(r-1) ) \right),
\end{aligned}\label{def:zet1feb24}
\end{align}
where $\chi$ denotes the smooth transition function defined in \eqref{eq:transfct} and for $l\geq1$,
\begin{align} \begin{aligned}
c^{(lm)} &:= - \half \isinf r^{1-\sqrt{l(l+1)/2}} h^{(lm)} dr, \\
\tilde{c}^{(lm)} &:= \frac{ \isinf c^{(lm)} r^{\sqrt{l(l+1)/2}+1} \pr_r \left( \chi (l(r-1) ) \right)dr }{ \isinf  r^{\sqrt{l(l+1)/2} +1 } \pr_r^2 \left( \chi (l(r-1)  ) \right)dr }.
\end{aligned} \label{def:zet2feb24} \end{align}
\item \textbf{Definition of $\xi$.} Let $\xi$ be the $S_r$-tangent vectorfield solving on $\RRRwo$
\begin{align}
\begin{cases}
\frac{1}{r^3} \pr_r \left( r^3 \Divd \xi \right) = \zeta^{[\geq1]}, \\
 \Curld \xi =0, \\
\xi \vert_{r=1}=0.
\end{cases} \label{eq:defnBelmE}
\end{align}
\end{itemize}

\begin{remark} \label{remarkexistencexi} The existence of a solution $\xi$ to \eqref{eq:defnBelmE} for smooth compactly supported $\zeta^{[\geq1]}$ is established as follows. First, integrate the first equation of \eqref{eq:defnBelmE} to get on each $S_r$, $r>0$, 
\begin{align*}
\Divd \xi =& \frac{1}{r^3} \int\limits_1^r (r')^3 \zeta^{[\geq1]} dr', \\
\Curld \xi =& 0.
\end{align*}
This Hodge system admits a unique solution $\xi$ on each $S_r$ (see Proposition \ref{prop:EllipticityHodgejan}) and hence on $\RRR^3 \setminus B_1$. The above argument can be used together with the a priori estimates of the next sections and a limit argument to deduce existence of solutions in the low regularity case.
\end{remark}

\subsubsection{Proof of surjectivity at the Euclidean metric} \label{newsecav4}

In this section, we prove first in Lemma \ref{lem:feb24formalsol} that $\zeta^{[\geq1]}, u$ and $\be$ solve \eqref{eq:systemUbe}. Then, in Propositions \ref{prop:feb24est1} and \ref{prop:feb242p}, we show that
$$\zeta^{[\geq1]} \in  \ol{H}^{w-2}_{-5/2}, \,\,\, (u, \xi) \in  \ol{H}^{w}_{-1/2} \times  \ol{\HH}^{w}_{-1/2}$$
with quantitative bounds. These results prove Lemma \ref{lem:scalarsurj}.

\begin{lemma}\label{lem:feb24formalsol} \label{claim:Formal111}
The $u, \xi, \zeta^{[\geq1]}$ defined in \eqref{def:Dez20u0}-\eqref{eq:defnBelmE} solve \eqref{eq:systemUbe}, that is, on $\RRRwo$,
\begin{align*}
\triangle u + \frac{1}{r} \pr_r u + \frac{1}{r^2} u - \half \Ld u &= \half \left( \half h + \zeta^{[\geq1]} \right), \\
\frac{1}{r^3} \pr_r \left( r^3 \Divd \xi \right) &=  \zeta^{[\geq1]}.
\end{align*}
\end{lemma}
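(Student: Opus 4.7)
The proof is a direct verification that the explicit definitions in \eqref{def:Dez20u0}--\eqref{eq:defnBelmE} reproduce the two equations of \eqref{eq:systemUbe}. The key is to split the first equation of \eqref{eq:systemUbe} according to the Fourier decomposition into its $l=0$ and $l \geq 1$ parts, noting that $\zeta^{[\geq 1]}$ has no $l=0$ component by construction \eqref{def:zet1feb24} and $u^{[0]}$ is radial.

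My plan is first to dispose of the second equation of \eqref{eq:systemUbe}: this is literally the first line of the definition \eqref{eq:defnBelmE} of $\xi$, so no work is required. (The additional conditions $\Curld \xi = 0$ and $\xi\vert_{r=1}=0$ imposed in \eqref{eq:defnBelmE} are needed for later regularity/surjectivity arguments, not here.)

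Next I would verify the first equation of \eqref{eq:systemUbe} mode by mode. Since $u^{[0]}$ is radial we have $\Ld u^{[0]} = 0$, so $\triangle u^{[0]} = \pr_r^2 u^{[0]} + \tfrac{2}{r}\pr_r u^{[0]}$, and the $l=0$ projection of the first equation of \eqref{eq:systemUbe} reduces to
\begin{equation*}
\pr_r^2 u^{[0]} + \frac{3}{r} \pr_r u^{[0]} + \frac{1}{r^2} u^{[0]} = \frac{1}{4} h^{[0]},
\end{equation*}
where I use that $\zeta^{[\geq 1]}$ has trivial $l=0$ mode from its expansion in \eqref{def:zet1feb24}. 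This is exactly the ODE \eqref{def:Dez20u0} satisfied by $u^{[0]}$. For the $l \geq 1$ projection, I note that $u^{[\geq 1]}$ has no $l=0$ mode, and therefore the $l\geq 1$ projection of the first equation of \eqref{eq:systemUbe} applied to $u^{[\geq 1]}$ coincides termwise with the left-hand side of \eqref{def:Dez20u1}, while the right-hand side is $\tfrac{1}{2}(\tfrac{1}{2} h^{[\geq 1]} + \zeta^{[\geq 1]})$ in both cases. Adding the two mode projections back together yields the first equation of \eqref{eq:systemUbe} in full.

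There is essentially no obstacle here: the scheme \eqref{def:Dez20u0}--\eqref{eq:defnBelmE} is constructed precisely so that this verification is tautological once one observes that $\Ld$ acts trivially on $l=0$ and that the coefficient $\tfrac{1}{r^2}$ of $u$ in the $l=0$ equation arises as $\tfrac{1}{r^2}\cdot l(l+1)\cdot 0 + \tfrac{1}{r^2} u$ after grouping. The genuine content of the construction—that the delicate choice of the correction $\zeta^{[\geq 1]}$ in \eqref{def:zet1feb24}--\eqref{def:zet2feb24} ensures $\zeta^{[\geq 1]} \in \ol{H}^{w-2}_{-5/2}$ and that $\xi$ solving \eqref{eq:defnBelmE} vanishes at $r=1$ and lies in $\ol{\HH}^{w}_{-1/2}$—is deferred to Propositions \ref{prop:feb24est1} and \ref{prop:feb242p}, and plays no role in the present lemma.
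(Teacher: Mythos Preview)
Your proof is correct and follows essentially the same approach as the paper: both split the first equation of \eqref{eq:systemUbe} into its $l=0$ and $l\geq 1$ modes, observe that these reduce exactly to the defining equations \eqref{def:Dez20u0} and \eqref{def:Dez20u1}, and note that the second equation is literally the first line of \eqref{eq:defnBelmE}.
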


\begin{proof}[Proof of Lemma \ref{lem:feb24formalsol}]
The coefficients of the system \eqref{eq:systemUbe} depend only on $r$. Therefore we may project the equations of \eqref{eq:systemUbe} onto the Hodge-Fourier basis elements. This uses Proposition \ref{prop:Completeness}. We split \eqref{eq:systemUbe} into the modes $l=0$ and $l\geq1$ and get the following two subsystems {\bf S0}, {\bf S1}.
\begin{align*}
\pr_r^2 u^{[0]} + \frac{3}{r} \pr_r u^{[0]} + \frac{1}{r^2} u^{[0]} = \frac{1}{4} h^{[0]},  \tag{{\bf S0}} \label{mars4S0}
\end{align*}
\begin{align*}
\triangle u^{[\geq1]} + \frac{1}{r} \pr_r  u^{[\geq1]} + \frac{1}{r^2}  u^{[\geq1]} - \half \Ld  u^{[\geq1]} &= \half \left(  \half h^{[\geq1]} +  \zeta^{[\geq1]} \right), \tag{{\bf S1.1}} \\
\frac{1}{r^3} \pr_r \left( r^3 \Divd \xi \right) &= \zeta^{[\geq1]}. \tag{{\bf S1.2}} 
\end{align*}
The $(u,\xi)$ and $\zeta^{[\geq1]}$ defined in \eqref{def:Dez20u0}-\eqref{eq:defnBelmE} directly solve these equations on $\RRRwo$. This proves Lemma \ref{lem:feb24formalsol}. \end{proof}

The next proposition is essential and only possible due to our careful choice of $\zeta^{[\geq1]}$ which ensures that for all $w\geq2$, we have the boundary behaviour $u^{[\geq1]} \in \ol{H}^w_{-1/2}$.
\begin{proposition}\label{prop:feb24est1} Let $w\geq2$ be an integer. The following holds.
\begin{itemize}
\item {\bf Regularity and boundary control of $\zeta^{[\geq1]}$. }The scalar function $\zeta^{[\geq1]}$ is well-defined by \eqref{def:zet1feb24}-\eqref{def:zet2feb24} and bounded by
\begin{align}
\Vert \zeta^{[\geq1]} \Vert_{ H^{w-2}_{-5/2}(\RRRwo)} \lesssim \Vert h^{[\geq1]} \Vert_{ \ol{H}^{w-2}_{-5/2}}+C_w \Vert h^{[\geq1]} \Vert_{ \ol{H}^{0}_{-5/2}}. \label{zetaestfeb25}
\end{align}
Moreover, $\zeta^{[\geq1]} \in \ol{H}^{w-2}_{-5/2}$ and the following integral identities hold.
\begin{align} 
\isinf r^{1-\sqrt{l(l+1)/2}} \left( \half h^{(lm)} + \zeta^{(lm)} \right) dr&=0, \label{feb25id1}\\
\isinf r^2 \zeta^{(lm)} dr &=0. \label{feb25id2}
\end{align}
\item {\bf Precise estimate for $\zeta^{[\geq1]}$.} It holds that
\begin{align} \label{inverseestimatelast}
\Vert \cDd_1^{-1}(\pr_r \zeta^{[\geq1]},0) \Vert_{\HH^{w-2}_{-5/2}(\RRRwo)} \lesssim \Vert h^{[\geq1]}\Vert_{\ol{H}^{w-2}_{-5/2}}+C_w \Vert h^{[\geq1]} \Vert_{ \ol{H}^{0}_{-5/2}}.
\end{align}
Moreover, $\cDd_1^{-1}(\pr_r \zeta^{[\geq1]},0) \in \ol{\HH}^{w-2}_{-5/2}$.
\item {\bf Regularity and boundary control of $u^{[\geq1]}$. }The scalar function $u^{[\geq1]}$ defined in \eqref{def:Dez20u1} is bounded by
\begin{align}
\Vert u^{[\geq1]} \Vert_{H^w_{-1/2}(\RRRwo)} \lesssim \Vert h^{[\geq1]} \Vert_{\ol{H}^{w-2}_{-5/2}}+C_w \Vert h^{[\geq1]} \Vert_{ \ol{H}^{0}_{-5/2}}. \label{ugeq1estimatemars2}
\end{align}
Moreover, $u^{[\geq1]} \in \ol{H}^w_{-1/2}$ and in particular, $$\pr_r u^{[\geq1]} \vert_{r=1}=0.$$
\end{itemize}
\end{proposition}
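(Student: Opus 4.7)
The overall strategy mirrors that of Proposition \ref{prop:boundaryvalues}: I will analyse the explicit formulas \eqref{def:zet1feb24}--\eqref{def:zet2feb24} mode by mode in the Hodge-Fourier basis of Section \ref{ssec:Hodgebasis}, derive the integral identities \eqref{feb25id1}--\eqref{feb25id2} by direct integration by parts, then apply the weighted elliptic theory of Appendix \ref{sec:WEIGHTEDellipticity} to $u^{[\geq1]}$, and finally extract $\pr_r u^{[\geq1]}\vert_{r=1}=0$ from the radial ODE that the modes $u^{(lm)}$ satisfy. Throughout, I will use that $\chi(l(r-1))$ and its derivatives are supported in $[1,1+1/l]$ together with uniform bounds on $\pr_r^n \chi$.

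For the first bullet, I would first verify that the constants $c^{(lm)}$ and $\tilde c^{(lm)}$ in \eqref{def:zet2feb24} are finite: for $c^{(lm)}$ by the same Cauchy-Schwarz argument as in \eqref{eq:constantestimate1112}, and for $\tilde c^{(lm)}$ by observing that after an integration by parts in $r$, the denominator $\isinf r^{\sqrt{l(l+1)/2}+1} \pr_r^2(\chi(l(r-1)))\,dr$ equals a positive constant times the integral of $\pr_r \chi$ (plus controllable boundary corrections vanishing at $r=1$ thanks to the support properties of $\chi$), which is strictly nonzero and in fact bounded below uniformly in $l$. The $H^{w-2}_{-5/2}$-estimate \eqref{zetaestfeb25} then follows mode by mode exactly as in \eqref{eq:feb9estimateE111} and \eqref{eq:8j66}, by plugging the bounds on $c^{(lm)}, \tilde c^{(lm)}$ into the explicit formula, using the support in $[1,1+1/l]$ to extract a factor $l^{-1}$ from the length of the support, and finally summing over $l,m$ via Propositions \ref{prop:Completeness} and \ref{prop:Howtoestimatefunctions}. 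Trivial extension regularity $\zeta^{[\geq1]}\in\ol H^{w-2}_{-5/2}$ is obtained by approximating with finite truncations $\sum_{l=1}^n \sum_{m}\zeta^{(lm)}Y^{(lm)}$, as in the proof of Proposition \ref{prop:boundaryvalues}(1). The integral identity \eqref{feb25id2} is immediate from the definition of $\tilde c^{(lm)}$ in \eqref{def:zet2feb24}, and \eqref{feb25id1} follows by an analogous integration by parts to \eqref{eq:feb25est45}, using $\chi(0)=0$ and $\chi(\infty)=1$.

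For the second bullet, I would note that $\cDd_1^{-1}(\pr_r\zeta^{[\geq1]},0)$ is diagonal in the Hodge-Fourier basis with factor $r/\sqrt{l(l+1)}$ on the $E$-component, by Lemma \ref{lem:inversetoDDDD}. Differentiating \eqref{def:zet1feb24} and using the improved bounds on $c^{(lm)}, \tilde c^{(lm)}$ obtained by shifting derivatives onto $h^{(lm)}$ via Propositions \ref{prop:Completeness} and \ref{prop:Howtoestimatefunctions} (exactly as in \eqref{eq:improvedfeb9}), the $L^2$-bound reduces to estimates of the form $\int_1^{1+1/l} r^{2\sqrt{l(l+1)/2}+2}\,dr \lesssim 1/l$, which combine with the constant bounds to give \eqref{inverseestimatelast}. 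Density of $\cDd_1^{-1}(\pr_r\zeta^{[\geq1]},0)$ in $\ol{\HH}^{w-2}_{-5/2}$ again follows by the finite-truncation approximation argument.

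For the third bullet, I would apply Propositions \ref{prop:ellmars10}, \ref{lowreg16mars} and \ref{higherregmars14} of Appendix \ref{sec:WEIGHTEDellipticity} to the elliptic problem \eqref{def:Dez20u1}, using \eqref{zetaestfeb25} to bound the right-hand side, which yields \eqref{ugeq1estimatemars2} together with $u^{[\geq1]}\vert_{r=1}=0$. The main obstacle, and the whole reason for the delicate choice of $\zeta^{[\geq1]}$, is to upgrade $u^{[\geq1]}\in H^w_{-1/2}$ to $\ol H^w_{-1/2}$, which by Propositions \ref{prop:TrivialExtensionRegularity} and \ref{higherregmars14444} reduces to establishing $\pr_r u^{[\geq1]}\vert_{r=1}=0$ whenever $w>2$. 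Projecting \eqref{def:Dez20u1} onto a single mode $(l,m)$ gives a second-order radial ODE of Euler type, which after multiplication by an integrating factor $r^{\a_l}$ (with $\a_l$ chosen so that the two fundamental solutions $r^{\pm\sqrt{l(l+1)/2}-1}$ are built in) becomes a perfect derivative. Integrating from $1$ to $\infty$, the left-hand side collapses to $-\pr_r u^{(lm)}\vert_{r=1}$ by using the boundary condition $u^{(lm)}\vert_{r=1}=0$ and the decay $u^{(lm)}\in H^2_{-1/2}$ (via Lemma \ref{sobolev1d}, as in \eqref{eq:8j3}), while the right-hand side reduces to $\isinf r^{1-\sqrt{l(l+1)/2}}(\tfrac12 h^{(lm)}+\zeta^{(lm)})\,dr$ which vanishes by exactly the identity \eqref{feb25id1}. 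This produces $\pr_r u^{(lm)}\vert_{r=1}=0$ for every mode and hence \eqref{ugeq1estimatemars2} with $u^{[\geq1]}\in\ol H^w_{-1/2}$, completing the proof.
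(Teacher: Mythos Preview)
Your proposal is correct and follows essentially the same approach as the paper's proof: mode-by-mode bounds on $c^{(lm)},\tilde c^{(lm)}$, support-length arguments for \eqref{zetaestfeb25} and \eqref{inverseestimatelast}, finite truncations for the $\ol H$-membership, and the radial ODE integrated against the weight $r^{1-\sqrt{l(l+1)/2}}$ to extract $\pr_r u^{(lm)}\vert_{r=1}=0$ via \eqref{feb25id1}.

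Two small points to tighten. First, the denominator of $\tilde c^{(lm)}$ is not merely bounded away from zero uniformly in $l$: the paper shows (by integrating by parts twice) that it is $\leq -(\sqrt{l(l+1)/2}+1)$, i.e.\ it \emph{grows} like $l$, and this growth is exactly what yields $|\tilde c^{(lm)}|\lesssim |c^{(lm)}|/l$, which you need to compensate the extra factor of $l$ in $\pr_r^2(\chi(l(r-1)))=l^2(\pr_r^2\chi)(l(r-1))$ when proving \eqref{zetaestfeb25}. Second, Lemma \ref{lowreg16mars} concerns (\textbf{E2}), not (\textbf{E3}); for $u^{[\geq1]}$ only Propositions \ref{prop:ellmars10} and \ref{higherregmars14} are relevant.
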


\begin{remark}
The function $u^{[\geq1]}$ satisfies the elliptic equation \eqref{def:Dez20u1}. Therefore its boundary behaviour is harder to estimate than for $u^{[0]}$ and $\xi$ which essentially satisfy transport equations in $r$, see also Remark \ref{remarkexistencexi}.
\end{remark}

\begin{proof} We prove each point separately. \\ 

{\bf Regularity and boundary control of $\zeta^{[\geq1]}$.} We show at first that the constants $c^{(lm)}, \tilde{c}^{(lm)}$ are well-defined in \eqref{def:zet2feb24}. Concerning $c^{(lm)}$, for $l\geq1$, $m \in \{-l, \dots, l\}$,
\begin{align} \begin{aligned}
\left\vert c^{(lm)} \right\vert &=\half \left\vert\isinf r^{1-\sqrt{l(l+1)/2}} h^{(lm)} dr\right\vert \\
&\leq \half \left( \isinf r^{-2 \sqrt{l(l+1)/2}} dr \right)^{1/2} \left( \isinf \left( r h^{(lm)} \right)^2 dr \right)^{1/2}\\
&= \half \frac{1}{(2\sqrt{l(l+1)/2} -1)^{1/2} }  \left( \isinf \left( r h^{(lm)} \right)^2 dr \right)^{1/2}.
\end{aligned} \label{eq:estc1feb25} \end{align}
To estimate $\tilde{c}^{(lm)}$, estimate first its denominator. Integrating by parts twice and using that $\mathrm{supp } (\pr_r\chi)(l(r-1)) \subset [1,1+ \frac{1}{l}]$ yields
\begin{align*}
&\isinf r^{\sqrt{l(l+1)/2} +1 } \pr_r^2 \left( \chi (l(r-1)  ) \right) dr \\
=& - \left( \sqrt{l(l+1)/2} +1 \right) \int\limits_1^{1+\frac{1}{l}} r^{\sqrt{l(l+1)/2}} \pr_r \left( \chi(l(r-1)) \right) dr \\
=& - \left( \sqrt{l(l+1)/2} +1 \right)  \left( 1+ \frac{1}{l} \right)^{\sqrt{l(l+1)/2}} \\
& + \left( \sqrt{l(l+1)/2} +1 \right) \left( \sqrt{l(l+1)/2} \right) \int\limits_1^{1+\frac{1}{l}} r^{\sqrt{l(l+1)/2}-1}\chi(l(r-1)) dr\\
 \leq& -  \left( \sqrt{l(l+1)/2} +1 \right),
\end{align*}
where we uniformly bounded $\vert \chi \vert \leq 1$. Consequently, for all $l\geq1$,
\begin{align} \begin{aligned}
\vert \tilde{c}^{(lm)} \vert &\leq \frac{1}{\sqrt{l(l+1)/2} +1} \left\vert \isinf c^{(lm)} r^{\sqrt{l(l+1)/2}+1} \pr_r \left( \chi (l(r-1) ) \right) dr \right\vert\\
&\leq \frac{\vert c^{(lm)} \vert l}{\sqrt{l(l+1)/2} +1} \int\limits_1^{1+\frac{1}{l}} \left(1+\frac{1}{l}\right)^{\sqrt{l(l+1)/2}+1} \vert \pr_r \chi \vert(l(r-1)) dr \\
&\lesssim \frac{\vert c^{(lm)} \vert}{l}\\
&\lesssim   \frac{1}{l(2\sqrt{l(l+1)/2} +1 )^{1/2}} \left( \isinf \left( r h^{(lm)} \right)^2 dr \right)^{1/2}
\end{aligned} \label{eq:estc2feb25} \end{align}
where we used \eqref{eq:estc1feb25} and the fact that $\vert \pr_r \chi \vert$ is universally bounded. This shows that $c^{(lm)}, \tilde{c}^{(lm)}$ are well-defined.\\

We now prove the case $w=2$ of \eqref{zetaestfeb25}, that is, 
\begin{align*}
\Vert \zeta^{[\geq1]} \Vert_{ H^{0}_{-5/2}(\RRRwo)} \lesssim \Vert h^{[\geq1]} \Vert_{ \ol{H}^{0}_{-5/2}}.
\end{align*} 
Indeed, by plugging in \eqref{def:zet1feb24}, for $l\geq1$,
\begin{align} \begin{aligned}
& \isinf r^2 \left( \zeta^{(lm)} \right)^2 dr \\
 \lesssim&  \isinf r^{2\sqrt{l(l+1)/2} } \left[  \left(c^{(lm)}\right)^2  \left( \pr_r \left( \chi(l(r-1) ) \right)\right)^2 + \left( \tilde{c}^{(lm)} \right)^2 \left( \pr_r^2 \left( \chi( l (r-1) ) \right) \right)^2    \right] dr \\
 \lesssim&  \isinf r^{2\sqrt{l(l+1)/2} } \left[  \left(c^{(lm)}\right)^2  l^2 ( \pr_r  \chi )^2(l(r-1)) + \left( \tilde{c}^{(lm)} \right)^2 l^4 ( \pr_r^2 \chi)^2( l (r-1) )    \right] dr \\
\lesssim& \left( \isinf \left( r h^{(lm)} \right)^2 dr \right) l \int\limits_1^{1+ \frac{1}{l}} r^{2 \sqrt{l(l+1)/2}} dr \\
 \lesssim&  \left(1 +\frac{1}{l} \right)^{2 \sqrt{l(l+1)/2}} \isinf \left( r h^{(lm)} \right)^2 dr \\
\lesssim&   \isinf \left( r h^{(lm)} \right)^2 dr,
\end{aligned} \label{feb25est23} \end{align}
where we used that $\mathrm{supp } \, \pr_r \chi(l(r-1)) \subset [1,1+\frac{1}{l}]$ and \eqref{eq:estc1feb25},\eqref{eq:estc2feb25}. Summing over $l\geq1, m \in \{ -l, \dots, l\}$ proves the case $w=2$ of \eqref{zetaestfeb25}.\\

We turn now to the case $w>2$ of \eqref{zetaestfeb25}. On the one hand, the estimates \eqref{eq:estc1feb25}, \eqref{eq:estc2feb25} improve,
\begin{align} \begin{aligned}
\vert c^{(lm)} \vert &\lesssim \frac{1}{(2\sqrt{l(l+1)/2} -1)^{1/2} } \frac{1}{\sqrt{l(l+1)}^{w-2}}  \left( \isinf  \left( \frac{l(l+1)}{r^2} \right)^{w-2} \left( r^{w-1} h^{(lm)} \right)^2 dr \right)^{1/2}, \\
\vert \tilde{c}^{(lm)} \vert &\lesssim  \frac{1}{l(2\sqrt{l(l+1)/2} +1 )^{1/2}}\frac{1}{\sqrt{l(l+1)}^{w-2}} \left( \isinf  \left( \frac{l(l+1)}{r^2} \right)^{w-2} \left( r^{w-1} h^{(lm)} \right)^2 dr \right)^{1/2},
\end{aligned}\label{improvedmars41} \end{align}
where the integrals on the right-hand side correspond to the norm $\Vert h \Vert_{\ol{H}^{w-2}_{-5/2}}$ and are therefore summable, see Proposition \ref{prop:Howtoestimatefunctions}. \\

On the other hand, by differentiating \eqref{def:zet1feb24} and using Lemma \ref{lem:RelationsSpherical}, derivatives of $\zeta^{(lm)}$ generally are of the form
\begin{align} \begin{aligned}
\pr_r \zeta^{(lm)} \approx \frac{l}{r} \zeta^{(lm)}, \,\,\, \left(\Nd \zeta\right)_E^{(lm)} =- \frac{\sqrt{l(l+1)}}{r} \zeta^{(lm)}. \end{aligned} \label{eq:av412}
\end{align}
Combining \eqref{improvedmars41} with \eqref{eq:av412}, yields estimates for higher derivatives of $\zeta^{[\geq1]}$ analogously to \eqref{feb25est23}. This proves \eqref{zetaestfeb25} for all $w\geq2$, see Lemma \ref{lem:commutationrelation}.\\

Next, we claim that $\zeta^{[\geq1]} \in \ol{H}^{w-2}_{-5/2}$. Indeed, the sequence of smooth functions
 \begin{align*}
f_n := \sum\limits_{l=1}^n \sum\limits_{m=-l}^l \zeta^{(lm)} Y^{(lm)}
\end{align*}
is compactly supported in $\RRRwo$ and converges by \eqref{zetaestfeb25} in $\ol{H}^{w-2}_{-5/2}$ to $\zeta^{[\geq1]}$ as $n\to \infty$. See also the analogous \eqref{conddensity}, \eqref{conddensity2}.\\

The integral identities \eqref{feb25id1} and \eqref{feb25id2} follow from the definition of $\zeta^{[\geq1]}$ in \eqref{def:zet1feb24}-\eqref{def:zet2feb24}. The proof is left to the reader, see the analogous identity \eqref{eq:feb25est45}.\\

{\bf Precise estimate for $\zeta^{[\geq1]}$.} The proof is similar to part (2) of Proposition \ref{prop:boundaryvalues} and therefore only sketched here. \\

Consider first the case $w=2$ of \eqref{inverseestimatelast}. By Proposition \ref{prop:Howtoestimatefunctions} and Lemmas \ref{lem:commutationrelation} and \ref{lem:inversetoDDDD} and given that we already control $\zeta^{[\geq1]}$ above, it suffices to prove in the Hodge-Fourier formalism that for $l\geq1, m \in \{-l, \dots, l\}$,
\begin{align} \label{HFest}
\isinf r^2 \left( \frac{r}{\sqrt{l(l+1)}} \pr_r \zeta^{(lm)} \right)^2 dr \lesssim \isinf r^2 \left( h^{(lm)} \right)^2 dr.
\end{align}
This follows by using the explicit \eqref{def:zet1feb24} which shows that schematically
\begin{align*}
\left\vert \frac{r}{\sqrt{l(l+1)}} \pr_r \zeta^{(lm)} \right\vert \lesssim& \zeta^{(lm)} + c^{(lm)} r^{\sqrt{l(l+1)}/2} \pr_r ((\pr_r \chi)(l(r-1))) \\
&- \tilde{c}^{(lm)} r^{\sqrt{l(l+1)}/2} \pr^2_r ((\pr_r \chi)(l(r-1))) \\
\approx& \zeta^{(lm)}.
\end{align*}
Therefore, by the above control of $\zeta^{[\geq1]}$, \eqref{HFest} follows. This proves \eqref{inverseestimatelast} in the case $w=2$. \\

The case $w>2$ of \eqref{inverseestimatelast} is treated similarly. Indeed, by the explicit expression \eqref{def:zet1feb24}, derivatives can be expressed in the Hodge-Fourier formalism as multiplication by $\frac{\sqrt{l(l+1)}}{r}$. At the same time, the estimates for the constants $c^{(lm)}, \tilde{c}^{(lm)}$ improve, see \eqref{improvedmars41}. This allows to use the estimate above to conclude \eqref{inverseestimatelast} for all $w\geq2$.  \\

It remains to show that $\cDd_1^{-1}(\pr_r \zeta^{[\geq1]},0) \in \ol{\HH}^{w-2}_{-5/2}$. By using \eqref{inverseestimatelast} and the definition of $\zeta^{[\geq1]}$ in \eqref{def:zet1feb24}, it follows that 
\begin{align*}
X_n := \sum\limits_{l=1}^n \sum\limits_{m=-l}^l \left( \cDd_1^{-1}(\pr_r \zeta^{[\geq1]},0) \right)^{(lm)}_E E^{(lm)}
\end{align*}
is a sequence of smooth vectorfields compactly supported in $\RRRwo$ that converges in $\HH^{w-2}_{-5/2}$ to $\cDd_1^{-1}(\pr_r \zeta^{[\geq1]},0)$ as $n \to \infty$. By the definition of $\ol{\HH}^{w-2}_{-5/2}$, this shows that $\cDd_1^{-1}(\pr_r \zeta^{[\geq1]},0) \in \ol{\HH}^{w-2}_{-5/2}$ and hence finishes the precise estimate of $\zeta^{[\geq1]}$.\\

{\bf Regularity and boundary control of $u^{[\geq1]}$.} By Proposition \ref{higherregmars14}, for all $w\geq2$, the scalar function $u^{[\geq1]}$ defined in \eqref{def:Dez20u1} is bounded by
\begin{align}
\Vert u^{[\geq1]} \Vert_{H^{w}_{-1/2}(\RRRwo)} \lesssim \left\Vert  \half h^{[\geq1]}+ \zeta^{[\geq1]} \right\Vert_{\ol{H}^{w-2}_{-5/2}}+C_w \left\Vert  \half h^{[\geq1]}+ \zeta^{[\geq1]} \right\Vert_{\ol{H}^{0}_{-5/2}}.
\label{eq:8j9e}
\end{align}

We show now the improved boundary behaviour
\begin{align*}
u^{[\geq1]} \in \ol{H}^w_{-1/2}.
\end{align*}
By Proposition \ref{higherregmars14444} it suffices to prove the next claim.
\begin{claim} \label{8j2ndclaim}
Let $w\geq2$ be an integer. It holds that 
\begin{align*}
\pr_r u^{[\geq1]} \vert_{r=1} = 0.
\end{align*}
\end{claim}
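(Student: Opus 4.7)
The plan is to follow the template of the analogous Claim~\ref{summaryclaimj8} established earlier for $\de^{[\geq 2]}$. First, I project the elliptic PDE \eqref{def:Dez20u1} onto each spherical harmonic $Y^{(lm)}$ with $l\geq 1$, $m\in\{-l,\dots,l\}$. Using $\Ld Y^{(lm)} = -l(l+1)r^{-2} Y^{(lm)}$ and the formula for $\triangle$, this produces the radial ODE
\begin{align*}
\pr_r^2 u^{(lm)} + \frac{3}{r}\pr_r u^{(lm)} + \frac{1-\mu_l^2}{r^2} u^{(lm)} = \tfrac{1}{2}\!\left(\tfrac{1}{2}h^{(lm)}+\zeta^{(lm)}\right),
\end{align*}
where $\mu_l=\sqrt{l(l+1)/2}\geq 1$ is the relevant Frobenius exponent, with Dirichlet datum $u^{(lm)}(1)=0$.

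Next, guided by the indicial roots $\alpha=-1\pm\mu_l$, I rewrite the ODE in factorised form
\begin{align*}
r^{\mu}\,\pr_r\!\left(r^{\,1-2\mu}\pr_r(r^{\,1+\mu}\, u^{(lm)})\right)
=\tfrac{1}{2}\, r^{2}\!\left(\tfrac{1}{2}h^{(lm)}+\zeta^{(lm)}\right),
\end{align*}
so that the innermost bracket evaluated at $r=1$ is exactly $\pr_r u^{(lm)}|_{r=1}$ (using $u^{(lm)}(1)=0$). I then multiply by $r^{-\mu-1}$ to line up the weight on the right-hand side with that of the integral identity \eqref{feb25id1}, and integrate from $r=1$ to $r=\infty$. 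Integration by parts on the left produces a boundary term at $r=1$ equal to $\pr_r u^{(lm)}|_{r=1}$ (up to a nonzero multiplicative constant), plus a remainder integral involving $r^{-2}$ against the intermediate antiderivative. The boundary terms at $r=\infty$ are controlled using the one-dimensional Sobolev embedding Lemma~\ref{sobolev1d} applied to $u^{(lm)}$, combined with the estimate \eqref{ugeq1estimatemars2} already proved for $u^{[\geq 1]}\in H^{w}_{-1/2}$.

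The right-hand side, after this rearrangement, becomes a linear combination of $\int_1^\infty r^{1-\mu_l}(\tfrac{1}{2}h^{(lm)}+\zeta^{(lm)})\,dr$ and $\int_1^\infty r^{2}\zeta^{(lm)}\,dr$, both of which vanish by the two integral identities \eqref{feb25id1} and \eqref{feb25id2} established in Proposition~\ref{prop:feb24est1}. Therefore $\pr_r u^{(lm)}|_{r=1}=0$ for every $l\geq 1$ and $m\in\{-l,\dots,l\}$, and summing in $l,m$ (using completeness of the Hodge--Fourier basis) gives $\pr_r u^{[\geq 1]}|_{r=1}=0$.

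The main obstacle is the lowest mode $l=1$, where $\mu_1=1$ sits exactly at the indicial exponent, so $r^{\,\mu_l-1}\equiv 1$ is the non-decaying homogeneous solution and the weighted Sobolev decay of $u^{(lm)}$ from $H^w_{-1/2}$ is the weakest possible. This is precisely the reason why $\zeta^{[\geq 1]}$ was constructed with the two free parameters $c^{(lm)}$ and $\tilde{c}^{(lm)}$: the first identity \eqref{feb25id1} cancels the leading integrated term, while the second identity \eqref{feb25id2}, which is essentially built into $\zeta^{(lm)}$ through the definition of $\tilde{c}^{(lm)}$, eliminates the leftover contribution at infinity of the intermediate antiderivative needed to apply integration by parts. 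Both identities together are therefore required, and their role in the proof mirrors exactly the role of \eqref{eq:propo8feb} in the proof of Claim~\ref{summaryclaimj8}.
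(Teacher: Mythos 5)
There are two related errors in your proof that prevent it from closing, both rooted in the projected ODE. In the paper's Hodge--Fourier convention $u^{(lm)}(r)=\int_{S_r} u\,Y^{(lm)}$ with $Y^{(lm)}$ normalised on $S_r$, so $Y^{(lm)}\sim r^{-1}$ and the coefficient functions $u^{(lm)}$ carry an extra factor of $r$ compared with unit-sphere harmonics. Tracking this shift (cf.\ Lemma \ref{lem:commutationrelation}), the correct radial ODE is
\begin{align*}
\pr_r^2 u^{(lm)} + \frac{1}{r}\pr_r u^{(lm)} - \frac{l(l+1)/2}{r^2}u^{(lm)} = \half\left(\half h^{(lm)}+\zeta^{(lm)}\right),
\end{align*}
which factorises as in \eqref{eq:feb25est5} with $r^{\mu}u^{(lm)}$ inside and outer weight $r^{-1+\mu}$, where $\mu=\sqrt{l(l+1)/2}$. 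Your ODE has $\frac{3}{r}\pr_r u^{(lm)} + \frac{1-\mu^2}{r^2}u^{(lm)}$ and the factorised right-hand side carries $r^2$; this is the unit-sphere-normalised equation, and it is inconsistent with the normalisation of $h^{(lm)},\zeta^{(lm)}$ used in \eqref{feb25id1}.

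This is not merely a notational slip, because it forces the wrong multiplier. To align with the $r^{1-\mu}$-weight of \eqref{feb25id1} from your $r^2$-weighted right-hand side you multiply by $r^{-\mu-1}$, leaving $r^{-1}\pr_r F$ on the left with $F:=r^{1-2\mu}\pr_r(r^{1+\mu}u^{(lm)})$. That is no longer a total derivative. Integrating by parts, the boundary contribution $-F(1)=-\pr_r u^{(lm)}(1)$ is exactly cancelled by the $F(1)\int_1^\infty r^{-2}dr=F(1)$ piece of the remainder integral, so the identity becomes a tautology in $\pr_r u^{(lm)}(1)$ and extracts no information. Your appeal to \eqref{feb25id2} to kill the remainder does not resolve this: the remainder involves $u^{(lm)}$ (not $\zeta^{(lm)}$ with weight $r^2$), and \eqref{feb25id2} is in fact never used for Claim \ref{8j2ndclaim}---in the paper it enters only the tangential estimate for $\xi$ in Proposition \ref{prop:feb242p}. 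With the correct factorisation \eqref{eq:feb25est5}, one multiplies by $r^{1-\mu}$, the left-hand side becomes the total derivative $\pr_r\bigl(r^{1-2\mu}\pr_r(r^{\mu}u^{(lm)})\bigr)$, the boundary term at $r=1$ is $-\pr_r u^{(lm)}(1)$ (since $u^{(lm)}(1)=0$), the boundary at infinity vanishes by Lemma \ref{sobolev1d} and \eqref{ugeq1estimatemars2}, and the right-hand side vanishes by \eqref{feb25id1} alone.
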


First, by \eqref{eq:8j9e}, it holds that for $l\geq1$, $m \in \{-l, \dots, l\}$,
\begin{align*}
\isinf \frac{1}{(1+r)^2} \left( u^{(lm)} \right)^2 dr,  \isinf \left( \pr_r u^{(lm)} \right)^2 dr,  \isinf (1+r)^2 \left( \pr^2_r u^{(lm)} \right)^2 dr < \infty.
\end{align*}
By Lemma \ref{sobolev1d}, it follows that
\begin{align}
\sup\limits_{r\in [1, \infty)}  (1+r)^{-1/2} \left\vert u^{(lm)} \right\vert < \infty, \,\, \sup\limits_{r\in [1, \infty)}  (1+r)^{1/2} \left\vert \pr_r u^{(lm)} \right\vert < \infty. \label{eq:8j9f}
\end{align}

We show now that for $w\geq2$ and $l\geq1$, $m \in \{-l, \dots, l\}$,
$$\pr_r u^{(lm)} \vert_{r=1} =0.$$ 
Definition \eqref{def:Dez20u1} is in the Hodge-Fourier formalism equivalent to the following ODEs for $u^{(lm)}$ with $l\geq1, m \in \{ -l, \dots, l\}$, see Lemma \ref{lem:RelationsSpherical},
\begin{align} \begin{cases}
r^{-1+ \sqrt{l(l+1)/2}} \pr_r \left( r^{1-2\sqrt{l(l+1)/2}} \pr_r \left( r^{\sqrt{l(l+1)/2}} u^{(lm)} \right) \right) = \half \left( \half h^{(lm)} + \zeta^{(lm)} \right), \\
u^{(lm)} \vert_{r=1} = 0.
\end{cases} \label{eq:feb25est5}
\end{align}
On the one hand,
\begin{align*}
&\isinf \pr_r \left( r^{1-2\sqrt{l(l+1)/2}} \pr_r \left( r^{\sqrt{l(l+1)/2}} u^{(lm)} \right) \right) dr \\
=&\left[ r^{1-\sqrt{l(l+1)/2}} \pr_r u^{(lm)} + (\sqrt{l(l+1)/2}) r^{-\sqrt{l(l+1)/2}} u^{(lm)} \right]_1^\infty\\
=& -\pr_r u^{(lm)} \vert_{r=1},
\end{align*}
where we used that $l\geq1$, $u^{(lm)} \vert_{r=1}=0$ and \eqref{eq:8j9f}. \\

On the other hand, by \eqref{eq:feb25est5} and integral identity \eqref{feb25id1},
\begin{align*}
&\isinf \pr_r \left( r^{1-2\sqrt{l(l+1)/2}} \pr_r \left( r^{\sqrt{l(l+1)/2}} u^{(lm)} \right) \right) dr \\
=& \half \isinf r^{1-\sqrt{l(l+1)/2}}  \left(\half h^{(lm)} + \zeta^{(lm)} \right) dr
\\=&0.
\end{align*}
This shows that for $l\geq1$, $m \in \{-l, \dots, l\}$, 
$$\pr_r u^{(lm)} \vert_{r=1}= 0.$$ 
This proves Claim \ref{8j2ndclaim} and finishes the control of $u^{[\geq1]}$. This finishes the proof of Proposition \ref{prop:feb24est1}. \end{proof}

The next proposition shows that $u^{[0]} \in \ol{H}^w_{-1/2}$ and $\xi \in \ol{\HH}^w_{-1/2}$ with quantitative estimates.
\begin{proposition} \label{prop:feb242p} Let $w\geq2$ be an integer and $h \in \ol{H}^{w-2}_{-5/2}$. Then, the following holds.
\begin{itemize}
\item {\bf Regularity and boundary behaviour of $u^{[0]}$. } The radial scalar function $u^{[0]}$ defined in \eqref{def:Dez20u0} is bounded by
\begin{align}
\Vert u^{[0]} \Vert_{H^w_{-1/2}(\RRRwo)} \lesssim \Vert h^{[0]} \Vert_{\ol{H}^{w-2}_{-5/2}}+C_w \Vert h^{[0]} \Vert_{\ol{H}^{0}_{-5/2}}. \label{may24eq1}
\end{align}
Furthermore, it holds that $u^{[0]} \in \ol{H}^w_{-1/2}$.
\item {\bf Regularity and boundary behaviour of $\xi$. }The $S_r$-tangent vector field $\xi$ defined in \eqref{eq:defnBelmE} is bounded by
\begin{align}
\Vert \xi \Vert_{\HH^w_{-1/2}(\RRRwo)} \lesssim \Vert h^{[\geq1]} \Vert_{\ol{H}^{w-2}_{-5/2}}+C_w \Vert h^{[\geq1]} \Vert_{\ol{H}^{0}_{-5/2}}. \label{may24eq2} 
\end{align}
Furthermore, it holds that $\xi \in \HHlo$.
\end{itemize}
\end{proposition}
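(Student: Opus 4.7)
The plan is to treat $u^{[0]}$ and $\xi$ separately, in both cases exploiting an explicit integral representation and the detailed control of $\zeta^{[\geq 1]}$ already obtained in Proposition \ref{prop:feb24est1}.

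\emph{Control of $u^{[0]}$.} I would first reduce the Euler-type ODE \eqref{def:Dez20u0} via the substitution $v(r) := r u^{[0]}(r)$, which turns it into $\frac{1}{r^2}\pr_r(r\pr_r v) = \frac{1}{4} h^{[0]}$ with boundary data $v(1)=\pr_r v(1)=0$. Two integrations give
\begin{align*}
u^{[0]}(r) = \frac{1}{4r}\int_1^r \frac{1}{r''}\int_1^{r''}(r')^2 h^{[0]}(r')\,dr'\,dr''.
\end{align*}
The $H^0_{-1/2}$-estimate of $u^{[0]}$ then follows from exactly the weighted integration-by-parts scheme used for $\de^{[0]}$ in Lemma \ref{lem:bdrycntrlfeb5} (see \eqref{avril111}); the derivatives $\pr_r u^{[0]}$ and $\pr_r^2 u^{[0]}$ are recovered algebraically from the ODE, and higher radial regularity is obtained by successive differentiation, while tangential regularity is trivial as $u^{[0]}$ is radial. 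For the boundary statement $u^{[0]}\in \ol{H}^w_{-1/2}$, the hypothesis $h \in \ol{H}^{w-2}_{-5/2}$ combined with Proposition \ref{prop:TrivialExtensionRegularity} gives $\pr_r^l h^{[0]}(1)=0$ for $l=0,\dots,w-3$; a short induction on $l$ using the ODE differentiated $l-2$ times and evaluated at $r=1$, together with the vanishing of $u^{[0]}(1)$ and $\pr_r u^{[0]}(1)$, yields $\pr_r^l u^{[0]}(1)=0$ for all $l=0,\dots,w-1$, and Proposition \ref{prop:TrivialExtensionRegularity} concludes.

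\emph{Control of $\xi$.} Since $\Curld \xi = 0$, the Hodge-Fourier decomposition collapses $\xi$ to its $E$-components, and Lemma \ref{lem:RelationsSpherical} together with the first line of \eqref{eq:defnBelmE} yields for $l\geq 1$, $m \in \{-l,\dots,l\}$,
\begin{align*}
\xi_E^{(lm)}(r) = \frac{1}{\sqrt{l(l+1)}\,r^2}\int_1^r (r')^3 \zeta^{(lm)}(r')\,dr'.
\end{align*}
The key observation is that since $\pr_r\chi$ and $\pr_r^2\chi$ are supported in $[1/10,1]$, the definition \eqref{def:zet1feb24} implies $\zeta^{(lm)}$ is compactly supported in $[1,1+1/l]$. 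For $r \geq 1+1/l$ the integral is therefore constant, and the integral identity \eqref{feb25id2} rewrites $\int_1^{1+1/l}(r')^3\zeta^{(lm)}\,dr' = \int_1^{1+1/l}(r'-1)(r')^2\zeta^{(lm)}\,dr'$, producing an extra $l^{-1}$ gain. Combining this with Cauchy--Schwarz on the thin support, together with the split of $r\in [1,1+1/l]$ versus $r\geq 1+1/l$, yields the low-regularity bound summable over $l,m$. Higher derivatives are then handled through the relation $\pr_r \xi_E^{(lm)} = -\tfrac{2}{r}\xi_E^{(lm)} + \tfrac{r}{\sqrt{l(l+1)}}\zeta^{(lm)}$ and the angular calculus of Lemmas \ref{lem:RelationsSpherical} and \ref{lem:commutationrelation}, invoking the full control of $\zeta^{[\geq 1]}$ from Proposition \ref{prop:feb24est1} to obtain \eqref{may24eq2}.

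\emph{Boundary regularity and main obstacle.} The flatness of $\chi$ at the origin implies $\pr_r^n\zeta^{(lm)}(1)=0$ for every $n\geq 0$; iterating the relation $\pr_r \xi_E^{(lm)} = -\tfrac{2}{r}\xi_E^{(lm)} + \tfrac{r}{\sqrt{l(l+1)}}\zeta^{(lm)}$ together with $\xi_E^{(lm)}(1)=0$ then shows $\pr_r^l \xi_E^{(lm)}(1)=0$ for every $l$, and Proposition \ref{prop:TrivialExtensionRegularity} gives $\xi \in \ol{\HH}^w_{-1/2}$. I expect the principal technical difficulty to be the uniform $l$-bookkeeping in the weighted norms, specifically propagating the thin-support Cauchy--Schwarz estimates through Proposition \ref{prop:Howtoestimatefunctions} and Lemma \ref{lem:commutationrelation} so that the summed bound matches the form $\|h^{[\geq 1]}\|_{\ol{H}^{w-2}_{-5/2}} + C_w \|h^{[\geq 1]}\|_{\ol{H}^0_{-5/2}}$ on the right-hand side of \eqref{may24eq2}.
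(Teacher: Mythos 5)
Your treatment of $u^{[0]}$ matches the paper's: the same substitution $v=r u^{[0]}$, the same double-integral representation, the same weighted integration by parts for the $L^2$ estimate (the paper does this in \eqref{eq:av468j1}--\eqref{eq:8j3rd8j}), differentiation of the ODE for higher radial regularity, and Proposition \ref{prop:TrivialExtensionRegularity} for the boundary vanishing. That part of the proposal is fine.

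For $\xi$, there is a concrete computational error in the explicit formula for $\xi_E^{(lm)}$. Recall that the spherical harmonics $Y^{(lm)}(r)$ in this paper are normalised to have unit $L^2(S_r)$ norm, so $Y^{(lm)}(r,\theta)=r^{-1}\tilde Y^{(lm)}(\theta)$ with $\tilde Y^{(lm)}$ normalised on $\SSS^2$. Consequently the $(lm)$-coefficient of a function $f$ satisfies $f^{(lm)}(r)=r\int_{\SSS^2}\tilde Y^{(lm)}f$, and when you project the integrated relation $r^3\Divd\xi(r,\theta)=\int_1^r (r')^3\zeta^{[\geq1]}(r',\theta)\,dr'$ onto $Y^{(lm)}(r)$ on the sphere $S_r$, the mismatched normalisations of $Y^{(lm)}(r)$ and $Y^{(lm)}(r')$ produce one power of $r'/r$. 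The correct formula, in agreement with the paper's \eqref{eq:defininingmars41}, is
\begin{align*}
\xi_E^{(lm)}(r)=\frac{1}{\sqrt{l(l+1)}\,r}\int_1^r (r')^2\zeta^{(lm)}(r')\,dr',
\end{align*}
not $\frac{1}{\sqrt{l(l+1)}\,r^2}\int_1^r (r')^3\zeta^{(lm)}\,dr'$ as you wrote. This is not just cosmetic. With the correct formula the identity \eqref{feb25id2} makes $\xi_E^{(lm)}(r)$ \emph{vanish identically} for $r\geq 1+1/l$, since $\int_1^r(r')^2\zeta^{(lm)}\,dr'=\int_1^\infty(r')^2\zeta^{(lm)}\,dr'=0$ there; that is exactly how the paper localises the integral to $[1,1+1/l]$ in \eqref{eq:estimateZETALMtang}. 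With your formula the integral is merely $O(l^{-1})$-small on the tail $r\geq 1+1/l$, so you would be forced to carry an additional far-field estimate that the paper never needs. Similarly, the transport relation you iterate to control boundary regularity should read $\pr_r\xi_E^{(lm)}=-\tfrac{1}{r}\xi_E^{(lm)}+\tfrac{r}{\sqrt{l(l+1)}}\zeta^{(lm)}$, not with coefficient $-2/r$; the qualitative conclusion $\pr_r^n\xi_E^{(lm)}(1)=0$ is unaffected, but fix the coefficient.

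Separately, note a mild difference in route from the paper. You propose to derive the zeroth- and first-order estimates on $\xi$ directly from the explicit Hodge--Fourier coefficient formula. The paper instead derives those low-order estimates from the tensorial Hodge system $\Divd\xi$, $\Curld\xi$, invoking Proposition \ref{prop:EllipticityHodgejan} together with a weighted integration by parts, and only deploys the coefficient formula \eqref{eq:defininingmars41} when the cancellation from \eqref{feb25id2} is really needed, namely for the second-order tangential estimate \eqref{8jeq1}. Your fully coefficient-level approach is a legitimate alternative once the formula is corrected — the compact support of $\xi_E^{(lm)}$ on $[1,1+1/l]$ makes the weighted estimates straightforward — but it trades the paper's clean appeal to Hodge-system ellipticity for more explicit $l$-bookkeeping throughout.
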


\begin{proof}[Proof of Proposition \ref{prop:feb242p}] We prove each part separately.\\

{ \bf Regularity and boundary behaviour of $u^{[0]}$.} We first show that for $w\geq2$, 
\begin{align}
\Vert u^{[0]} \Vert_{H^w_{-1/2}(\RRRwo)} \lesssim \Vert h^{[0]} \Vert_{\ol{H}^{w-2}_{-5/2}}+C_w \Vert h^{[0]} \Vert_{\ol{H}^{0}_{-5/2}}. \label{necestmars2u0}
\end{align}

Recall that $u^{[0]}$ satisfies on $r>1$ by \eqref{def:Dez20u0}
\begin{align*}\begin{cases}
\frac{1}{r^2} \pr_r \left( r \pr_r \left( r u^{[0]} \right) \right) = \frac{1}{4} h^{[0]},\\
u^{[0]}\vert_{r=1} = \pr_r u^{[0]} \vert_{r=1} =0.
\end{cases}
\end{align*}
By integration, it follows that
\begin{align}
u^{[0]}(r) = \frac{1}{r} \isr \frac{1}{r'} \left( \int\limits_1^{r'} (r''^2) h^{[0]} dr'' \right) dr'. \label{eq:av45}
\end{align}

We now prove the case $w=2$ of \eqref{necestmars2u0} by showing 
\begin{align}
\Vert u^{[0]} \Vert_{H^0_{-1/2}(\RRRwo)} &\lesssim \Vert h^{[0]} \Vert_{\ol{H}^{0}_{-5/2}}, \label{eq:av468j1} \\
\Vert \pr_r u^{[0]} \Vert_{H^0_{-3/2}(\RRRwo)} &\lesssim \Vert h^{[0]} \Vert_{\ol{H}^0_{-5/2}}, \label{eq:may241} \\
\Vert \pr^2_r u^{[0]} \Vert_{H^0_{-5/2}(\RRRwo)} &\lesssim \Vert h^{[0]} \Vert_{\ol{H}^0_{-5/2}}. \label{eq:8j3rd8j}
\end{align}
Indeed, see Lemma \ref{lem:commutationrelation} and note that tangential regularity follows because $u^{[0]}$ is constant on spheres.\\

To prove \eqref{eq:av468j1}, use \eqref{eq:av45} and that $u^{[0]}$ is constant on spheres,
\begin{align*}
\Vert u^{[0]} \Vert^2_{H^0_{-1/2}(\RRRwo)}&= \isinf \frac{1}{r^2} \left( \isr \frac{1}{r'} \left( \int\limits_1^{r'} (r''^2) h^{[0]} dr'' \right) dr' \right)^2 dr \\
&= \left[ -\frac{1}{r} \left( \isr \frac{1}{r'} \left( \int\limits_1^{r'} (r''^2) h^{[0]} dr'' \right) dr' \right)^2 \right]_1^\infty\\
& \qquad + 2 \isinf \frac{1}{r} \left(  \frac{1}{r} \int\limits_1^{r} (r'^2) h^{[0]} dr' \right) \left( \isr \frac{1}{r'} \left( \int\limits_1^{r'} (r''^2) h^{[0]} dr'' \right) dr' \right) dr.
\end{align*} 
The boundary term has negative sign and may thus be discarded. The integral term can be estimated by Cauchy-Schwarz as
\begin{align*}
&\isinf \left(  \frac{1}{r} \int\limits_1^{r} (r'^2) h^{[0]} dr' \right) \left( \frac{1}{r}  \isr \frac{1}{r'} \left( \int\limits_1^{r'} (r''^2) h^{[0]} dr'' \right) dr' \right) dr  \\
\leq& \left( \isinf \frac{1}{r^2} \left( \isr (r')^2 h^{[0]} dr'\right)^2  dr \right)^{1/2}  \Vert u^{[0]} \Vert_{H^0_{-1/2}(\RRRwo)}.
\end{align*}
This proves that 
\begin{align}
\Vert u^{[0]} \Vert^2_{H^0_{-1/2}(\RRRwo)} \lesssim \isinf \frac{1}{r^2} \left( \isr (r')^2 h^{[0]} dr' \right)^2 dr. \label{eq:1mars2}
\end{align}
A similar integration by parts shows that
\begin{align} \begin{aligned}
\isinf \frac{1}{r^2} \left( \isr (r')^2 h^{[0]} dr' \right)^2 dr &\lesssim \isinf  r^4 \left( h^{[0]}\right)^2 dr\\
&= \Vert h^{[0]} \Vert^2_{\ol{H}^{0}_{-5/2}}.
\end{aligned}\label{1marseq1} \end{align}
Together, \eqref{eq:1mars2} and \eqref{1marseq1} prove \eqref{eq:av468j1}.\\

We now prove \eqref{eq:may241}. By differentiating \eqref{eq:av45} in $r$, it follows that on $r>1$
\begin{align*}
 \pr_r u^{[0]} = -\frac{1}{r} u^{[0]} + \frac{1}{r^2} \isr (r')^2 h^{[0]} dr'.
\end{align*}
Therefore, by using \eqref{eq:av468j1} and \eqref{1marseq1},
\begin{align*}
\Vert \pr_r u^{[0]} \Vert_{H^0_{-3/2}(\RRRwo)} &\leq \Vert u^{[0]} \Vert_{H^0_{-1/2}(\RRRwo)} + \left\Vert  \frac{1}{r^2} \isr (r')^2 h^{[0]} dr' \right\Vert_{H^0_{-3/2}(\RRRwo)}\\
&=\Vert u^{[0]} \Vert_{H^0_{-1/2}(\RRRwo)} + \left\Vert  \frac{1}{r} \isr (r')^2 h^{[0]} dr' \right\Vert_{H^0_{-1/2}(\RRRwo)}\\
&\lesssim \Vert h^{[0]} \Vert_{H^0_{-5/2}(\RRRwo)}.
\end{align*} 
This proves \eqref{eq:may241}.\\

By the defining ODE \eqref{def:Dez20u0}, the previous estimates \eqref{eq:av468j1}, \eqref{eq:may241} imply \eqref{eq:8j3rd8j}. This finishes the proof of \eqref{necestmars2u0} in the case $w=2$.\\

We turn now to the case $w>2$ of \eqref{necestmars2u0}. Higher radial derivatives can be estimated by differentiating the defining ODE \eqref{def:Dez20u0} in $r$. Tangential regularity is trivial because $u^{[0]}$ is radial. This proves \eqref{necestmars2u0} for $w\geq2$.\\

It remains to show that $u^{[0]} \in \ol{H}^{w}_{-1/2}(\RRRwo)$. Indeed, this follows by \eqref{def:Dez20u0} and Proposition \ref{prop:TrivialExtensionRegularity}. This finishes the control of $u^{[0]}$. \\


{ \bf Regularity and boundary behaviour of $\xi$.} We prove now that for $w\geq2$
\begin{align}
\Vert \xi \Vert_{\HH^w_{-1/2}(\RRRwo)} \lesssim \Vert h^{[\geq1]} \Vert_{\ol{H}^{w-2}_{-5/2}}+ C_w \Vert h^{[\geq1]} \Vert_{\ol{H}^{0}_{-5/2}}. \label{highermars3}
\end{align}

First, we claim that 
\begin{align}
\Vert \xi \Vert^2_{\HH^0_{-1/2}(\RRRwo)} + \Vert \Nd \xi \Vert^2_{\HH^0_{-3/2}(\RRRwo)} \lesssim \Vert h^{[\geq1]} \Vert_{\ol{H}^{0}_{-5/2}}. \label{mars1est4} 
\end{align}
Indeed, by \eqref{eq:defnBelmE}, $\xi$ solves on each $S_r$, $r\geq1$,
\begin{align*}
\Divd \xi &= \frac{1}{r^3} \isr (r')^3 \zeta^{[\geq1]} dr',\\
\Curld \xi &= 0. 
\end{align*}
Therefore, by Proposition \ref{prop:EllipticityHodgejan}, for all $r\geq1$,
\begin{align} \label{ellestxi13}
\sr \vert \Nd \xi \vert^2 + \frac{1}{r^2} \vert \xi \vert^2 = \sr (\Divd \xi)^2.
\end{align}
We can estimate
\begin{align*}
&\Vert \xi \Vert_{\HH^0_{-1/2}(\RRRwo)}^2 + \Vert \Nd \xi \Vert_{\HH^0_{-3/2}(\RRRwo)}^2 \\
=& \isinf \sr \frac{1}{r^6} \left( \isr (r')^3 \zeta^{[\geq1]} \right)^2dr \\
=& \int\limits_{\mathbb{S}^2} \isinf \frac{1}{r^4} \left( \isr (r')^3 \zeta^{[\geq1]} dr' \right)^2 dr\\
=& \int\limits_{\mathbb{S}^2} \left[ -\frac{1}{3r^3} \left( \isr (r')^3 \zeta^{[\geq1]} dr' \right)^2 \right]_1^\infty\\
& + \frac{2}{3} \int\limits_{\mathbb{S}^2} \isinf \frac{1}{r^3} \left( r^3 \zeta^{[\geq1]} \right) \left( \isr (r')^3 \zeta^{[\geq1]} dr' \right) dr.
\end{align*} 
The first term on the right-hand side is non-positive and discarded. The second term can be estimated by Cauchy-Schwarz as
\begin{align*}
&\int\limits_{\RRRwo} \frac{1}{r^5} \left( r^3 \zeta^{[\geq1]} \right) \left( \isr (r')^3 \zeta^{[\geq1]} dr' \right) \\
\leq& \left( \, \int\limits_{\RRRwo} \frac{1}{r^6} \left( \isr (r')^3 \zeta^{[\geq1]} \right)^2 \right)^{1/2} \left( \, \int\limits_{\RRRwo} r^2 \left( \zeta^{[\geq1]} \right)^2 \right)^{1/2}\\
=& \left( \,\int\limits_{\RRRwo} (\Divd \xi)^2 \right)^{1/2} \left( \, \int\limits_{\RRRwo} r^2 \left( \zeta^{[\geq1]} \right)^2 \right)^{1/2} \\
=& \left( \, \Vert \xi \Vert_{\HH^0_{-1/2}(\RRRwo)}^2 + \Vert \Nd \xi \Vert_{\HH^0_{-3/2}(\RRRwo)}^2 \right)^{1/2} \Vert \zeta^{[\geq1]} \Vert_{\ol{H}^0_{-5/2}},
\end{align*}
where we used \eqref{ellestxi13}. This shows that
\begin{align*}
\Vert \xi \Vert_{\HH^0_{-1/2}(\RRRwo)} &+ \Vert \Nd \xi \Vert_{\HH^0_{-3/2}(\RRRwo)}  \lesssim  \Vert \zeta^{[\geq1]} \Vert_{\ol{H}^0_{-5/2}}.
\end{align*}
By Proposition \ref{prop:feb24est1}, this proves \eqref{mars1est4}.\\

Next, we consider radial regularity of order $1$. We claim that
\begin{align}
\Vert \Ndn \xi \Vert_{\HH^0_{-3/2}(\RRRwo)}^2 + \Vert \Nd \Ndn \xi \Vert_{\HH^0_{-5/2}(\RRRwo)}^2 \lesssim \Vert h^{[\geq1]} \Vert^2_{\ol{H}^0_{-5/2}}. \label{eq:8j2estxi} 
\end{align}
Indeed, by Lemma \ref{lem:commutationrelation}, it follows that on $r>1$
\begin{align*}
r \Divd \Ndn \xi &= \pr_r (r \Divd \xi)\\
&= \pr_r \left( \frac{1}{r^2} r^3 \Divd \xi \right)\\
&= -2 \Divd \xi + r \left( \frac{1}{r^3} \pr_r \left( r^3 \Divd \xi \right) \right),\\
\Curld \Ndn \xi &= 0
\end{align*}
Therefore $\Ndn \xi$ solves on each $S_r$, $r\geq1$, the Hodge system
\begin{align*}
\Divd \Ndn \xi &=-\frac{2}{r} \Divd \xi + \zeta^{[\geq1]},\\
\Curld \Ndn \xi &= 0.
\end{align*}
By Propositions \ref{prop:EllipticityHodgejan} and \ref{prop:feb24est1}, this proves \eqref{eq:8j2estxi}.\\

Similarly, we have the higher radial regularity for each $w\geq2$,
\begin{align} \label{radreg1}
\Vert \Nd_N^{w} \xi \Vert_{\HH^0_{-1/2-w}(\RRRwo)}^2 \lesssim \Vert h^{[\geq1]} \Vert^2_{\ol{H}^{w-2}_{-5/2}}+C_w \Vert h^{[\geq1]} \Vert_{\ol{H}^{0}_{-5/2}}.
\end{align}
This follows by an induction in $w\geq2$, using that by
\begin{align*}
\Divd \left( \Nd_n^w \xi \right) =& \frac{1}{r} \pr_r^w (r \Divd \xi)\\
=& \frac{1}{r} \pr_r^{w-1} \left( -2 \Divd \xi + r \zeta^{[\geq2]} \right)\\
=& -2 \Divd \left( \Nd_N^{w-1} \left( \frac{1}{r} \xi \right) \right) + \frac{1}{r} \pr_r^{w-1} \left( \frac{1}{r} \zeta^{[\geq1]} \right)
\end{align*}
so that we have
\begin{align} \label{normalderxiw}
\Nd_n^w \xi =& -2 \Nd_n^{w-1}\left( \frac{1}{r} \xi \right) + \Nd_N^{w-2} \left( \frac{1}{r} \cDd_1^{-1}(\zeta^{[\geq1]}, 0) \right) \\
&+ \Nd_N^{w-2} \left( \cDd_1^{-1}(\pr_r \zeta^{[\geq1]},0 ) \right),
\end{align}
where the last term is controlled by Proposition \ref{prop:feb24est1}. \\

We turn now to tangential regularity. We claim first that
\begin{align}
\Vert \Nd \Nd \xi \Vert_{\HH^0_{-5/2}(\RRRwo)} \lesssim \Vert h^{[\geq1]} \Vert_{\ol{H}^{0}_{-5/2}}, \label{8jeq1}.
\end{align}
By Proposition \ref{prop:Howtoestimatefunctions}, it suffices to estimate for $l\geq1, m \in \{ -l, \dots l \}$
\begin{align}
\isinf r^2 \left(\frac{l(l+1)}{r^2} \xi_E^{(lm)} \right)^2 dr  &\lesssim \isinf r^2 (h^{(lm)})^2 dr, \label{est:3mars2}. 
\end{align}
Definition \eqref{eq:defnBelmE} is in the Hodge-Fourier formalism equivalent to the following expression for $\xi_E^{(lm)}$, $l\geq1, m \in \{ -l, \dots l \}$,
\begin{align}
\frac{\sqrt{l(l+1)}}{r}\xi_E^{(lm)} = \frac{1}{r^2} \left( \isr (r')^2 \zeta^{(lm)}dr' \right). \label{eq:defininingmars41}
\end{align}
Rewrite first 
\begin{align} \begin{aligned}
\isinf r^2 \left(  \frac{l(l+1)}{r^2} \right)^2 \left( \xi_E^{(lm)} \right)^2 dr &= l(l+1) \isinf \frac{1}{r^4} \left( \isr (r')^2 \zeta^{(lm)} dr' \right)^2dr \\
&= l(l+1) \int\limits_1^{1+\frac{1}{l}} \frac{1}{r^4} \left( \isr (r')^2 \zeta^{(lm)} dr' \right)^2 dr,
\end{aligned} \label{eq:estimateZETALMtang} \end{align}
where in the last integral we bounded the domain of integration by combining the integral identity \eqref{feb25id2} and the fact that $$\mathrm{supp } \zeta^{(lm)}\subset \left[ 1, 1+ \frac{1}{ l} \right].$$
By \eqref{def:zet1feb24} and \eqref{def:zet2feb24}, the right-hand side of \eqref{eq:estimateZETALMtang} can be estimated by
\begin{align} \begin{aligned}
&l(l+1) \int\limits_1^{1+\frac{1}{l}} \frac{1}{r^4} \left( \isr (r')^2 \zeta^{(lm)} dr' \right)^2dr  \\
\lesssim& l(l+1) \int\limits_1^{1+\frac{1}{l}} \frac{1}{r^4} \left( \isr  c^{(lm)} (r')^{ \sqrt{l(l+1)/2}+1} \pr_r \left( \chi (l(r-1) ) \right) dr' \right)^2 dr \\
&+ l(l+1) \int\limits_1^{1+\frac{1}{l}} \frac{1}{r^4} \left(  \isr \tilde{c}^{(lm)} (r')^{ \sqrt{l(l+1)/2} +1 } \pr_r^2 \left( \chi (l(r-1)) \right) dr' \right)^2 dr.
\end{aligned}\label{mars3est6} \end{align}
The first term on the right-hand side is estimated as
\begin{align*}
 &l(l+1) \int\limits_1^{1+\frac{1}{l}} \frac{1}{r^4} \left( \isr  c^{(lm)} (r')^{\sqrt{l(l+1)/2}+1} \pr_r \left( \chi (l(r-1) ) \right) dr' \right)^2 dr \\
  \lesssim& l(l+1) \int\limits_1^{1+\frac{1}{l}} \frac{1}{r^4} \left( c^{(lm)} \right)^2 l ^2 r^{2 \sqrt{l(l+1)/2} +2} \left( \int\limits_1^{1+\frac{1}{l}} dr'  \right)^2 dr \\
 \lesssim& l(l+1) \left( c^{(lm)} \right)^2 \int\limits_1^{1+\frac{1}{l}} r^{2 \sqrt{l(l+1)/2}-2} dr \\
 \lesssim& \isinf \left( r h^{(lm)} \right)^2 dr,
\end{align*}
where we used \eqref{eq:estc1feb25} and the fact that $l\geq1$. The second term on the right-hand side of \eqref{mars3est6} is estimated similarly by using \eqref{eq:estc2feb25}, this is left to the reader. This proves \eqref{est:3mars2} and therefore \eqref{8jeq1}.\\

We also have the higher tangential regularity
\begin{align} \label{highertangreg1}
\Vert \Nd^w \xi \Vert_{\HH^{0}_{-1/2-w}(\RRRwo)} \lesssim \Vert h^{[\geq1]} \Vert_{\ol{H}^{w-2}_{-5/2}}+ C_w \Vert h^{[\geq1]} \Vert_{\ol{H}^{0}_{-5/2}}.
\end{align}
Indeed, in the Hodge-Fourier formalism, see \eqref{eq:defininingmars41},
\begin{align*}
\left\vert \left( \frac{\sqrt{l(l+1)}}{r} \right)^w \xi_E^{(lm)} \right\vert =& \frac{1}{\sqrt{l(l+1)} r} \left\vert \isr (r')^2 \left( \frac{\sqrt{l(l+1)}}{r} \right)^w \zeta^{(lm)}dr' \right\vert\\
\lesssim&  \frac{1}{\sqrt{l(l+1)} r} \left\vert \isr (r')^2 \left( \frac{\sqrt{l(l+1)}}{r'} \right)^w \zeta^{(lm)}dr' \right\vert.
\end{align*}
Together with the higher regularity of $\zeta^{[\geq1]}$ provided by Proposition \ref{prop:feb24est1}, similar estimates as for \eqref{8jeq1} imply \eqref{highertangreg1}.\\

To summarise, \eqref{mars1est4}, \eqref{eq:8j2estxi}, \eqref{radreg1}, \eqref{8jeq1} and \eqref{highertangreg1} imply \eqref{highermars3} for $w\geq2$.\\

It remains to show that $\xi \in \ol{\HH}^{w}_{-1/2}$. This follows by induction from \eqref{normalderxiw} and the fact that $\zeta^{[\geq1]}, \cDd_1^{-1} (\pr_r \zeta^{[\geq1]},0) \in \HHno$ together with Proposition \ref{prop:TrivialExtensionRegularity}. This finishes the proof of Proposition \ref{prop:feb242p}. \end{proof}


\section{Proof of the main Theorem \ref{MainTheorem12} } \label{sec:Iteration}

In this section, we prove the Main Theorem \ref{MainTheorem12}. The idea of the proof is to use Theorems \ref{thm:kextension1} and \ref{thm:MainExtensionScalarTheorem} to set up an iterative scheme. We show that this scheme is well-defined and converges to a fixed point which solves the maximal constraint equations on $\RRR^3$. In Section \ref{SectionHigherreg225342}, we prove higher regularity estimates \eqref{estimateshigherreg24352}.


\subsection{Setup of the iterative scheme} \label{sec:iterationdef}

In this section, we define a sequence of pairs $(g_i, k_i)_{i\geq1}$, where for each $i\geq1$, $g_i$ is an $\HH^2_{-1/2}$-asymptotically flat metric and $k_i \in \HH^{1}_{-3/2}$ a symmetric $2$-tensor on $\RRR^3$.\\

Let $\varep >0$ be a small constant to be determined later. Let $(\bar g, \bar k) \in \HH^w(B_1) \times \HH^{w-1}(B_1)$ be a solution to the maximal constraint equations on $B_1$,
\begin{align} \begin{aligned}
R(\bar g) &= \vert \bar k \vert_{\bar g}^2,\\
\Div_{\bar g} \bar k &= 0, \\
\tr_{\bar g } \bar k&=0
\end{aligned} \label{maximaleqfeb22}
\end{align}
such that 
\begin{align*}
\Vert (\bar g-e, \bar k) \Vert_{\HH^2(B_1) \times \HH^{1}(B_1)} < \varep.
\end{align*}

\begin{itemize}
\item {\bf Definition of $(g_1,k_1)$.} By Proposition \ref{functionextcts}, extend $\bar g$ from $B_1$ to an $\HH^2_{\de}$-asymptotically flat metric $g_1$ on $\RRR^3$ such that
\begin{align*}
\Vert g_1 - e \Vert_{\HH^2_{-1/2}} \lesssim \Vert \bar g - e \Vert_{\HH^2(B_1)}.
\end{align*}
Similarly, extend $\bar k$ from $B_1$ to a symmetric $g_1$-tracefree $2$-tensor $k_1 \in \HH^1_{-3/2}$ such that
\begin{align}
\Vert k_1 \Vert_{\HH^1_{-3/2}} \lesssim \Vert k \Vert_{\HH^{1}(B_1)};\label{eq:estk1feb22}
\end{align}
see Lemma \ref{remark:Bounded}.

\item {\bf Definition of $(g_{i+1}-e,k_{i+1})$ for $i \geq1$.} Given $(g_{i}, k_{i})$, define $(g_{i+1},k_{i+1})$ as follows.\\

\noindent First, let $g_{i+1}$ be the $\HH^2_{-1/2}$-asymptotically flat metric on $\RRR^3$ constructed by Theorem \ref{thm:MainExtensionScalarTheorem} such that 
\begin{align*}
g_{i+1}\vert_{B_1} &= \bar g, \\
R(g_{i+1}) &= \vert k_i \vert_{g_i}^2 \,\,\, \text{on } \RRR^3.
\end{align*}
Here we assumed that $\Vert (g_{i}-e, k_{i} )\Vert_{\HH^2_{-1/2} \times \HH^1_{-3/2}}$ is sufficiently small.\\

\noindent Second, let $k_{i+1} \in \HH^1_{-3/2}$ be the symmetric $2$-tensor on $\RRR^3$ constructed by Theorem \ref{thm:kextension1} such that $$k_{i+1} \vert_{B_1} = \bar k$$ and on $\RRR^3$
\begin{align*} 
\Div_{g_{i+1}} \left( k_{i+1} \right) &= 0, \\
\tr_{g_{i+1}} \left( k_{i+1} \right) &=0.
\end{align*}
Here we assumed further that $\Vert g_{i+1}-e \Vert_{\HH^2_{-1/2}}$ is sufficiently small.
\end{itemize}

In the next section we prove that for $\varep>0$ small enough, the above smallness assumptions are satisfied for all $i\geq1$. In particular, that the sequence is well-defined.

\subsection{Convergence of the iterative scheme}

In this section, we combine the iteration estimates of Theorems \ref{thm:kextension1} and \ref{thm:MainExtensionScalarTheorem} to prove estimates for the iteration scheme defined above in Section \ref{sec:iterationdef}. These estimates then directly imply uniform boundedness and convergence of the sequence $(g_i, k_i)_{i\geq1}$.\\

The next proposition is the main result of this section.
\begin{proposition} \label{seq:estifeb221}
Let $w\geq2$ be an integer. There exist universal constants $\varep>0,c>0$ such that if for an $i\geq2$ it holds that
\begin{align*}
\Vert (g_i-e,k_i) \Vert_{\HH^2_{-1/2} \times \HH^1_{-3/2}}< \varep, \Vert(g_{i-1}-e, k_{i-1}) \Vert_{\HH^2_{-1/2} \times \HH^1_{-3/2}} < \varep,
\end{align*} 
then
\begin{align} \begin{aligned}
&\Vert (g_{i+1}, k_{i+1}) -(g_{i}, k_{i})\Vert_{\HH^2_{-1/2} \times \HH^1_{-3/2}} \\
\leq& c \left( \Vert (g_i-e,k_i) \Vert_{\HH^2_{-1/2} \times \HH^1_{-3/2}} + \Vert (g_{i-1}-e, k_{i-1}) \Vert_{\HH^2_{-1/2} \times \HH^1_{-3/2}}  \right) \\
&  \times\Vert (g_i,k_i)-(g_{i-1}, k_{i-1} ) \Vert_{\HH^2_{-1/2} \times \HH^1_{-3/2}}.
\end{aligned} \label{est:feb221} \end{align}
\end{proposition}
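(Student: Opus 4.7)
The plan is to derive \eqref{est:feb221} by assembling the two iteration estimates in parts (2) of Theorems \ref{thm:kextension1} and \ref{thm:MainExtensionScalarTheorem} and closing the loop with a product/telescoping estimate for the nonlinearity $|k|_g^2$ that feeds the scalar curvature equation.

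First I would bound $\|g_{i+1}-g_i\|_{\HH^2_{-1/2}}$. By construction both $g_{i+1}$ and $g_i$ restrict to $\bar g$ on $B_1$ and have scalar curvatures $R(g_{i+1})=|k_i|_{g_i}^2$, $R(g_i)=|k_{i-1}|_{g_{i-1}}^2$ on $\RRR^3$, which coincide on $B_1$ with $R(\bar g)=|\bar k|_{\bar g}^2$ (since $k_j|_{B_1}=\bar k$ for all $j$ and $g_j|_{B_1}=\bar g$). Provided $\varep$ is small enough that the hypotheses of Theorem \ref{thm:MainExtensionScalarTheorem} are satisfied for both pairs, part (2) of that theorem gives
\begin{align*}
\|g_{i+1}-g_i\|_{\HH^2_{-1/2}} \lesssim \big\||k_i|_{g_i}^2 - |k_{i-1}|_{g_{i-1}}^2\big\|_{H^0_{-5/2}}.
\end{align*}

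Next I would control the right-hand side by the telescoping decomposition
\begin{align*}
|k_i|_{g_i}^2 - |k_{i-1}|_{g_{i-1}}^2 = \big(g_i^{-1}\otimes g_i^{-1} - g_{i-1}^{-1}\otimes g_{i-1}^{-1}\big):k_i\otimes k_i + g_{i-1}^{-1}\otimes g_{i-1}^{-1}:(k_i\otimes k_i - k_{i-1}\otimes k_{i-1}),
\end{align*}
splitting the second tensor difference as $(k_i-k_{i-1})\otimes k_i + k_{i-1}\otimes(k_i-k_{i-1})$. Applying the product estimates of Lemma \ref{ProductEstimates} together with the Lipschitz bound for the inverse metric map in Lemma \ref{GInverseAnalysis} yields
\begin{align*}
\big\||k_i|_{g_i}^2 - |k_{i-1}|_{g_{i-1}}^2\big\|_{H^0_{-5/2}} \lesssim \|k_i\|^2_{\HH^1_{-3/2}}\|g_i-g_{i-1}\|_{\HH^2_{-1/2}} + \big(\|k_i\|_{\HH^1_{-3/2}}+\|k_{i-1}\|_{\HH^1_{-3/2}}\big)\|k_i-k_{i-1}\|_{\HH^1_{-3/2}}.
\end{align*}
Combined with the previous step, and using $\|k_i\|^2_{\HH^1_{-3/2}}\leq\varep\,\|k_i\|_{\HH^1_{-3/2}}$, this already controls $\|g_{i+1}-g_i\|_{\HH^2_{-1/2}}$ by the factor $(\|k_i\|_{\HH^1_{-3/2}}+\|k_{i-1}\|_{\HH^1_{-3/2}})$ times the full difference $\|(g_i,k_i)-(g_{i-1},k_{i-1})\|$.

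For $\|k_{i+1}-k_i\|_{\HH^1_{-3/2}}$, I would invoke part (2) of Theorem \ref{thm:kextension1}: $k_{i+1}$ and $k_i$ are both constructed from the same boundary datum $\bar k$ with the metrics $g_{i+1}$ and $g_i$ which agree on $B_1$, so (for $\varep$ small)
\begin{align*}
\|k_{i+1}-k_i\|_{\HH^1_{-3/2}} \lesssim \|\bar k\|_{\HH^1(B_1)}\|g_{i+1}-g_i\|_{\HH^2_{-1/2}}.
\end{align*}
Since $\bar k=k_i|_{B_1}$, the trace inequality gives $\|\bar k\|_{\HH^1(B_1)}\lesssim\|k_i\|_{\HH^1_{-3/2}}$, so this contribution is likewise bounded by $(\|k_i\|_{\HH^1_{-3/2}}+\|k_{i-1}\|_{\HH^1_{-3/2}})$ times the previous difference. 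Summing the two estimates, absorbing the universal implicit constant into $c$, and using smallness of $\varep$ to replace $(S_i+S_{i-1})^2$ by $(S_i+S_{i-1})$ where needed, gives \eqref{est:feb221}.

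The only non-routine step is the product decomposition in paragraph two, since it must simultaneously extract a factor of $\|k\|$ (to recover the prefactor $(S_i+S_{i-1})$ on the right of \eqref{est:feb221}) and remain linear in the differences $g_i-g_{i-1}$ and $k_i-k_{i-1}$; everything else is a direct application of the two previously proved iteration theorems.
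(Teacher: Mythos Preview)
Your proof is correct and follows essentially the same approach as the paper: both apply part (2) of Theorem \ref{thm:MainExtensionScalarTheorem} to bound $\|g_{i+1}-g_i\|$ in terms of $\||k_i|^2_{g_i}-|k_{i-1}|^2_{g_{i-1}}\|_{H^0_{-5/2}}$, telescope this nonlinearity into a metric-difference piece and a $k$-difference piece controlled by product estimates, and then apply part (2) of Theorem \ref{thm:kextension1} for $\|k_{i+1}-k_i\|$. The only cosmetic difference is that the paper packages the metric-variation step into the auxiliary Lemma \ref{ksquareestifeb22} and uses the algebraic identity $|k_i|^2_{g_i}-|k_{i-1}|^2_{g_i}=(k_i-k_{i-1})^{ab}(k_i+k_{i-1})_{ab}$, whereas you write out the inverse-metric tensor decomposition explicitly; the content is the same.
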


Before proving Proposition \ref{seq:estifeb221}, we state the following technical lemma. Its proof is based on Lemma \ref{ProductEstimates}, see also Lemma \ref{GInverseAnalysis}, and left to the reader.
\begin{lemma} \label{ksquareestifeb22}
Let $g$ and $\tilde g$ be two $\HH^2_{-1/2}$-asymptotically flat metrics on $\RRR^3$. There exists universal constant $\varep>0$ such that if 
\begin{align*}
\Vert g-e \Vert_{\HH^2_{-1/2}} < \varep, \Vert \tilde g-e \Vert_{\HH^2_{-1/2}} < \varep, 
\end{align*}
then for all symmetric $2$-tensors $V \in \HHm$,
\begin{align*}
\Vert \vert V \vert_g^2- \vert V \vert_{\tilde{g}}^2 \Vert_{H^{0}_{-5/2}} \lesssim \Vert g- \tilde{g} \Vert_{\HH^2_{-1/2}} \Vert V \Vert_{\HH^1_{-3/2}}^2.
\end{align*}
\end{lemma}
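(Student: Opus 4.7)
The plan is to reduce the estimate to weighted product inequalities after an algebraic decomposition of the difference of quadratic forms. Telescoping yields
\begin{align*}
|V|_g^2 - |V|_{\tilde g}^2 = (g^{ia} - \tilde g^{ia})\, g^{jb}\, V_{ij} V_{ab} + \tilde g^{ia}\, (g^{jb} - \tilde g^{jb})\, V_{ij} V_{ab},
\end{align*}
so it suffices to control each trilinear term on the right in $H^0_{-5/2}$. By Lemma \ref{GInverseAnalysis}(2)--(3), for $\varep$ small one has $\|g^{-1}-\tilde g^{-1}\|_{\HH^2_{-1/2}} \lesssim \|g-\tilde g\|_{\HH^2_{-1/2}}$, and $\|g^{-1}-e\|_{\HH^2_{-1/2}} + \|\tilde g^{-1}-e\|_{\HH^2_{-1/2}} \lesssim \varep$. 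Writing $g^{-1} = e + h$ and $\tilde g^{-1} = e + \tilde h$ with $h,\tilde h \in \HH^2_{-1/2}$, each trilinear term decomposes further into a leading piece of the form $(h - \tilde h)^{ia} e^{jb} V_{ij} V_{ab}$ plus quartic corrections that each carry an additional small factor of $h$ or $\tilde h$.

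For the leading piece I would use the Sobolev embedding of Lemma \ref{SobolevEmbeddingsAndNonlinear} to obtain the pointwise bound $|h - \tilde h|(x) \lesssim (1+r)^{-1/2}\,\|g - \tilde g\|_{\HH^2_{-1/2}}$. Since the $H^0_{-5/2}$-norm squared amounts to weighting by $(1+r)^2$ in $L^2$, this reduces matters to controlling $\int (1+r)\,|V|^4\,dx$. The latter is furnished by Lemma \ref{ProductEstimates}(1), which gives the product estimate $\|V \otimes V\|_{H^0_{-3}} \lesssim \|V\|_{\HH^1_{-3/2}}^2$, equivalently $\int (1+r)^3 |V|^4\,dx \lesssim \|V\|_{\HH^1_{-3/2}}^4$. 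Since $(1+r) \le (1+r)^3$ on $\RRR^3$, a fortiori $\int (1+r)|V|^4 dx \lesssim \|V\|_{\HH^1_{-3/2}}^4$, and combining produces the bound $\|g-\tilde g\|_{\HH^2_{-1/2}} \|V\|_{\HH^1_{-3/2}}^2$ for the leading piece. The quartic corrections are handled identically and, carrying one extra factor of $h$ or $\tilde h$, come with an additional $\varep$ and are absorbed.

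No real obstacle arises; the argument is essentially weight bookkeeping. The pointwise $(1+r)^{-1/2}$ decay of $g - \tilde g$ extracted from Sobolev embedding is indispensable, since Lemma \ref{ProductEstimates} does not directly accommodate a triple product in which one factor lies only in $H^0$ rather than $H^1$. The loss of one weight power is compensated precisely by the $(1+r)^{3/2}$ budget from the $H^0_{-3}$ estimate for $V\otimes V$, which matches the $(1+r)$ weight required by $H^0_{-5/2}$; the smallness constant $\varep$ enters only to legitimise Lemma \ref{GInverseAnalysis} and to absorb the quartic correction terms.
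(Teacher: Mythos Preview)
Your proof is correct and follows essentially the approach the paper indicates (which merely cites Lemmas~\ref{ProductEstimates} and~\ref{GInverseAnalysis} and leaves details to the reader). One minor remark: your detour through the pointwise Sobolev embedding of Lemma~\ref{SobolevEmbeddingsAndNonlinear} is not strictly necessary---by grouping $(h-\tilde h)\cdot V$ first and applying Lemma~\ref{ProductEstimates}(2) with $w=2$ to land in $H^1_{-2}$, then applying Lemma~\ref{ProductEstimates}(1) against the remaining factor of $V$ to land in $H^0_{-7/2}\subset H^0_{-5/2}$, one stays entirely within the product estimates---but your route is equally valid and the weight bookkeeping is clean.
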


We turn now to the proof of Proposition \ref{seq:estifeb221}.
\begin{proof}[Proof of Proposition \ref{seq:estifeb221}] On the one hand, for $\varep>0$ sufficiently small, it follows by the iteration estimates of Theorem \ref{thm:MainExtensionScalarTheorem} and Lemmas \ref{ksquareestifeb22} that 
\begin{align}  \begin{aligned}
\Vert g_{i+1} - g_{i} \Vert_{\HH^2_{-1/2}} &\lesssim \Vert R(g_{i+1}) - R(g_i) \Vert_{H^{0}_{-5/2}}\\
&\lesssim \Vert \vert k_i \vert_{g_i}^2 - \vert k_{i-1} \vert_{g_{i-1}}^2 \Vert_{H^{0}_{-5/2}(\RRR^3)} \\
&\lesssim \Vert \vert k_i \vert_{g_i}^2 - \vert k_{i-1} \vert_{g_{i}}^2 \Vert_{H^{0}_{-5/2}(\RRR^3)} + \Vert \vert k_{i-1} \vert_{g_i}^2 - \vert k_{i-1} \vert_{g_{i-1}}^2 \Vert_{H^{0}_{-5/2}(\RRR^3)} \\
&\lesssim \Vert \vert k_i \vert_{g_i}^2 - \vert k_{i-1} \vert_{g_{i}}^2 \Vert_{H^{0}_{-5/2}(\RRR^3)} + \Vert k_{i-1} \Vert_{\HH^1_{-3/2}}^2 \Vert g_i - g_{i-1} \Vert_{\HH^2_{-1/2}}
\end{aligned} \label{star11} \end{align}
Using Lemma \ref{ProductEstimates} and the identity
\begin{align*}
\vert k_i \vert_{g_i}^2 - \vert k_{i-1} \vert_{g_{i}}^2 = (k_i - k_{i-1})^{ab} (k_i + k_{i-1} )_{ab},
\end{align*}
we have, for $\varep>0$ sufficiently small,
\begin{align*}
\Vert \vert k_i \vert_{g_i}^2 - \vert k_{i-1} \vert_{g_{i}}^2 \Vert_{H^{0}_{-5/2}(\RRR^3)} &\lesssim \Vert k_i + k_{i-1} \Vert_{\HH^1_{-3/2}} \Vert k_i - k_{i-1} \Vert_{\HH^1_{-3/2}} \\
& \lesssim \left(\Vert k_i \Vert_{\HH^1_{-3/2}} + \Vert k_{i-1} \Vert_{\HH^1_{-3/2}}\right) \Vert k_i - k_{i-1} \Vert_{\HH^1_{-3/2}}.
\end{align*}
Plugging this into \eqref{star11}, we get
\begin{align*}
&\Vert g_{i+1} - g_{i} \Vert_{\HH^2_{-1/2}} \\
\lesssim& \left( \Vert k_i\Vert_{\HH^1_{-3/2}} +\Vert k_{i-1} \Vert_{\HH^1_{-3/2}} + \Vert k_{i-1} \Vert_{\HH^1_{-3/2}}^2 \right) \Vert (g_i,k_i)- (g_{i-1}, k_{i-1}) \Vert_{\HH^2_{-1/2} \times \HH^1_{-3/2}}.
\end{align*}

On the other hand, for $\varep>0$ sufficiently small, by the iteration estimates of Theorem \ref{thm:kextension1}, 
\begin{align*}
\Vert k_{i+1} - k_i \Vert_{\HH^1_{-3/2}} &\lesssim \Vert \bar k \Vert_{\HH^{1}(B_1)} \Vert g_{i+1} - g_i \Vert_{\HH^2_{-1/2}} \\
&\lesssim \Vert k_i \Vert_{\HH^1_{-3/2}}\Vert g_{i+1} - g_i \Vert_{\HH^2_{-1/2}}.
\end{align*}

Combining the two last estimates, it follows that for $\varep>0$ sufficiently small, there exists a universal constant $c>0$ such that
\begin{align*}
&\Vert (g_{i+1},k_{i+1})- (g_{i}, k_{i}) \Vert_{\HH^2_{-1/2} \times \HH^1_{-3/2}} \\
\leq & c \left( \Vert k_i \Vert_{\HH^1_{-3/2}}+\Vert k_{i-1} \Vert_{\HH^1_{-3/2}} + \Vert k_{i-1} \Vert_{\HH^1_{-3/2}}^2 \right) \Vert (g_i,k_i)- (g_{i-1}, k_{i-1}) \Vert_{\HH^2_{-1/2} \times \HH^1_{-3/2}}.
\end{align*}
This finishes the proof of Proposition \ref{seq:estifeb221}. \end{proof}

Proposition \ref{seq:estifeb221} implies that for sufficiently small data on $B_1$, the iteration is a contraction mapping and hence converges to a fixed point. For completeness, we write out details. \\

First, we to prove that the sequence is well-defined and derive a uniform estimate.
\begin{lemma}\label{cor:feb22}
Let $w\geq2$ be an integer. There is a universal $\varep>0$ small enough such that if 
\begin{align*}
\Vert (\bar g-e, \bar k) \Vert_{\HH^2(B_1) \times \HH^{1}(B_1)} < \varep,
\end{align*}
then the sequence $(g_i, k_i)_{i\geq1}$ is well-defined and for all $i \geq2$,
\begin{align}
\Vert (g_{i+1},k_{i+1})- (g_{i}, k_{i}) \Vert_{\HH^2_{-1/2} \times \HH^1_{-3/2}} \leq \frac{1}{4} \Vert ( g_i,k_i ) -  (g_{i-1}, k_{i-1}) \Vert_{\HH^2_{-1/2} \times \HH^1_{-3/2}}. \label{eqstfebb3}
\end{align}
Furthermore, it is uniformly bounded by
\begin{align*}
\Vert (g_i,k_i) \Vert_{\HH^2_{-1/2} \times \HH^1_{-3/2}} \lesssim \Vert (\bar g-e, \bar k) \Vert_{\HH^2(B_1) \times \HH^{1}(B_1)}.
\end{align*} 
\end{lemma}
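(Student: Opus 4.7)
The argument is a standard contraction/bootstrap based on Proposition~\ref{seq:estifeb221}. Write $\Vert(g,k)\Vert := \Vert(g-e,k)\Vert_{\HH^2_{-1/2}\times\HH^1_{-3/2}}$ and let $c>0$ be the constant from Proposition~\ref{seq:estifeb221}. I will choose $\varep_0>0$ smaller than the thresholds required by Theorems~\ref{thm:kextension1} and~\ref{thm:MainExtensionScalarTheorem}, by Proposition~\ref{seq:estifeb221}, and by Lemma~\ref{ksquareestifeb22}, and furthermore so small that $8c\cdot K\varep_0 \leq 1$, where $K$ is the universal constant implicit in the extension estimates below.

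\textbf{Base case.} From the construction of $(g_1,k_1)$ via Proposition~\ref{functionextcts} and Corollary~\ref{remark:Bounded}, one has directly
\[
\Vert (g_1,k_1)\Vert \leq K\Vert(\bar g-e,\bar k)\Vert_{\HH^2(B_1)\times\HH^1(B_1)} \leq K\varep.
\]
For $\varep$ small the smallness hypotheses of Theorems~\ref{thm:kextension1} and~\ref{thm:MainExtensionScalarTheorem} are satisfied (the source term $|k_1|_{g_1}^2$ in the scalar curvature equation is quadratically small in $\varep$ by Lemma~\ref{ksquareestifeb22} applied with $\tilde g=e$, $V=k_1$), so $(g_2,k_2)$ is well-defined and
\[
\Vert(g_2,k_2)\Vert \leq K\bigl(\Vert(\bar g-e,\bar k)\Vert + \Vert|k_1|_{g_1}^2\Vert_{H^0_{-5/2}}\bigr) \leq K\varep + K^3\varep^2 \leq 2K\varep,
\]
provided $\varep$ is small enough. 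Hence $\Vert(g_2,k_2)-(g_1,k_1)\Vert \leq 3K\varep$.

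\textbf{Inductive step.} Assume that for some $i\geq 2$, the pairs $(g_{i-1},k_{i-1})$ and $(g_i,k_i)$ are both defined and satisfy $\Vert(g_j,k_j)\Vert \leq 4K\varep$ for $j=i-1,i$. Choosing $\varep$ so small that $4K\varep < \varep_{\ref{seq:estifeb221}}$ (the threshold of Proposition~\ref{seq:estifeb221}) and also so that $R(g_{i+1})=|k_i|_{g_i}^2$ and $k_{i+1}$ satisfy the smallness hypotheses of Theorems~\ref{thm:MainExtensionScalarTheorem} and~\ref{thm:kextension1} (which again follows from a quadratic bound on $|k_i|_{g_i}^2$), the pair $(g_{i+1},k_{i+1})$ is well-defined, and Proposition~\ref{seq:estifeb221} yields
\[
\Vert(g_{i+1},k_{i+1})-(g_i,k_i)\Vert \leq c\bigl(\Vert(g_i,k_i)\Vert+\Vert(g_{i-1},k_{i-1})\Vert\bigr)\Vert(g_i,k_i)-(g_{i-1},k_{i-1})\Vert.
\]
Since $c\bigl(\Vert(g_i,k_i)\Vert+\Vert(g_{i-1},k_{i-1})\Vert\bigr)\leq 8cK\varep \leq \tfrac14$ by the choice of $\varep_0$, this proves~\eqref{eqstfebb3}. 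Iterating gives $\Vert(g_{i+1},k_{i+1})-(g_i,k_i)\Vert\leq (1/4)^{i-1}\Vert(g_2,k_2)-(g_1,k_1)\Vert$, and by the triangle inequality
\[
\Vert(g_{i+1},k_{i+1})\Vert \leq \Vert(g_1,k_1)\Vert+\sum_{j=1}^{i}(1/4)^{j-1}\Vert(g_2,k_2)-(g_1,k_1)\Vert \leq K\varep + \tfrac{4}{3}\cdot 3K\varep \leq 4K\varep,
\]
which closes the induction on both the well-definedness and the uniform bound (with $C_0=4K$).

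\textbf{Main obstacle.} The only delicate point is verifying, at each step, that the source data for the two extension theorems really does lie in the appropriate small ball. For Theorem~\ref{thm:MainExtensionScalarTheorem} this requires $\Vert|k_i|_{g_i}^2\Vert_{H^0_{-5/2}}<\varep$, which by Lemma~\ref{ksquareestifeb22} (with $\tilde g=e$) is quadratic in $\Vert k_i\Vert_{\HH^1_{-3/2}}$ and therefore harmless once $\varep_0$ is chosen small enough; for Theorem~\ref{thm:kextension1} we need smallness of $g_{i+1}-e$ and of $\bar k$, which follows from the uniform bound established inductively. Thus all smallness conditions bootstrap cleanly, and the contraction estimate~\eqref{eqstfebb3} together with the uniform bound $\Vert(g_i,k_i)\Vert\lesssim\Vert(\bar g-e,\bar k)\Vert_{\HH^2(B_1)\times\HH^1(B_1)}$ follows.
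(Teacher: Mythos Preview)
Your proposal is correct and follows essentially the same approach as the paper: an induction using Proposition~\ref{seq:estifeb221} for the contraction factor, the extension estimates of Theorems~\ref{thm:kextension1} and~\ref{thm:MainExtensionScalarTheorem} for the base case, and a geometric-series telescoping to close the uniform bound. The only cosmetic differences are the bookkeeping of constants (your $4K\varep$ versus the paper's summing to a universal multiple of $\Vert(\bar g-e,\bar k)\Vert$) and that you invoke Lemma~\ref{ksquareestifeb22} with $\tilde g=e$ for the quadratic smallness of $|k_i|_{g_i}^2$ where the paper uses product estimates directly; neither affects the argument.
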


\begin{proof} The proof of \eqref{eqstfebb3} goes by induction in $i\geq2$. \\

{\bf The case $i=2$.} By construction, for $\varep>0$ sufficiently small, by Theorems \ref{thm:kextension1} and \ref{thm:MainExtensionScalarTheorem},
\begin{align*} \begin{aligned}
\Vert ( g_1 - e, k_1) \Vert_{\HH^2_{-1/2} \times \HH^1_{-3/2}} &\lesssim \Vert (\bar g - e, \bar k ) \Vert_{\HH^2(B_1) \times \HH^{1}(B_1)}, \\
\Vert (g_2 -e, k_2 )\Vert_{\HH^2_{-1/2} \times \HH^1_{-3/2} } &\lesssim  \Vert  \bar g - e \Vert_{\HH^2(B_1)} + \Vert R(g_2) \Vert_{H^{0}_{-5/2}} + \Vert \bar k \Vert_{\HH^{1}(B_1)} \\
& \lesssim \Vert \bar g - e \Vert_{\HH^2(B_1)} + \Vert \vert k_1 \vert_{g_1}^2 \Vert_{H^{0}_{-5/2}} + \Vert \bar k \Vert_{\HH^{1}(B_1)} \\
&\lesssim \Vert \bar g - e \Vert_{\HH^2(B_1)} + \Vert k_1 \Vert_{\HH^1_{-3/2}}^2 + \Vert \bar k \Vert_{\HH^{1}(B_1)} \\
&\lesssim \Vert (\bar g-e ,\bar k ) \Vert_{\HH^2(B_1) \times \HH^{1}(B_1)},
\end{aligned}
\end{align*}
where we used Lemmas \ref{ProductEstimates}. By Theorems \ref{thm:kextension1} and \ref{thm:MainExtensionScalarTheorem}, for $\varep>0$ sufficiently small, $(g_3,k_3)$ is well-defined. \\

By Proposition \ref{seq:estifeb221}, there exists a universal $c>0$ such that  
\begin{align*}
&\Vert (g_3-g_2, k_3-k_2 )\Vert_{\HH^2_{-1/2} \times \HH^1_{-3/2}} \\
\leq& 3c \Vert (\bar g-e ,\bar k ) \Vert_{\HH^2(B_1) \times \HH^{1}(B_1)} \Vert (g_2-g_{1}, k_2 - k_{1} )\Vert_{\HH^2_{-1/2} \times \HH^1_{-3/2}}
\end{align*}
Let $\varep< \frac{1}{24c}$. This proves the case $i=2$.\\

{\bf The induction step $i\to i+1$.} Using the induction hypothesis, it holds for $j=i-1,i$ that
\begin{align}\begin{aligned}
&\Vert (g_{j+1}-e, k_{j+1}) \Vert_{\HH^2_{-1/2} \times \HH^1_{-3/2}} \\ 
\leq& \Vert (g_{j+1}-g_{j}, k_{j+1} - k_{j} \Vert_{\HH^2_{-1/2} \times \HH^1_{-3/2}} + \dots + \Vert (g_3-g_2,k_3-k_2) \Vert_{\HH^2_{-1/2} \times \HH^1_{-3/2}}\\
&+ \Vert (g_2-e,k_2) \Vert_{\HH^2_{-1/2} \times \HH^1_{-3/2}}\\
\leq &\sum\limits_{m=1}^{j-2}  \frac{1}{4^m}  \Vert (g_2-g_1,k_2-k_1) \Vert_{\HH^2_{-1/2} \times \HH^1_{-3/2}} +  \Vert (g_2-e,k_2) \Vert_{\HH^2_{-1/2} \times \HH^1_{-3/2}}\\
\leq &2  \Vert (g_2-g_1,k_2-k_1) \Vert_{\HH^2_{-1/2} \times \HH^1_{-3/2}}+  \Vert (g_2-e,k_2) \Vert_{\HH^2_{-1/2} \times \HH^1_{-3/2}}\\
\lesssim & \Vert (\bar g-e, \bar k) \Vert_{\HH^2(B_1) \times \HH^1(B_1)}.
\end{aligned}  \label{eq:feb225}\end{align}
This shows that $(g_{i+2},k_{i+2})$ is well-defined for $\varep>0$ sufficiently small. By applying Proposition \ref{seq:estifeb221}, there exists a universal constant $c'>0$ such that
\begin{align*}
\Vert (g_{i+2}-g_{i+1},k_{i+2}-k_{i+1}) \Vert_{\HH^2_{-1/2} \times \HH^1_{-3/2}} \leq c' \varep \Vert (g_{i+1}-g_{i}, k_{i+1}-k_i) \Vert_{\HH^2_{-1/2} \times \HH^1_{-3/2}}.
\end{align*} 
Let $\varep< \frac{1}{24c'}$. This finishes the induction step and hence the proof of \eqref{eqstfebb3}. \\

For $\varep>0$ small, we have, as in \eqref{eq:feb225}, the uniform estimate for all $i\geq1$,
\begin{align*}
\Vert (g_i, k_i) \Vert_{\HH^2_{-1/2} \times \HH^1_{-3/2}} \lesssim \Vert (\bar g-e, \bar k) \Vert_{\HH^2(B_1) \times \HH^{1}(B_1)}.
\end{align*}
This finishes the proof of Lemma \ref{cor:feb22}. \end{proof}

Lemma \ref{cor:feb22} implies convergence of the iterative scheme.
\begin{corollary} \label{lem:ConvergenceOfSeries}
There exists $\varep>0$ small such that if 
\begin{align*}
\Vert (\bar g, \bar k) \Vert_{\HH^2(B_1) \times \HH^{1}(B_1)} <\varep,
\end{align*}
then the sequence $ (g_i-e,k_i)_{i\geq 1}$ converges in $\HH^2_{-1/2} \times \HH^1_{-3/2}$ as $i\to \infty$. Its limit
\begin{align*}
(g, k) := \lim\limits_{i \to \infty} (g_i,k_i) \in \HH^2_{-1/2} \times \HH^1_{-3/2}
\end{align*}
solves the maximal constraint equations on $\RRR^3$ 
\begin{align} \begin{aligned}
R(g) &= \vert k \vert_{g}^2,\\
\Div_{g} k &= 0, \\
\tr_{g} k &=0
\end{aligned}
\label{eq:maximaleqfeb2255}  \end{align}
and satisfies $(g, k) \vert_{B_1} = (\bar g, \bar k)$. Moreover,
\begin{align*}
\Vert (g-e, k) \Vert_{\HH^2_{-1/2} \times \HH^1_{-3/2}} &\lesssim \Vert (\bar g-e, \bar k) \Vert_{H^2(B_1) \times H^{1}(B_1)}.
\end{align*}
\end{corollary}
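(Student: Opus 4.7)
The plan is to apply the standard Banach contraction argument: Lemma \ref{cor:feb22} has already established the crucial contraction and uniform-boundedness estimates, so here the task reduces to extracting the limit and verifying that the limit solves the constraint equations by continuity.

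\textbf{Convergence and bounds.} By the contraction estimate \eqref{eqstfebb3}, for $i \geq 2$ one has
\[
\Vert (g_{i+1}, k_{i+1}) - (g_i, k_i) \Vert_{\HH^2_{-1/2} \times \HH^1_{-3/2}} \leq 4^{-(i-1)} \Vert (g_2,k_2) - (g_1,k_1) \Vert_{\HH^2_{-1/2} \times \HH^1_{-3/2}},
\]
so $(g_i - e, k_i)_{i\geq1}$ is Cauchy in the Banach space $\HH^2_{-1/2} \times \HH^1_{-3/2}$ and hence converges to a limit $(g,k)$. The uniform bound of Lemma \ref{cor:feb22} passes to the limit, yielding the claimed control by $\Vert (\bar g - e, \bar k) \Vert_{\HH^2(B_1) \times \HH^1(B_1)}$. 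Since $g_i \vert_{B_1} = \bar g$ and $k_i \vert_{B_1} = \bar k$ for every $i$ by construction and the restriction map $\HH^2_{-1/2} \times \HH^1_{-3/2} \to \HH^2(B_1) \times \HH^1(B_1)$ is continuous, the identity $(g,k) \vert_{B_1} = (\bar g, \bar k)$ follows.

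\textbf{Passage to the limit in the equations.} For every $i \geq 1$, by construction
\[
\Div_{g_{i+1}} k_{i+1} = 0, \quad \tr_{g_{i+1}} k_{i+1} = 0, \quad R(g_{i+1}) = \vert k_i \vert^2_{g_i} \quad \text{on } \RRR^3.
\]
Lemmas \ref{lem:DivergenceBoundedOperator} and \ref{lem:conttrace} provide continuity of the maps $(V,g) \mapsto \Div_g V$ and $(V,g) \mapsto \tr_g V$ on a small neighbourhood of $(0,e)$ in $\HH^1_{-3/2} \times \HH^2_{-1/2}$, with values in $\HH^0_{-5/2}$ and $H^1_{-3/2}$ respectively, so the first two equations pass directly to the limit to give $\Div_g k = 0$ and $\tr_g k = 0$. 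For the scalar curvature equation, Lemma \ref{prop:Fanalysis} gives continuity of $g \mapsto R(g) \in H^0_{-5/2}$, while the difference $\vert k_i \vert^2_{g_i} - \vert k \vert^2_g$ splits into a variation in $k$ at fixed metric, controlled by product estimates (Lemma \ref{ProductEstimates}), plus a variation in $g$ at fixed $k$, controlled by Lemma \ref{ksquareestifeb22}. Both sides of $R(g_{i+1}) = \vert k_i \vert^2_{g_i}$ thus converge in $H^0_{-5/2}$ to $R(g)$ and $\vert k \vert^2_g$ respectively, yielding $R(g) = \vert k \vert^2_g$ on $\RRR^3$.

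The only point requiring any care is verifying that each of the nonlinear constraint operators is continuous in the low-regularity topology $\HH^2_{-1/2} \times \HH^1_{-3/2}$ in a neighbourhood of $(e,0)$; but this is exactly what the continuity lemmas cited above supply. No further estimate is needed, and the corollary then falls out as a direct consequence of the iteration estimates proved in Lemma \ref{cor:feb22}.
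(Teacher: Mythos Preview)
Your proof is correct and follows essentially the same route as the paper: both argue that Lemma \ref{cor:feb22} makes the iteration a contraction in $\HH^2_{-1/2}\times\HH^1_{-3/2}$, deduce convergence, and pass to the limit in the constraint equations using that the convergence is strong enough. The paper's own proof is terser, merely asserting that ``the convergence in $\HH^2_{-1/2}\times\HH^1_{-3/2}$ is strong enough'' to pass to the limit, whereas you have helpfully spelled out which continuity lemmas (Lemmas \ref{lem:DivergenceBoundedOperator}, \ref{lem:conttrace}, \ref{prop:Fanalysis}, \ref{ksquareestifeb22}) justify this step.
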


\begin{proof}[Proof of Corollary \ref{lem:ConvergenceOfSeries}] Lemma \ref{cor:feb22} shows that the iterative scheme $(g_i,k_i)_{i\geq1}$ is a contraction in the Hilbert space $\HH^2_{-1/2} \times \HH^1_{-3/2}$. By the Banach fixed-point theorem, the scheme therefore converges to a fixed point,
\begin{align*}
(g, k) := \lim\limits_{i \to \infty} (g_i,k_i) \in H^2_{-1/2} \times H^{1}_{-3/2}.
\end{align*}
The convergence in $\HH^2_{-1/2} \times \HH^1_{-3/2}$ is strong enough to conclude that $(g,k)$ solves \eqref{eq:maximaleqfeb2255} and moreover, by construction of the sequence, 
$$(g, k) \vert_{B_1} = (\bar g, \bar k).$$
By the uniform estimate in Lemma \ref{cor:feb22},
\begin{align*}
\Vert (g-e, k) \Vert_{\HH^2_{-1/2} \times \HH^1_{-3/2}} 
\lesssim \Vert (\bar g-e, \bar k) \Vert_{\HH^2(B_1) \times \HH^{1}(B_1)}.
\end{align*}
This finishes the proof of Corollary \ref{lem:ConvergenceOfSeries} \end{proof}


\subsection{Higher regularity estimates} \label{SectionHigherreg225342} It remains to prove the higher regularity estimates \eqref{estimateshigherreg24352} of Theorem \ref{MainTheorem12}.\\

Using that the constructed solution $(g,k)$ in the previous section is a fixed point of the iteration scheme, we have by the higher regularity estimates of Theorems \ref{thm:kextension1} and \ref{thm:MainExtensionScalarTheorem} that for integers $w\geq 3$, for $\varep>0$ sufficiently small,
\begin{align*}
&\Vert g-e \Vert_{\HH^w_{-1/2}} + \Vert k \Vert_{\HH^{w-1}_{-3/2}}\\
 \lesssim& \Vert R(g) \Vert_{H^{w-2}_{-5/2}} + \Vert \bar k \Vert_{\HH^1(B_1)} \Vert g-e \Vert_{\HH^w_{-1/2}} + C_w\Big( \Vert \bar g-e \Vert_{\HH^w(B_1)} + \Vert R(g) \Vert_{\HH^0_{-5/2}} + \Vert \bar k \Vert_{\HH^{w-1}(B_1)} \Big) \\
\lesssim& \Vert R(g) \Vert_{H^{w-2}_{-5/2}}+ C_w\Big( \Vert \bar g-e \Vert_{\HH^w(B_1)} + \Vert R(g) \Vert_{\HH^0_{-5/2}} + \Vert \bar k \Vert_{\HH^w(B_1)} \Big)\\
\lesssim& \Vert R(g) \Vert_{H^{w-2}_{-5/2}} + C_w \Big( \Vert \bar g-e \Vert_{\HH^w(B_1)} + \Vert \bar k \Vert_{\HH^w(B_1)} \Big),
\end{align*}
where we used the estimates of Theorems \ref{thm:kextension1} and \ref{thm:MainExtensionScalarTheorem}, and absorbed the second term on the right-hand side after the first estimate.\\

Using that $(g,k)$ satisfies the constraint equations, it holds that $R(g) = \vert k \vert_{g}^2$. Therefore we have by product estimates as in Lemma \ref{ProductEstimates}, for $\varep>0$ sufficiently small,
\begin{align*}
\Vert R(g) \Vert_{\HH^{w-2}_{-5/2}} \lesssim& \Vert g-e \Vert_{\HH^2_{-1/2}} \Vert k \Vert_{\HH^{w-1}_{-3/2}} +\Vert k \Vert_{\HH^1_{-3/2}}  \Vert g-e \Vert_{\HH^w_{-1/2}} \\
&+ C_w \Vert g-e \Vert_{\HH^2_{-1/2}} \Vert k \Vert_{\HH^1_{-3/2}} \\
\lesssim& \Vert \bar g-e \Vert_{\HH^2(B_1)}  \Vert k \Vert_{\HH^{w-1}_{-3/2}} +  \Vert \bar k \Vert_{\HH^1(B_1)}\Vert g-e \Vert_{\HH^w_{-1/2}} \\
&+ C_w \Vert \bar g-e \Vert_{\HH^2_{-1/2}} \Vert \bar k \Vert_{\HH^1_{-3/2}}.
\end{align*}

Therefore, combining the above two estimates and using that $\varep>0$ is sufficiently small to absorb terms on the right-hand side, we have
\begin{align*}
\Vert g-e \Vert_{\HH^w_{-1/2}} + \Vert k \Vert_{\HH^{w-1}_{-3/2}} \leq C_w \Big( \Vert \bar g-e \Vert_{\HH^w(B_1)} + \Vert \bar k \Vert_{\HH^{w-1}(B_1)} \Big),
\end{align*}
where the constant $C_w>0$ depends only on $w$. This finishes the proof of Theorem \ref{MainTheorem12}.

\appendix

\section{The proof of Proposition \ref{prop:Completeness}} \label{sec:ProofCompleteness}

In this section we prove Proposition \ref{prop:Completeness}. First we show that 
\begin{align*}
\Big\{ E^{(lm)}, H^{(lm)} : l \geq 1, m \in \{ -l,\dots,l \} \Big\}
\end{align*}
is a complete orthonormal basis for $L^2$-integrable vectorfields on $(S_r,\gac)$, $r>0$. The orthonormality of $E^{(lm)}, H^{(lm)}$ defined in \eqref{def:Vectorsjan} follows from the orthonormality and completeness of the spherical harmonics $Y^{(lm)}$. Indeed, by \eqref{def:Vectorsjan},
for all index pairs $(lm), (l'm')$,
\begin{align*}
\sr E^{(lm)} \cdot E^{(l'm')} &= \frac{r^2}{l(l+1)} \sr (Y^{(lm)},0) \cdot \left( \cDd_1 \cDd_1^\ast \right)\left(Y^{(l'm')},0\right)\\
&=  \frac{r^2}{l(l+1)} \sr (Y^{(lm)},0) \cdot \left(- \Ld Y^{(l'm')},0\right)\\
&= \sr Y^{(lm)} Y^{(l'm')} \\
&= \de^{l l'} \de^{m m'},
\end{align*}
where we used Lemma \ref{lem:Chrlem1} and denoted the pointwise product of two pairs of functions $$( f_1, f_2) \cdot ( f_3, f_4):= (f_1 f_2, f_3 f_4).$$ The same holds for $H^{(lm)}$. Furthermore, for all index pairs $(lm), (l'm')$,
\begin{align*}
\sr E^{(lm)} \cdot H^{(l'm')} &= \sr \left(Y^{(lm)},0\right) \cdot \left( 0, Y^{(l'm')} \right) \\&= 0.
\end{align*}
This proves the orthonormality of the vectorfields $E^{(lm)}, H^{(lm)}$. \\

We now show that the vectorfields $E^{(lm)}, H^{(lm)}$ form a complete basis of vectorfields in $L^2(S_r)$ for every $r>0$. It suffices to show that for any vector $Z \in L^2(S_r)$,
\begin{align*}
\left( Z^{(lm)}_E=Z^{(lm)}_H = 0 \,\,\, \text{for all } l\geq1, m\in \{-l, \dots,l\}  \right) \Rightarrow Z = 0.
\end{align*}
By the identities of Lemma \ref{lem:RelationsSpherical}, for all $l\geq1, m\in \{-l, \dots,l\}$,
\begin{align*}
0 &= Z^{(lm)}_E = \sr Z \cdot E^{(lm)} = \frac{r}{\sqrt{l(l+1)}} \sr \left( \Divd Z \right)Y^{(lm)},\\
0 &= Z^{(lm)}_H = \sr Z \cdot H^{(lm)} = \frac{r}{\sqrt{l(l+1)}} \sr \left( \Curld Z \right)Y^{(lm)}.
\end{align*}
By the completeness of $Y^{(lm)}$, see Lemma \ref{sphericalharmonics}, this shows that $$\Divd Z = \Curld Z= 0.$$ By the ellipticity of this Hodge system, see Proposition \ref{prop:EllipticityHodgejan}, it follows that $Z=0$. This proves the completeness of the basis $E^{(lm)}, H^{(lm)}$, $l\geq1, m\in \{-l, \dots,l\}$.\\

Second, we show that 
\begin{align*}
\Big\{ \psi^{(lm)}, \phi^{(lm)}: l \geq 2, m \in \{ -l, \dots, l \} \Big\}
\end{align*}
is a complete orthonormal basis of tracefree symmetric $2$-tensors in $L^2(S_r)$, $r>0$. The orthonormality of the $\psi^{(lm)},\phi^{(lm)}$ defined in \eqref{def:2tensorsjan} is proved analogously to the orthonormality of $E^{(lm)}, H^{(lm)}$ and left to the reader.\\

To prove the completeness of the $\psi^{(lm)},\phi^{(lm)}$, we need to prove that for any tracefree symmetric $2$-tensor $V \in L^2(S_r)$, 
\begin{align*}
\left( V^{(lm)}_\psi=V^{(lm)}_\phi = 0 \,\,\, \text{for all } l\geq2, m\in \{-l, \dots,l\}  \right) \Rightarrow V = 0.
\end{align*}
This follows however by the completeness of the $E^{(lm)}, H^{(lm)}$ and Proposition \ref{prop:EllipticityHodgejan}, similar to the above proof for $E^{(lm)}, H^{(lm)}$. This proves the completeness of the basis $\psi^{(lm)}, \phi^{(lm)}$, $l\geq2, m\in \{-l, \dots,l\}$.\\

The equality of norms follows by the orthonormality and completeness properties. This finishes the proof of Proposition \ref{prop:Completeness}.

\section{The proofs of Proposition \ref{prop:Howtoestimatefunctions} and Lemma \ref{lem:commutationrelation} } \label{sec:appidentities}
In this section we prove Proposition \ref{prop:Howtoestimatefunctions} and Lemma \ref{lem:commutationrelation} .
\begin{proof}[Proof of Proposition \ref{prop:Howtoestimatefunctions}] 
Consider the first relation. We show at first that that 
\begin{align}
\Vert \Nd^w u \Vert_{L^2(S_r)}^2 &\lesssim \sumzero \left( \frac{l(l+1)}{r^2} \right)^w  \left( u^{(lm)}\right)^2 + C_w \sumzero \left( u^{(lm)}\right)^2, \label{ineq111}
\end{align}
where $C_w>0$ is a constant that depends on $w$, by induction in $w\geq0$. The cases $w=0,1$ are verified by Lemma \ref{lem:RelationsSpherical} and Propositions \ref{prop:EllipticityHodgejan} and \ref{prop:Completeness}. \\

For the induction step $w \to w+1$, integrate by parts to estimate
\begin{align} \begin{aligned}
&\Vert \Nd^{w+1} u \Vert_{L^2(S_r)}^2 \\
=& - \int\limits_{S_r} \Ld (\Nd^{w} u) \cdot \Nd^{w} u \\
=& -\int\limits_{S_r}  \Nd^{w} \Ld u \cdot \Nd^{w} u  + [\Ld, \Nd^{w}]u \cdot \Nd^{w} u \\
\leq& \int\limits_{S_r} \Nd^{w-1} \Ld u \cdot \Ld \Nd^{n-1} u + \Vert  [\Ld, \Nd^{w}]u \Vert_{L^2(S_r)} \Vert  \Nd^{w} u \Vert_{L^2(S_r)} \\
\lesssim& \Vert \Nd^{w-1} \Ld u \Vert_{L^2(S_r)} \Vert \Nd^{w+1} u \Vert_{L^2(S_r)} + \frac{C_w}{r^2} \sumzero \left( \frac{l(l+1)}{r^2} \right)^{w}  \left( u^{(lm)}\right)^2,
\end{aligned} \label{eq:10j}\end{align}
where we used that 
\begin{align*}
\Vert [\Ld, \Nd^{w}]u \Vert^2_{L^2(S_r)} &\lesssim \frac{1}{r^4} \sumzero \left( \frac{l(l+1)}{r^2} \right)^{w}  \left( u^{(lm)}\right)^2 +\frac{C_w}{r^{2(w+2)}} \sumzero \left( u^{(lm)}\right)^2, \\
\Vert \Nd^{w} u \Vert^2_{L^2(S_r)} &\lesssim \sumzero \left( \frac{l(l+1)}{r^2} \right)^w  \left( u^{(lm)}\right)^2+\frac{C_w}{r^{2w}} \sumzero \left( u^{(lm)}\right)^2, 
\end{align*}
by the fact that we work on the round sphere $(S_r, \gac)$ and the induction hypothesis.\\

It follows from \eqref{eq:10j} that 
\begin{align}\begin{aligned}
\Vert \Nd^{w+1} u \Vert^2_{L^2(S_r)} \lesssim& \Vert \Nd^{w-1}\Ld u \Vert^2_{L^2(S_r)} + \frac{C_w}{r^2} \sumzero \left( \frac{l(l+1)}{r^2} \right)^{w}  \left( u^{(lm)}\right)^2.
\end{aligned} \label{eq:27may}
\end{align}
To estimate the first term on the right-hand side, we use the induction assumption and Lemma \ref{lem:RelationsSpherical},
\begin{align*}
&\Vert \Nd^{w-1} \Ld u \Vert_{L^2(S_r)}^2  \\
\lesssim& \sumzero \left( \frac{l(l+1)}{r^2} \right)^{w-1}  \left( (\Ld u)^{(lm)}\right)^2 + \frac{C_w}{r^{2(w+1)}} \sumzero \left( (\Ld u)^{(lm)}\right)^2\\
\lesssim& \sumzero \left( \frac{l(l+1)}{r^2} \right)^{w-1}  \left( \frac{l(l+1)}{r^2} u^{(lm)}\right)^2 + \frac{C_w}{r^2} \sumzero \left( \frac{l(l+1)}{r^2} \right)^{w}  \left( u^{(lm)}\right)^2 \\
\lesssim& \sumzero \left( \frac{l(l+1)}{r^2} \right)^{w+1}  \left(u^{(lm)}\right)^2 + \frac{C_w}{r^2} \sumzero \left( \frac{l(l+1)}{r^2} \right)^{w}  \left( u^{(lm)}\right)^2.
\end{align*}
Plugging the above into \eqref{eq:27may} yields
\begin{align*}
\Vert \Nd^{w+1} u \Vert_{L^2(S_r)}^2 \lesssim \sumzero \left( \frac{l(l+1)}{r^2} \right)^{w+1}  \left(u^{(lm)}\right)^2 + \frac{C_w}{r^2} \sumzero \left( \frac{l(l+1)}{r^2} \right)^{w}  \left( u^{(lm)}\right)^2.
\end{align*}
Using the interpolation inequality
\begin{align*}
&\frac{C_w}{r^2} \sumzero \left( \frac{l(l+1)}{r^2} \right)^{w}  \left( u^{(lm)}\right)^2 \\
\lesssim&  \sumzero \left( \frac{l(l+1)}{r^2} \right)^{w+1}  \left( u^{(lm)}\right)^2 + \frac{C_w}{r^{2(w+1)}} \sumzero \left(u^{(lm)}\right)^2
\end{align*}
finishes the induction step and therefore the proof of \eqref{ineq111}. The other direction needed for the equivalence relation is proved similarly and left to the reader.\\

It remains to show the second equivalence relation for a vectorfield $X$. This follows as for scalar functions by induction on $w\geq0$, using this time the vectorfields $E^{(lm)}, H^{(lm)}$ with Remark \ref{remarkharm29} and Lemma \ref{lem:RelationsSpherical}. This finishes the proof of Proposition \ref{prop:Howtoestimatefunctions}. \end{proof}

\begin{proof}[Proof of Lemma \ref{lem:commutationrelation}]We prove each part separately. \\

{\bf Part (1).} The estimate \eqref{resscalar1} follows directly by Definition \ref{defnotationsep} and the fact that $Y^{(lm)} \sim r^{-1}$ due to its normalisation, see Section \ref{ssec:Hodgebasis}.\\

{\bf Part (2).}  On the one hand, it generally holds that in standard polar frame components, see \eqref{polarnormalframe}, for $A=1,2$,
\begin{align} \begin{aligned} 
\pr_r \left( X^A \right) &= N \left( X^A \right)\\
&= e(\nab_N X, e_A) - e(X, \nab_N e_A) \\
&= \left( \Nd_N X \right)^A - e(X,\nab_{e_A} N) - e(X, [N,e_A]) \\
&= \left( \Nd_N X \right)^A - X^B \left(-\Theta_{BA} \right) + \frac{1}{r} X^A\\
&= \left( \Nd_N X \right)^A,
\end{aligned}  \label{simid1} \end{align}
where we used that in the Euclidean case, for $A=1,2$,
$$ [N,e_A] = - \frac{1}{r} e_A$$
and, in standard polar frame components,
$$\Theta_{11}=\Theta_{22}= -\frac{1}{r}, \Theta_{12}= \Theta_{21}=0.$$
Consequently,
\begin{align*}
\pr_r \left( X_E^{(lm)} \right) &= \sr \pr_r \left( X^A \right) \left( E^{(lm)} \right)_A + \frac{1}{r} X_E^{(lm)}\\
&= \left( \Nd_N X \right)_E^{(lm)} + \frac{1}{r} X_E^{(lm)},
\end{align*}
where we used in the first equality that on $(S_r,\gac)$, $\sqrt{\mathrm{det}\gac} \sim r^2$ and that in polar frame components, for $A=1,2$, see \eqref{polarnormalframe}, 
\begin{align*}
\left( E^{(lm)} \right)_A = - e_A \left( Y^{(lm)} \right) \frac{r}{\sqrt{l(l+1)}} \sim \frac{1}{r}.
\end{align*}
Repeatedly applying $\Ndn$ then proves the first of \eqref{resscalar2}. The second of \eqref{resscalar2} is proved similarly. \\

On the other hand, it holds generally in standard polar frame components that
\begin{align*}
\Divd X &= e_1\left(X^1\right) + e_2 \left( X^2 \right)\\
&= \frac{1}{r}\pr_{\th^1} \left(X^1\right) + \frac{1}{ r \sin \th^1} \pr_{\th^2} \left( X^2 \right).
\end{align*}
This leads to
\begin{align*}
\pr_r \left( r \Divd X \right) &= \pr_{\th^1} \pr_r \left(X^1\right) + \frac{1}{ \sin \th^1} \pr_{\th^2} \pr_r \left( X^2 \right) \\
&= \pr_{\th^1} \left( \Ndn X \right)^1 +  \frac{1}{ \sin \th^1} \pr_{\th^2} \left( \Ndn X \right)^2 \\
&= r\Divd \Ndn X,
\end{align*}
where we used \eqref{simid1}. This finishes the proof of part (2) of Lemma \ref{lem:commutationrelation}. \\

{\bf Part (3).} The proof of part (3) is similar to part (2) and left to the interested reader. This finishes the proof of Lemma \ref{lem:commutationrelation}. \end{proof}


\section{Elliptic operators on weighted Sobolev spaces} \label{sec:ELLIPTICITYweighted} \label{sec:WEIGHTEDellipticity}

In this section, we first introduce the weak formulation of boundary value problems in weighted spaces. Second, we prove ellipticity and derive higher elliptic regularity estimates in $H^w_\de(\RRRwo) \cap \ol{H}^1_\de$ and $\ol{H}^w_\de$ for PDEs that were used in Sections \ref{sec:EuclideanSurjectivity} and \ref{sec:SurjectivityR}. Here the $w,\de$ depend on the PDE system under consideration. We also derive an elliptic estimate for distributional solutions with $L^2$-regularity to one of the PDEs.

\subsection{Weak formulation of boundary value problems in weighted spaces} \label{ssweakM10}

First, we define corresponding dual spaces.
\begin{definition}[Dual spaces of weighted Sobolev spaces]
Let $ \left( \ol{H}^w_\de \right)^\ast$ denote the space of linear maps $G: \ol{H}^w_\de \to \RRR$ such that there exists a constant $c>0$ so that
\begin{align*}
\vert G(u) \vert \leq c \Vert u \Vert_{\ol{H}^w_\de} \,\,\, \text{for all } u \in \ol{H}^w_\de. 
\end{align*}
Let the norm $\Vert G \Vert_{\left( \ol{H}^w_\de \right)^\ast}$ be defined as the smallest $c>0$ such that the above inequality holds.\end{definition}

The next lemma shows how weights behave with respect to the dual spaces.
\begin{lemma}\label{lem:simple1}
Let $w\geq0$, $v \in \ol{H}^{0}_{\de}$ and $\a \in \mathbb{N}^3$ a multi-index such that $\vert \a \vert=w$. Denote by $\pr^\a v$ the $\a$-th weak derivative of $v$. Then 
\begin{align*}
\pr^\a v \in \left( \ol{H}^w_{-\de+w-3} \right)^\ast.
\end{align*}
\end{lemma}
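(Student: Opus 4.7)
The plan is to define the pairing by integration by parts on test functions and extend it by density, with the weight exponents chosen precisely so that a single Cauchy--Schwarz step closes the estimate. Since $\ol{H}^w_{-\de+w-3}$ is, by Definition \ref{definitionolspace}, the closure of $C^\infty_c(\RRRwo)$ in the weighted Sobolev norm, it suffices to produce a bounded linear functional on the dense subspace $C^\infty_c(\RRRwo)$ with norm controlled by $\Vert v \Vert_{\ol{H}^0_\de}$.

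First, I would set, for $u \in C^\infty_c(\RRRwo)$,
\begin{equation*}
\langle \pr^\a v, u \rangle := (-1)^{\vert \a \vert} \int_{\RRRwo} v \, \pr^\a u \, dx,
\end{equation*}
which is the standard distributional definition; the integral converges because $v \in L^2_{\mathrm{loc}}$ and $\pr^\a u$ is smooth and compactly supported. Next, I would apply Cauchy--Schwarz after inserting the trivial factorisation $1 = (1+r)^{-\de-3/2}\,(1+r)^{\de+3/2}$:
\begin{equation*}
\Bigl\vert \int v \, \pr^\a u \, dx \Bigr\vert \leq \bigl\Vert (1+r)^{-\de-3/2} v \bigr\Vert_{L^2(\RRRwo)} \, \bigl\Vert (1+r)^{\de+3/2} \pr^\a u \bigr\Vert_{L^2(\RRRwo)}.
\end{equation*}
The first factor is precisely $\Vert v \Vert_{\ol{H}^0_\de}$. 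For the second factor, the key computation is that the exponent on the derivative term satisfies
\begin{equation*}
\de + 3/2 = -(-\de + w - 3) - 3/2 + \vert \a \vert,
\end{equation*}
which uses only $\vert \a \vert = w$; consequently the second factor is bounded by $\Vert u \Vert_{\ol{H}^w_{-\de+w-3}}$ by the very definition of the weighted norm.

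Finally, I would observe that the estimate
\begin{equation*}
\vert \langle \pr^\a v, u \rangle \vert \leq \Vert v \Vert_{\ol{H}^0_\de} \, \Vert u \Vert_{\ol{H}^w_{-\de+w-3}}
\end{equation*}
extends the functional by continuity from the dense subspace $C^\infty_c(\RRRwo)$ to all of $\ol{H}^w_{-\de+w-3}$, proving that $\pr^\a v \in \bigl( \ol{H}^w_{-\de+w-3} \bigr)^\ast$ with operator norm at most $\Vert v \Vert_{\ol{H}^0_\de}$.

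The statement is essentially a weight-bookkeeping lemma, and the only step requiring thought is verifying the arithmetic of the exponents, which motivates the otherwise peculiar choice of parameter $-\de+w-3$; the Cauchy--Schwarz inequality and the density of smooth compactly supported functions in $\ol{H}^w_{-\de+w-3}$ do the rest. I do not anticipate any serious obstacle.
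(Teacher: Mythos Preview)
Your proposal is correct and follows essentially the same approach as the paper's proof: integrate by parts to move all derivatives onto $u$, then apply Cauchy--Schwarz with the weight split $(1+r)^{-\de-3/2}\cdot(1+r)^{\de+3/2}$ so that the two factors are exactly $\Vert v\Vert_{\ol{H}^0_\de}$ and a single term of $\Vert u\Vert_{\ol{H}^w_{-\de+w-3}}$. The paper compresses this into two displayed lines without spelling out the density argument or the weight arithmetic, whereas you make both explicit; the content is identical.
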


\begin{proof}
For $u \in \ol{H}^w_{-\de+w-3}$, it holds that
\begin{align*}
\int\limits_{\RRRwo} u \pr^\a v &= (-1)^{\vert \a \vert} \int\limits_{\RRRwo} \pr^\a u \, v\\
&\leq \Vert u \Vert_{\ol{H}^w_{-\de+w-3}} \Vert v \Vert_{\ol{H}^0_{\de}}.
\end{align*}
This concludes the proof of Lemma \ref{lem:simple1}.
\end{proof}

In Sections \ref{sec:EuclideanSurjectivity} and \ref{sec:SurjectivityR}, we consider PDEs of the form 
\begin{align}
\begin{cases}
\triangle u + \frac{a}{r} \pr_r u + \frac{b}{r^2} u = f \,\,\, \text{on } \RRRwo,\\
u \vert_{r=1} =0, 
\end{cases} \label{eq:mars8}
\end{align}
where $a,b \in \RRR$ are constants and $u \in \ol{H}^1_\de, f \in \left( \ol{H}^1_{-\de-1} \right)^\ast$.\\

Note that if $v\in \ol{H}^1_{\de}$, then $r^{-1-2\de} v \in \ol{H}^1_{-\de-1}$. Therefore, to apply the standard theory of generalised solutions, see for example \cite{GilbargTrudinger}, we consider \emph{weighted} weak formulations after a formal integration by parts of \begin{align*}
\int\limits_{\RRRwo} r^{-2\de-1} \left(-\triangle u - \frac{a}{r} \pr_r u - \frac{b}{r^2} u \right) v
\end{align*}
where $u, v \in \ol{H}^1_\de$. This leads to the following definition.

\begin{definition}[Weak solutions] 
Let $ f \in \left( \ol{H}^1_{-\de-1}\right)^\ast, a,b \in \RRR$ be given. A function $u \in \ol{H}^1_\de$ is called \emph{weak solution} to
\begin{align}
\begin{cases}
\triangle u + \frac{a}{r} \pr_r u + \frac{b}{r^2} u = f \,\,\, \text{on } \RRRwo,\\
u \vert_{r=1} =0, 
\end{cases}
\end{align}
if for all $v\in \ol{H}^1_{\de}$ it holds that
\begin{align*}
\BB_{\de,a,b}(u,v) = \int\limits_{\RRRwo} r^{-1-2\de } f v,
\end{align*}
where $\BB_{\de,a,b}(u,v): \ol{H}^1_\de \times \ol{H}^1_\de \to \RRR$ is the symmetric bilinear form defined by
\begin{align}\begin{aligned}
\BB_{\de,a,b}(u,v) := &\int\limits_{\RRRwo} r^{-2\de-1} \nab u \cdot \nab v - \frac{a+2\de+1}{2}  r^{-2\de-2} \left(v \pr_r u - u \pr_r v \right)\\
& -\int\limits_{\RRRwo} (\de(a+2\de+1)+b) r^{-2\de-3} uv. \end{aligned}\label{mars111}
\end{align}
\end{definition}
It is left to the reader to verify that $\BB_{\de,a,b}(u,v): \ol{H}^1_\de \times \ol{H}^1_\de \to \RRR$ is bounded for all $\de,a,b\in \RRR$, that is,
\begin{align*}
\vert \BB_{\de,a,b}(u,v) \vert \lesssim \Vert u \Vert_{\ol{H}^1_\de} \Vert v \Vert_{\ol{H}^1_\de} \,\,\, \text{for all } u,v \in \ol{H}^1_\de.
\end{align*}


Let $w\geq2$ be an integer. Introduce three PDEs on $\RRRwo$.
\begin{itemize}
\item Consider a scalar function $u^{[\geq2]}$ on $\RRRwo$ that verifies 
\begin{align} \begin{cases}
\triangle u^{[\geq2]} +\frac{4}{r} \pr_r u^{[\geq2]} +\frac{6}{r^2} u^{[\geq2]} = f^{[\geq2]} \,\,\, \text{on } \RRRwo, \\
u^{[\geq2]}\vert_{r=1} = 0,
\end{cases} \label{eq:weightedProblem} \tag{\textbf {E1}} \end{align}
where $f^{[\geq2]}$ is a given scalar function on $\RRRwo$.
\item Consider a scalar function $u^{[\geq2]}$ on $\RRRwo$ that verifies
\begin{align} \begin{cases}
\triangle u^{[\geq2]} + \frac{1}{r} \pr_r u^{[\geq2]} - \frac{3}{r^2} u^{[\geq2]}= f^{[\geq2]} \,\,\, \text{on } \RRRwo, \\
u^{[\geq2]} \vert_{r=1} = 0,
\end{cases} \label{eq:weightedProblem1} \tag{\textbf {E2}} \end{align}
where $f^{[\geq2]}$ is a given scalar function on $\RRRwo$.
\item Consider a scalar function $u^{[\geq1]}$ on $\RRRwo$ that verifies
\begin{align} \begin{cases}
\triangle u^{[\geq1]} -\half \Ld u^{[\geq1]} + \frac{1}{r} \pr_r u^{[\geq1]} + \frac{1}{r^2} u^{[\geq1]}= f^{[\geq1]} \,\,\, \text{on } \RRRwo, \\
u^{[\geq1]} \vert_{r=1} = 0,
\end{cases} \label{eq:weightedProblem2} \tag{\textbf {E3}} \end{align}
where $f^{[\geq1]}$ is a given scalar function on $\RRRwo$.
\end{itemize}
Notice that ({\bf E1}) corresponds to \eqref{eq:delta4}, ({\bf E2}) to \eqref{eq:sigma2} and ({\bf E3}) to \eqref{def:Dez20u1}.


\subsection{Elliptic estimates in $\ol{H}^1_\de$} \label{ellmay1}

The next proposition shows existence and first elliptic estimates for the above PDEs in weighted Sobolev spaces.
\begin{proposition}\label{prop:ellmars10} The following holds.
\begin{itemize}
\item Let $ f^{[\geq2]} \in  \left( \ol{H}^1_{1/2} \right)^\ast$. There exists a unique weak solution $u^{[\geq2]} \subset \ol{H}^1_{-3/2}$ to \eqref{eq:weightedProblem} bounded by
\begin{align}
\Vert u^{[\geq2]} \Vert_{\ol{H}^1_{-3/2}} \lesssim \Vert f^{[\geq2]} \Vert_{ \left( \ol{H}^1_{1/2} \right)^\ast} \label{est:mars101}
\end{align}
\item Let $ f^{[\geq2]} \in  \left( \ol{H}^1_{3/2} \right)^\ast$. There exists a unique weak solution $u^{[\geq2]} \subset \ol{H}^1_{-5/2}$ to \eqref{eq:weightedProblem1} bounded by
\begin{align}
\Vert u^{[\geq2]} \Vert_{\ol{H}^1_{-5/2}} \lesssim \Vert f^{[\geq2]} \Vert_{ \left( \ol{H}^1_{3/2} \right)^\ast} \label{est:mars1011}
\end{align}
\item Let $f^{[\geq1]} \in  \left( \ol{H}^{1}_{-1/2} \right)^\ast $. There exists a unique weak solution $u^{[\geq1]} \in \ol{H}^1_{-1/2}$ to \eqref{eq:weightedProblem2} bounded by
\begin{align}
\Vert u^{[\geq1]} \Vert_{\ol{H}^1_{-1/2}} \lesssim \Vert f^{[\geq1]} \Vert_{ \left( \ol{H}^{1}_{-1/2} \right)^\ast } \label{est:mars102}
\end{align}
\end{itemize}
\end{proposition}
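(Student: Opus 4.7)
\smallskip
\noindent\textbf{Proof proposal.} My plan is to obtain each of the three statements by an application of the Lax--Milgram theorem to the symmetric bilinear form $\BB_{\de,a,b}$ from \eqref{mars111} (suitably modified for (\textbf{E3}) to accommodate the extra $-\tfrac12 \Ld$), working on the closed subspace of $\ol{H}^1_\de$ consisting of functions supported on the spherical harmonic modes $l\geq k$ with $k=2$ for (\textbf{E1})--(\textbf{E2}) and $k=1$ for (\textbf{E3}). The continuity of $\BB$ on $\ol{H}^1_\de\times \ol{H}^1_\de$ is immediate from Cauchy--Schwarz applied term by term in \eqref{mars111} (using the definition of the $\ol{H}^1_\de$-norm to absorb every weight $r^{-2\de-1}$, $r^{-2\de-2}$, $r^{-2\de-3}$). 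The continuity of the right--hand side $v\mapsto \int r^{-2\de-1} f v \, dx$ as a functional on $\ol{H}^1_\de$ is built into the dual--space hypotheses on $f^{[\geq 2]}$ and $f^{[\geq 1]}$, once one observes that multiplication by $r^{-2\de-1}$ sends $\ol{H}^1_\de$ continuously into $\ol{H}^1_{-\de-1}$ on $\RRRwo$.

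The heart of the argument is coercivity. Setting $v=u$ in \eqref{mars111} kills the antisymmetric cross term and leaves
\begin{equation*}
\BB_{\de,a,b}(u,u) \;=\; \int_{\RRRwo} r^{-2\de-1}|\nabla u|^2\, dx \;-\; \big[\de(a+2\de+1)+b\big]\int_{\RRRwo} r^{-2\de-3} u^2\, dx.
\end{equation*}
Writing $|\nabla u|^2 = (\pr_r u)^2 + |\Nd u|^2$ and integrating by parts on each sphere $S_r$, the angular part is bounded below using the Hodge--Fourier expansion of Section \ref{ssec:Hodgebasis}: for $u = u^{[\geq k]}$ one has $\int_{S_r}|\Nd u|^2 \geq \tfrac{k(k+1)}{r^2}\int_{S_r} u^2$, which integrated against $r^{-2\de-1}$ produces the sharp lower bound $k(k+1)\int r^{-2\de-3} u^2$. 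Thus coercivity holds as soon as $k(k+1) > \de(a+2\de+1)+b$, and in that case
\[
\BB_{\de,a,b}(u,u) \;\gtrsim\; \int r^{-2\de-1}(\pr_r u)^2\, dx + \int r^{-2\de-1}|\Nd u|^2\, dx + \int r^{-2\de-3} u^2\, dx \;\gtrsim\; \|u\|_{\ol{H}^1_\de}^2,
\]
since $r\geq 1$ on $\RRRwo$ makes the weights $r^{\bullet}$ and $(1+r)^{\bullet}$ equivalent. A direct arithmetic check confirms the strict inequality in each case: for (\textbf{E1}) with $\de=-3/2, a=4, b=6, k=2$ one gets $6-3 = 3>0$; for (\textbf{E2}) with $\de=-5/2, a=1, b=-3, k=2$ one gets $6 - 9/2 = 3/2 > 0$.

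For (\textbf{E3}) the term $-\tfrac12 \Ld u$ forces a modification of the bilinear form: repeating the integration by parts yields the symmetric form
\[
\BB^{(3)}_\de(u,v) = \int r^{-2\de-1}\bigl(\pr_r u\, \pr_r v + \tfrac12 \Nd u\cdot \Nd v\bigr) dx \;-\; (\de+1)\int r^{-2\de-2}(v\pr_r u - u\pr_r v)\, dx \;-\; \bigl[\de(2\de+2)+1\bigr]\int r^{-2\de-3}u v\, dx,
\]
so that at $\de=-1/2$ and $u=v$ one gets $\BB^{(3)}_{-1/2}(u,u) = \int (\pr_r u)^2 + \tfrac12 \int |\Nd u|^2 - \tfrac12 \int r^{-2} u^2$, and the mode bound $\int |\Nd u|^2 \geq 2\int r^{-2} u^2$ valid for $u=u^{[\geq 1]}$ yields coercivity with a strict margin. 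In all three cases, Lax--Milgram then provides a unique weak solution in the appropriate mode--restricted subspace of $\ol{H}^1_\de$, and the estimates \eqref{est:mars101}--\eqref{est:mars102} drop out directly from $\BB(u,u)\gtrsim \|u\|^2$ together with the functional bound on the RHS.

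The main technical obstacle is the coercivity threshold: it only holds because the source is restricted to modes $l\geq k$ with $k$ exactly matching the algebraic structure of the zeroth--order coefficient in each PDE --- a slightly weaker projection (say $l\geq k-1$) would fail by the borderline $l=k-1$ mode. A secondary (but routine) point is verifying that the subspace of mode--restricted functions is closed in $\ol{H}^1_\de$ and that the PDE preserves the decomposition; both follow from Parseval on each sphere $S_r$ and the $r$--only dependence of the coefficients.
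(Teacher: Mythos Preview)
Your proposal is correct and follows essentially the same route as the paper: both argue via Lax--Milgram, with coercivity of $\BB_{\de,a,b}$ (and of the modified form for (\textbf{E3})) obtained from the angular Poincar\'e inequality $\int_{S_r}|\Nd u^{[\geq k]}|^2 \geq \tfrac{k(k+1)}{r^2}\int_{S_r}(u^{[\geq k]})^2$, which the paper states separately as Lemma~\ref{lem:hardy1}. Your numerical checks for the three cases match the paper's computations in \eqref{mars161}--\eqref{mars162} and the corresponding display for $\tilde\BB$, and your remark about the mode restriction being sharp is exactly the mechanism the paper relies on.
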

To prove the above proposition, we use the following Poincar\'e inequality.
\begin{lemma} \label{lem:hardy1} Let $n\geq1$ be an integer. Let the scalar function $u^{[\geq n]} \in C^\infty(\RRR^3)$. For any $r>0$ it holds that 
\begin{align}
\int\limits_{S_r} \frac{\left(u^{[\geq n]} \right)^2}{r^2} \leq \frac{1}{n(n+1)} \int\limits_{S_r} \vert \Nd u^{[\geq n]} \vert^2. \label{eq:hardyestimate}
\end{align}
\end{lemma}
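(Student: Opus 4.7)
The plan is to reduce the inequality to an elementary pointwise comparison on the Fourier coefficients of $u^{[\geq n]}$ on each sphere $S_r$ by using the complete orthonormal basis of spherical harmonics $\{Y^{(lm)}\}$ from Lemma \ref{sphericalharmonics}. The key observations are that $u^{[\geq n]}$ has, by assumption, no components at modes $l<n$, and that each $Y^{(lm)}$ is an eigenfunction of $\Ld$ with eigenvalue $-l(l+1)/r^2$, see \eqref{av112}.

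First I would fix $r>0$ and expand $u^{[\geq n]}$ restricted to $S_r$ in the orthonormal basis $\{Y^{(lm)}\}_{l\geq n,\,|m|\leq l}$, writing $u^{[\geq n]} = \sum_{l\geq n}\sum_{m=-l}^{l} u^{(lm)}(r)\,Y^{(lm)}$ with $u^{(lm)}(r)$ given by Definition \ref{defnotationsep}. By Parseval (Proposition \ref{prop:Completeness}),
\begin{equation*}
\int_{S_r} \bigl(u^{[\geq n]}\bigr)^2 \;=\; \sum_{l\geq n}\sum_{m=-l}^{l} \bigl(u^{(lm)}(r)\bigr)^2.
\end{equation*}

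Next I would compute the Dirichlet energy on $S_r$ via integration by parts and \eqref{av112}:
\begin{equation*}
\int_{S_r} |\Nd u^{[\geq n]}|^2 \;=\; -\int_{S_r} u^{[\geq n]}\,\Ld u^{[\geq n]} \;=\; \sum_{l\geq n}\sum_{m=-l}^{l} \frac{l(l+1)}{r^2}\bigl(u^{(lm)}(r)\bigr)^2,
\end{equation*}
where the sum converges since $u^{[\geq n]}\in C^\infty(\RRR^3)$ restricted to $S_r$ is smooth and the boundary terms in the integration by parts vanish on the closed surface $S_r$. This identity is precisely the equivalence relation of Proposition \ref{prop:Howtoestimatefunctions} for $w=1$.

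Finally, the inequality follows from the elementary bound $l(l+1)\geq n(n+1)$ valid for every $l\geq n$, which gives termwise
\begin{equation*}
\frac{1}{r^2}\bigl(u^{(lm)}\bigr)^2 \;\leq\; \frac{1}{n(n+1)}\,\frac{l(l+1)}{r^2}\bigl(u^{(lm)}\bigr)^2,
\end{equation*}
and summing over $l\geq n$ and $|m|\leq l$ yields \eqref{eq:hardyestimate}. There is no genuine obstacle here; the only step worth checking carefully is the justification of termwise summation and the vanishing of boundary terms in the integration by parts, both of which are immediate from the smoothness of $u^{[\geq n]}$ on $\RRR^3$ and standard elliptic regularity on the compact manifold $(S_r,\gac)$.
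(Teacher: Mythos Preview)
Your proof is correct and follows essentially the same approach as the paper: both expand $u^{[\geq n]}$ in spherical harmonics, use Parseval on $S_r$, identify $\int_{S_r}|\Nd u|^2$ with $\sum_{l\geq n,m}\frac{l(l+1)}{r^2}(u^{(lm)})^2$, and apply the elementary bound $l(l+1)\geq n(n+1)$ termwise. The only cosmetic difference is that the paper cites Lemma~\ref{lem:RelationsSpherical} for the gradient identity, whereas you derive it via integration by parts against $\Ld$.
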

\begin{proof}
Indeed, write
\begin{align*}
\int\limits_{S_r} \frac{\left( u^{[\geq n]} \right)^2}{r^2} &= \sum\limits_{l\geq n}  \sum\limits_{m=-l}^l \frac{ \left(u^{(lm)} \right)^2}{r^2}, \\
&= \sum\limits_{l\geq n}  \sum\limits_{m=-l}^l \frac{1}{l(l+1)}  \frac{l(l+1)}{r^2} \left( u^{(lm)} \right)^2, \\
&\leq\sum\limits_{l\geq n}  \sum\limits_{m=-l}^l \frac{1}{n(n+1)}  \frac{l(l+1)}{r^2} \left( u^{(lm)} \right)^2, \\
&= \frac{1}{n(n+1)} \isphere \vert \Nd u^{[\geq n]} \vert^2,
\end{align*}
where we used Lemma \ref{lem:RelationsSpherical}. This proves Lemma \ref{lem:hardy1}. \end{proof}
\begin{proof}[Proof of Proposition \ref{prop:ellmars10}] We show for each PDE ({\bf E1})-({\bf E3}) that the corresponding bilinear form $\BB_{\de,a,b}$ defined in \eqref{mars111} is coercive on the respective weighted space. The $a,b$ corresponding to the PDEs are specified by comparing to \eqref{eq:mars8}. By the Lax-Milgram Theorem, see for example \cite{GilbargTrudinger}, existence, uniqueness and the claimed estimates follow.\\

{\bf Estimate \eqref{est:mars101}.} For ({\bf E1}), $a=4, b=6$. We derive the coercivity of $\BB_{-3/2,4,6}$ by Lemma \ref{lem:hardy1} with $n=2$ as follows,
\begin{align} \begin{aligned}
\BB_{-3/2,4,6}(u^{[\geq2]},u^{[\geq2]})&= \int\limits_{\RRRwo} r^2 \vert \nab u^{[\geq2]}\vert^2 -3\left(u^{[\geq2]} \right)^2 \\
&\geq \int\limits_{\RRRwo} r^2 \vert \nab u^{[\geq2]}\vert^2 - \frac{1}{2} r^2 \vert \Nd u^{[\geq2]} \vert^2, \\
& \geq \half  \int\limits_{\RRRwo} r^2 \vert \nab u^{[\geq2]}\vert^2\\
& \gtrsim \Vert u^{[\geq2]} \Vert_{\overline{H}^1_{-3/2}}^2, \end{aligned} \label{mars161}
\end{align}
where we used Lemma \ref{lem:hardy1} in the second and the last line. This proves \eqref{est:mars101}. \\


{\bf Estimate \eqref{est:mars1011}.}  For ({\bf E2}), $a=1,b=-3$. We estimate from below with Lemma \ref{lem:hardy1}
\begin{align} \begin{aligned}
\mathcal{B}_{-5/2,1,-3}(u^{[\geq2]},u^{[\geq2]})&= \int\limits_{\RRRwo} r^4 \vert \nab u^{[\geq2]}\vert^2 -\frac{9}{2} r^2 \left( u^{[\geq2]} \right)^2\\
&\geq \int\limits_{\RRRwo} r^4 \vert \nab u^{[\geq2]}\vert^2 - \frac{9}{12} r^4 \vert \Nd u^{[\geq2]}\vert^2\\
&\gtrsim \Vert u^{[\geq2]} \Vert_{\overline{H}^1_{-5/2}}^2.
\end{aligned} \label{mars162} \end{align}
This proves \eqref{est:mars1011}.\\

{\bf Estimate \eqref{est:mars102}.} The symmetric bilinear form $\tilde{\BB}$ associated to the weighted weak formulation of ({\bf E3}) in $\ol{H}^1_{-1/2}$ is in fact given by
\begin{align*}
\tilde{\BB}(u,v):= \int\limits_{\RRRwo} \nab u \nab v - \half \Nd u \Nd v - \frac{1}{2r} \left( v \pr_r u -u \pr_r v\right) - \frac{1}{2r^2} uv.
\end{align*}
Estimate this from below by Lemma \ref{lem:hardy1}
\begin{align} \begin{aligned}
\tilde{\BB}(u^{[\geq1]},u^{[\geq1]}) &=  \int\limits_{\RRRwo} \vert \nab u^{[\geq1]}  \vert^2 - \half \vert \Nd u^{[\geq1]}  \vert^2 - \frac{1}{2r^2} \left( u^{[\geq1]} \right)^2 \\
&\geq \int\limits_{\RRRwo} \vert \nab u^{[\geq1]}  \vert^2  - \half \vert \Nd u^{[\geq1]}  \vert^2  - \frac{1}{4} \vert \Nd u^{[\geq1]}  \vert^2 \\
&\gtrsim \Vert u^{[\geq1]} \Vert_{\overline{H}^1_{-1/2}}^2.
\end{aligned} \end{align}
This proves \eqref{est:mars102} and hence finishes the proof of Proposition \ref{prop:ellmars10}. \end{proof}

\subsection{Higher elliptic regularity in $H^w_{\de}(\RRRwo) \cap \ol{H}^1_{\de}$ and $\ol{H}^w_\de$} \label{highermay1}

In this section, we derive higher elliptic regularity estimates in $H^w_\de(\RRRwo) \cap \ol{H}^1_{\de}$, $w\geq2$ and $\ol{H}^w_\de$, for the boundary value problems \eqref{eq:weightedProblem}-\eqref{eq:weightedProblem2}, on the domain $\RRRwo$. \\

The next proposition is a generalisation of standard higher elliptic regularity estimates to weighted Sobolev spaces.
\begin{proposition}[Higher regularity in $H^w_\de(\RRRwo) \cap \ol{H}^1_{\de}$] \label{higherregmars14}
Let $w\geq2$ be an integer. The following holds.
\begin{itemize}
\item Let $ f^{[\geq2]} \in H^{w-2}_{-7/2}(\RRRwo)$. Then the solution $u^{[\geq2]}$ to \eqref{eq:weightedProblem} satisfies $$u^{[\geq2]} \in H^{w}_{-3/2}(\RRRwo) \cap \ol{H}^1_{-3/2}$$ and
\begin{align}
\Vert u^{[\geq2]} \Vert_{H^{w}_{-3/2}(\RRRwo)} \lesssim \Vert f^{[\geq2]} \Vert_{H^{w-2}_{-7/2}(\RRRwo)} + C_w \Vert f^{[\geq2]} \Vert_{H^{0}_{-7/2}(\RRRwo)}. \label{est:mars141}
\end{align}
\item Let $ f^{[\geq2]} \in H^{w-2}_{-9/2}(\RRRwo)$. Then the solution $u^{[\geq2]}$ to \eqref{eq:weightedProblem1} satisfies $$u^{[\geq2]} \in H^{w}_{-5/2}(\RRRwo)\cap \ol{H}^1_{-5/2}$$ and
\begin{align}
\Vert u^{[\geq2]} \Vert_{H^{w}_{-5/2}(\RRRwo)} \lesssim \Vert f^{[\geq2]} \Vert_{H^{w-2}_{-9/2}(\RRRwo)} + C_w \Vert f^{[\geq2]} \Vert_{H^{0}_{-9/2}(\RRRwo)}. \label{est:mars1411}
\end{align}
\item Let $f^{[\geq1]} \in H^{w-2}_{-5/2}(\RRRwo)$. Then the solution $u^{[\geq1]}$ to \eqref{eq:weightedProblem2} satisfies $$u^{[\geq1]} \in H^{w}_{-1/2}(\RRRwo) \cap \ol{H}^1_{-1/2}$$ and
\begin{align}
\Vert u^{[\geq1]} \Vert_{H^{w}_{-1/2}(\RRRwo)} \lesssim \Vert f^{[\geq1]} \Vert_{ H^{w-2}_{-5/2}(\RRRwo) }+ C_w\Vert f^{[\geq1]} \Vert_{ H^{0}_{-5/2}(\RRRwo) }. \label{est:mars142}
\end{align}
\end{itemize}
\end{proposition}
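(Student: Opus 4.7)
The strategy is to combine the $\ol{H}^1_\de$ existence statement of Proposition \ref{prop:ellmars10} with classical interior and boundary elliptic regularity, transferred to the weighted setting via a dyadic decomposition of $\RRRwo$. Since the three equations (E1)--(E3) all fit the template
\begin{align*}
\triangle u + \frac{a}{r}\pr_r u + \frac{b}{r^2} u - c\,\Ld u = f,\quad u\vert_{r=1}=0,
\end{align*}
with constants $a,b,c$ and a uniformly elliptic principal part, I will treat them simultaneously and only indicate the choice of weights $\de$ (namely $\de=-3/2,-5/2,-1/2$) at the end. Throughout, the unique weak solution $u\in\ol{H}^1_\de$ is the one provided by Proposition \ref{prop:ellmars10}.

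The first step is to introduce the dyadic annuli
\begin{align*}
A_k := \{2^k \le r \le 2^{k+1}\},\qquad A_k':=\{2^{k-1}\le r\le 2^{k+2}\}\cap\RRRwo, \quad k\ge 0,
\end{align*}
together with the boundary layer $A_{-1}:=\{1\le r\le 2\}$. On each $A_k$ with $k\ge 1$ I would rescale $y=2^{-k}x$, and set $\tilde u(y):=u(2^k y)$, $\tilde f(y):=2^{2k}f(2^k y)$. Because the operator is homogeneous of degree $-2$ in $r$, the rescaled equation has \emph{exactly the same form} on the fixed annulus $\{1\le|y|\le 2\}$, with the same constants $a,b,c$ and hence the same ellipticity constant. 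Applying standard interior elliptic regularity (e.g.\ the $H^w$-version of Theorem 8.8 in Gilbarg--Trudinger) on the fixed annulus gives
\begin{align*}
\Vert \tilde u\Vert_{H^w(\{1\le|y|\le 2\})} \lesssim \Vert \tilde f\Vert_{H^{w-2}(\{1/2\le|y|\le 4\})} + \Vert \tilde u\Vert_{L^2(\{1/2\le|y|\le 4\})},
\end{align*}
with a constant independent of $k$. Unwinding the scaling and keeping track of the weights $(1+r)^{-\de-3/2+|\beta|}$ in the definition of the weighted norms yields, for the relevant choice of $\de$ (so that $\de-2$ is the weight on the right-hand side), the annular bound
\begin{align*}
\Vert u\Vert_{H^w_\de(A_k)}^2 \lesssim \Vert f\Vert_{H^{w-2}_{\de-2}(A_k')}^2 + \Vert u\Vert_{H^0_{\de}(A_k')}^2.
\end{align*}
For the boundary layer $A_{-1}$ I would apply the analogous classical boundary elliptic regularity theorem for the Dirichlet problem (zero trace at $r=1$, trace at $r=2$ controlled via the interior estimate on $A_0$), yielding the same bound with $A_{-1}$ in place of $A_k$.

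Summing over $k\ge -1$ and using that the enlarged annuli $A_k'$ have bounded overlap gives
\begin{align*}
\Vert u\Vert_{H^w_\de(\RRRwo)}^2 \lesssim \Vert f\Vert_{H^{w-2}_{\de-2}(\RRRwo)}^2 + \Vert u\Vert_{H^0_{\de}(\RRRwo)}^2,
\end{align*}
and the last term is absorbed by means of Proposition \ref{prop:ellmars10}:
\begin{align*}
\Vert u\Vert_{H^0_\de(\RRRwo)} \lesssim \Vert u\Vert_{\ol{H}^1_\de} \lesssim \Vert f\Vert_{(\ol{H}^1_{-\de-1})^\ast} \lesssim \Vert f\Vert_{H^0_{\de-2}(\RRRwo)},
\end{align*}
the last embedding being the dual of the weighted Hardy/Poincar\'e inequality of Lemma \ref{lem:hardy1}. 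Picking $\de=-3/2,-5/2,-1/2$ gives \eqref{est:mars141}, \eqref{est:mars1411}, \eqref{est:mars142}, respectively. The constant $C_w$ appears because the standard interior elliptic estimate on the fixed annulus has a $w$-dependent constant (and so does the interpolation absorbing $\Vert u\Vert_{H^0_\de}$ into the right-hand side); outside of this factor all constants are universal by scale-invariance.

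The main technical obstacle is bookkeeping the weights through the rescaling: one has to check that the weight $(1+r)^{-\de-3/2+|\beta|}$ in the definition of $H^w_\de$, combined with the Jacobian $dx = 2^{3k} dy$ and the derivative scaling $\partial_x = 2^{-k}\partial_y$, produces exactly the same power of $2^k$ on both sides of the annular inequality, so that no divergent factors appear when summing. This is what forces the weight shift $\de\mapsto\de-2$ between right- and left-hand sides and is consistent precisely because the operator is homogeneous of degree $-2$ in $r$. Finally, $u\in\ol{H}^1_\de$ combined with $u\in H^w_\de(\RRRwo)$ and the explicit vanishing at $r=1$ is exactly the conclusion.
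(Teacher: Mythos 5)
Your proof is correct and follows essentially the same route as the paper: establish the weighted a~priori estimate $\Vert u\Vert_{H^w_\de}\lesssim\Vert f\Vert_{H^{w-2}_{\de-2}}+C_w\Vert u\Vert_{H^1_\de}$ and then absorb the lower-order term using Proposition~\ref{prop:ellmars10}, with the final embedding $H^0_{\de-2}(\RRRwo)\hookrightarrow\left(\ol{H}^1_{-\de-1}\right)^\ast$ obtained by Cauchy--Schwarz. The only difference is one of detail: the paper cites the references \cite{Bartnik}, \cite{BruhatChr}, \cite{Maxwell}, \cite{MaxwellDissertation} for the weighted elliptic estimate, whereas you spell out the dyadic-annulus rescaling argument that underlies those references (and your bookkeeping of the $2^{-2k\de}$ factors is correct). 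One small inaccuracy: the last embedding in your chain is just Cauchy--Schwarz combined with $\Vert v\Vert_{H^0_{-\de-1}}\le\Vert v\Vert_{\ol{H}^1_{-\de-1}}$, and does not actually use Lemma~\ref{lem:hardy1}.
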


\begin{proof}[Proof of Proposition \ref{higherregmars14}] The standard interior and global elliptic regularity estimates (see for example Theorems 8.8 and 8.13 in \cite{GilbargTrudinger}) can be generalised from bounded smooth domains to weighted Sobolev spaces, see for example \cite{Bartnik} \cite{BruhatChr} \cite{Maxwell} and \cite{MaxwellDissertation}. For example, in case of \eqref{est:mars141}, we have for integers $w\geq2$,
\begin{align*}
\Vert u^{[\geq2]} \Vert_{H^{w}_{-3/2}(\RRRwo)} \lesssim& \left\Vert \triangle u^{[\geq2]} +\frac{4}{r} \pr_r u^{[\geq2]} +\frac{6}{r^2} u^{[\geq2]} \right\Vert_{H^{w-2}_{-7/2}(\RRRwo)} + C_w \Vert u^{[\geq2]} \Vert_{H^{1}_{-3/2}(\RRRwo)} \\
\lesssim& \left\Vert f^{[\geq2]} \right\Vert_{H^{w-2}_{-7/2}(\RRRwo)} + C_w \Vert u^{[\geq2]} \Vert_{H^{1}_{-3/2}(\RRRwo)}.
\end{align*}
From this, the estimate \eqref{est:mars141} follows by using the ellipticity of $({\bf E1})$ proved in Proposition \ref{prop:ellmars10}. The estimates \eqref{est:mars1411} and \eqref{est:mars142} are derived similarly. \end{proof}


Furthermore, we have the following result in $\ol{H}^w_\de$.
\begin{proposition}[Higher regularity for ({\bf E1}), ({\bf E2}),({\bf E3})] \label{higherregmars14444} \label{corollary:realsystems}
Let $w\geq2$ be an integer. The following holds.
\begin{itemize}
\item Let $ f^{[\geq2]} \in \ol{H}^{w-2}_{-7/2}$. If the solution $u^{[\geq2]} \in \ol{H}^{1}_{-3/2} \cap H^2_{-3/2}(\RRRwo)$ to \eqref{eq:weightedProblem} satisfies
\begin{align*}
\pr_r u^{[\geq2]} \vert_{r=1} =0,
\end{align*}
then it holds that $u^{[\geq2]} \in \ol{H}^{w}_{-3/2}$.
\item Let $ f^{[\geq2]} \in \ol{H}^{w-2}_{-9/2}$. If the solution $u^{[\geq2]} \in \ol{H}^{1}_{-5/2}\cap H^2_{-5/2}(\RRRwo)$ to \eqref{eq:weightedProblem1} satisfies
\begin{align*}
\pr_r u^{[\geq2]} \vert_{r=1} =0,
\end{align*}
then it holds that $u^{[\geq2]} \in \ol{H}^{w}_{-5/2}$.
\item Let $w\geq2$ be an integer. Let $f^{[\geq1]} \in \ol{H}^{w-2}_{-5/2}$. If the solution $u^{[\geq1]} \in \ol{H}^{1}_{-1/2} \cap H^2_{-1/2}(\RRRwo)$ to \eqref{eq:weightedProblem2} satisfies
\begin{align*}
\pr_r u^{[\geq1]} \vert_{r=1} =0,
\end{align*}
then it holds that $u^{[\geq1]} \in \ol{H}^{w}_{-1/2}$.
\end{itemize}
\end{proposition}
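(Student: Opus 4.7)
The plan is to combine the higher elliptic regularity of Proposition \ref{higherregmars14} with the trace characterisation of $\ol{H}^w_\de$ of Proposition \ref{prop:TrivialExtensionRegularity}. Each hypothesis on the source term in particular implies $f \in H^{w-2}_\de(\RRRwo)$ with the weight $\de$ appropriate for (E1)--(E3), so Proposition \ref{higherregmars14} immediately gives $u \in H^w_\de(\RRRwo)$. By Proposition \ref{prop:TrivialExtensionRegularity}, the conclusion $u \in \ol{H}^w_\de$ then reduces to verifying the trace vanishings $\pr_r^l u |_{r=1} = 0$ for $l = 0, 1, \ldots, w-1$.

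For $l = 0$ this is the Dirichlet boundary condition built into each of (E1)--(E3), and for $l = 1$ it is precisely the added hypothesis. The values $l \geq 2$ are handled by induction on $l$. Writing $\triangle = \pr_r^2 + \frac{2}{r}\pr_r + \Ld$ and solving for $\pr_r^2 u$, each PDE takes the schematic form
\begin{align*}
\pr_r^2 u = f + \frac{c_0}{r^2} u + \frac{c_1}{r}\pr_r u + \frac{c_2}{r^2}\,(r^2 \Ld u),
\end{align*}
with real constants $c_0, c_1, c_2$ depending on the equation, and where $r^2 \Ld$ is the Laplacian of the unit round sphere, acting only on angular variables and therefore commuting with $\pr_r$. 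Differentiating this identity $l-1$ times in $r$ expresses $\pr_r^{l+1} u$ as a linear combination of $\pr_r^{l-1} f$ and of terms of the form $r^{-k} \pr_r^j u$ and $r^{-k} (r^2 \Ld) \pr_r^j u$ with $j \leq l$ and $k \geq 0$.

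Taking the trace at $r = 1$, the inductive hypothesis $\pr_r^j u|_{r=1} = 0$ for $j = 0, \ldots, l$, combined with the fact that $r^2\Ld$ is purely tangential, kills every term on the right-hand side except $\pr_r^{l-1} f|_{r=1}$. This last trace also vanishes, since $f \in \ol{H}^{w-2}_\de$ and Proposition \ref{prop:TrivialExtensionRegularity} applied to $f$ gives $\pr_r^{l-1} f|_{r=1} = 0$ precisely in the range $l - 1 \leq w - 3$, i.e.\ $l \leq w-2$, which is exactly the range needed to cover all $l+1 \leq w-1$. Hence $\pr_r^{l+1} u|_{r=1} = 0$, closing the induction and giving $u \in \ol{H}^w_\de$ by Proposition \ref{prop:TrivialExtensionRegularity}. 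The main obstacle is really only careful bookkeeping: one must check that each trace involved makes sense and that the commutation manipulations are rigorous, which is immediate from the interior regularity $u \in H^w_\de(\RRRwo)$, the explicit form of the PDE, and the standard trace theorem on the smooth boundary $S_1$.
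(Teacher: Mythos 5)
Your proof is correct and follows essentially the same route as the paper's own (very terse) argument: apply Proposition~\ref{higherregmars14} for interior $H^w_\de(\RRRwo)$ regularity, then use Proposition~\ref{prop:TrivialExtensionRegularity} together with the PDE (solved for $\pr_r^2 u$, with $\Ld$ commuting with $\pr_r$ up to explicit $r$-factors) to bootstrap the vanishing of all normal derivatives $\pr_r^l u|_{r=1}$ for $l \le w-1$, using the vanishing traces of $f \in \ol{H}^{w-2}_\de$ in the same step.
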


\begin{proof} Proposition \ref{higherregmars14444} follows by Propositions \ref{higherregmars14} and \ref{prop:TrivialExtensionRegularity}. Indeed, all necessary normal derivatives on $r=1$ can be expressed via the equations and shown to vanish.
\end{proof}

\subsection{An elliptic estimate in $L^2$}

In Section \ref{sssec:EuclideanSurjectivity}, we considered the following Dirichlet problem on $\RRRwo$ for a scalar function $u^{[\geq2]} \in H^{w-2}_{-5/2}(\RRRwo)$,
\begin{align}\begin{cases}
\triangle u^{[\geq2]} + \frac{1}{r} \pr_r u^{[\geq2]} - \frac{3}{r^2}u^{[\geq2]} = \pr_r \left(  \Curld \left( f^{[\geq2]}_H \right) \right), \\
u^{[\geq2]} \vert_{r=1} = 0,
\end{cases} \label{eq:sigmaROUGH}
\end{align}
where $f^{[\geq2]}_H \in \HH^{w-2}_{-5/2}(\RRRwo)$, $w\geq2$ was a given vectorfield. In the previous sections \ref{ellmay1} and \ref{highermay1} where this PDE was denoted ({\bf E2}), we derived elliptic estimates in case $w\geq3$. In this section we derive estimates for the case $w=2$. \\ 

First, we derive a distributional formulation of \eqref{eq:sigmaROUGH}. Let 
\begin{itemize}
\item $f^{[\geq2]}_H \in C^\infty(\RRR^3)$, 
\item $u \in C^\infty(\RRR^3)$ be a solution to \eqref{eq:sigmaROUGH},
\item $\phi \in C_c^\infty(\RRR^3)$ such that $\phi \vert_{r=1}=0$.
\end{itemize}
Then, by integrating by parts twice, 
\begin{align*}
0=&\int\limits_{\RRRwo} r^4 \Big( \triangle u^{[\geq2]} + \frac{1}{r} \pr_r u^{[\geq2]} - \frac{3}{r^2}u^{[\geq2]} -  \pr_r \left(  \Curld \left( f^{[\geq2]}_H \right) \right) \Big) \phi \\
=& \int\limits_{\RRRwo} r^4 u^{[\geq2]} \left( \triangle \phi^{[\geq2]} + \frac{7}{r} \pr_r \phi^{[\geq2]} + \frac{12}{r^2} \phi^{[\geq2]} \right) \\
&- \int\limits_{\RRRwo} r^4 f_H^{[\geq2]} \cdot \left( \frac{6}{r}  {}^\ast \mkern-5mu \left(\Nd \phi^{[\geq2]}\right) + {}^\ast \mkern-5mu \left(\Nd (\pr_r \phi^{[\geq2]}) \right)\right),
\end{align*}
where here ${}^\ast \mkern-5mu \left(\Nd \phi \right)_A := \iin_{AB} \left(\Nd \phi \right)^B$ denotes the Hodge dual of $\Nd \phi$. Here the boundary terms 
\begin{align*}
\int\limits_{S_1} \pr_r u \, \phi, \, \int\limits_{S_1} u \pr_r \phi, \, \int\limits_{S_1} u\phi, \, \int\limits_{S_1} \Curld f_H^{[\geq2]} \phi
\end{align*}
vanished by the assumptions. The right-hand side still makes sense for 
\begin{align*}
f_H^{[\geq2]} &\in \HH^0_{-5/2}(\RRRwo),\\
u^{[\geq2]} &\in H^0_{-5/2}(\RRRwo), \\
\phi &\in H^2_{-5/2}(\RRRwo) \cap \ol{H}^1_{-5/2}. 
\end{align*}
Note the dense inclusion 
\begin{align*}
\{ \phi \in C_c^{\infty}(\RRR^3) : \phi \vert_{r=1} =0 \} \,\, \subset \,\, H^2_{-5/2}(\RRRwo) \cap \ol{H}^1_{-5/2} \cap C^\infty(\RRRwo),
\end{align*}
which is proved by using cut-off functions and left to the reader. This leads to the following definition.
\begin{definition} \label{def:distributionalsol}
Let $f^{[\geq2]}_H \in \HH^{w-2}_{-5/2}(\RRRwo)$ be a vectorfield. A function $u^{[\geq2]} \in H^0_{-5/2}(\RRRwo)$ is called a \emph{distributional solution} to 
\begin{align*}\begin{cases}
\triangle u^{[\geq2]} + \frac{1}{r} \pr_r u^{[\geq2]} - \frac{3}{r^2}u^{[\geq2]} = \pr_r \left(  \Curld \left( f^{[\geq2]}_H \right) \right), \\
u^{[\geq2]} \vert_{r=1} = 0,
\end{cases} 
\end{align*}
if for all $\phi^{[\geq2]} \in H^{2}_{-5/2}(\RRRwo) \cap \ol{H}^1_{-5/2} \cap C^\infty(\RRRwo)$,
\begin{align} \begin{aligned}
&\int\limits_{\RRRwo} r^4 u^{[\geq2]} \left( -\triangle \phi^{[\geq2]} - \frac{7}{r} \pr_r \phi^{[\geq2]} - \frac{12}{r^2} \phi^{[\geq2]} \right) \\
=& - \int\limits_{\RRRwo} r^4 f_H^{[\geq2]} \cdot \left( \frac{6}{r} {}^\ast \mkern-5mu \left(\Nd \phi^{[\geq2]}\right) + {}^\ast \mkern-5mu \left(\Nd (\pr_r \phi^{[\geq2]}) \right)\right).
\end{aligned}
\label{eq:def13}
\end{align}
\end{definition}
Note that this distributional solution is unique in $H^0_{-5/2}(\RRRwo)$ in view of Lemma \ref{lowreg16mars} below.\\

The next lemma is the main result of this section. 
\begin{lemma} \label{lowreg16mars}
Let $f^{[\geq2]}_H \in \HH^0_{-5/2}(\RRRwo)$. Let $u^{[\geq2]} \in H^0_{-5/2}(\RRRwo)$ be a distributional solution to \eqref{eq:sigmaROUGH}. Then it holds that 
\begin{align*}
\Vert u^{[\geq2]} \Vert_{H^0_{-5/2}} \lesssim \Vert f_H^{[\geq2]} \Vert_{\HH^0_{-5/2}}.
\end{align*}
\end{lemma}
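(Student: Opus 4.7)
\textbf{Proof plan for Lemma \ref{lowreg16mars}.}

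My plan is to prove this $L^2$-estimate by a duality argument based on Definition \ref{def:distributionalsol}. The operator in \eqref{eq:sigmaROUGH} is $L := \triangle + \frac{1}{r}\pr_r - \frac{3}{r^2}$, and after multiplication by the weight $r^4$ its formal adjoint (already built into the distributional formulation) is
\[
L^{\ast}\phi = -\triangle\phi - \frac{7}{r}\pr_r\phi - \frac{12}{r^2}\phi.
\]
The key observation is that if we can produce a suitable test function $\phi^{[\geq 2]}$ satisfying $L^{\ast}\phi = u^{[\geq 2]}/r^2$ with $\phi\vert_{r=1}=0$, then the left-hand side of \eqref{eq:def13} becomes $\int_{\RRRwo} r^4 u^{[\geq 2]}\cdot u^{[\geq 2]}/r^2 = \int_{\RRRwo} r^2 (u^{[\geq 2]})^2 \sim \Vert u^{[\geq 2]}\Vert_{H^0_{-5/2}}^2$, while the right-hand side can be controlled by $\Vert f_H^{[\geq 2]}\Vert_{\HH^0_{-5/2}}$ times a suitable $\phi$-norm via Cauchy--Schwarz.

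\textbf{Construction of $\phi$.} I would first note that $g := u^{[\geq 2]}/r^2$ is in the $l\geq 2$ modes and satisfies $\Vert g\Vert_{H^0_{-9/2}(\RRRwo)}^2 = \int (1+r)^6\,g^2 \lesssim \int r^2 (u^{[\geq 2]})^2 = \Vert u^{[\geq 2]}\Vert_{H^0_{-5/2}}^2$. I then solve $L^{\ast}\phi = g$ with $\phi\vert_{r=1}=0$ by the same weighted Lax--Milgram strategy as Proposition \ref{prop:ellmars10}, working in $\ol H^1_{-5/2}$ restricted to $l\geq 2$ modes. A direct computation with \eqref{mars111} gives
\[
\BB_{-5/2,\,7,\,12}(\phi,\phi) = \int_{\RRRwo} r^{4}\,|\nab\phi|^2 - \tfrac{9}{2}\int_{\RRRwo} r^{2}\phi^{2},
\]
and applying Lemma \ref{lem:hardy1} with $n=2$ yields $\int r^{2}\phi^{2}\leq \tfrac{1}{6}\int r^{4}|\Nd\phi|^2$, so the negative term is controlled by $\tfrac{3}{4}\int r^{4}|\Nd\phi|^2$ and coercivity on $\ol H^1_{-5/2}\cap\{l\geq 2\}$ follows (this is the analogue of \eqref{mars161}--\eqref{mars162}). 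Higher elliptic regularity as in Proposition \ref{higherregmars14}, applied to the adjoint operator (whose principal part is still $\triangle$), upgrades the weak solution to $\phi\in H^2_{-5/2}(\RRRwo)\cap\ol H^1_{-5/2}$ with
\[
\Vert\phi\Vert_{H^2_{-5/2}(\RRRwo)} \lesssim \Vert g\Vert_{H^0_{-9/2}(\RRRwo)} \lesssim \Vert u^{[\geq 2]}\Vert_{H^0_{-5/2}(\RRRwo)}.
\]

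\textbf{Using $\phi$ in the distributional identity.} Approximating $\phi$ by smooth compactly-supported functions vanishing on $S_1$ (dense in $H^2_{-5/2}\cap\ol H^1_{-5/2}$) and using that both sides of \eqref{eq:def13} are continuous in the $H^2_{-5/2}\cap\ol H^1_{-5/2}$-topology of the test function when $u\in H^0_{-5/2}$ and $f_H\in\HH^0_{-5/2}$, I can plug this $\phi$ into \eqref{eq:def13}. The left-hand side equals $\int_{\RRRwo}r^2(u^{[\geq 2]})^2\gtrsim \Vert u^{[\geq 2]}\Vert_{H^0_{-5/2}}^2$. For the right-hand side, Cauchy--Schwarz gives
\[
\left|\int_{\RRRwo} r^4 f_H^{[\geq 2]}\cdot\Bigl(\tfrac{6}{r}\,{}^{\ast}\!\Nd\phi + {}^{\ast}\!\Nd(\pr_r\phi)\Bigr)\right|
\lesssim \Vert f_H^{[\geq 2]}\Vert_{\HH^0_{-5/2}}\bigl(\Vert\Nd\phi\Vert_{H^0_{-7/2}} + \Vert\Nd\pr_r\phi\Vert_{H^0_{-9/2}}\bigr),
\]
and the parenthesis is $\lesssim \Vert\phi\Vert_{H^2_{-5/2}} \lesssim \Vert u^{[\geq 2]}\Vert_{H^0_{-5/2}}$. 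Combining gives $\Vert u^{[\geq 2]}\Vert_{H^0_{-5/2}}^2 \lesssim \Vert f_H^{[\geq 2]}\Vert_{\HH^0_{-5/2}}\Vert u^{[\geq 2]}\Vert_{H^0_{-5/2}}$, and the estimate follows.

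\textbf{Main obstacle.} The core analytic difficulty is showing the coercivity of $\BB_{-5/2,\,7,\,12}$ on the $l\geq 2$ subspace of $\ol H^1_{-5/2}$ with exactly the right constants (the Hardy constant $1/6$ matches the bad coefficient $9/2$ with room to spare only for $l\geq 2$); this is really what forces the lemma to be stated for $u^{[\geq 2]}$ rather than for all modes. A secondary technical point, handled by a standard density/approximation argument, is to justify that the $\phi$ produced by Lax--Milgram plus weighted elliptic regularity is admissible as a test function in Definition \ref{def:distributionalsol}, since the definition is originally stated for smooth $\phi$ while our $\phi$ is only $H^2_{-5/2}$-regular.
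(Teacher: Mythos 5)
Your proposal is correct and takes essentially the same route as the paper: a duality argument through the weighted adjoint elliptic problem, with coercivity of $\BB_{-5/2,7,12}$ obtained from the Poincar\'e inequality (Lemma \ref{lem:hardy1}) restricted to $l\geq 2$, followed by weighted elliptic regularity and Cauchy--Schwarz. The only implementation difference is that the paper tests against arbitrary $\varphi\in C^\infty_c(\RRRwo)$ and solves the adjoint with source $\varphi$, so the resulting $\Psi$ is smooth and directly admissible in Definition \ref{def:distributionalsol}; you instead take the adjoint source to be $u^{[\geq 2]}/r^2$, so your $\phi$ is only $H^2_{-5/2}$-regular and requires the approximation step you flag at the end, which the dense-inclusion remark preceding Definition \ref{def:distributionalsol} indeed justifies.
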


\begin{proof}[Proof of Lemma \ref{lowreg16mars}] To prove that $u^{[\geq2]} \in H^0_{-5/2} = \ol{H}^0_{-5/2} = \left( \ol{H}^0_{-1/2} \right)^\ast$ with
$$\Vert u^{[\geq2]} \Vert_{H^0_{-5/2}} \lesssim \Vert f_H^{[\geq2]}\Vert_{\HH^0_{-5/2} },$$
it suffices to show that for all $\varphi^{[\geq2]} \in C^\infty_c(\RRRwo)$,
\begin{align*}
\int\limits_{\RRRwo} u^{[\geq2]} \varphi^{[\geq2]} \lesssim \Vert f_H^{[\geq2]} \Vert_{\HH^0_{-5/2}} \Vert \varphi^{[\geq2]} \Vert_{H^0_{-1/2}}.
\end{align*}
In the following, we will prove that for all $\varphi^{[\geq2]} \in C^\infty_c(\RRRwo)$,
\begin{align}
\int\limits_{\RRRwo} r^4 u^{[\geq2]} \varphi^{[\geq2]} \lesssim \Vert f^{[\geq2]}_H \Vert_{\HH^0_{-5/2}} \Vert \varphi^{[\geq2]} \Vert_{H^0_{-9/2}}, \label{eq:roughestimate}
\end{align}
which implies the above estimate by the fact that  
\begin{align*}
\Vert r^{-4} f^{[\geq2]}_H \Vert_{\HH^0_{-9/2}(\RRRwo)} \simeq \Vert f^{[\geq2]}_H \Vert_{\HH^0_{-1/2}(\RRRwo)}
\end{align*}

It remains to prove \eqref{eq:roughestimate}. For given $\varphi^{[\geq2]} \in C^\infty_c(\RRRwo)$, let $\Psi^{[\geq2]}$ be defined as solution to 
\begin{align} \begin{cases}
-\triangle \Psi^{[\geq2]} - \frac{7}{r} \pr_r \Psi^{[\geq2]} - \frac{12}{r} \Psi^{[\geq2]} = \varphi^{[\geq2]} \,\,\, \text{on } \RRRwo, \\
\Psi^{[\geq2]} \vert_{r=1}=0.
\end{cases}  \label{eq:label111} \end{align}
The operator in \eqref{eq:label111} is the adjoint to ({\bf E2}) with respect to weighted scalar product $$(u,v) \mapsto \int\limits_{\RRRwo} r^4 uv.$$ Therefore \eqref{eq:label111} has the same weak formulation, ellipticity and higher regularity estimates for its generalised solutions as ({\bf E2}). By Proposition \ref{higherregmars14} and standard local interior and boundary elliptic regularity,
$$\Psi \in H^2_{-5/2}(\RRRwo) \cap \ol{H}^1_{-5/2} \cap C^\infty(\RRRwo)$$
with 
\begin{align}
\Vert \Psi^{[\geq2]} \Vert_{H^2_{-5/2}(\RRRwo)} \lesssim \Vert \varphi^{[\geq2]} \Vert_{H^0_{-9/2}}. \label{est:7jjj}
\end{align}
Plugging now $\Psi$ into \eqref{eq:def13}, using \eqref{est:7jjj} and \eqref{eq:label111}, we get
\begin{align*}
\int\limits_{\RRRwo} r^4 u^{[\geq2]} \varphi^{[\geq2]} &= \int\limits_{\RRRwo} r^4 u^{[\geq2]} \left(-\triangle \Psi^{[\geq2]} - \frac{7}{r} \pr_r \Psi^{[\geq2]} - \frac{12}{r} \Psi^{[\geq2]} \right) \\
&= - \int\limits_{\RRRwo} r^4 f_H^{[\geq2]} \cdot \left( \frac{6}{r} {}^\ast \mkern-5mu \left(\Nd \Psi^{[\geq2]}\right) + {}^\ast \mkern-5mu \left(\Nd (\pr_r \Psi^{[\geq2]}) \right) \right) \\
&\lesssim  \Vert f_H^{[\geq2]} \Vert_{\HH^0_{-5/2}(\RRRwo)} \Vert \Psi^{[\geq2]} \Vert_{H^2_{-5/2}(\RRRwo)} \\
&\lesssim \Vert f_H^{[\geq2]} \Vert_{\HH^0_{-5/2}(\RRRwo)} \Vert \varphi^{[\geq2]} \Vert_{H^0_{-9/2}(\RRRwo)}.
\end{align*}
This proves \eqref{eq:roughestimate} and finishes the proof of Lemma \ref{lowreg16mars}. \end{proof}

\begin{remark} The existence of a solution $u^{[\geq2]} \in H^0_{-5/2}$ to $({\bf E2})$ can be deduced from Lemma \ref{lowreg16mars} by a limit argument. Indeed, it suffices to take $\left(f^{[\geq2]}_H\right)_n \in C^\infty_c(\RRRwo),  n\in \mathbb{N}$ a sequence such that $\left(f^{[\geq2]}_H\right)_n \to f^{[\geq2]}_H$ in $\HH^0_{-5/2}$ as $n\to \infty$. The corresponding solutions $(u^{[\geq2]} )_n \in \ol{H}^1_{-5/2}$ whose existence is assured by Proposition \ref{prop:ellmars10} will by Lemma \ref{lowreg16mars} converge to the distributional solution $u^{[\geq2]}$ in $H^0_{-5/2}$.
\end{remark}

\subsection{Estimates to apply Lemma \ref{lem:simple1}}

To apply the above elliptic theory in Section \ref{sec:EuclideanSurjectivity}, 
the following corollary is used. It follows by the operator analysis in Section \ref{sec:DivergenceEquation} and its proof is left to the reader.
\begin{corollary} \label{corprac8j}
The right-hand sides of the PDEs \eqref{eq:delta4} and \eqref{eq:sigma2} can be estimated as follows. We have
\begin{align*}
\left\Vert \frac{1}{r^3} \pr_r \left( r^3 \left( \rh_N \right)^{[\geq2]} \right) - \Divd \left( \rhod_E^{[\geq2]} + \zeta_E \right) 
\right\Vert_{\left( \ol{H}^1_{1/2} \right)^\ast} &\lesssim \Vert \rh \Vert_{\ol{\HH}^0_{-5/2}} + \Vert \zeta_E\Vert_{\ol{\HH}^0_{-5/2}}, \\
\left\Vert \rhod_H^{[\geq2]} + \zeta_H \right\Vert_{ \HH^0_{-5/2}(\RRRwo)} &\lesssim \Vert \rh \Vert_{\HH^{0}_{-5/2}(\RRRwo)} +\Vert \zeta_H \Vert_{\HH^{0}_{-5/2}(\RRRwo)} \\
\left\Vert \pr_r \Curld \left( \rhod_H^{[\geq2]} + \zeta_H  \right) \right\Vert_{\left( \ol{H}^1_{3/2}(\RRRwo) \right)^\ast} &\lesssim \Vert \rh \Vert_{\ol{\HH}^{1}_{-5/2}(\RRRwo)} +\Vert \zeta_H \Vert_{\ol{\HH}^{1}_{-5/2}(\RRRwo)}.
\end{align*}
Also for $w\geq3$, 
\begin{align*}
\left\Vert \frac{1}{r^3} \pr_r  \left( r^3 \left( \rh_N \right)^{[\geq2]} \right) - \Divd \left( \rhod_E^{[\geq2]} + \zeta_E \right) \right\Vert_{\ol{H}^{w-3}_{-7/2} } \lesssim& \Vert \rh \Vert_{\ol{\HH}^{w-2}_{-5/2}} + \Vert \zeta_E \Vert_{\ol{\HH}^{w-2}_{-5/2}} \\
&+ C_w\Big( \Vert \rh \Vert_{\ol{\HH}^{0}_{-5/2}} + \Vert \zeta_E \Vert_{\ol{\HH}^{0}_{-5/2}}\Big),
\end{align*}
and for $w\geq4$,
\begin{align*}
\left\Vert \pr_r \left( \Curld \left( \rhod_H^{[\geq2]} + \zeta_H \right) \right) \right\Vert_{\ol{H}^{w-4}_{-9/2}} \lesssim& \Vert \rh \Vert_{\ol{\HH}^{w-2}_{-5/2}}+\Vert \zeta_H\Vert_{\ol{\HH}^{w-2}_{-5/2}} \\
&+ C_w \Big( \Vert \rh \Vert_{\ol{\HH}^{0}_{-5/2}}+\Vert \zeta_H\Vert_{\ol{\HH}^{0}_{-5/2}} \Big).
\end{align*}
\end{corollary}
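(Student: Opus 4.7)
The three estimates in the base case ($w = 2$, relevant for \eqref{est:mars101}, trivial, and \eqref{est:mars1011} respectively) are dual-norm bounds. My plan is to prove them by testing against $\phi$ in the appropriate weighted Sobolev space and moving all derivatives off the source term by integration by parts; the boundary contributions at $r = 1$ vanish because $\phi|_{r=1} = 0$ in the trace sense (see Proposition \ref{prop:TrivialExtensionRegularity}), so the bookkeeping is purely algebraic. Concretely, for the first estimate I would expand
\[
\frac{1}{r^3}\pr_r\bigl(r^3\rho_N^{[\geq 2]}\bigr) - \Divd\bigl(\rhod_E^{[\geq 2]} + \zeta_E\bigr)
= \pr_r\rho_N^{[\geq 2]} + \frac{3}{r}\rho_N^{[\geq 2]} - \Divd\bigl(\rhod_E^{[\geq 2]} + \zeta_E\bigr),
\]
and pair with $\phi \in \ol{H}^1_{1/2}$. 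The derivative terms are handled by integration by parts (giving $\int \rho_N^{[\geq 2]}\,\pr_r\phi$ and $\int(\rhod_E + \zeta_E)\cdot\Nd\phi$) and the bound follows from Lemma \ref{lem:simple1} or directly by weighted Cauchy--Schwarz: since $\ol{H}^0_{-5/2}$ is dual to $\ol{H}^0_{-1/2}$ and the full $\Nd$ of $\phi$ satisfies $\|\Nd\phi\|_{\HH^0_{-1/2}} \lesssim \|\phi\|_{\ol{H}^1_{1/2}}$, we gain exactly the required weights. The purely multiplicative term $\frac{3}{r}\rho_N^{[\geq 2]}$ is controlled by noting $r \geq 1$ on $\RRRwo$, so $\int r^{-1}\rho_N^{[\geq 2]}\phi \lesssim \|\rho_N\|_{\ol H^0_{-5/2}}\|\phi\|_{\ol H^1_{1/2}}$.

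The second estimate is just the triangle inequality combined with the bound \eqref{eq:zetabound8feb} for $\zeta_H$ from Proposition \ref{prop:boundaryvalues}. The third estimate is similar in spirit to the first but needs one more derivative: writing
\[
\int \pr_r \Curld\bigl(\rhod_H^{[\geq 2]} + \zeta_H\bigr)\phi = -\int \Curld\bigl(\rhod_H^{[\geq 2]} + \zeta_H\bigr)\pr_r\phi
\]
for $\phi \in \ol{H}^1_{3/2}$, I would then use that $\Curld$ of a vectorfield in $\HH^1_{-5/2}$ lies in $\HH^0_{-3/2}$ (one tangential derivative), and $\pr_r\phi \in \HH^0_{-3/2}$ for $\phi \in \ol{H}^1_{3/2}$, so Cauchy--Schwarz closes the estimate. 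The input $\zeta_H$ is controlled via \eqref{eq:zetabound8feb}.

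For the higher-regularity estimates ($w \geq 3$ and $w \geq 4$), the target norms are plain weighted Sobolev norms rather than dual norms, so the analysis reduces to distributing derivatives via Leibniz and applying Corollary \ref{cor:algebraprop} and the product estimates in Lemma \ref{ProductEstimates}. Here $\frac{1}{r^3}\pr_r(r^3 \cdot)$ and $\Divd$ both lower the regularity by one and shift the weight by $-1$, so the loss from $\ol{\HH}^{w-2}_{-5/2}$ data to $\ol{H}^{w-3}_{-7/2}$ output is exactly what is needed; similarly $\pr_r \Curld$ is second-order, giving $\ol{H}^{w-4}_{-9/2}$ for $\ol{\HH}^{w-2}_{-5/2}$ data. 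The low-order correction terms with constant $C_w$ come from the standard product-estimate structure in Lemma \ref{ProductEstimates}.

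The main obstacle is neither deep nor conceptual but is the most error-prone step: verifying that the weights actually match up in the Cauchy--Schwarz pairings used for the dual-norm bounds, particularly reconciling the weighted pairing convention of the weak formulation in Section \ref{ssweakM10} with the pointwise weights that appear when integrating by parts. Once the dualities between $\ol{H}^0_{\de}$ and $\ol{H}^0_{-\de-3}$ (and their first-derivative analogues) are written down carefully, every bound reduces to a one-line application of Cauchy--Schwarz and the established control of $\zeta_E, \zeta_H$ from Proposition \ref{prop:boundaryvalues}.
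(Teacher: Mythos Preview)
Your approach is correct and is exactly what the paper intends: it leaves the proof to the reader, citing the operator analysis of Section~\ref{sec:analysisonafsets} together with Lemma~\ref{lem:simple1}, and your plan of testing against $\phi$ in the relevant $\ol{H}^1_\de$ space, integrating by parts (boundary terms vanish since $\phi\vert_{r=1}=0$), and closing by weighted Cauchy--Schwarz is precisely the computation being deferred.

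One minor slip to flag: in your discussion of the third estimate you write that $\Curld$ of a vectorfield in $\HH^1_{-5/2}$ lies in $\HH^0_{-3/2}$ and that $\pr_r\phi\in\HH^0_{-3/2}$ for $\phi\in\ol{H}^1_{3/2}$. Both indices are off: by Lemma~\ref{SobolevEmbeddingsAndNonlinear} a first derivative sends $H^1_{-5/2}\to H^0_{-7/2}$ and $H^1_{3/2}\to H^0_{1/2}$. Fortunately $-7/2+1/2=-3$ just as $-3/2+(-3/2)=-3$, so the duality pairing still closes and your conclusion is unaffected. Also, for the higher-regularity part you cite Corollary~\ref{cor:algebraprop} and Lemma~\ref{ProductEstimates}, but no genuine product structure is present---the operators $\tfrac{1}{r^3}\pr_r(r^3\,\cdot)$, $\Divd$, $\Curld$ are linear first-order with smooth (radial) coefficients, so the mapping $H^{w-2}_{-5/2}\to H^{w-3}_{-7/2}$ (respectively second-order into $H^{w-4}_{-9/2}$) follows directly from the definition of the weighted norms without invoking product estimates.
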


\end{document}